\def\Ind{\mathbf{1}}
\newtheorem{theorem}{Theorem}[section]
\newtheorem{definition}[theorem]{Definition}
\newtheorem{proposition}[theorem]{Proposition}
\newtheorem{lemma}[theorem]{Lemma}
\newtheorem{remark}[theorem]{Remark}
\newtheorem{corollary}[theorem]{Corollary}
\newtheorem{convention}[theorem]{Convention}
\newtheorem{notation}[theorem]{Notation}
\newtheorem{assumption}[theorem]{Assumption}
\def\eps{\varepsilon}
\def\phi{\varphi}
\def\IR{\mathbb{R}}
\def\IT{\mathbb{T}}
\def\IZ{\mathbb{Z}}
\def \diff {\mathrm{d}}
\def\sigalg{\mathcal{F}} 
\def\cM{\mathcal{M}} 
\def\cR{\mathcal{R}}
\def\cC{\mathcal{C}}
\def\cone{\mathscr{C}} 
\numberwithin{equation}{section}
\title{Equidistribution for standard pairs in planar dispersing billiard flows}
\author[1]{P\'eter B\'alint}
\author[2]{P\'eter N\'andori}
\author[1]{Domokos Sz\'asz}
\author[1,3]{Imre P\'eter T\'oth}
\affil[1]{Institute of Mathematics, Budapest University of Technology and
  Economics, Egry J\'ozsef u. 1, H-1111 Budapest, Hungary}
\affil[2]{  Department of Mathematics, University of Maryland, 4176 Campus Drive,
  College Park, MD 20742, USA}
\affil[3]{  MTA-BME Stochastics Research group, Egry J\'ozsef u. 1, H-1111
  Budapest, Hungary}
\begin{document}

\maketitle


\begin{abstract}
We prove exponential correlation decay in dispersing billiard flows on the 2-torus assuming finite horizon and lack of corner points. With applications aimed at describing heat conduction, the highly singular initial measures are concentrated here on $1$-dimensional submanifolds (given by standard pairs) and the observables are supposed to satisfy a generalized H\"older continuity property. The result is based on the exponential correlation decay bound of Baladi, Demers and Liverani \cite{BDL16} obtained for H\"older continuous observables in these billiards. The model dependence of the bounds is also discussed.
\end{abstract}

\tableofcontents
\section{Introduction}

\subsection{General introduction}\label{sec:general-intro}

Decay of correlations is a most useful property when establishing probabilistic laws for stochastic processes. What is more - and this is the main reason of our interest - it has a pivotal role in non-equilibrium statistical physics since it also controls convergence to equilibrium. Usually correlation decay bounds are originally established when the system starts from a nice measure absolutely continuous with respect to the smooth invariant measure of the system. In contrast, our goal here is to present a {\it correlation bound for planar Sinai billiard flows in the case where the initial measure is determined by a standard pair}. Briefly saying a {\it standard pair} is a smooth unstable curve together with a nice probability density on it hence determining a singular measure in the phase space (the precise definition will be given in Section \ref{sec:stpair}). The tool of standard pairs was initiated in \cite{D04,D05} where it appeared as a much effective and flexible variant of Markov approximations of hyperbolic
dynamical systems. Since then, it has been widely used when tackling a variety of problems (see \cite{BChD11,BChD16,ChD08,ChD09B,ChD09C,CDP14,DL11,DN16A,DN16B,DdS12,DSzV08,DSzV09}).
In particular, in diverse approaches studying the Fourier law of heat conduction for Hamiltonian models, it occurs that correlation bounds - and the resulting convergence to equilibrium - for {\it a hyperbolic {\rm flow} starting from precisely a standard pair} seem inevitable (see e. g. \cite{DL11,DN16B,BNSzT16}). \footnote{\cite{DL11} uses the method of standard pairs for the study of systems with \textit{slow-fast degrees of freedom} whereas \cite{DN16B,BNSzT16} apply it to treat the \textit{rare interaction limit}.} {Although} on the one hand, our prime motivation was the aforementioned application (for instance \cite{DN16B} also uses our result), we, on the other hand, also mean our work as a contribution to and a reference on the {methodology of standard pairs applied to flows}.

Our result is based on correlation bounds when the initial measure is a nice smooth one. Indeed, for planar Sinai billiards {\it exponential correlation bounds for the billiard ball map} had been known since the late 90's (see \cite{Y98} -- finite horizon case, \cite{Ch99} -- infinite horizon case). However, it took time until, in 2007, Chernov \cite{Ch07} could derive a {\it stretched exponential correlation bound for the flow} in case of so-called generalized H\"older continuous observables. (Earlier \cite{M07} had obtained superpolynomial correlation bounds for functions smooth in the flow direction). Quite recently, Baladi, Demers and Liverani \cite{BDL16} were finally able to {\it achieve an exponential bound} for H\"older observables.\footnote{ \cite{BDL16} also contains more details on earlier results providing correlation decay for planar dispersing billiard flows.}
 It is essential to note here that the aforementioned results were drawn up in terms of the invariant Liouville measure as an initial measure and they can be naturally extended to cases when the initial law is absolutely continuous with respect to it with a nice density.

As said above it is, however, substantial for some actual applications (see \cite{DN16B,BNSzT16} that  the initial measure be given by a standard pair (and, moreover, the observable be H\"older in a weaker, so-called generalized sense, only). Such a result is the goal of this paper and will be the claim of our Main Theorem \ref{thm:main}.
Under these circumstances a natural way to derive a correlation bound starting from a standard pair is 1) to smear the singular measure given by the standard pair in a small neighbourhood of its unstable curve component and 2) apply a known result to this smooth initial measure. This idea works well for smooth dynamical systems, for instance in case of geodesic flows \cite{DL11}.  However, for singular systems, like billiards, there arises a substantial difficulty, and actually the bulk of the paper fights exactly this obstacle. We discuss the problem in detail in Section~\ref{sec:about-the-proof}. We also note that though the Banach spaces used by \cite{BDL16} also contain singular measures nevertheless the standard pairs we need and consider here do not belong to those Banach spaces.

Here, we prove our results for the simplest possible class of Sinai billiards: for planar dispersing billiards with finite horizon and no corner points. However, in order to discuss possible extensions, we present a setup which is slightly more general, and also allows corner points. It is worth mentioning that in the recent work \cite{CPZ17} a similar approach, i. e. taking  iterates of standard pairs, is successfully used for constructing SRB measures for multidimensional, non-uniformly hyperbolic maps.

The billiard
ball map has been extensively studied in the literature, see e.g. \cite{ChM06} and \cite{ChD09A}. In the preliminary
Section \ref{sec:prelim}, we extend several notions and definitions {\it from the billiard ball  map to the billiard flow}
(with the exception of Section \ref{sec:Holder}, where various notions of H\"older continuity are recalled).
Section \ref{sec:proof} contains the proof of our Main Theorem \ref{thm:main}. In particular, certain substantial foliations are introduced in Sections \ref{sec:conditional}-\ref{sec:product}.
The approximating density is defined in Section \ref{sec:construction}. Then its regularity properties
are studied and adjusted to the foliations in Sections \ref{sec:regularity}  and \ref{sec:smoothing}.
The proof is completed in Section~\ref{sec:using}.
Finally, a possible strengthening and two possible generalizations of our result are discussed in Section~\ref{sec:extensions}.
In particular, as we learnt from \cite{ChD09A}, standard pairs are most appropriate for the perturbative analysis of billiard-like systems therefore we pay special attention to the dependence on the model of the constants appearing in the bounds.\footnote{We note, however, that the proof of the limit transition to a Markov jump process in the model of \cite{BNSzT16} does not need these uniform - in the appearing models - bounds of the constants but they are necessary for more complicated models.}   This question is discussed in Subsection \ref{sec:extensions} (see also Subsection \ref{sec:const-convention}). The aforementioned and desirable generalizations are discussed in Subsections \ref{sec:ext-corner} and \ref{sec:no-eps0}.

About the Appendix: Subsection \ref{sec:Ahom} is devoted to demonstrate three properties,  formulated in Subsection \ref{sec:holonomy}, of holonomy maps along central-stable manifolds. In subsection \ref{sec:genholder-EDC} we extend exponential correlation decay known for H\"older observables to those for generalized H\"older observables. In particular we use Corollary \ref{cor:osc_F-genholder-norm} of a Theorem \ref{thm:osc_f-genHolder} by IP T\'oth, which is proved in his separate paper \cite{T17}. Finally Subsection \ref{sec:Holder-extension-1p} formulates statements about the extension of H\"older continuous function to larger sets.

\subsection{Setup and Main Theorem}

\subsubsection{Billiard table}\label{sec:billiardtable}

Our main discussion is restricted to hyperbolic billiards with smooth scatterers. Nevertheless,
keeping for later purpose we introduce the model a bit more generally, i. e. we permit the boundaries of the scatterers to consist of smooth pieces (called walls, cf. \cite{ChM06}, Section 2.1). Specifically, let $\IT^2$ denote the two-dimensional torus $\IT^2=\IR^2/\IZ^2$. Let $Q_0\subset \IT^2$ be open and connected and let $Q=\overline{Q_0}$ be its closure. Assume that
the boundary of $Q$ is a finite union of curves in $\IT^2$:
\[\partial Q=\Gamma=\Gamma_1\cup\dots\cup \Gamma_r.\]
The curves $\Gamma_i$ are assumed to be $C^3$-smooth of finite length, and intersect only at the endpoints:
\[\Gamma_i\cap \Gamma_j\subset \partial\Gamma_i \cup \partial\Gamma_j\quad\text{for every $i\neq j$}.\]
The closed set $Q\subset\IT^2$ is called the \emph{billiard table} or \emph{billiard flow configuration space}.
The $\Gamma_i$ are called \emph{walls} (while the boundaries of the the connected components of $\IT^2 \setminus Q_0$ are called \emph{scatterers}).
The billiard flow describes the motion of a point particle (called the billiard particle) that moves in $Q$ with unit velocity, so \emph{the phase space of the billiard flow} is
\[M=Q\times S^1\subset \IT^3\]
where $S^1$ is the unit circle. Geometrically, we view it as the set of unit velocity vectors: $S^1=\{v\in\IR^2 \,|\, |v|=1\}$. From a metric perspective, it is better to {view} it as the $1$-torus $S^1=\IT^1=\IR/\IZ\cong \{e^{i2\pi \xi}\,|\,z\in\IR\}$.
The billiard particle moves uniformly (with constant {speed}) in $Q$ until it hits the boundary $\partial Q$. When it reaches the boundary, it bounces back under the rules of elastic collision -- so the velocity remains unit, and its direction changes like the direction of a reflected light ray in geometric optics: the angle of reflection equals the angle of incidence.

For $r\in Q$ and $v\in S^1$ and $x=(r,v)\in M$ we call $r$ the position, $v$ the velocity, and $x$ the phase point of the particle. For a particle with phase point $x=(r,v)\in M$, let $\Phi^t(x)$ give the phase point of the same particle after it moves for time $t$. For $x = (r, v) \in M$ denote $\pi_Q x = r$. \{$\Phi^t:M\to M | t \in \mathbb R$\}  is called the billiard flow and this definition is unambiguous if we assume that, in addition, the trajectories are continuous from the right. \footnote{Another way to make it unambiguous is to identify the left and right limit points when the orbit hits the boundary, cf. \cite{KSSz90,ChD09A}. Both ways are equivalent.}
For $x\in M$, let $\tau(x)$ denote the time of free flight for $x$ until the first collision:
\[\tau(x)=\inf\{t>0\,|\,\pi_Q(\Phi^t (x)) \in \Gamma\}.\]

\subsubsection{Assumptions and statement of the theorem}\label{sec:statement}

We will use the following assumptions:

\begin{assumption}[No corner points]
\label{asm:no-corner}
 All the walls $\Gamma_i$ are closed $C^3$-smooth curves, so they have no boundary and they are disjoint.
\end{assumption}
Consequently the walls and the scatterers are identical.

\begin{assumption}[Dispersing planar billiard]
\label{asm:dispersing}
 All walls are strictly convex when viewed from the outside of $Q$.
\end{assumption}
Consequently all scatterers are strictly convex.

\begin{assumption}[Finite horizon]
\label{asm:finite-horizon}
 There are no phase points {which} can fly {indefinitely} without a collision (or equivalently the free flight time $\tau(x)$ is finite for every $x$ and consequently it is also bounded).
\end{assumption}

Our main result is proven under the assumptions \ref{asm:dispersing}, \ref{asm:finite-horizon} and \ref{asm:no-corner}.\footnote{According to the terminology of \cite{ChM06}, billiards satisfying assumptions \ref{asm:dispersing}, \ref{asm:finite-horizon} and \ref{asm:no-corner} belong to category A.}
Most importantly, our main reference \cite{BDL16} only covers this case. However, our long term aim is to drop Assumption \ref{asm:no-corner} and cover billiards with corner points, under the much weaker Assumption \ref{asm:no-cusp}, which will be discussed in Section~\ref{sec:ext-corner}.

The more often used -- and this was, indeed, traditionally the most favoured and more convenient -- description of billiard dynamics relies upon the discrete phase space
\[\mathcal M = \{(q, v)| q \in \partial Q, |v|\in \IR^2, |v|=1, \langle n(q),v \rangle \ge 0\}\subset M,\]
where $n = n(q)$ is the normal vector of $\partial Q$ at $q$ pointing inwards into $Q$ (that is, out of the scatterers).
Then the related billiard ball map $T: \cM \to \cM$ is defined by $Tx = \Phi^{\tau(x)}(x)$.  (We note that - in the sense of our convention made in Section \ref{sec:billiardtable} - actually $M = (Q_0 \times S^1) \cup \mathcal M$.) We also note that a convenient coordinate for the velocity component $(q, v) \in \mathcal M $ is the angle
$\varrho\in\left[-\frac{\pi}{2},\frac{\pi}{2}\right]$ between $v$ and $n$ implying $\cos \varrho = \langle n,v\rangle$. It is sometimes convenient to use $(q, \varrho)$ instead of $(q, v)$, so -- with some abuse of notation -- we can write
\begin{equation}\label{eq:map-phasespace-with-position-and-angle}
\mathcal M = \{(q, \varrho)| q \in \partial Q, -\frac{\pi}{2} \le \varrho \le \frac{\pi}{2}\}.
\end{equation}

The connection between the two phase spaces is provided by the projection $\Pi:M\to \cM$ that assigns to each point $x\in M$ the
point of the previous collision. That is, if $\tau_-(x)=\min\{t\ge 0 \mid \Phi^{-t}(x)\in\cM\}$, then $\Pi x=\Phi^{-\tau_-(x)}x$. The invariant, Liouville measure $\mu$ for the billiard flow is given by
\[\mu=\frac{1}{Leb(M)} Leb_M \quad\text{, so}\quad\diff \mu = \frac{1}{2\pi Vol (Q)}\diff r \diff v,\]
whereas the natural invariant measure $\nu$ for the billiard ball map is given by
\[\diff\nu = \frac{1}{2|\partial Q|}\langle n,v\rangle \diff q \diff v\quad (= \frac{1}{2|\partial Q|}\cos \varrho \diff q \diff v).\]
In general we are going to follow the terminology of (and several facts from) \cite{ChM06} often without particular reference.

Now we present the main theorem of the paper. Most of the precise terminology will be introduced later, cf. Definition \ref{def:stp}. For now assume that $(W^u,\phi)$ is a sufficiently regular standard pair where $\phi$ is a nice probability density with respect to the length measure $m_{W^u}$ on the $u$-curve $W^u$.  The essence is that if $F$ is a sufficiently regular observable on $M$, then $\int_{W^u} \phi(x)F(\Phi^t x)\diff x \to \int_{W^u} \phi(x) \diff x \int_M F(x) \diff x$ as $t\to\infty$, exponentially fast.

Assume we are given a standard pair $(W^u, \phi)$. In addition, for technical reasons we assume that there is some $\eps_0>0$ such that the $2\eps_0$-neighbourhood of $W^u$ is disjoint from the boundary of $M$. This condition is formulated in Definition~\ref{def:gooducurve}, and discussed more in Section~\ref{sec:no-eps0}.

\begin{theorem}\label{thm:main} Suppose the billiard table $Q$ satisfies assumptions \ref{asm:dispersing}, \ref{asm:finite-horizon} and \ref{asm:no-corner}. Let $0<\Theta_\phi<1$, $0<\alpha_F \le 1$ and $\eps_0>0$. Then there exist $\cC<\infty$ and $a>0$ with the following properties:

Let $(W^u,\phi)$ be a standard pair with a dynamically $\Theta_\phi$-H\"older $\phi$ (cf. Definition \ref{def:dHc}). Assume that the $2\eps_0$-neighbourhood of $W^u$ is disjoint from the boundary of $M$. Let $F:M\to \IR$ be generalized $\alpha_F$-H\"older continuous (cf. Definition \ref{def:GHC}). Then, for every $t\ge 0$,
 \[\left| \int_{W^u} \left(F\circ \Phi^t\right) \phi \diff m_{W^u} - \int_M F\diff\mu \right| \le \cC \cdot ||\phi||_{\Theta_\phi;dH}\cdot var_{\alpha_F}F\cdot  e^{-a t}.\]
Here
 \begin{itemize}
  \item $||\phi||_{\Theta_\phi;dH}$ is the dynamical H\"older norm of $\phi$ (see (\ref{eq:dHn})).
  \item $var_{\alpha_F} F$ is the generalized H\"older seminorm of $F$ (see (\ref{eq:var})).
  \item $a=a(Q,\cR_u,\Theta_\phi,\alpha_F)<\infty$ and $\cC=\cC(Q,\cR_u,\Theta_\phi,\alpha_F)<\infty$ depend on the billiard table $Q$, the regularity of good $u$-curves quantified in $\cR_u$, and the regularity classes of $\phi$ and $F$ given by $\Theta_\phi$ and $\alpha_F$. In other words, they depend on $W^u$, $\phi$ and $F$ through the aforementioned parameters, only.
 \end{itemize}
Moreover, $\cC$ depends on $Q$ only through $\cR_Q$ from (\ref{eq:RQ}) and, furthermore, through $\cC_{BDL}(Q,\alpha)$ from Theorem~\ref{thm:BDL} with some $\alpha=\alpha(\cR_Q,\cR_u,\Theta_\phi)>0$.
Similarly, $a$ depends on $(Q,\cR_u,\Theta_\phi)$ only through $a'(Q,\alpha)$ from Theorem~\ref{thm:BDL} with the same $\alpha=\alpha(\cR_Q,\cR_u,\Theta_\phi)>0$. That is,
\[\cC=\cC(\cR_Q,\cR_u,\alpha_F,\cC_{BDL}(Q,\alpha(\cR_Q,\cR_u,\Theta_\phi)))\]
and
\[a=a(\alpha_F, a'(Q,\alpha(\cR_Q,\cR_u,\Theta_\phi))).\]
\end{theorem}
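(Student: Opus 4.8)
The plan is to approximate the singular measure $\phi\,\diff m_{W^u}$ by an absolutely continuous probability measure $\mu_\eps$, $\diff\mu_\eps=\rho_\eps\,\diff\mu$, supported in the $\eps$-neighbourhood of $W^u$ (for $0<\eps\le\eps_0$), to which the exponential correlation bound for smooth initial data applies, and then to control the error of this smearing. Subtracting $\int_M F\,\diff\mu$ from $F$ — which changes neither side of the asserted inequality, since $\int\rho_\eps\,\diff\mu=\int_{W^u}\phi\,\diff m_{W^u}=1$ — we may assume $\int_M F\,\diff\mu=0$, whence $\|F\|_\infty\le C(Q)\,var_{\alpha_F}F$ (by generalized H\"older continuity and the finite diameter of $M$), and the left-hand side of the theorem becomes $\bigl|\int_{W^u}(F\circ\Phi^t)\,\phi\,\diff m_{W^u}\bigr|$. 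By the triangle inequality, for each such $\eps$,
\[
\left|\int_{W^u}(F\circ\Phi^t)\,\phi\,\diff m_{W^u}\right|
\le
\left|\int_{W^u}(F\circ\Phi^t)\,\phi\,\diff m_{W^u}-\int_M(F\circ\Phi^t)\,\diff\mu_\eps\right|
+\left|\int_M(F\circ\Phi^t)\,\diff\mu_\eps\right| .
\]
The last term equals $\bigl|\int_M(F\circ\Phi^t)\,\diff\mu_\eps-\int_M F\,\diff\mu\bigr|$, and by the exponential decay bound of \cite{BDL16} for absolutely continuous initial measures (Theorem~\ref{thm:BDL}) extended to generalized $\alpha_F$-H\"older observables (Appendix~\ref{sec:genholder-EDC}) it is $\le\cC_{BDL}(Q,\alpha)\cdot\|\rho_\eps\|_\ast\cdot var_{\alpha_F}F\cdot e^{-a'(Q,\alpha)\,t}$ (the rate possibly reduced by a factor depending on $\alpha_F$ in this last extension), where $\|\cdot\|_\ast$ is the norm of the Banach space of \cite{BDL16}. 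Two things remain: (i) to construct $\rho_\eps$ in that Banach space with $\|\rho_\eps\|_\ast\le C\,\|\phi\|_{\Theta_\phi;dH}\,\eps^{-\beta}$ for some $\alpha=\alpha(\cR_Q,\cR_u,\Theta_\phi)>0$ and $\beta=\beta(\cR_Q,\cR_u,\Theta_\phi,\alpha_F)>0$; and (ii) to show the first term above is $\le C\,\|\phi\|_{\Theta_\phi;dH}\,var_{\alpha_F}F\,(\eps^{\alpha_F}+P(t)\,\eps)$ with $P$ a fixed polynomial. Granting (i) and (ii), the choice $\eps=\min\{\eps_0,\,e^{-a't/(\beta+\alpha_F)}\}$ balances $\eps^{-\beta}e^{-a't}$ against $\eps^{\alpha_F}$ and, absorbing $P(t)$ into a slightly smaller exponential rate, yields the claimed inequality with $a$ a positive multiple of $a'(Q,\alpha)$ (the multiple determined by $\alpha_F$ and $\beta$) and $\cC$ a product of $\cC_{BDL}(Q,\alpha)$ with constants depending only on $\cR_Q,\cR_u,\alpha_F$; bookkeeping of how $\alpha,\beta$ depend on $(\cR_Q,\cR_u,\Theta_\phi)$ produces the stated dependences.

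\emph{Construction of $\rho_\eps$} (Sections~\ref{sec:conditional}--\ref{sec:smoothing}). In the neighbourhood of $W^u$ of radius $\le\eps_0$, which by hypothesis is disjoint from $\partial M$ (so there the flow is a smooth flow box for times $|s|\le\eps_0$), I introduce a foliation $\{W^u_\xi\}_{\xi\in D}$ by $u$-curves — with $W^u_0=W^u$ and $\xi$ ranging over a two-dimensional transverse disk $D$, obtained by sliding $W^u$ along the flow direction and along a transverse (roughly stable) direction — together with the accompanying holonomies $h_\xi\colon W^u\to W^u_\xi$. These leaves are deliberately \emph{not} dynamically invariant (the genuine stable foliation is only measurable and does not fill a neighbourhood), so the curvature, distortion and tangent slopes of the $W^u_\xi$, and the H\"older dependence on $\xi$ of $h_\xi$ and of its Jacobian $J_\xi$ (with $J_0\equiv1$ and $|J_\xi-1|\le C|\xi|^{\alpha_F}$), must be established directly; this is Section~\ref{sec:regularity}, and rests on the H\"older properties of central-stable holonomy maps proved in Appendix~\ref{sec:Ahom}. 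Spreading $\phi$ along this foliation, weighted by a fixed mollifier $\chi_\eps$ on $D$ at scale $\eps$ (with $\int_D\chi_\eps\,\diff\xi=1$) and by $J_\xi$, defines $\rho_\eps$ through
\[
\int_M G\,\rho_\eps\,\diff\mu=\int_D\chi_\eps(\xi)\left(\int_{W^u}(G\circ h_\xi)\,\phi\,J_\xi\,\diff m_{W^u}\right)\diff\xi\qquad(G\in C(M)) .
\]
Because $\phi$ is only dynamically $\Theta_\phi$-H\"older and the foliation only H\"older transversally, $\rho_\eps$ is itself merely H\"older and no better across leaves; Section~\ref{sec:smoothing} performs one further mollification — along the leaves and in $\xi$, so that the disintegration above survives up to a negligible error and the dynamically H\"older $\phi$ is replaced by a genuinely H\"older factor — after which $\rho_\eps$ lies in the Banach space of \cite{BDL16} with $\|\rho_\eps\|_\ast\le C\,\|\phi\|_{\Theta_\phi;dH}\,\eps^{-\beta}$.

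\emph{Estimating the smearing error} (Section~\ref{sec:using}). Write $F_t=F\circ\Phi^t$. Applying the identity above with $G=F_t$ and subtracting $\int_{W^u}F_t\,\phi\,\diff m_{W^u}=\int_D\chi_\eps(\xi)\bigl(\int_{W^u}F_t\,\phi\,\diff m_{W^u}\bigr)\diff\xi$ bounds the first term on the right-hand side of the displayed inequality by $\sup_{\xi\in D}\int_{W^u}\bigl|(F_t\circ h_\xi)\,J_\xi-F_t\bigr|\,\phi\,\diff m_{W^u}$; since $\int_{W^u}\phi\,\diff m_{W^u}=1$ and $|J_\xi-1|\le C\eps^{\alpha_F}$, this is at most $\sup_{\xi\in D}\int_{W^u}\bigl|F_t\circ h_\xi-F_t\bigr|\,\phi\,\diff m_{W^u}+C\,\|F\|_\infty\,\eps^{\alpha_F}$. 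For $q\in W^u$, the points $q$ and $h_\xi q$ differ by a displacement of size $\le\eps$ lying in the flow and stable directions; under $\Phi^t$ its flow component keeps size $\le\eps$ while its stable component contracts, so \emph{unless} the short transverse arc from $q$ to $h_\xi q$ is cut by the singularity set of $\Phi^t$ before time $t$, the images $\Phi^t q$ and $\Phi^t(h_\xi q)$ stay dynamically $C\eps$-close, whence $|F(\Phi^t h_\xi q)-F(\Phi^t q)|\le C\,var_{\alpha_F}F\,\eps^{\alpha_F}$ by the generalized H\"older property (Definition~\ref{def:GHC}). The exceptional set of such $q$ has $\phi$-measure $\le C\,\|\phi\|_{\Theta_\phi;dH}\,P(t)\,\eps$ — this uses the at most linear growth of the complexity of singularity sets for finite-horizon dispersing billiards without corners (Assumptions~\ref{asm:dispersing}, \ref{asm:finite-horizon}, \ref{asm:no-corner}) — and there we bound the integrand crudely by $2\|F\|_\infty\le C\,var_{\alpha_F}F$. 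Collecting terms yields (ii).

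\emph{Main obstacle.} The hard part is (i): producing a $\rho_\eps$ that is simultaneously regular enough to lie in the rather rigid Banach space of \cite{BDL16} — with norm controlled by an explicit negative power of $\eps$ — and disintegrated along a foliation fine enough for the sheet-by-sheet comparison of (ii). These two demands pull in opposite directions, because the only \emph{continuous} unstable foliation available is not dynamically invariant and is only H\"older, while a dynamically invariant one is not continuous; and, superimposed on this, the singularities of the billiard flow threaten both the transverse regularity of $\rho_\eps$ and the comparison of the flowed sheets $\Phi^t W^u_\xi$ (which cross the singularity set at different times). Arranging the foliations — the conditional one along $u$-curves (Section~\ref{sec:conditional}) and the complementary product structure (Section~\ref{sec:product}) — so that neither effect is fatal is what the bulk of Section~\ref{sec:proof} carries out, and the difficulty is discussed at greater length in Section~\ref{sec:about-the-proof}.
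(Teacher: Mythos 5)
Your overall skeleton (smear $\phi\,\diff m_{W^u}$ into an absolutely continuous density at scale $\eps$, apply the correlation bound of \cite{BDL16} extended to generalized H\"older observables, then optimize $\eps=\eps(t)$) is the same as the paper's, but your step (ii) breaks at exactly the point the paper identifies as the main difficulty. You transport $\phi$ along a \emph{geometric, non-dynamically-invariant} family of holonomies $h_\xi$ and claim that $\Phi^t q$ and $\Phi^t(h_\xi q)$ remain $C\eps$-close unless the connecting arc is cut by a singularity, the exceptional set having $\phi$-measure $O(P(t)\eps)$. This is false: unless $h_\xi q$ lies on the homogeneous central-stable manifold of $q$, the displacement has an unstable component of size comparable to $\eps$ (a fixed transverse direction cannot track the only-measurable stable direction field, as you yourself note), and that component grows exponentially along perfectly smooth pieces of orbit; for $t\gtrsim C\log(1/\eps)$ the two images are order-one apart for \emph{typical} $q$, not for an exceptional set of measure $O(\eps)$ times a polynomial. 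The paper's construction transports $\phi$ along genuine homogeneous central-stable manifolds precisely to secure the property $dist(\Phi^t r,\Phi^t\pi(r))\le C_\pi\eps$ for \emph{all} $t\ge0$ (Proposition~\ref{prop:integral_compare_on_H1-U1}); the price is that such manifolds cross the tube only over a Cantor-like set $H\subset W^u$ with complement of measure $O(\eps)$ (Theorem~\ref{thm:long_cs_manifolds}, Lemma~\ref{lem:W-minus-H-small}), and Sections~\ref{sec:csfoliation}--\ref{sec:smoothing} are devoted to repairing the resulting loss of continuity (cutting down to $U_1$, then extending by Lemma~\ref{lem:Holder-extension} to get a truly H\"older $G$). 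Your ``deliberate'' choice of a continuous but non-invariant foliation discards exactly the invariance the error estimate needs.

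There is a second, independent gap: even on your good set you bound $|F(\Phi^t h_\xi q)-F(\Phi^t q)|$ \emph{pointwise} by $C\,var_{\alpha_F}F\,\eps^{\alpha_F}$. Generalized H\"older continuity (Definition~\ref{def:GHC}) gives no pointwise bound — only $\int_M osc_r F\,\diff\mu\le|F|_{\alpha_F;gH}\,r^{\alpha_F}$ — and your integral is taken against the singular measure $\phi\,\diff m_{W^u}$ pushed by $\Phi^t$, so the ``small on $\mu$-average'' information cannot be used directly (doing so would be circular, being a statement of the same kind as the theorem itself). The paper resolves this by bounding the smearing error by $\int_M[(osc_{C_\pi\eps}F)\circ\Phi^t]\,G\,\diff\mu$ and applying the extended correlation bound (Theorem~\ref{thm:BDL_extended}) a \emph{second} time to this term, which requires the generalized H\"older bound on $osc_{C_\pi\eps}F$ from Theorem~\ref{thm:osc_f-genHolder} and Corollary~\ref{cor:osc_F-genholder-norm}; this second application is absent from your plan and cannot be replaced by the pointwise estimate. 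A smaller remark: you do not need the anisotropic Banach space of \cite{BDL16} at all — the paper points out that standard pairs do not lie in it, constructs instead a genuinely H\"older density $G$ with $\|G\|_{\alpha_G;H}\le C\eps^{-3}\|\phi\|_{\Theta_\phi;dH}$ (Proposition~\ref{prop:G_Holder}), and invokes only the H\"older-observable statement of Theorem~\ref{thm:BDL}.
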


\subsection{Some words about the proof}\label{sec:about-the-proof}

As mentioned before,
{we will use the recent result of
Baladi, Demers and Liverani on the
exponential correlation bound for H\"older observables.} In fact, Corollary 1.3 (of Theorem 1.2) in their work \cite{BDL16}) says the following:

\begin{theorem}\label{thm:BDL}
Consider a billiard like the one introduced above. Assume $0 < \alpha \le 1$. Then there exist $a' = a'(Q,\alpha) > 0$ and $\cC_{BDL}=\cC_{BDL}(Q,\alpha) < \infty$ such that for any
$F,G:M\to \IR$ $\alpha$-H\"older functions with $\int_M F\diff\mu=0$ and any $t\ge 0$ one has
 \[\left| \int_M (F\circ \Phi^t) G \diff \mu  \right| \le \cC_{BDL}||F||_{\alpha;H}   ||G||_{\alpha;H} e^{- a't}. \]
Here $||.||_{\alpha;H}$ denotes the $\alpha$-H\"older norm defined in (\ref{eq:Holder-norm}).
(BDL stands for Baladi-Demers-Liverani.)
\end{theorem}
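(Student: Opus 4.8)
The statement is, as the authors note, essentially Corollary 1.3 of Theorem 1.2 in \cite{BDL16}, so a proof proposal here amounts to recalling the architecture of that argument and extracting the H\"older-specialized bound; the plan has three stages. First I would set up a scale of anisotropic Banach spaces, schematically $\mathcal{B} \supset \mathcal{B}_w$, of distributions on $M$, obtained by testing against smooth functions carried by short unstable curves and controlling simultaneously a strong unstable norm (integrals along unstable leaves), a strong stable norm (modulus of continuity in the stable and flow directions) and a weak norm, with the billiard singularity set $\mathcal{S}$ and its images handled via homogeneity-strip (logarithmic) decompositions. The transfer operators $\mathcal{L}_t$ of the flow (push-forward of measures, dual to $G\mapsto G\circ\Phi^t$) act boundedly on this scale; using uniform hyperbolicity of the dispersing billiard together with Assumptions \ref{asm:dispersing}, \ref{asm:finite-horizon}, \ref{asm:no-corner} to bound the complexity of $\mathcal{S}$, one proves Lasota--Yorke inequalities and hence quasi-compactness of the resolvent $R(z) = \int_0^{\infty} e^{-zt}\mathcal{L}_t\,\diff t$ of the generator on a half-plane $\{\Re z > -c_0\}$, with $\mu$ the unique invariant element and $z=0$ a simple isolated eigenvalue.

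The crucial second stage is to upgrade quasi-compactness to a genuine spectral gap for the \emph{flow}, i.e. to show that $R(z)$ extends meromorphically to $\{\Re z \ge -a'\}$ with $z=0$ its only pole and with at most polynomial growth of $\|R(z)\|$ as $|\Im z|\to\infty$. This is a Dolgopyat-type estimate: for the twisted operators $\mathcal{L}_z$ with $z = a + i b$ and $|b|$ large, one must extract oscillatory cancellation from the phases $e^{-ib\tau_n}$ built from the free-flight function $\tau$, using its uniform non-integrability along unstable curves (a consequence of strict convexity of the scatterers). Concretely one proves a contraction $\|\mathcal{L}_z^{N} v\|_w \le \theta\,\|v\|$ on a suitable cone of densities for $N\sim\log|b|$ and $\theta<1$ uniform in $b$; the delicate point is propagating the cone condition and the cancellation through the unavoidable fragmentation of unstable curves at $\mathcal{S}$ and its iterates.

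Given the resolvent bound in the strip, a contour-shift / Paley--Wiener inversion (as carried out in \cite{BDL16}) yields, for $F,G$ in $\mathcal{B}$ and its predual with $\int_M F\,\diff\mu = 0$, the estimate $\left|\int_M (F\circ\Phi^t)G\,\diff\mu\right| \le C\,\|F\|_{\mathcal B}\,\|G\|_{\mathcal B}\,e^{-a't}$ for any $a'$ below the strip width. To reach the stated H\"older form it then suffices to check that an $\alpha$-H\"older function embeds continuously into the relevant spaces, $\|F\|_{\mathcal B} \le c(Q,\alpha)\,\|F\|_{\alpha;H}$: viewed as a density against $\mu$, a H\"older $F$ has bounded integrals along unstable leaves and H\"older-controlled oscillation in the stable and flow directions, and its regularity comfortably beats the at-most-logarithmic loss from the homogeneity-strip cutting and the limited smoothness of the anisotropic test functions. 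Both $c(Q,\alpha)$ and the admissible rate $a'$ depend on $\alpha$ because the Banach space realizing the gap must be chosen compatibly with the available regularity; collecting these gives $\mathcal{C}_{BDL}=\mathcal{C}_{BDL}(Q,\alpha)$ and $a'=a'(Q,\alpha)$ as claimed.

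The genuinely hard part is the second stage — the Dolgopyat oscillatory-cancellation estimate in the presence of billiard singularities — which is exactly what makes \cite{BDL16} a substantial paper; the first and third stages are by now fairly standard technology for finite-horizon dispersing billiards. In the economy of the present paper, of course, Theorem \ref{thm:BDL} is invoked as a black box, its proof being precisely Corollary 1.3 of \cite{BDL16}.
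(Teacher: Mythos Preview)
The paper does not prove Theorem~\ref{thm:BDL} at all: it is quoted verbatim as Corollary~1.3 of \cite{BDL16} and used as a black box throughout. You correctly identify this, and your final paragraph says so explicitly. Your three-stage sketch of the \cite{BDL16} architecture (anisotropic Banach spaces with Lasota--Yorke inequalities, the Dolgopyat oscillatory-cancellation estimate to rule out peripheral spectrum off zero, and the H\"older embedding) is a fair high-level summary of that paper, but none of it appears in the present paper, so there is nothing to compare against.
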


Once a bound for correlations under the initial Liouville measure for H\"older observables is known, then, for obtaining a similar one for less regular objects (standard pairs vs. generalized H\"older observables) it is a natural idea -- as this was also sketched in \cite{Ch07} -- to smear both the initial measure and the observable to improve regularity, and then use Theorem~\ref{thm:BDL}.
This is exactly what we will do.

We first generalize Theorem~\ref{thm:BDL} to the case when $F$ is only generalized H\"older continuous. This is done in Section~\ref{sec:genholder-EDC} with a standard approximation argument. The essence is that  in $L^1(\mu)$ a generalized H\"older continuous function can easily be approximated by a truly H\"older one. This is sometimes done by the method of mollification (see, for instance, \cite{E15}).

The second step -- namely, replacing $G\diff\mu$ with a measure given by a standard pair, is by far less trivial. No $L^1$ approximation makes sense, so we have to carefully make use of the regularity of the integrand. However, the integrand $F\circ \Phi^t$ has very bad regularity properties when $t$ is big -- except in the central-stable directions. Consequently the mollification (in other words the smearing of the density component of the standard pair) should act along the central-stable direction. However, not every point has a long enough central-stable manifold. Indeed, for any unstable curve $W^u$ and for any neighbourhood $U$ of $W^u$, those points of the unstable curve, along which smooth pieces of central-stable manifolds fully cross $U$ (i. e. have no boundary points inside $U$), form a Cantor-like subset only. Similar is the situation with the union of the aforementioned smooth pieces inside the given neighbourhood. The bulk of the paper fights exactly this difficulty.
(In fact, in case of geodesic flows, this idea of smearing works much more simply, since this difficulty does not arise there and e.g. \cite{DL11} completes the proof in half a page.)

We note that doing the two steps the other way round -- i.e. first allowing standard pairs instead of $G$ and then allowing $F$ to be generalized H\"older instead of H\"older, would not work (or at least not {naively}). Indeed, in the second step, approximating a generalized H\"older $F$ with a H\"older one in $L^1(\mu)$ does not help: their integrals w.r.t. the singular measure can be very different.

Finally we note that the title of our work follows the terminology of \cite{ChD09A} where correlation bounds, in case when the initial measure is determined by a standard pair, are called {\it equidistribution properties}.

\section{Preliminaries}\label{sec:prelim}

Below we suppose that Assumptions 1.1, 1.2 and 1.3 hold.

\subsection{Singularities, homogeneity layers, central-stable manifolds}

As well-known, although dispersing billiards are hyperbolic dynamical systems, they possess singularities that necessarily cut the invariant manifolds thus making the mathematical treatment harder. In our case the so-called primary singularities {correspond to grazing collisions}. For the billiard ball map they are
\begin{align*}
S_0 =& \{x = (q, v) \in \mathcal M | \langle n(q),v \rangle = 0\} \\
S_n =& \{x = (q, v) \in \mathcal M | T^{-n}x \in S_0\}\qquad  n \in \IZ
\end{align*}

These set are submanifolds of $\mathcal M$ which can only terminate on each other or on the boundary of $\mathcal M$:
\begin{lemma}\label{lem:sing-continuation}
$\partial S_n \subset \bigcup_{0\le k<n} S_k$.
\end{lemma}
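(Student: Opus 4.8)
The plan is to reduce the general statement to the single case $n=1$, exploiting that the iterates of $T$ are diffeomorphisms away from the singularity sets. Recall first that, since there are no corners (Assumption~\ref{asm:no-corner}), $S_0=\partial\mathcal M=\{\varrho=\pm\tfrac{\pi}{2}\}$ is a finite disjoint union of $C^2$ closed curves, and that unwinding the definition gives $x\in S_n \iff T^{-1}x\in S_{n-1}\iff\cdots\iff T^{-n}x\in S_0$, i.e. $S_n=T^nS_0$. For $n=0$ the assertion reads $\partial S_0=\emptyset$, which is the fact just stated, so from now on take $n\ge1$; the finite horizon assumption (Assumption~\ref{asm:finite-horizon}) guarantees that each $S_n$ is a finite union of compact $C^2$ arcs, so $\partial S_n$ is a well-defined finite set of endpoints.

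Next I would invoke the standard regularity of the billiard map (see \cite{ChM06}): for every $m\ge0$, the map $T^m$ is a $C^2$ diffeomorphism of $\mathcal M\setminus\bigcup_{j=0}^m S_{-j}$ onto $\mathcal M\setminus\bigcup_{j=0}^m S_j$. Put $\mathcal W_n:=\mathcal M\setminus\bigcup_{k=0}^{n-1}S_k$ and $\Omega_n:=\mathcal M\setminus\bigcup_{j=0}^{n-1}S_{-j}$; then $T^{\,n-1}$ restricts to a $C^2$ diffeomorphism $\Omega_n\to\mathcal W_n$, and crucially $\Omega_n\cap S_0=\emptyset$ (the $j=0$ term has been removed). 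A direct check with the definition gives
\[
  S_n\cap\mathcal W_n \;=\; T^{\,n-1}\bigl(S_1\cap\Omega_n\bigr),
\]
since $x\in S_n\iff T^{-(n-1)}x\in TS_0=S_1$, and $T^{-(n-1)}$ carries $\mathcal W_n$ onto $\Omega_n$. Because $\Omega_n\cap S_0=\emptyset$ we have $S_1\cap\Omega_n\subseteq S_1\setminus S_0$. Hence, once one knows that $S_1\setminus S_0$ is a $C^2$ one-dimensional submanifold of the open set $\mathcal M\setminus S_0$ \emph{without boundary}, its image $S_n\cap\mathcal W_n$ under the diffeomorphism $T^{\,n-1}$ is a boundaryless $C^2$ curve inside the open set $\mathcal W_n$; consequently $S_n$ can have no endpoint in $\mathcal W_n$, i.e. $\partial S_n\subseteq\mathcal M\setminus\mathcal W_n=\bigcup_{0\le k<n}S_k$, which is the claim.

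It therefore remains to treat the base case: $S_1\setminus S_0$ is a boundaryless $C^2$ curve in $\mathcal M\setminus S_0$. Here I would argue geometrically. A point of $S_1$ is a collision $(q,\varrho)$ whose preceding collision was grazing, so $S_1=T(S_0)$, and to describe it one follows the trajectory leaving a boundary point $q_0$ tangentially. Parametrizing $q_0$ by arclength along a scatterer $\Gamma_i$ — which, being strictly convex by Assumption~\ref{asm:dispersing}, has nonvanishing curvature, so its tangent direction genuinely rotates — and fixing one of the two tangent directions, the outgoing ray is the corresponding tangent line; its first intersection with $\partial Q$, together with the incidence angle there, traces out a $C^2$ curve. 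This curve stops being a $C^2$ arc without endpoint only where either (a) the ray becomes tangent to the scatterer it first meets, in which case the image point lies in $S_0$, or (b) the first scatterer hit changes, which — no corners! — can happen only when the ray is simultaneously tangent to the old and the new scatterer, again placing the relevant collision in $S_0$. Thus every endpoint of $S_1\setminus S_0$ lies in $S_0$, i.e. $S_1\setminus S_0$ has no boundary in $\mathcal M\setminus S_0$.

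The main obstacle is exactly this base case, the local analysis of trajectories tangent to a scatterer, and it is precisely where Assumptions~\ref{asm:dispersing} (strict convexity, so that the only degeneration of a grazing family is re-grazing) and \ref{asm:no-corner} (so that the first-hit scatterer can change only through a tangency, never at a corner) are used; the finite-horizon assumption is used to keep $S_n$ a finite union of compact arcs. Everything after the base case is the routine bookkeeping with the diffeomorphism property of $T^{\,n-1}$ spelled out above; alternatively, one can phrase the same argument as a plain induction on $n$, pushing $S_{n-1}$ forward by $T$ and tracking where its endpoints and its intersections with $S_0\cup S_{-1}$ can go.
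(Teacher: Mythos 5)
Your reduction of the general case to $n=1$ — the identity $S_n\cap\mathcal W_n=T^{\,n-1}(S_1\cap\Omega_n)$ together with the fact that $T^{\,n-1}$ is a diffeomorphism from $\Omega_n$ onto $\mathcal W_n$ — is sound, and it is indeed the routine part (note the paper gives no proof at all here, deferring to Sinai's analysis of singularity curves in \cite{S70}). The genuine gap is in the base case, exactly where you say the work is. It is \emph{not} true that $S_1\setminus S_0$ is a boundaryless $C^2$ submanifold of $\cM\setminus S_0$, nor that every endpoint of a smooth arc of $S_1$ lies in $S_0$. The obstruction is a trajectory with two consecutive tangential collisions, which a typical finite-horizon table has (take a common tangent line of two scatterers $B$, $C$ whose tangency segment is unobstructed and extend it backwards until it hits a third scatterer $A$... here run time-reversed): let a trajectory graze $C$ at $x_0^*$, then graze $B$ at $y^*$, then hit $A$ transversally at $p:=Ty^*$. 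Parametrizing $S_1$ by the grazing point on $C$ as you do, on the side of $x_0^*$ where the tangent ray hits $B$ the corresponding branch of $S_1$ does terminate at $y^*\in S_0$; but on the side where the ray \emph{misses} $B$ the branch lies on $A$'s component and terminates at $p$, and $p=Ty^*\in TS_0=S_1$ is a transversal collision, so $p\in S_1\setminus S_0$. Your case (a) has the same oversight: when the first-hit scatterer is approached tangentially, the one-sided limit from the ``miss'' side is again the $T$-image of the grazing point, a point of $S_1\setminus S_0$ (and, incidentally, the transition in case (b) generically involves tangency with only one of the two scatterers, not both). Near such a point $S_1\setminus S_0$ is a T-junction — one arc terminating at an interior point of another — hence not a manifold; your geometric analysis tracks only one of the two one-sided limits at each transition.

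This also shows that with your reading of $\partial S_n$ as ``the finite set of endpoints of the compact arcs'', the statement you set out to prove is already false for $n=1$. The lemma is correct when $\partial S_n$ is read as $\overline{S_n}\setminus S_n$, i.e.\ the limit points of $S_n$ not belonging to $S_n$ itself (arcs of $S_n$ may perfectly well end on other arcs of $S_n$, consistent with the paper's gloss that the singularity sets ``terminate on each other or on the boundary''). With that reading your scaffolding survives: the right base statement is the weaker relative-closedness assertion $\overline{S_1}\subset S_0\cup S_1$, which your tangency analysis essentially yields once the ``miss''-side limits are attributed to $S_1$ rather than to $S_0$, and closedness is transported by the homeomorphism $T^{\,n-1}:\Omega_n\to\mathcal W_n$ exactly as in your reduction. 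In fact, for this reading one can bypass the curve analysis altogether: if $x\notin\bigcup_{k=0}^{n}S_k$ then none of the collisions $x,T^{-1}x,\dots,T^{-n}x$ is tangential, so $T^{-n}$ is continuous at $x$; hence if $x_j\in S_n$, $x_j\to x$ and $x\notin\bigcup_{0\le k<n}S_k\cup S_n$, then $T^{-n}x_j\to T^{-n}x\in S_0$ (using compactness of $S_0$), forcing $x\in S_n$, a contradiction, i.e.\ $\overline{S_n}\setminus S_n\subset\bigcup_{0\le k<n}S_k$. The genuinely nontrivial facts — that $S_n$ consists of finitely many compact smooth curves with the stated termination structure — are precisely what the paper outsources to \cite{S70} (see also \cite{ChM06}), and your proposal does not supply them either.
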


A detailed analysis of singularity curves also providing the proof of the lemma was provided in Sinai's classical paper \cite{S70}.

{In order to control the unbounded expansion in the vicinity of the tangencial collisions, it is
useful to introduce secondary singularities} (cf. \cite{ChM06}, chapter 5, in particular Definitions
5.8 and 5.11). These partition neighbourhoods of primary singularities into so-called homogeneity layers. As a consequence one uses local manifolds (unstable, stable or central-stable ones) as pieces contained in a single layer and call them local homogeneous - unstable, stable or central-stable - manifolds. (For instance, local central-stable manifolds (and their holonomy maps, see Section \ref{sec:holonomy}) will play a central role in our argument.)
Specifically, the phase space $\cM$ is partitioned into homogeneity layers
\begin{eqnarray}
\nonumber \mathbb{H}_k &=& \left\{(q,\varrho)\in \cM \,\mid \, \frac{\pi}{2} -\frac{1}{k^2}\le \varrho< \frac{\pi}{2} -\frac{1}{(k+1)^2}\right\}\ (k>k_0), \\
\nonumber \mathbb{H}_{-k} &=& \left\{(q,\varrho)\in \cM \,\mid \, \frac{\pi}{2} -\frac{1}{k^2}\le -\varrho< \frac{\pi}{2} -\frac{1}{(k+1)^2}\right\}\ (k>k_0), \\
\nonumber \mathbb{H}_{0}&=& \left\{(q,\varrho)\in \cM \,\mid \, |\varrho|\le \frac{\pi}{2} -\frac{1}{(k_0)^2}\right\}.
\end{eqnarray}
where $k_0 > 0$ is an appropriately fixed integer.
Just like in analogous constructions for the billiard map
(see for instance Appendix A in \cite{ChD09A}), we have to introduce the homogeneous manifolds (see Definition~\ref{def:homcsman} below) to guarantee the
required regularity properties.

As said, for  obtaining appropriate distortion control, the boundaries of the homogeneity layers are regarded as artificial (or secondary) singularities.
A \textit{homogeneous local stable manifold} for the map $T:\cM\to\cM$ is a local stable manifold $\gamma$ such that for any $n\ge 0$
$T^n\gamma$ belongs to a single homogeneity strip. Here we define a possible extension for the billiard flow.

\begin{definition}\label{def:homcsman}
A homogeneous local central-stable manifold for the billiard flow $\Phi^t:M\to M$ is a local central-stable manifold $W\subset M$ such that
 $\gamma=\Pi W$ is a homogeneous local stable manifold for the map $T:\cM\to \cM$.
\end{definition}

\subsection{Global constants and regularity parameters}

In the billiards literature, numbers that depend on the billiard table $Q$ only, are often called ``global constants''  and are simply denoted by $C$. Their precise value is usually not important and not studied. However, in some cases it is good to know if such a ``constant'' depends on $Q$ only through some regularity parameters, like bounds on free flight time, scatterer curvature, etc. We will keep track of such dependence. Moreover, the notion of ``unstable curve'' plays a key role in our study. The definition of this notion includes a number of arbitrary choices of regularity constants. These could be chosen as global constants -- i.e. depending on $Q$ only, -- but for the sake of applicability, we will keep track of these choices as well. As a result, what we will call ``constants'' actually depend on both $Q$ and a number of further input parameters. The precise form of the dependence is unimportant, but we record what they depend on.

\subsubsection{Regularity parameters of the billiard table}\label{sec:regpar}

Let $\tau_{min}>0$ and $\tau_{max}>\infty$ be lower and upper bounds for the free flight. Let $\kappa_{min}>0$ and $\kappa_{max}<\infty$ be lower and upper bounds for the curvature of the scatterers. Let $\kappa'_{max}<\infty$ be an upper bound for the derivative of the curvature (as a function of the base point with respect to the arc length parametrization). Let $K_{max}<\infty$ be an upper bound for the number of scatterers. Let $A_{min}$, $A_{max}$ be lower and upper bounds for the area of $Q$ (NB: we think of situations where our result is applied to a family of billiards and we need uniform bounds). Let $d_Q$ be an upper bound for the diameter of the configuration space.
The data
\begin{equation}\label{eq:RQ}
\cR_Q:=\{\tau_{min},\tau_{max},\kappa_{min},\kappa_{max},\kappa'_{max},K_{max},A_{min},A_{max},d_Q\}
\end{equation}
describe the regularity of the billiard table $Q$ for our purposes. In most of our calculations, constants that depend only on the billiard table $Q$, will actually depend on $Q$ only through these regularity parameters. Note that the bounds $\tau_{min},\tau_{max},\kappa_{min},\kappa_{max},\kappa'_{max},K_{max},d_Q$ need not be sharp, so the estimates we give are uniform for the class of billiard tables satisfying the same bounds.

\begin{remark}
So far as we understand the proof of Theorem \ref{thm:BDL} in \cite{BDL16}, in its present form, does not provide that $a'=a'(Q,\alpha)$ and $\cC_{BDL}=\cC_{BDL}(Q,\alpha)$ only depend on $Q$ through $\cR_Q$. However, we believe these are true and it would also be desirable to know them. This is discussed in more detail in Section~\ref{sec:extensions}.
\end{remark}

\subsubsection{Hyperbolicity, cone fields and regularity parameters of $u$-curves}

When studying hyperbolicity of planar dispersing billiards is discrete time, there is a natural choice of stable and unstable cone fields:
$\cone_u(x):=\{(\diff q,\diff \phi)\in T_x \cM\,|\, \diff q\diff \phi>0\}\cup{0}$ for the unstable ones and
$\cone_s(x):=\{(\diff q,\diff \phi)\in T_x \cM\,|\, \diff q\diff \phi<0\}\cup{0}$ for the stable ones. These cone fields are strictly invariant, meaning
that $DT(\cone_u(x))\subset \mathop{int}(\cone_u(Tx))\cap \{ 0\}$ and $DT^{-1}(\cone_s(x))\subset \mathop{int}(\cone_s(T^{-1}x))\cap \{ 0\}$. However, for applications it is often convenient to use smaller cone fields, e.g. by applying the (derivative of the) dynamics to the above. We will stick to the simplest definition above, and use this unstable cone field to define $u$-curves below. Using a  smaller cone field would result in a more restrictive definition of $u$-curves, so our result still applies.

A discrete time $u$-curve or \emph{unstable curve}, is defined in \cite{ChM06}:

\begin{definition}\label{def:discrete_time_u-curve}
 A discrete time $u$-curve is a twice differentiable curve $w\subset \cM$ such that its tangent vector is in the unstable cone $\cone_u(x)$ at any point $x$ belonging to $w$, and its curvature is at most some $B_{max}<\infty$ everywhere.
\end{definition}

$B_{max}=B_{max}(\cR_Q)$ is chosen big enough to make sure that $u$-curves evolve into $u$-curves under the billiard map $T$, apart from being cut by singularities. However, one may want to choose $B_{max}$ bigger than what is necessary for this invariance. We allow that, and record when our ``constants'' may depend on $B_{max}$.

To define a $u$-curve in continuous time, we take a discrete time $u$-curve $w\subset \cM$ and let its points move with the flow for some place-dependent time:

\begin{definition}\label{def:u-curve}
A $u$-curve is a curve $W\subset M$ obtained as
\[W:=\{\Phi^{t(x)}(x)\,|\, x\in w\},\]
where $w$ is a discrete time $u$-curve and the flight time function $t:w\to R^+$ is chosen so that the following regularity properties are satisfied:
\begin{enumerate}[(1)]
 \item $W\subset M$ is also a twice differentiable curve.
 \item The angle of the tangent vector of $W$ with the flow direction is at least some $\alpha_{min}>0$.
 \item The curvature of $W$ is at most some $0<\Gamma_{max}<\infty$ everywhere.
 \item The length of $W$ (to be denoted as $L=L(W^u)$) is at most some $L_{max}\le\frac{1}{100\Gamma_{max}}$.
\end{enumerate}
 \end{definition}
We note that the notion of $u$-curve for the flow in \cite{DN16B} {slightly} differs from ours. Our notion of $u$-curve depends on the data
\[\{B_{max},\alpha_{min},\Gamma_{max},L_{max}\}.\]

For several reasons, we restrict to $u$-curves that are sufficiently far from the scatterers as expressed in the following definition.
For example, the proof of certain regularity properties (discussed in Section~\ref{sec:holonomy}) is easier this way.

\begin{definition}\label{def:gooducurve}
 Fix some $\eps_0\ge 10 L_{max}$. Given a $u$-curve $W^u$, let $dist(W^u,\partial M)$ be its distance from the boundary of $M$. We say that $W^u$ is a good $u$-curve if $dist(W^u,\partial M)>\eps_0$.
\end{definition}

So our notion of good $u$-curve depends on the data
\begin{equation}\label{eq:Ru}
\cR_u:=\{B_{max},\alpha_{min},\Gamma_{max},L_{max},\eps_0\},
\end{equation}
and it is assumed that $10 L_{max}\le\eps_0$.

\subsubsection{Convention on the notation for constants}\label{sec:const-convention}

What we call  ``constants'' or ``global constants'' in this paper, are numbers that depend on $Q$ and $\cR_u$ only. We usually denote these by $C$, often with an index. This index will often refer to the role of the constant, like $C_h$ to the regularity constant of the holonomy in Theorem~\ref{thm:holonomy_dynholder}, of $C_{G;u}$ to the regularity constant of our function $G$ along $u$-curves in Proposition~\ref{prop:G_0-dynHolder}. In other cases, we just use $C_1,C_2,\dots$ to denote different numbers. We also use the unindexed $C$, which may denote different constants at each appearance -- even within a line. In all cases, anything denoted by $C$ depends on $(Q,\cR_u)$ only. Also, some global constants may be denoted by other letters for reasons of tradition, like $\lambda$ for the hyperbolic expansion factor in Theorem~\ref{thm:hyperbolicity}.

Actually, all the global constants that appear in this paper are known to depend on $Q$ only through the regularity parameters $\cR_Q$. (Unfortunately, this is not {known} for the coefficients of the correlation decay estimates denoted by $\cC$ and $a$, see later.)
Also, in many cases, a global constant is known to depend only on $Q$ and not on $\cR_u$. We keep track of these dependences, with future applications in mind. As mentioned above, equidistribution theorems like ours are sometimes applied to a class of models simultaneously, see e.g. \cite{ChD09A}. Then it is important to know if the same bounds hold for all models in the class.

We also note that in the literature of correlation decay, it is common to use the word ``constant'' and the notation $C$ for something which is not a global constant in our sense. A typical example is an exponential correlation decay statement for H\"older observables, of the form
\[Cov(F,G,t)\le C ||F||_\alpha ||G||_\alpha e^{-at}.\]
Here the ``constant'' $C$ does not depend on $F$, $G$ and $t$, but it does depend on $\alpha$ -- i.e. the class of regularity of the observables -- so it is not a global constant.
In this paper, such quantities will not be denoted by $C$, but by $\cC$.

\subsubsection{Hyperbolic properties}\label{sec:hyp-prop}

Below we state three theorems on the hyperbolic properties of the billiard flow. Our main reference for these statements is \cite{ChM06}. Although not formulated exactly as in our Theorems,
\cite{ChM06} contains some estimates from which these properties immediately follow. Below we point out these connections. Note also that similar properties are
discussed in \cite{BDL16}, too. Yet, the discussion of \cite{BDL16} does not literally apply, as our notion of u-curve is different from that of \cite{BDL16} since we allow
variations in the flow direction. Nonetheless, our $u$-curves are uniformly transversal to the flow direction,  see item (2) in Definition~\ref{def:u-curve}.
Accordingly, all the constants that appear in the statements below depend on the class of u-curves $\cR_u$, in particular on the choice of the constant
$\alpha_{min}$.

\begin{theorem}[Uniform hyperbolicity]\label{thm:hyperbolicity}
There are constants $\lambda=\lambda(\cR_Q,\cR_u)>1$ and $c_{hyp}=c_{hyp}(\cR_Q,\cR_u)>0$ such that if $W$ is a $u$-curve, $x,y\in W$ and $\Phi^t$ is smooth on $W$, then
\[dist_{\Phi^t W}(\Phi^t x, \Phi^t y)\ge c_{hyp} \lambda^t dist_{W}(x, y).\]
\end{theorem}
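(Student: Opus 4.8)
The plan is to deduce the flow estimate from the classical uniform hyperbolicity of the billiard ball map $T:\cM\to\cM$, keeping careful track of the (uniformly bounded) losses incurred when passing from the map to the flow. \textbf{Step 1 (discrete version).} First I would record the analogue for $T$, which is contained in \cite{ChM06}, Ch.~4: there are $\Lambda=\Lambda(\cR_Q)>1$ and $c_0=c_0(\cR_Q)>0$ such that for every discrete-time $u$-curve $w$, every $n\ge1$ with $T^n$ smooth on $w$, and all $x,y\in w$ one has $dist_{T^nw}(T^nx,T^ny)\ge c_0\Lambda^n\,dist_w(x,y)$. This comes from the uniform pointwise lower bound $\|D_zT^nv\|\ge c_0\Lambda^n\|v\|$ for every $v\in\cone_u(z)$ (near grazing collisions the expansion is only \emph{large}, so the lower bound is uniform and needs no homogeneity), integrated along the sub-arc of $w$ joining $x$ to $y$.

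\textbf{Step 2 (flow $u$-curves versus discrete ones).} Writing $W=\{\Phi^{t(x)}(x)\mid x\in w\}$ as in Definition~\ref{def:u-curve}, after cutting $W$ into a bounded number of pieces I may assume $0\le t(x)<\tau(x)$, so that $\Pi W=w$ is a discrete-time $u$-curve (here clauses (1)--(4), in particular $L_{max}\le\frac1{100\Gamma_{max}}$, are what force $\Pi W$ to have curvature at most $B_{max}$). The desynchronization map $\sigma\colon w\to W$, $\sigma(x)=\Phi^{t(x)}(x)$, is then a diffeomorphism which is bi-Lipschitz with constants depending only on $(\cR_Q,\cR_u)$: $C_1^{-1}dist_w(x,y)\le dist_W(\sigma x,\sigma y)\le C_1 dist_w(x,y)$. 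Indeed the flow is an isometry in the flow direction, the free-flight bounds control $t(\cdot)$ and its derivative, and the transversality clause (2) (angle $\ge\alpha_{min}$) together with (3)--(4) keep the arclength along $W$ comparable to the arclength along its projection to $\cM$. Since $D\Phi^t$ preserves the (central-)unstable cone, which is itself transversal to the flow direction by a uniform angle, the same kind of comparison — with a constant again in $(\cR_Q,\cR_u)$ — holds between $\Phi^tW$ and the discrete $u$-curve $\Pi(\Phi^tW)$.

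\textbf{Step 3 (collision count and conclusion).} Fix $t\ge0$, a $u$-curve $W$ with $\Phi^t$ smooth on $W$, and $x,y\in W$. Smoothness of $\Phi^t$ on $W$ forbids tangential collisions along the trajectory segments $\{\Phi^sz\mid 0\le s\le t,\ z\in W\}$ and forces all of them to hit the \emph{same} sequence of scatterers; hence the number of collisions of $z$ in time $t$ is piecewise constant on $W$ and takes at most $\tau_{max}/\tau_{min}+1$ values. After one further bounded subdivision I may assume it equals a fixed $m$ on $W$, so that $\Phi^tz=\Phi^{s(z)}\bigl(T^m(\Pi z)\bigr)$ with $s(z)\in[0,\tau_{max})$ for all $z\in W$, and $m\ge t/\tau_{max}-C_2$ with $C_2=C_2(\cR_Q)$ (consecutive collisions are at most $\tau_{max}$ apart). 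Now chain Step~2 (for $\Phi^tW$), Step~1 (for $\Pi W\mapsto T^m(\Pi W)=\Pi(\Phi^tW)$), and Step~2 (for $W$):
\begin{align*}
dist_{\Phi^tW}(\Phi^t x,\Phi^t y)
&\ge C_1^{-1}\,dist_{T^m(\Pi W)}(T^m\Pi x,T^m\Pi y)\\
&\ge C_1^{-1}c_0\,\Lambda^m\,dist_{\Pi W}(\Pi x,\Pi y)
\ \ge\ C_1^{-2}c_0\,\Lambda^m\,dist_W(x,y).
\end{align*}
Putting $\lambda:=\Lambda^{1/\tau_{max}}>1$ and absorbing $\Lambda^{-C_2}$, $C_1^{-2}c_0$, and the finitely many subdivision losses into one $c_{hyp}=c_{hyp}(\cR_Q,\cR_u)>0$ yields $dist_{\Phi^tW}(\Phi^t x,\Phi^t y)\ge c_{hyp}\lambda^t dist_W(x,y)$, as claimed.

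\textbf{Main obstacle.} The delicate part is Step~3 together with the bi-Lipschitz claim of Step~2: one must match the desynchronized curve $\Phi^tW$ cleanly to an honest iterate $T^m$ of a discrete $u$-curve, control the leftover flight times $s(z)$ and the variation of the collision count over $W$, and — most importantly — verify that the comparison constant $C_1$ genuinely depends on $(\cR_Q,\cR_u)$ only. This last point is where the transversality constant $\alpha_{min}$ and the length restriction $L_{max}\le\frac1{100\Gamma_{max}}$ enter essentially, as they prevent $W$ (and its flow image) from ``turning'' relative to the flow direction and relative to its projection to $\cM$.
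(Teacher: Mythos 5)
Your proposal is correct in substance, but it takes a more roundabout route than the paper. The paper's own argument also starts from the discrete-time statement (Corollary 4.20 and (4.19) in \cite{ChM06}), but then works directly in continuous time: a tangent vector of a $u$-curve is split into its flow component and a dispersing-wave-front component, the front component is uniformly comparable to the whole vector thanks to the angle bound $\alpha_{min}$ in Definition~\ref{def:u-curve}, and the continuous-time expansion of dispersing fronts ((3.35) and (4.9) in \cite{ChM06}; see also Lemma 3.3 of \cite{BDL16}) immediately gives exponential growth in $t$, with the rate tied to $\tau_{max}$. This avoids your whole apparatus of projecting to $\cM$, counting collisions, and matching $\Phi^t W$ with an honest iterate $T^m(\Pi W)$; what your scheme buys in return is that it quotes only the standard map-level estimate and makes the $\lambda=\Lambda^{1/\tau_{max}}$ bookkeeping explicit.

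Two soft spots you should repair if you keep your route. First, the bi-Lipschitz comparison of Step 2 is the genuinely delicate point, and your stated reasons (free-flight bounds controlling $t(\cdot)$ and its derivative) are not the ones that make it work near grazing collisions: there the projection to $\cM$ stretches the configurational coordinate by a factor $1/\cos\varphi$, which is unbounded, and the comparison survives only because for unstable fronts the angular component dominates ($\diff\omega=B_+\diff\xi$ with $B_+\cos\varphi$ bounded below by the curvature bound), so that $\diff r$ is controlled by $\diff\omega$; this is exactly the difficulty the paper flags in Section~\ref{sec:Ahom} and sidesteps with transparent walls for the holonomy statements, and your constant $C_1$ needs this argument, not just $\alpha_{min}$ and $L_{max}$. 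Second, your claim in Step 3 that the collision count takes at most $\tau_{max}/\tau_{min}+1$ values on $W$ is unjustified (smoothness of $\Phi^t$ on $W$ does not prevent collision counts of different points from drifting apart as $t$ grows); fortunately this does not matter, because the desired inequality is additive in arclength: a uniform per-piece estimate summed over consecutive subdivision points gives the global one with no loss, so no factor needs to be ``absorbed'' and the number of pieces is irrelevant. With these two points fixed, your argument is a valid alternative proof with the claimed dependence of $\lambda$ and $c_{hyp}$ on $(\cR_Q,\cR_u)$.
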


\begin{proof} In this whole section we assume that $F:M\to \IR$ is generalized $\alpha_F$-Hölder continuous (cf. Definition \ref{def:GHC}) and that $\int_M F\diff\mu =0$.
The analogous property for the billiard map is stated in Corollary 4.20 and Formula (4.19) in \cite{ChM06}. As formulated in Theorem~\ref{thm:hyperbolicity},
the statement follows from the definition of $\alpha_{min}$ and the expansion properties of dispersing wave fronts, in particular Formulas (3.35) and (4.9) in
\cite{ChM06}. See also Lemma 3.3 in \cite{BDL16}.
\end{proof}

\begin{theorem}[Transversality]\label{thm:transversality}
There is a constant $c_{tr}=c_{tr}(\cR_Q,\cR_u)>0$ such that any $u$-curve and any central-stable manifold intersecting it have an angle at least $c_{tr}$ at their intersection point.
\end{theorem}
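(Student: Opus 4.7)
The plan is to reduce the transversality in the flow to the classical transversality of $u$-curves and local stable manifolds for the billiard map $T:\cM\to\cM$ in dispersing billiards (see e.g.\ \cite{ChM06}, Chapter 4). Let $y\in W^u\cap W^{cs}$. Without loss of generality $y$ is not a collision point (if it were, I would argue at $\Phi^{-s}y$ for a small $s>0$ instead), so there is a unique previous collision $x_-=\Pi(y)\in\cM$ with $y=\Phi^\tau(x_-)$ for some $\tau\in[0,\tau_{max}]$. Near $y$ I use the transverse section $\Sigma_y:=\Phi^\tau(\cM_{loc})$, where $\cM_{loc}\ni x_-$: the map $\Phi^{-\tau}:\Sigma_y\to\cM_{loc}$ is a smooth diffeomorphism sending the germ of $W^u$ at $y$ onto the germ at $x_-$ of the underlying discrete $u$-curve $w$ (cf.\ item (1) of Definition \ref{def:u-curve}), while $\Phi^{-\tau}(W^{cs})$ is a local central-stable surface at $x_-$ whose projection along the flow direction yields the homogeneous local stable manifold $\gamma^s=\Pi(W^{cs})\subset\cM_{loc}$ (Definition \ref{def:homcsman}).

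The next step is to invoke the discrete transversality. The tangent of any $u$-curve lies in $\cone_u$, while the tangent $E^s(x_-)$ of a homogeneous local stable manifold lies in $\cone_s(x_-)$ and, by the standard formulas for dispersing wave fronts, has slope $\diff\varrho/\diff q$ bounded above and below away from $0$ and $-\infty$ by constants depending only on $\cR_Q$. Since $\cone_u$ and the thus-narrowed subcone of $\cone_s$ lie in opposite quadrants of $(\diff q,\diff\varrho)$-coordinates, and the latter is bounded away from both coordinate axes, the angle between any vector of $\cone_u$ and $E^s(x_-)$ is at least some $c^{map}_{tr}=c^{map}_{tr}(\cR_Q)>0$; in particular
\[\angle\bigl(T_{x_-}w,\,T_{x_-}\gamma^s\bigr)\ge c^{map}_{tr}.\]

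Finally I transfer the bound to $y$. Between consecutive collisions the flow in Euclidean coordinates is the shear $(r,v)\mapsto(r+tv,v)$, so $D\Phi^\tau$ and its inverse have operator norm bounded uniformly on $[0,\tau_{max}]$ by constants depending only on $\cR_Q$, and angles are therefore distorted by at most a bounded factor. Working in the orthogonal decomposition $T_yM=\IR\cdot V(y)\oplus V(y)^\perp$, item (2) of Definition \ref{def:u-curve} guarantees that the projection of a unit tangent of $W^u$ onto $V(y)^\perp$ has length at least $\sin\alpha_{min}$, while the projection of $T_yW^{cs}$ onto $V(y)^\perp$ is precisely the stable line $e^s(y)$ tangent to $W^{cs}$. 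The angle between these two projected lines coincides, up to the bounded distortion factor of $D\Phi^\tau$, with $\angle(T_{x_-}w,T_{x_-}\gamma^s)$, and is therefore bounded below by a constant multiple of $c^{map}_{tr}$. Combining this with the $\sin\alpha_{min}$ factor produces the desired positive lower bound $c_{tr}=c_{tr}(\cR_Q,\cR_u)>0$ on $\angle(T_yW^u,T_yW^{cs})$.

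The main obstacle is mild but somewhat tedious to spell out: one must carefully track the action of $D\Phi^\tau$ on the plane orthogonal to $V$, verify that it does not compress the transverse component of $T_yW^u$ below a fixed multiple of $\sin\alpha_{min}$, and confirm that it preserves the angle between the two projected lines up to a bounded multiplicative factor. Once these distortion bounds are written in terms of $\tau_{max}$ and the Jacobians of the coordinate changes between $\cM$ and $\Sigma_y$, all constants visibly depend on the billiard table only through $\cR_Q$ and on the class of $u$-curves only through $\cR_u$ (in fact only through $\alpha_{min}$).
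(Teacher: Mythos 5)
There is a genuine gap in the transfer step between the collision space $\cM$ and the flow. A first, minor inaccuracy: since the flow $u$-curve $W^u$ is obtained from the discrete curve $w$ by a \emph{place-dependent} flight time (Definition~\ref{def:u-curve}), the fixed-time map $\Phi^{-\tau}$ does not send the germ of $W^u$ at $y$ onto the germ of $w$ at $x_-$; its image is a curve through $x_-$ which in general does not lie in $\cM$. So to invoke the map-transversality you must still project along the flow direction onto $T_{x_-}\cM$, and this is where the essential problem appears: that projection (equivalently, the linear identification between $(\diff q,\diff\varrho)$-vectors in $T_{x_-}\cM$ and flow-transverse Jacobi vectors $(\diff\xi,\diff\omega)$) carries a factor of order $1/\cos\varrho$ and is \emph{not} uniformly bounded -- it degenerates at grazing collisions. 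Consequently the claim that the angle at $y$ ``coincides, up to the bounded distortion factor of $D\Phi^\tau$'' with $\angle(T_{x_-}w,T_{x_-}\gamma^s)$ fails exactly in the dangerous regime: if the previous collision is nearly tangential and $\tau$ is small, the dispersing-front curvature $B_u$ at $y$ is of order $\kappa/\cos\varrho$, the transferred angle bound degrades by a factor comparable to $\cos\varrho$, and no uniform $c_{tr}$ comes out of the reduction. This is precisely the difficulty the paper flags in Section~\ref{sec:Ahom} (the flow-to-map transition has unbounded derivative near tangential collisions), and the reason transparent walls are introduced there for the holonomy statements.

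What actually saves the statement is a flow-specific fact your argument never uses, and which is the core of the paper's proof: in Jacobi coordinates the $u$-direction is $(1,0,B_u)$ and the stable direction is $(1,0,-B_s)$, where both $B_u$ and $B_s$ are bounded below by a constant depending only on $\cR_Q$, each may be unbounded above, but they cannot blow up simultaneously ($B_u$ is large only just \emph{after} a near-tangential collision, $B_s$ only just \emph{before} one; the uniform lower bound on the free flight excludes both at once). Combined with the $\alpha_{min}$-transversality of $u$-curves to the flow direction, which controls the flow component of the central-stable plane, this gives the uniform lower bound on the angle. To repair your reduction you would have to quantify the section-to-flow distortion as $C/\cos\varrho$ and then show that the map-angle is in fact better by a factor of order $\cos\varrho$ in that regime -- which amounts to redoing the above curvature analysis; the bounded-distortion shortcut cannot work as stated.
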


\begin{proof}
The analogous property for the billiard map is stated in Formulas (4.14) and (4.21) in \cite{ChM06}. To discuss the property for the flow, note that central-stable manifolds are
two dimensional. Here we describe two linearly independent directions tangent to central stable manifolds such that the plane they span is uniformly transversal to $u$-curves.
Tangent vectors in the flow phase space are conveniently described in the Jacobi coordinates $(d\xi,d\eta,d\omega)$, see section 3.6 in \cite{ChM06}, or Section~\ref{sec:Ahom} in the present paper.
On the one hand, the flow direction $(0,1,0)$ is tangential to central stable manifolds, and uniformly transversal to u-curves by the definition of $\alpha_{min}$. On the other hand,
the stable direction for the billiard flow is $(1,0,-B_s)$ for some $B_s>0$, that is, stable manifolds are associated with \textit{convergent} wave fronts, see Formula (4.45) in \cite{ChM06}. u-curves,
in turn,  correspond to dispersing wave fronts that have tangent vector $(1,0,B_u)$ for some $B_u>0$. For the claimed uniform transversality, we need to see that the plane $\{(b,a,-b B_s)|(a,b)\in\mathbb{R}^2\}$ is uniformly transversal to the vector $(1,0,B_u)$. This follows as both $B_s$ and $B_u$ are
uniformly bounded away form $0$, and even though neither of these quantities is bounded from above, they cannot approach infinity simultaneously. In particular, $B_u$ can be arbitrarily large
just \textit{after} tangential collisions, while $B_s$ can be arbitrarily large just \textit{before} tangential collisions. See also \cite{BDL16}, in particular Remark 2.1 and the discussion following it.
\end{proof}

To state one more property we need some terminology: the set $\{t\ge 0|\Phi^{-t} S_0\}$, the singularity set for \textit{the flow} $\Phi^t$, $t\ge 0$ is a countable collection of smooth, one-codimensional (i.e.~two dimensional)
submanifolds. We will refer to these submanifolds as singularity manifolds.

\begin{theorem}[Alignment]\label{thm:alignment}
There is a constant $c_{al}=c_{al}(\cR_Q,\cR_u)>0$ such that if $W$ is a $u$-curve, $S_i$ is any singularity manifold which is a pre-image of a tangential collision, and $W$ and $S_i$ intersect, then they have an angle at least $c_{al}$ at their intersection point.
\end{theorem}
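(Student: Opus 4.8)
Theorem (Alignment) — This is the statement about $u$-curves and singularity manifolds arising as preimages of tangential collisions.

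Let me think about what this theorem says and how to prove it.The plan is to reduce the claim to the well-known discrete-time alignment property for the billiard map (see \cite{ChM06}, with \cite{S70} for the underlying structure of the singularity sets) via the flow-out structure of $S_i$, in the same spirit in which the proof of Theorem~\ref{thm:transversality} reduces the flow statement to its discrete counterpart. Given an intersection point $y\in S_i\cap W$, put $x_0=\Pi y\in\cM$ and write $y=\Phi^{s_0}(x_0)$ with $s_0\in[0,\tau(x_0))$. Since the forward orbit of $y$ reaches a tangential collision, some iterate $T^m x_0$ with $m\ge 1$ lies on $S_0$, so $x_0$ belongs to a connected component $\sigma$ of the preimage singularity set $T^{-m}S_0$ ($m\ge1$), and near $y$ the manifold $S_i$ is the flow-out of $\sigma$ inside one free-flight segment: $S_i=\{\Phi^s(x)\mid x\in\sigma,\ 0\le s<\tau(x)\}$. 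Hence $T_yS_i=\operatorname{span}\!\bigl(D\Phi^{s_0}(T_{x_0}\sigma),\,V\bigr)$, where $V$ is the flow direction at $y$. Writing similarly $W=\{\Phi^{t(x)}(x)\mid x\in w\}$ for the underlying discrete-time $u$-curve $w$ (Definition~\ref{def:u-curve}), one has $x_0\in w$ (the flight times occurring for a $u$-curve keep $\Phi^{t(x)}x$ within the free-flight segment of $x$, so $\Pi\circ\Phi^{t(x)}=\mathrm{id}$ on $w$), and $T_yW\subseteq\operatorname{span}\!\bigl(D\Phi^{s_0}(T_{x_0}w),\,V\bigr)$ with $T_{x_0}w\subset\cone_u(x_0)$.

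The key input is that, because $m\ge1$, the curve $\sigma\subset T^{-m}S_0$ is \emph{stable}, $T_{x_0}\sigma\subset\cone_s(x_0)$, and that $u$-curves meet such preimage singularity curves at a uniformly positive angle in $\cM$ --- the discrete-time alignment lemma of \cite{ChM06}. I would transfer this to $T_yM$ exactly as the proof of Theorem~\ref{thm:transversality} transfers transversality: in the Jacobi coordinates $(\diff\xi,\diff\eta,\diff\omega)$ the flow direction is $(0,1,0)$, the vector $D\Phi^{s_0}(T_{x_0}w)$ is tangent to a dispersing wave front and has the form $(1,0,B_u)$ with $B_u>0$, while $D\Phi^{s_0}(T_{x_0}\sigma)$ is tangent to the \emph{convergent} wave front through the points of $\sigma$ that focuses at the tangential collision $m$ steps ahead, of the form $(1,0,-B_s)$ with $B_s>0$. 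Thus $T_yS_i=\{(b,a,-bB_s)\mid(a,b)\in\IR^2\}$, and the desired lower bound on the angle between $W$ and $S_i$ follows once this plane is shown to be uniformly transversal to the line $\IR(1,0,B_u)\subset T_yW$, combined with the uniform transversality of $T_yW$ to $V$ supplied by $\alpha_{min}$ in item (2) of Definition~\ref{def:u-curve}.

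The main difficulty is the degeneracy at tangential collisions already encountered in Theorem~\ref{thm:transversality}: $B_u$ (just after such a collision) or $B_s$ (just before) can be arbitrarily large. What saves the argument --- and this is precisely why the statement is restricted to singularity manifolds that are preimages of tangential collisions, and why the secondary homogeneity-layer singularities were introduced --- is that such an $S_i$ corresponds to a \emph{convergent} front focusing at a uniformly bounded distance ahead: reflection off a dispersing scatterer only makes it more convergent and free flight keeps it convergent, so $B_s$ is bounded below in modulus by a constant $c(\cR_Q)>0$, and a dispersing $u$-curve front and such a convergent front cannot both have huge curvature at the same phase point. Making this quantitative, using the wave-front curvature estimates around (3.35), (4.9) and (4.45) in \cite{ChM06} (cf.\ also Remark~2.1 of \cite{BDL16} and the discussion following it), produces $c_{al}=c_{al}(\cR_Q,\cR_u)>0$ and finishes the proof.
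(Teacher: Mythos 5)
Your proposal follows essentially the same route as the paper's proof: you represent the singularity manifold as a flow-out with the flow direction tangential, associate it with a convergent wave front of the form $(1,0,-B_s)$ in Jacobi coordinates (the paper cites the time-reversed Proposition 4.41 of \cite{ChM06} for this), use item (2) of Definition~\ref{def:u-curve} for transversality to the flow direction, and conclude via the key observation that the convergent-front curvature and the dispersing-front curvature of the $u$-curve are bounded away from zero and cannot blow up simultaneously (one only before, the other only after tangential collisions). This is exactly the argument the paper gives by reduction to the proof of Theorem~\ref{thm:transversality}, so the proposal is correct and matches the paper's approach.
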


\begin{proof}
The argument is essentially the same as in the proof of Theorem~\ref{thm:transversality}. Just like central-stable manifolds, the singularity manifolds are two dimensional, and the flow direction is tangential to them. Furthermore, singularity manifolds can be associated to convergent wave fronts, see Proposition 4.41 (more precisely, its time reversal counterpart) in \cite{ChM06}. That is, for an appropriate choice of $B_{\mathop{Sing}}$, the vector $(1,0,-B_{\mathop{Sing}})$ is tangential to the singularity manifold. Here $B_{\mathop{Sing}}$ is uniformly bounded away from $0$ and can approach infinity just \textit{before} tangential collisions. The required transversality follows as in the case of Theorem~\ref{thm:transversality}.
\end{proof}

\begin{remark}\label{rem:aligncorner}
 Note that, for dispersing billiards with corner points, the above properties are more subtle as there is no lower bound on the free flight.

 However, uniform hyperbolicity and transversality (theorems \ref{thm:hyperbolicity} and \ref{thm:transversality}) extend to dispersing billiards with corner points, under the weaker Assumption~\ref{asm:no-cusp}, without any problem. Alignment is more problematic, because it fails at specific points of the singularity {set} that corresponds to a collision at the corner. However, in Theorem~\ref{thm:alignment} we only claim alignment for tangential singularities. This \emph{does} remain true under Assumption~\ref{asm:no-cusp}, and this is exactly what we need in this paper. (Alignment is used only once, in the proof of Proposition~\ref{prop:G_0-Holder-on-H_1}.) For further details, see section 9 in \cite{Ch99}.
\end{remark}

\subsection{Notions of H\"older continuity}\label{sec:Holder}

\subsubsection{H\"older continuity}

Let $f:X\to\IR$, where $(X,dist)$ is some metric space. Let $0<\alpha\in\IR$ and $0\le C<\infty$. The function $f$ is said to be H\"older continuous with exponent $\alpha$ and H\"older constant $C$ if for any $x,y\in X$
\begin{equation}\label{eq:Holder-def}
|f(x)-f(y)|\le C dist(x,y)^{\alpha}.
\end{equation}
We also say that $f$ is $\alpha$-H\"older with constant $C$, or that $f$ is H\"older with constants $(\alpha,C)$.
The best constant
\[|f|_{\alpha;H}:=\inf\{C\in\IR\,|\, \forall x,y\in X\, |f(x)-f(y)|\le C dist(x,y)^{\alpha} \}\]
is a seminorm on the space of $\alpha$-H\"older functions. Correspondingly, the $\alpha$-H\"older norm of $f$ is
\begin{equation}\label{eq:Holder-norm}
||f||_{\alpha;H}:=\sup_{x\in X} |f(x)| + |f|_{\alpha;H}.
\end{equation}

We will use this notion with $X=M$ or $X=W$ where $W$ is a $u$-curve or a central-stable manifold. We need the following easy quantitative properties:
\begin{lemma}\label{lem:Holder-Holder}
 Let $0<\alpha\le 1$ and let $f,g:X\to\IR$ be $\alpha$-H\"older. Then
 \begin{enumerate}[(i)]
  \item\label{it:Holder-product} $f g$ is also $\alpha$-H\"older and
   \[|fg|_{\alpha;H}\le \sup_X |f| |g|_{\alpha;H} + \sup_X |g| |f|_{\alpha;H}.\]
  \item\label{it:Holder-Holder} If $0<\alpha'\le\alpha$, then $f$ is also $\alpha'$-H\"older and
   \[|f|_{\alpha';H}\le diam(X)^{\alpha-\alpha'} |f|_{\alpha;H}.\]
  \item\label{it:1_over_f-Holder} If $\inf_X f>0$, then $\frac1f$ is also $\alpha$-H\"older and
   \[\left|\frac1f\right|_{\alpha;H} \le \frac{|f|_{\alpha;H}}{\inf_X f^2}.\]
 \end{enumerate}
\end{lemma}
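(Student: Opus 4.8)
All three claims are elementary consequences of the defining inequality (\ref{eq:Holder-def}) together with the triangle inequality, and I would treat them in turn, in each case producing the bound by estimating $|(\text{expression})(x)-(\text{expression})(y)|$ directly.

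For part (\ref{it:Holder-product}) the plan is to use the standard product splitting
\[
f(x)g(x)-f(y)g(y)=f(x)\bigl(g(x)-g(y)\bigr)+g(y)\bigl(f(x)-f(y)\bigr),
\]
apply the triangle inequality, and then bound $|f(x)|\le\sup_X|f|$, $|g(y)|\le\sup_X|g|$, $|g(x)-g(y)|\le|g|_{\alpha;H}\,dist(x,y)^\alpha$ and $|f(x)-f(y)|\le|f|_{\alpha;H}\,dist(x,y)^\alpha$. Dividing by $dist(x,y)^\alpha$ and taking the infimum over admissible constants (equivalently, the supremum over $x\neq y$) gives exactly the asserted inequality for $|fg|_{\alpha;H}$; in particular this shows $fg$ is $\alpha$-H\"older.

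For part (\ref{it:Holder-Holder}) I would simply write $dist(x,y)^{\alpha}=dist(x,y)^{\alpha'}\cdot dist(x,y)^{\alpha-\alpha'}$ and use $dist(x,y)\le diam(X)$ together with $\alpha-\alpha'\ge 0$ to get $dist(x,y)^{\alpha}\le diam(X)^{\alpha-\alpha'}\,dist(x,y)^{\alpha'}$; combined with (\ref{eq:Holder-def}) for exponent $\alpha$ this yields $|f(x)-f(y)|\le diam(X)^{\alpha-\alpha'}|f|_{\alpha;H}\,dist(x,y)^{\alpha'}$, hence the claimed seminorm bound. (One should note $diam(X)<\infty$ in our applications, where $X=M$ or $X=W$ with $W$ a $u$-curve or central-stable manifold.) For part (\ref{it:1_over_f-Holder}), under $\inf_X f>0$ I would use
\[
\frac{1}{f(x)}-\frac{1}{f(y)}=\frac{f(y)-f(x)}{f(x)f(y)},
\]
bound the denominator below by $(\inf_X f)^2=\inf_X f^2$, and the numerator by $|f|_{\alpha;H}\,dist(x,y)^\alpha$, which gives the stated estimate.

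There is no real obstacle here: the arguments are routine, and the only points worth a word are that $diam(X)$ is finite in the cases of interest and that the hypothesis $\inf_X f>0$ is exactly what makes $1/f$ well defined and keeps its H\"older constant finite. (Note that the restriction $\alpha\le 1$ is not actually needed for any of the three parts; it is imposed merely to match the regime in which the lemma will be used.)
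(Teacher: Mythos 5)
Your argument is correct and is exactly the ``trivial calculation using only the definition'' that the paper itself invokes: the product splitting, the factorization $dist(x,y)^{\alpha}=dist(x,y)^{\alpha'}dist(x,y)^{\alpha-\alpha'}$ with $dist(x,y)\le diam(X)$, and the difference quotient for $1/f$ bounded below by $(\inf_X f)^2$. Your side remarks (finiteness of $diam(X)$ in the intended applications, and that $\alpha\le 1$ is not actually needed) are accurate and do not change the substance.
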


\begin{proof}
 Trivial calculation using only the definition.
\end{proof}

\subsubsection{Generalized H\"older continuity}\label{sec:genholder-def}

Following Keller~\cite{Keller85}, Saussol~\cite{S00} and Chernov~\cite{Ch07}, we generalize the above notion so that (\ref{eq:Holder-def}) need not hold for every pair $(x,y)$, only ``on average'' w.r.t the natural invariant measure $\mu$.

For $x\in M$ and $0\le r\in\IR$, $B_r(x)$ denotes the ball of radius $r$ centred at $x$:
\[B_r(x):=\{y\in M\,:\, dist(x,y)\le r\}.\]
For a function $f:M\to\IR$ we use $(osc_r f):M\to\IR$ to denote its ``$r$ oscillation'':
\begin{equation}\label{eq:osc_r-def}
(osc_r f)(x):=\sup_{y\in B_r(x)} f(y)-\inf_{y\in B_r(x)} f(y).
 \end{equation}
For $0<\alpha\le 1$ the generalized $\alpha$-H\"older seminorm of $f$ is
\begin{equation}\label{eq:gH-seminorm-def}
|f|_{\alpha;gH}:=\sup_{r>0}\frac{1}{r^{\alpha}}\int_M (osc_r f)(x)\diff \mu(x).
\end{equation}

It is easy to see that $osc_r f$ is $\mu$-measurable.

\begin{definition}\label{def:GHC}
Let $f:M\to\IR$ be Borel measurable. We say that $f$ is generalized $\alpha$-H\"older if $|f|_{\alpha;gH}<\infty$.
\end{definition}

\begin{remark}\label{rem:sup_not_ess-sup}
This definition coincides with the one given by Chernov in \cite{Ch07}. It is also similar to what Saussol calls the ``quasi-H\"older property'' in \cite{S00} (which is a special case of the notion defined by Keller in \cite{Keller85}). However, it is not exactly the same. The difference is that Keller~\cite{Keller85} and Saussol~\cite{S00} use \emph{essential} supremum and infimum in the definition (\ref{eq:osc_r-def}) of the oscillation, so their definition does not notice the difference between functions that are equal almost everywhere - w.r.t some distinguished (e.g. Lebesgue) measure.
This is in accordance with using absolutely continuous measures only, when integrating $f$.

From our point of view, two functions, which are equal $\mu$-almost everywhere, may be very different. The measures we use for integration are given by standard pairs, so they are singular w.r.t. $\mu$ -- actually, concentrated on submanifolds. So, for us, the notion of oscillation with the true $\sup$ and $\inf$ is the good one.
\end{remark}

Since $M$ is compact, $f$ can only be generalized $\alpha$-H\"older if it is bounded, so
\begin{equation}\label{eq:var}
var_\alpha(f):=|f|_{\alpha;gH} + \sup_M f - \inf_M f <\infty
\end{equation}
as well. This is still not a norm, since $var_\alpha(const)=0$, but it is the good quantity to measure the regularity of $f$ for the purpose of our statements.

\begin{lemma}\label{lem:holder_genholder}
 If the function $f$ is $\alpha$-H\"older
 then it is also generalized $\alpha$-H\"older and
 \[|f|_{\alpha;gH}\le 2 |f|_{\alpha;H},\]
 \[var_{\alpha}(f) \le 2 |f|_{\alpha;H} +  \sup_M f - \inf_M f \le 2 ||f||_{\alpha;H}.\]
\end{lemma}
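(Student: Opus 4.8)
The plan is to verify both inequalities directly from the definitions, using only the elementary comparison between the oscillation $osc_r f$ and the H\"older modulus of continuity. First I would observe that if $f$ is $\alpha$-H\"older with seminorm $|f|_{\alpha;H}$, then for any $x\in M$ and any $y,z\in B_r(x)$ we have $|f(y)-f(z)|\le |f|_{\alpha;H}\,dist(y,z)^\alpha\le |f|_{\alpha;H}\,(2r)^\alpha$, since $dist(y,z)\le 2r$ by the triangle inequality. Taking the supremum over $y$ and the infimum over $z$ gives the pointwise bound $(osc_r f)(x)\le 2^\alpha\, |f|_{\alpha;H}\, r^\alpha$. (A slightly sharper route uses $dist(y,x)\le r$ and $dist(z,x)\le r$ separately to get $|f(y)-f(z)|\le |f(y)-f(x)|+|f(x)-f(z)|\le 2|f|_{\alpha;H} r^\alpha$, which already yields the constant $2$ without the $2^\alpha$.)

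Next I would integrate this pointwise bound against $\mu$. Since $\mu$ is a probability measure on $M$, we get $\int_M (osc_r f)(x)\,\diff\mu(x)\le 2\,|f|_{\alpha;H}\, r^\alpha$ for every $r>0$, hence dividing by $r^\alpha$ and taking the supremum over $r>0$ gives $|f|_{\alpha;gH}\le 2\,|f|_{\alpha;H}$, which is the first claimed inequality and in particular shows $f$ is generalized $\alpha$-H\"older. For the second inequality I would simply add $\sup_M f-\inf_M f$ to both sides, recalling the definition (\ref{eq:var}) of $var_\alpha$, to obtain $var_\alpha(f)=|f|_{\alpha;gH}+\sup_M f-\inf_M f\le 2|f|_{\alpha;H}+\sup_M f-\inf_M f$. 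Finally, since $\sup_M f-\inf_M f\le 2\sup_M|f|$ and $|f|_{\alpha;H}\le \sup_M|f|+|f|_{\alpha;H}=\|f\|_{\alpha;H}$ by (\ref{eq:Holder-norm}), the right-hand side is bounded by $2|f|_{\alpha;H}+2\sup_M|f|\le 2\|f\|_{\alpha;H}$, giving the last bound. (One should note $M$ is compact and $f$ continuous, so $\sup_M f$ and $\inf_M f$ are finite and these manipulations are legitimate.)

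There is no real obstacle here; the only point requiring a moment's care is getting the constant exactly $2$ rather than $2^\alpha$ or some other factor, which is handled by the split-through-the-center estimate $|f(y)-f(z)|\le |f(y)-f(x)|+|f(x)-f(z)|$ rather than the cruder $dist(y,z)\le 2r$ bound. Everything else is a one-line consequence of the definitions of $osc_r$, $|\cdot|_{\alpha;gH}$, $var_\alpha$, and $\|\cdot\|_{\alpha;H}$, together with $\mu(M)=1$. Accordingly the proof will be short, and I would present it as essentially a direct computation.
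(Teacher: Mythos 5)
Your proof is correct and follows exactly the route the paper intends: the paper's own proof is just the remark that the bound is immediate from the definitions using $\mu(M)=1$, which is precisely your pointwise estimate $(osc_r f)(x)\le 2|f|_{\alpha;H}r^{\alpha}$ (via splitting through the center) integrated against the probability measure $\mu$, followed by the definitional bookkeeping for $var_\alpha$ and $\|\cdot\|_{\alpha;H}$. No issues.
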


\begin{proof}
 This is immediate from the definition. We use that $\mu(M)=1$.
\end{proof}

\begin{lemma}\label{lem:genholder-genholder}
 If $0<\alpha'\le\alpha\le 1$ and $f:M\to\IR$ is generalized $\alpha$-H\"older, then it is also generalized $\alpha'$-H\"older and
 \[var_{\alpha'}(f) \le \cC(\cR_Q,\alpha,\alpha') var_\alpha(f)\]
 where $\cC(\cR_Q,\alpha,\alpha')=\max\{(diam(M))^{\alpha-\alpha'},1\}$.
\end{lemma}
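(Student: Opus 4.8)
The plan is to compare the $\alpha'$-oscillation integral with the $\alpha$-oscillation integral, scale by scale in $r$, splitting the supremum over $r>0$ into the regime of small radii (where the scatterers' geometry, i.e. $r\le 1$ say) and large radii (where $r$ exceeds the diameter of $M$ and the oscillation stops growing). First I would recall that for every fixed $x\in M$ and every $r>0$ the map $r\mapsto (osc_r f)(x)$ is non-decreasing, since $B_r(x)\subset B_{r'}(x)$ whenever $r\le r'$; moreover it is bounded above by $\sup_M f-\inf_M f$, and it is eventually constant, equal to $\sup_M f - \inf_M f$, once $r\ge diam(M)$ (because then $B_r(x)=M$). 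These two monotonicity/boundedness facts are the only structural input needed.

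Next I would estimate $\frac{1}{r^{\alpha'}}\int_M (osc_r f)\,d\mu$ for a fixed $r$. If $r\le diam(M)$, then $r^{\alpha-\alpha'}\le (diam(M))^{\alpha-\alpha'}$ (using $\alpha\ge\alpha'$), so
\[
\frac{1}{r^{\alpha'}}\int_M (osc_r f)\,d\mu
= r^{\alpha-\alpha'}\cdot\frac{1}{r^{\alpha}}\int_M (osc_r f)\,d\mu
\le (diam(M))^{\alpha-\alpha'}\,|f|_{\alpha;gH}.
\]
If instead $r> diam(M)$, then $(osc_r f)(x)=(osc_{diam(M)} f)(x)$ pointwise, hence
\[
\frac{1}{r^{\alpha'}}\int_M (osc_r f)\,d\mu
\le \frac{1}{(diam(M))^{\alpha'}}\int_M (osc_{diam(M)} f)\,d\mu
\le (diam(M))^{\alpha-\alpha'}\,|f|_{\alpha;gH},
\]
where in the last step I again used the definition of $|f|_{\alpha;gH}$ at radius $diam(M)$. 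Taking the supremum over $r>0$ yields $|f|_{\alpha';gH}\le (diam(M))^{\alpha-\alpha'}|f|_{\alpha;gH}$, and since $(osc_r f)\le \sup_M f-\inf_M f$ always holds, the trivial bound $|f|_{\alpha';gH}\le \sup_M f - \inf_M f$ is also available, so one may freely insert the $\max\{(diam(M))^{\alpha-\alpha'},1\}$ to cover the case $diam(M)<1$. Adding $\sup_M f-\inf_M f$ to both sides and bounding the resulting constant by $\cC(\cR_Q,\alpha,\alpha')=\max\{(diam(M))^{\alpha-\alpha'},1\}$ gives the claimed inequality $var_{\alpha'}(f)\le \cC(\cR_Q,\alpha,\alpha')\,var_\alpha(f)$; the dependence is only on $\cR_Q$ because $diam(M)$ is controlled by $d_Q\in\cR_Q$ (together with the trivial bound on the velocity component).

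I do not expect any serious obstacle here: the statement is the generalized-H\"older analogue of Lemma \ref{lem:Holder-Holder}(\ref{it:Holder-Holder}), and the only mild subtlety is the large-$r$ regime, handled by the observation that the oscillation saturates once $r\ge diam(M)$; without that observation one could not bound $\sup_{r>0}$ by a constant times $|f|_{\alpha;gH}$, since $r^{\alpha-\alpha'}\to\infty$. The bookkeeping of the $\max\{\cdot,1\}$ is only there to handle tables of small diameter and is purely cosmetic.
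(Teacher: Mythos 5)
Your proposal is correct and takes essentially the same route as the paper's proof: restrict attention to radii $r\le diam(M)$ using the saturation of the oscillation, bound $r^{\alpha-\alpha'}\le (diam(M))^{\alpha-\alpha'}$ to get $|f|_{\alpha';gH}\le (diam(M))^{\alpha-\alpha'}|f|_{\alpha;gH}$, then add $\sup_M f-\inf_M f$ and absorb everything into $\max\{(diam(M))^{\alpha-\alpha'},1\}\,var_\alpha(f)$. One small caveat: your side remark that $|f|_{\alpha';gH}\le \sup_M f-\inf_M f$ is false in general (for small $r$ the factor $r^{-\alpha'}$ makes $\frac{1}{r^{\alpha'}}\int_M (osc_r f)\diff\mu$ potentially exceed $\sup_M f-\inf_M f$, e.g.\ for a Lipschitz $f$), but this remark is not used anywhere: the $\max\{\cdot,1\}$ is needed only because the additive term $\sup_M f-\inf_M f$ carries coefficient $1$ on both sides, which is exactly how your (and the paper's) final step handles it.
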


\begin{proof}
Setting $R:=diam(M)$, clearly $osc_r f\le osc_R f$, so
\[|f|_{\alpha;gH}=\sup_{r>0}\frac{1}{r^{\alpha}}\int_M (osc_r f)(x)\diff \mu(x)=\sup_{0<r<R}\frac{1}{r^{\alpha}}\int_M (osc_r f)(x)\diff \mu(x).\]
So
\[|f|_{\alpha';gH}=\sup_{0<r<R}r^{\alpha-\alpha'}\frac{1}{r^{\alpha}}\int_M (osc_r f)(x)\diff \mu(x)\le R^{\alpha-\alpha'}|f|_{\alpha;gH}.\]
Now this implies
\[var_{\alpha'}(f):=|f|_{\alpha';gH} + \sup_M f - \inf_M f\le R^{\alpha-\alpha'} |f|_{\alpha;gH} + \sup_M f - \inf_M f \le \max\{R^{\alpha-\alpha'},1\}var_\alpha(f).\]
\end{proof}

\subsubsection{Dynamical H\"older continuity}

The billiard ball map can be extended to $M$ in a natural way: $T:M\to\mathcal{M}$, $T(x):=\Phi^{\tau(x)}(x)$.

\begin{definition}\label{def:sep-time}
Let $W$ be a $u$-curve. For any $x,y\in W$, their \emph{separation time} $s^+(x,y)$ is the smallest $n>0$ for which $T^n$ is not continuous on the subcurve of $W$ connecting $x$ and $y$.
\end{definition}

This is a convenient definition to describe the partitioning of $W$ during its time evolution due to the presence of singularities, but we note that $s^+(x,y)$ does not depend on the $u$-curve $W$ connecting them, as long as they can be connected with some $u$-curve. Also note that this separation time is connected to the discrete time billiard map. In this paper, no continuous version of the separation time will be used.

Since $W$ is expanded by $T$, $s^+(x,y)<\infty$ for every $x\neq y\in W$. If we fix some $\vartheta<1$, then
\[d_\vartheta(x,y):=\vartheta^{s^+(x,y)}\]
is a metric on $W$, and is called the dynamical distance.

In this section we concentrate on functions $f$ that are not defined on all of $M$, but instead, on a $u$-curve $W$ only.
Then the notion of dynamical distance makes sense for every $x,y\in W$, and the usual notion of H\"older continuity, w.r.t. this metric, is called \emph{dynamical H\"older continuity}: $f:W\to\IR$ is called dynamically H\"older continuous if there are constants $0\le C<\infty$ and $0<\alpha\le 1$, such that
\[|f(x)-f(y)|\le C d_\vartheta(x,y)^\alpha\]
for every $x,y\in W$. Since $d_\vartheta(x,y)^\alpha=(\vartheta^{s^+(x,y)})^\alpha = \theta^{s^+(x,y)}$ with $\theta:=\vartheta^\alpha$, the notion of dynamical H\"older continuity is independent of the choice of $\vartheta$ -- or, in other words, only the power $\theta:=\vartheta^\alpha$ has physical meaning. This justifies the following formal definition:
\begin{definition}\label{def:dHc}
 Let $W$ be a $u$-curve, $0<\theta<1$ and $C<\infty$. The function $f:W\to \IR$ is dynamically H\"older with constants $(C,\theta)$ (or dynamically $\theta-$H\"older) if for any $x,y\in W$
 \[|f(x)-f(y)|\le C \theta^{s^+(x,y)}.\]
 The dynamical $\theta$-H\"older seminorm of $f$ is defined as the best constant
 \begin{equation}\label{eq:dHn}
 |f|_{\theta;dH}:=\inf\{C\in\IR\,|\, \forall x,y\in W \,|f(x)-f(y)|\le C \theta^{s^+(x,y)} \}.
 \end{equation}
 The dynamical $\theta$-H\"older norm of $f$ is defined as
 \begin{equation}\label{eq:dynHolder-norm}
 ||f||_{\theta;dH}:=|f|_{\theta;dH} + \sup_W f.
 \end{equation}
\end{definition}

A few easy properties:

\begin{lemma}\label{lem:dynHolder-product}
 Let $W$ be a $u$-curve, $0<\theta<1$ and $f,g:W\to\IR$ dynamically $\theta$-H\"older. Then
 \begin{enumerate}[(i)]
  \item\label{it:dynHolder-product} $f g$ is also dynamically $\theta$-H\"older and
   \[|fg|_{\theta;dH}\le \sup_W |f| \cdot |g|_{\theta;dH} + \sup_W |g| \cdot |f|_{\theta;dH}.\]
  \item\label{it:dynHolder-dynHolder} If {$\theta'<\theta<1$}, then $f$ is also dynamically $\theta'$-H\"older and
   \[|f|_{\theta';dH}\le |f|_{\theta;dH}.\]
  \item\label{it:1_over_f-dynHolder} If $\inf_W f>0$, then $\frac1f$ is also dynamically $\theta$-H\"older and
   \[\left|\frac1f\right|_{\theta;dH} \le \frac{|f|_{\theta;dH}}{\inf_W f^2}.\]
 \end{enumerate}
\end{lemma}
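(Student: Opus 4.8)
The plan is to prove the three items by direct computation from Definition~\ref{def:dHc}, in exact parallel with Lemma~\ref{lem:Holder-Holder}; the only change is that the metric $dist$ there is replaced by the dynamical distance $d_\vartheta(x,y)=\vartheta^{s^+(x,y)}$ on $W$, with the H\"older exponent taken equal to $1$.

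For (i) I would use the add-and-subtract identity $f(x)g(x)-f(y)g(y)=f(x)\bigl(g(x)-g(y)\bigr)+g(y)\bigl(f(x)-f(y)\bigr)$, apply the defining bounds $|g(x)-g(y)|\le|g|_{\theta;dH}\,\theta^{s^+(x,y)}$ and $|f(x)-f(y)|\le|f|_{\theta;dH}\,\theta^{s^+(x,y)}$, estimate $|f(x)|\le\sup_W|f|$ and $|g(y)|\le\sup_W|g|$, factor out the common $\theta^{s^+(x,y)}$, and pass to the infimum over admissible constants. For (iii) I would write $\tfrac1{f(x)}-\tfrac1{f(y)}=\tfrac{f(y)-f(x)}{f(x)f(y)}$, bound the numerator by $|f|_{\theta;dH}\,\theta^{s^+(x,y)}$ and the denominator below by $(\inf_W f)^2$, and read off the bound on $\bigl|\tfrac1f\bigr|_{\theta;dH}$. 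For (ii) the point is that $s^+(x,y)$ is a positive integer (Definition~\ref{def:sep-time}), so for two parameters in $(0,1)$ the powers $\theta^{s^+(x,y)}$ and $(\theta')^{s^+(x,y)}$ are comparable pointwise in $x,y$, the one with base closer to $1$ dominating; dividing the defining inequality for $f$ with respect to the more restrictive parameter by the appropriate power of the other then yields the asserted comparison of seminorms. In contrast with Lemma~\ref{lem:Holder-Holder}(ii), no factor involving the diameter of $W$ appears here, precisely because $s^+\ge1$ forces the dynamical distance to be bounded above by the parameter itself.

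None of these steps presents a genuine obstacle: exactly as in the proof of Lemma~\ref{lem:Holder-Holder}, everything is immediate from the definitions. The only mild point requiring attention is the monotonicity in the parameter used in (ii), where one must invoke $s^+\ge1$ to obtain the comparison of the dynamical distances in the correct direction.
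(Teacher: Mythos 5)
Your treatment of (i) and (iii) is the same direct computation from Definition~\ref{def:dHc} that the paper has in mind (its proof is the one line ``trivial calculation using only the definition''), and both are fine: the add-and-subtract identity and the identity $\frac{1}{f(x)}-\frac{1}{f(y)}=\frac{f(y)-f(x)}{f(x)f(y)}$, combined with $|f(x)-f(y)|\le|f|_{\theta;dH}\,\theta^{s^+(x,y)}$, give exactly the stated bounds.

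The gap is in (ii). The monotonicity goes the opposite way from Lemma~\ref{lem:Holder-Holder}(ii), because here the parameter enters as the \emph{base} of $\theta^{s^+(x,y)}$, not as an exponent: decreasing $\theta$ makes $\theta^{s^+}$ smaller and the condition \emph{stronger}. If $f$ only satisfies $|f(x)-f(y)|\le C\,\theta^{s^+(x,y)}$, then to be dynamically $\theta'$-H\"older with $\theta'<\theta$ one would need $\theta^{n}\le K(\theta')^{n}$ uniformly in $n=s^+(x,y)$, and $\sup_n(\theta/\theta')^{n}=\infty$; since separation times are unbounded on a $u$-curve, a function whose increments are comparable to $\theta^{s^+(x,y)}$ is a counterexample. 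So the implication in the direction you argue it (and in the direction the lemma is printed) cannot be established, and your appeal to $s^+\ge 1$ and to ``dividing the defining inequality by the appropriate power of the other'' does not produce an inequality in the required direction. What is true --- and what the paper actually uses later, in the proof of Proposition~\ref{prop:G_0-dynHolder}, where $\Theta_\phi$, $\Theta_h$, $\Theta_{\cos\psi}$ are replaced by their \emph{maximum} $\Theta_G$ --- is the weakening direction: if $f$ is dynamically $\theta$-H\"older and $\theta\le\theta'<1$, then $f$ is dynamically $\theta'$-H\"older with $|f|_{\theta';dH}\le|f|_{\theta;dH}$, simply because $\theta^{s^+(x,y)}\le(\theta')^{s^+(x,y)}$ pointwise; no division and no use of $s^+\ge1$ is needed, and that pointwise inequality (not the bound $\theta^{s^+}\le\theta$) is the reason no diameter factor appears. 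You should either prove that version, reading the inequality between $\theta$ and $\theta'$ in the statement accordingly, or flag the direction issue explicitly rather than asserting that $s^+\ge1$ yields the comparison ``in the correct direction.''
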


\begin{proof}
 Trivial calculation using only the definition.
\end{proof}

A relation between H\"older continuity and dynamical H\"older continuity is shown in the following lemma.

\begin{lemma}\label{lem:Holder-dynHolder}
If $W$ is a $u$-curve, $0<\alpha\in\IR$ and $f:W\to\IR$ is H\"older continuous with exponent $\alpha$, then $f$ is also dynamically H\"older continuous with some constant $\theta=\theta(\alpha,\cR_Q,\cR_u)$ and
\[|f|_{\theta;dH} \le \cC(\alpha,\cR_Q,\cR_u) |f|_{\alpha;H}.\]
 \end{lemma}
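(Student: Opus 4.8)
The plan is to show that the dynamical distance $d_\vartheta$ on a $u$-curve $W$ cannot be much smaller than the Euclidean distance raised to an appropriate power, and then feed this comparison into the $\alpha$-Hölder estimate for $f$. The key geometric input is uniform hyperbolicity (Theorem~\ref{thm:hyperbolicity}), which says that $T^n$ expands lengths along $u$-curves by a definite factor $c_{hyp}\lambda^n$ on each subcurve where $T^n$ is smooth. Since $M$ (and hence any $u$-curve, whose length is bounded by $L_{max}$) has bounded diameter, a subcurve of $W$ along which $T^n$ is smooth has length at most $\mathrm{diam}(M)$, and after applying $T^n$ its image still has length at most $\mathrm{diam}(\mathcal M)$ — this is where the finite-horizon and bounded-geometry parameters $\cR_Q$ enter, guaranteeing that the $u$-curve structure (and in particular the bound $L_{max}$ on images) is preserved. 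Combining these two facts, for $x,y\in W$ with separation time $s^+(x,y)=n$, the subcurve between $x$ and $y$ is smooth under $T^{n-1}$, so
\[
\mathrm{dist}_W(x,y) \le \frac{1}{c_{hyp}\lambda^{n-1}}\,\mathrm{dist}_{T^{n-1}W}(T^{n-1}x,T^{n-1}y) \le \frac{\mathrm{diam}(\mathcal M)}{c_{hyp}\lambda^{n-1}}.
\]

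From this I read off $\lambda^{-n} \ge \big(c_{hyp}\,\mathrm{dist}_W(x,y)/(\lambda\,\mathrm{diam}(\mathcal M))\big)$, i.e. there are constants $c_1=c_1(\cR_Q,\cR_u)>0$ and (since $\lambda>1$) we get $\theta_0^{s^+(x,y)} \le c_1^{-1}\mathrm{dist}_W(x,y)$ with $\theta_0 = 1/\lambda \in (0,1)$. Now given the $\alpha$-Hölder bound $|f(x)-f(y)|\le |f|_{\alpha;H}\,\mathrm{dist}_W(x,y)^\alpha$, I substitute to obtain
\[
|f(x)-f(y)| \le |f|_{\alpha;H}\, \big(c_1^{-1}\big)^\alpha\,\theta_0^{\alpha s^+(x,y)} = \cC(\alpha,\cR_Q,\cR_u)\,|f|_{\alpha;H}\,\theta^{s^+(x,y)},
\]
with $\theta := \theta_0^\alpha = \lambda^{-\alpha}\in(0,1)$ and $\cC(\alpha,\cR_Q,\cR_u) := c_1^{-\alpha}$. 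This is exactly the claimed bound, with $\theta$ and $\cC$ depending only on $\alpha$ and the regularity parameters, as required. One small bookkeeping point: if $\mathrm{dist}_W(x,y)$ uses arc length along $W$ while the Hölder condition is stated with the ambient metric, the two differ by at most a bounded factor (controlled by $\Gamma_{max},L_{max}$), which is absorbed into $\cC$; and the case $n=1$ is handled by the trivial bound $|f(x)-f(y)|\le 2\sup_W|f|$, again absorbed.

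The main obstacle — though it is really a verification rather than a deep difficulty — is justifying that the expansion estimate of Theorem~\ref{thm:hyperbolicity} applies with a factor like $\lambda^{n-1}$ on the subcurve between $x$ and $y$: one must use the definition of $s^+(x,y)$ correctly, noting that $T^{n-1}$ (not $T^n$) is continuous, hence smooth in the relevant sense, on that subcurve, and that the iterated images remain $u$-curves (up to the cutting, which does not occur here precisely because we stopped before the separation time) so that Theorem~\ref{thm:hyperbolicity} is legitimately invoked at each step. Everything else is the "trivial calculation using only the definition" that the paper's neighboring lemmas advertise.
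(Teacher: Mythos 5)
Your proof follows essentially the same route as the paper's: bound the length of the smooth components of $T^nW$ (the paper justifies this by noting they are increasing curves in $\cM$ that must terminate on singularities, which is the precise version of your diameter-type bound), apply uniform hyperbolicity with $n=s^+(x,y)-1$ to get $dist_W(x,y)\le \frac{C\lambda}{c_{hyp}}\lambda^{-s^+(x,y)}$, and substitute into the H\"older estimate with $\theta=\lambda^{-\alpha}$. One small correction: your separate treatment of $s^+(x,y)=1$ via $2\sup_W|f|$ is both invalid (the seminorm $|f|_{\alpha;H}$ does not control $\sup_W|f|$, so that term cannot be absorbed into $\cC\,|f|_{\alpha;H}$) and unnecessary, since the main estimate already covers this case with $n-1=0$ because $dist_W(x,y)\le L_{max}$.
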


\begin{proof}
We only need to estimate the Euclidean distance with the dynamical distance from above.
The dynamics is uniformly expanding along $u$-curves by uniform hyperbolicity (Theorem~\ref{thm:hyperbolicity}), but the length of possible smooth components of $T^n W$ is bounded: indeed, they are increasing curves in the discrete time phase space viewed as (\ref{eq:map-phasespace-with-position-and-angle}), and must terminate on negative time singularities. So $|T^n W|\le \Cl{const:H-dynH-1}(\cR_Q,\cR_u)$, and points $x$ and $y$ with a long separation time $s^+(x,y)$ have to be close: if $n<s^+(x,y)$, then
\[\Cr{const:H-dynH-1}\ge |T^n W|\ge dist_{T^n W} (T^n x,T^n y)\ge c_{hyp}\lambda^{n}dist_{W}(x,y).\]
Using this with $n=s^+(x,y)-1$, we get
\[dist_{W}(x,y)\le \frac{\Cr{const:H-dynH-1} \lambda}{c_{hyp}} \left(\frac{1}{\lambda}\right)^{s^+(x,y)}.\]
Choosing
\[\theta=\theta(\alpha,\cR_Q,\cR_u):=\left(\frac{1}{\lambda}\right)^\alpha<1 \quad , \quad \cC=\cC(\alpha,\cR_Q,\cR_u):=\left(\frac{\Cr{const:H-dynH-1} \lambda}{c_{hyp}}\right)^\alpha\]
this becomes
\[dist_{W}(x,y)^\alpha\le \cC \theta^{s^+(x,y)}.\]
So since $f$ is $\alpha$-H\"older,
\[|f(x)-f(y)|\le |f|_{\alpha;H} dist(x,y)^\alpha \le |f|_{\alpha;H}dist_{W}(x,y)^\alpha \le |f|_{\alpha;H} \cC \theta^{s^+(x,y)}.\]
This is exactly the statement to prove.
\end{proof}

A comparison in the other direction is not so easy: dynamical H\"older continuity obviously doesn't imply H\"older continuity -- in fact, it doesn't even imply continuity. However, a dynamically H\"older function can be made H\"older at the cost of modifying it on a small set. We will use this in our construction (in Proposition~\ref{prop:G_0-Holder-on-H_1}).

\begin{lemma}\label{lem:sep-time-constant-on-W^cs}
 If $W^u_1$ and $W^u_2$ are $u$-curves, $h:H\subset W^u_1\to W^u_2$ is the holonomy map along central-stable manifolds, $x,y\in H$ and $s^+$ is the separation time from Definition~\ref{def:sep-time}, then
 \[s^+(h(x),h(y))=s^+(x,y).\]
\end{lemma}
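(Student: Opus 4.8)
The plan is to reduce the statement to the corresponding fact about the discrete-time billiard map, where separation time is defined. The key observation is that the separation time $s^+(x,y)$ for points on a $u$-curve is, by Definition~\ref{def:sep-time}, governed entirely by the action of the discrete map $T$ on the projected curve $\Pi W^u_1$, and the holonomy $h$ along central-stable manifolds of the flow projects, under $\Pi$, to the holonomy along \emph{stable} manifolds of the map $T$. So the first step is to show that $\Pi h = h_{\mathcal{M}} \Pi$, where $h_{\mathcal{M}}$ is the stable-manifold holonomy between $\Pi W^u_1$ and $\Pi W^u_2$; this is essentially the definition of central-stable manifolds for the flow (a central-stable manifold $W$ is one for which $\Pi W$ is a homogeneous local stable manifold, cf. Definition~\ref{def:homcsman}), together with the fact that $\Pi$ intertwines the flow-time evolution in the relevant way.

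\textbf{Main step.} Next I would observe that for the map $T$, two points on the same stable manifold are never separated: if $\xi$ and $\xi'$ lie on a single (homogeneous) local stable manifold $\gamma$, then $T^n$ is smooth on $\gamma$ for every $n\ge 0$, and $T^n\gamma$ stays in one homogeneity strip and shrinks, so in particular $T^n$ is continuous on the subcurve of $\Pi W^u_1$ (resp.\ $\Pi W^u_2$) joining the relevant projected points exactly when it is continuous on the corresponding subcurve on the other $u$-curve. Concretely: let $\xi = \Pi x$, $\eta = \Pi y$ on $\Pi W^u_1$, and $\xi' = \Pi h(x)$, $\eta' = \Pi h(y)$ on $\Pi W^u_2$, with $\xi,\xi'$ on one stable manifold and $\eta,\eta'$ on another. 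The subcurve of $\Pi W^u_1$ from $\xi$ to $\eta$ and the subcurve of $\Pi W^u_2$ from $\xi'$ to $\eta'$ are connected by the stable holonomy $h_{\mathcal{M}}$ restricted to that subcurve. Since stable holonomy maps commute with $T$ (as long as no singularity is hit) and since crossing a singularity of $T^n$ on one curve forces crossing it on the holonomy-related curve at the same time $n$ — because the singularities $S_k$ are unions of stable manifolds (in the relevant direction) and the holonomy is along stable leaves — the smallest $n$ for which continuity fails is the same on both sides. This gives $s^+(\xi',\eta') = s^+(\xi,\eta)$, i.e.\ $s^+(h(x),h(y)) = s^+(x,y)$, using that $s^+$ is intrinsic to the pair of points and independent of the connecting $u$-curve.

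\textbf{Anticipated obstacle.} The delicate point is the claim that a singularity of $T^n$ is crossed on the subcurve of $\Pi W^u_1$ precisely when it is crossed on the $h_{\mathcal{M}}$-image subcurve of $\Pi W^u_2$, at the same value of $n$. This requires knowing that the singularity manifolds $S_k$ are (locally, in the relevant region) foliated by stable manifolds — equivalently, that $S_k$ is a union of stable curves — so that the stable holonomy either maps a point of $\Pi W^u_1$ on $S_k$ to a point of $\Pi W^u_2$ on the same $S_k$, or maps points off $S_k$ to points off $S_k$, with no crossing appearing or disappearing. This is a standard structural fact about dispersing billiards (the forward images of $S_0$ are stable curves, their backward images unstable curves, and the holonomy is taken along central-stable leaves which project to stable leaves), but spelling it out carefully — including the homogeneity-strip boundaries, which are also stable curves by construction — is where the real content lies. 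Everything else is bookkeeping: once the stable-leaf structure of the relevant singularity set is in hand, the equality $s^+(h(x),h(y)) = s^+(x,y)$ follows because both sides count the same combinatorial event.
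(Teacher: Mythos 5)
Your core idea is the same as the paper's, but the paper's proof is three lines and rests on less than what you invoke: since $x$ and $h(x)$ lie on the same homogeneous central-stable manifold, for every $n\ge 0$ the points $T^n x$ and $T^n h(x)$ lie in the same continuity component of the collision map (indeed in the same homogeneity strip) --- this is the defining property of such manifolds --- and likewise for $y$ and $h(y)$; hence the pair $(x,y)$ is separated at exactly the same step as $(h(x),h(y))$, using also that $s^+$ is intrinsic to the pair of points (independent of the connecting $u$-curve, as noted after Definition~\ref{def:sep-time}). Two remarks on where your write-up deviates. First, the detour through $\Pi$ and the map holonomy is unnecessary: the paper extends $T$ to $M$ by $T(x)=\Phi^{\tau(x)}(x)$, so the separation time is defined directly for flow phase points and one can argue with $x$, $h(x)$, $y$, $h(y)$ themselves. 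Second, and more importantly, the structural fact you flag as the crux --- that the singularity sets $S_k$ are unions of stable manifolds, so that stable holonomy ``cannot create or destroy crossings'' --- is false as stated: future singularity curves are stable-cone aligned, but they are not local stable manifolds, nor are they foliated by them. What is true, and all that is needed, is the weaker statement that a singularity of a forward iterate cannot transversally cross a homogeneous (central-)stable manifold, because by definition the map is continuous on such a manifold for all forward times; combined with the continuation property (Lemma~\ref{lem:sing-continuation}) this forces the cutting singularity to cross both $u$-curves of the quadrilateral at the same iterate. With that correction your argument closes, and it is then essentially the paper's argument in expanded form.
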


\begin{proof}
 $x$ and $h(x)$ are on the same central-stable manifold, so for every discrete time step of the collision map, their images are on the same continuity component of the map. Similarly for $y$ and $h(y)$. So $x$ and $y$ are separated exactly when $h(x)$ and $h(y)$ are separated.
\end{proof}

\subsection{Holonomy along central-stable manifolds}\label{sec:holonomy}

We state some regularity properties of homogeneous local central stable-manifolds and their holonomy
{which will pay a crucial role in our argument.}
In particular, we consider $u$-curves in the sense of Definition~\ref{def:u-curve}, and we want to ensure that there are many homogeneous local central-stable manifolds that connect these $u$-curves, and that the holonomy maps obtained by sliding along the central stable manifolds are sufficiently regular. For this purpose, we only consider \emph{good} $u$-curves in the sense of Definition~\ref{def:gooducurve}. Analogous properties in the map context are discussed in \cite{ChM06}, Chapter 5; hence our task is to reduce the flow statements to the map statements. The proofs are given in Section~\ref{sec:Ahom}.

For any $x\in M$, let $r^{c-s}(x)$ denote the inner diameter of the homogeneous local central stable manifold of $x$ -- meaning the supremum of the radii of those disks in the homogeneous local central-stable manifold, centred at $x$, which fit completely into the manifold without reaching its boundary.

\begin{theorem}\label{thm:long_cs_manifolds} There is a constant $C_W=C_W(\cR_Q,\cR_u)$ such that for any good $u$-curve $W^u$ and any $0<\eps$,
\[m_{W^u}(\{x\in W^u\,|\; r^{c-s}(x)\le \eps\})\le C_W \eps.\]
\end{theorem}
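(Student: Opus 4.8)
The plan is to reduce the flow statement to the corresponding statement for the billiard ball map $T:\cM\to\cM$, where it is classical (cf.\ \cite{ChM06}, Chapter~5): for a discrete-time $u$-curve $w$ and any $\eps>0$, the $m_w$-measure of those $x\in w$ whose homogeneous local stable manifold has inner diameter at most $\eps$ is $\le C\eps$. First I would recall that a $u$-curve $W^u$ in the flow sense is, by Definition~\ref{def:u-curve}, the image $\{\Phi^{t(x)}(x)\mid x\in w\}$ of a discrete-time $u$-curve $w\subset\cM$ under a place-dependent flight time, and that by Definition~\ref{def:homcsman} the homogeneous local central-stable manifold $W$ of a point $z\in W^u$ satisfies $\gamma=\Pi W$ being a homogeneous local stable manifold for $T$ through $\Pi z$. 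So $r^{c-s}(z)$ being small should be controlled by the inner diameter $r^s(\Pi z)$ of that stable manifold together with how much ``room'' there is in the flow direction before hitting $\partial M$ — and the latter is where the hypothesis that $W^u$ is a \emph{good} $u$-curve (i.e.\ $\mathrm{dist}(W^u,\partial M)>\eps_0\ge 10L_{max}$, Definition~\ref{def:gooducurve}) enters: near $W^u$ there is a definite amount of free flight available, so a short central-stable manifold must be short because its projection $\gamma$ is a short stable manifold, not because it ran into the boundary of $M$.

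The key steps, in order, would be: (1) Fix the good $u$-curve $W^u$ with associated discrete-time $u$-curve $w$ and flight-time function $t(\cdot)$. Show that the map $w\to W^u$, $x\mapsto\Phi^{t(x)}(x)$, is bi-Lipschitz with constants depending only on $\cR_u$ (this uses item (2) of Definition~\ref{def:u-curve}, the uniform transversality $\alpha_{min}$ of $W^u$ to the flow, plus boundedness of $t$ via $\tau_{max}$ and the $C^1$ control of the flight-time function implicit in items (1),(3) of that definition); consequently $m_{W^u}$ and the push-forward of $m_w$ are comparable with constants in $\cR_u$. (2) Show there is a constant $c=c(\cR_Q,\cR_u)$ such that for $z=\Phi^{t(x)}(x)\in W^u$ one has $r^{c-s}(z)\ge c\cdot r^s(\Pi z)$, where $r^s(\cdot)$ is the inner diameter of the homogeneous local stable manifold for $T$. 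Here one writes the central-stable manifold $W$ through $z$ as a union of flow-orbit segments over the stable manifold $\gamma=\Pi W$; since $z$ is $\eps_0$-far from $\partial M$ and free flights are bounded below by $\tau_{min}$, the orbit segment of $z$ inside $W$ has length bounded below by a constant, and the ``stable width'' of $W$ at $z$ is comparable to the length of $\gamma$ near $\Pi z$, i.e.\ to $r^s(\Pi z)$, using the uniform transversality/alignment geometry already recorded in Theorems~\ref{thm:transversality} and \ref{thm:hyperbolicity} and the distortion bounds of \cite{ChM06}, Ch.~5. (3) Combine: if $r^{c-s}(z)\le\eps$ then $r^s(\Pi z)\le\eps/c$, so
\[
m_{W^u}(\{z\in W^u\mid r^{c-s}(z)\le\eps\})\le C_1\, m_w(\{x\in w\mid r^s(\Pi z(x))\le\eps/c\})\le C_1\cdot C_{ChM}\cdot \eps/c=:C_W\eps,
\]
where $C_{ChM}$ is the corresponding constant from the map setting and $z(x)=\Phi^{t(x)}(x)$; note $\Pi z(x)$ need not equal $x$, but $\Pi z(x)$ lies on the orbit of $x$ within one free flight, so $\{x\in w\mid r^s(\Pi z(x))\le\eps/c\}$ is, up to the bi-Lipschitz reparametrization along the flow, the same set as $\{y\in \Pi(W^u)\mid r^s(y)\le\eps/c\}$ on a discrete-time $u$-curve, to which the map-level estimate applies directly.

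The main obstacle I expect is step (2): carefully translating ``inner diameter of a two-dimensional local central-stable manifold for the flow'' into ``inner diameter of a one-dimensional local stable manifold for the map''. One must check that the central-stable manifold is genuinely a flow-saturation (union of orbit segments) of $\gamma=\Pi W$ up to a two-sided flight, that the homogeneity (secondary singularity) constraint is exactly encoded by $\gamma$ being a \emph{homogeneous} stable manifold (Definition~\ref{def:homcsman}), and that the boundary of $W$ inside $M$ is reached either because $\gamma$ ends (controlled by $r^s$) or because one reaches $\partial M$ — and the latter is excluded, near $z$, precisely by $\mathrm{dist}(W^u,\partial M)>\eps_0\ge 10 L_{max}$ together with $L(W^u)\le L_{max}$, so the whole of $W^u$ and a fixed-size neighbourhood of it stay away from $\partial M$. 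Making the constant $c$ depend only on $\cR_Q$ and $\cR_u$ (and not on $W^u$ or $z$) is the quantitative heart of the argument; everything else is bi-Lipschitz bookkeeping and an appeal to the established map-level statement. Full details are deferred to Section~\ref{sec:Ahom}.
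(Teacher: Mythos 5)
Your skeleton is the same as the paper's: reduce to the map-level statement (Theorem 5.67 of \cite{ChM06}) on a suitable cross-section, show the projection from the flow curve to that cross-section has a uniformly bounded Jacobian, and show that a short homogeneous central-stable manifold at $z$ forces a short homogeneous stable manifold at its projection. The genuine difference is the choice of cross-section. You project by $\Pi$ onto the natural collision space, i.e.\ onto the discrete-time $u$-curve $w$ from Definition~\ref{def:u-curve}; the paper instead erects an auxiliary \emph{transparent wall} behind $W^u$ (a circular arc uniformly transversal to the trajectories leaving $W^u$), treats it as an extra scatterer, and applies the \cite{ChM06} results to the extended billiard, proving the needed Jacobian bounds in Lemma~\ref{lem:JacobianMapFlow}. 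The paper does this precisely because the direct transition between a flow $u$-curve and its trace on the actual scatterers degenerates near tangential collisions (the flight time has unbounded derivative, $\cos\varphi\to 0$); the transparent wall buys uniform transversality once and for all, and the same device then also serves the harder Theorems~\ref{thm:holonomy_dynholder} and~\ref{thm:holonomy_cs-regularity}, where Lipschitz/H\"older control of the Jacobian (not just boundedness) is required. Your route avoids the auxiliary construction, and for the present theorem (where only two-sided bounds on the Jacobian are needed) it can in principle be carried out, but it puts you exactly at the point the paper chose to sidestep.

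Two of your asserted steps are therefore where real work is needed, and as stated they are not quite right. First, the bi-Lipschitz claim in step (1) does not follow from $\cR_u$ alone and is delicate when the previous collision is nearly grazing: one must use that the tangent of $w$ lies in the unstable cone (so that the $\omega$-component $d\varphi+K\,dr$ of the lifted tangent is bounded below by $c(|dr|+|d\varphi|)$ with $c=c(\kappa_{min})$) together with $t\le\tau_{max}$ and the transversality parameter $\alpha_{min}$ for the upper bound; the constants depend on $\cR_Q$ as well as $\cR_u$, and none of this is automatic from ``items (1)--(3) of the definition''. Second, in step (2) the comparison $r^{c-s}(z)\ge c\,r^s(\Pi z)$ is not only about having room in the flow direction before $\partial M$: the stable width of the central-stable manifold at $z$ is obtained from the stable manifold at $\Pi z$ by flowing forward for time $t(x)$, and this contracts by a factor comparable to the \emph{remaining} free path divided by $\tau_{max}$ (the convergent front focuses only beyond the next collision). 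So the mechanism by which goodness enters is a lower bound on the distance from $z$ to its \emph{next} collision, not merely the backward room you invoke; without this the constant $c$ degenerates as $z$ approaches a scatterer. With these two points supplied, your argument closes along the lines of your step (3); the paper's Section~\ref{sec:Ahom} achieves the same end by transporting the whole problem to the transparent wall, where both issues disappear by construction.
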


Consider two good $u$-curves $W^u_1$ and $W^u_2$ sufficiently close to each other. For $H_1\subset W^u_1$ denote by $h:H_1\to W^u_2$ the holonomy map defined with \emph{homogeneous central-stable} manifolds and let $H_2=h(H_1)$. Let finally $m_{H_1}$ and $m_{H_2}$ denote the corresponding arc length measures. Our next theorem claims that the holonomy maps between good $u$-curves along homogeneous central-stable manifolds are absolutely continuous with uniformly bounded and dynamically H\"older continuous densities.

\begin{theorem}\label{thm:holonomy_dynholder}
There exist constants $C_h=C_h(\cR_Q,\cR_u)<\infty${, $\Theta_h=\Theta_h(\cR_Q,\cR_u)<1$ and} a function $Jh:H_1\to\IR^+$ such that for any Borel $B\subset H_1$,
\begin{equation}\label{eq_Jh-def}
m_{H_2}(h(B))=\int_B Jh \diff m_{H_1}.
\end{equation}
Furthermore, this $Jh$ satisfies that for any $x,y\in H_1$
\[
|Jh(x)|\le C_h
\]
and
\[
|Jh(x)-Jh(y)|\le C_h \Theta_h^{s^+(x,y)}
\]
where $s^+(x,y)$ is the separation time of $x$ and $y$.
The function $Jh$ being, in fact, the Radon-Nikodym derivative of $m_{H_2}\circ h$ w.r.t. $m_{H_1}$  is called the Jacobian of the holonomy.
\end{theorem}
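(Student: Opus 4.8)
The plan is to reduce the statement to its well-known counterpart for the billiard ball map: the holonomy between two discrete-time $u$-curves in $\cM$ along \emph{homogeneous} local stable manifolds is absolutely continuous, with a Jacobian that is bounded and whose logarithm is dynamically H\"older with respect to the separation time, all constants depending on $\cR_Q$ and the class of $u$-curves only (\cite{ChM06}, Chapter~5). The link between flow and map is the projection $\Pi:M\to\cM$ onto the previous collision. I would start from the identity $T^nx=T^n(\Pi x)$, valid for every $n\ge 1$ and every $x\in M$ that has already collided --- once the particle has collided, $\Pi$ is transparent for the forward map. Applied to a good $u$-curve $W^u$ (whose points are far from $\partial M$, so $\Pi$ is defined on it and $\tau_-$ is bounded away from $0$ and $\infty$), this shows that $\Pi|_{W^u}$ is a piecewise $C^2$ bijection onto $w:=\Pi W^u$, which is a piecewise discrete-time $u$-curve, and --- since $\Pi$ intertwines the forward dynamics --- that the separation time $s^+$ computed along $W^u$ agrees with the one computed along $w$ after projection. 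By Definition~\ref{def:homcsman}, $\Pi$ carries homogeneous local central-stable manifolds to homogeneous local stable manifolds, so the flow holonomy $h:H_1\to W^u_2$ and the map holonomy $\bar h$ along the corresponding stable manifolds satisfy $\Pi\circ h=\bar h\circ\Pi$, whence componentwise $h=(\Pi|_{W^u_2})^{-1}\circ\bar h\circ(\Pi|_{W^u_1})$. The chain rule for arc-length Jacobians then gives
\[Jh(x)=\frac{J(\Pi|_{W^u_1})(x)}{J(\Pi|_{W^u_2})(h(x))}\cdot J\bar h(\Pi x),\]
which already realises $Jh$ as the positive Radon--Nikodym derivative in (\ref{eq_Jh-def}).

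It remains to estimate the three factors. The factor $J\bar h\circ\Pi$ is bounded and dynamically $\Theta_h$-H\"older by the cited map result, together with the coincidence of separation times above (the case $s^+=1$ needs nothing beyond boundedness of $Jh$). The two projection factors $J(\Pi|_{W^u_i})$ are, on each smooth component, $C^1$ functions of arc length, since $\tau_-$ there solves a smooth transversality equation; away from near-grazing previous collisions their $C^1$-seminorms are bounded in terms of $\cR_Q,\cR_u$, so Lemma~\ref{lem:Holder-dynHolder} makes them dynamically H\"older on $W^u_1$, respectively $W^u_2$, while Lemma~\ref{lem:sep-time-constant-on-W^cs} transports the second factor back to $H_1$ without changing separation times. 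Multiplying the three dynamically H\"older factors by Lemma~\ref{lem:dynHolder-product}, and using that $Jh$ is bounded, then yields the claimed estimates with $\Theta_h$ the largest of the dynamical H\"older exponents obtained and $C_h$ assembled from Lemma~\ref{lem:dynHolder-product}.

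The main obstacle is the behaviour near grazing collisions: the projection Jacobians $J(\Pi|_{W^u_i})$ are \emph{not} uniformly controlled up to the boundaries of their smooth components, because $\Pi$ becomes strongly expanding when the previous collision is nearly tangential, so that, a priori, both the uniform bound on $Jh$ and the H\"older estimate are lost. This is where the homogeneous structure is essential. If $x\in H_1$, its homogeneous central-stable manifold --- and hence $\Pi x$ --- lies in a single homogeneity layer, and the arc-length distortion of $\Pi|_{W^u_1}$ at $x$ is governed by, and degenerates with, the index of that layer. Crucially, $h(x)$ lies on the same central-stable manifold, so $\Pi h(x)$ lies in the same layer, and the distortion of $\Pi|_{W^u_2}$ at $h(x)$ degenerates by the same amount; hence the two degeneracies cancel in the ratio $J(\Pi|_{W^u_1})(x)/J(\Pi|_{W^u_2})(h(x))$, which is then uniformly bounded and, on each component, as regular as needed. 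The remaining factor $J\bar h$ is bounded uniformly over all layers and dynamically H\"older precisely because homogeneous manifolds are used (\cite{ChM06}, Chapter~5); this also relies on the transversality statements, Theorems~\ref{thm:transversality} and \ref{thm:alignment}. Making this cancellation quantitative and layer-uniform, together with the careful separation-time bookkeeping across $\Pi$, is the technical heart of the argument, and is carried out in Section~\ref{sec:Ahom}.
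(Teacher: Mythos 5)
Your overall strategy --- reduce to the billiard-map holonomy via a flow-to-section projection, write $Jh$ as a ratio of two projection Jacobians times the map-holonomy Jacobian (your formula is the analogue of the paper's decomposition (\ref{eq:Jreduce})), use equality of separation times, and invoke \cite{ChM06}, Chapter~5 --- is exactly the right skeleton. But there is a genuine gap at the step you yourself flag as ``the technical heart'': you project to the \emph{actual} previous collision via $\Pi$, and the regularity of $J(\Pi|_{W^u_i})$ near grazing previous collisions is neither established nor establishable by the cancellation you describe. Your homogeneity-layer argument compares the two projection Jacobians at the \emph{same} point of the same central-stable manifold ($\Pi x$ and $\Pi h(x)$ lie in one strip by Definition~\ref{def:homcsman}), and at best controls the pointwise size of the ratio. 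The statement you must prove, however, is the dynamical H\"older bound $|Jh(x)-Jh(y)|\le C_h\Theta_h^{s^+(x,y)}$ for \emph{two different} points $x,y\in H_1$, whose previous collisions may lie in different homogeneity layers and whose connecting subcurve may pass arbitrarily close to a tangency, where the modulus of continuity of the $\Pi$-Jacobians degenerates (quantities like $B_+\sim 2\kappa/\cos\varphi$ vary with unbounded derivative). The usual scheme ``Jacobian Lipschitz in arc length $+$ uniform expansion $\Rightarrow$ dynamically H\"older'' then fails, because the local Lipschitz constant blows up with the layer index while the separation time gives no control over how close to grazing the previous collision is. Asserting that this can be made ``quantitative and layer-uniform'' and deferring it to Section~\ref{sec:Ahom} is circular: that appendix does not carry out your argument.

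For comparison, the paper sidesteps precisely this obstacle (it states explicitly that the transition between flow and map $u$-curves has bad regularity near tangential collisions) by \emph{not} using $\Pi$: it erects an auxiliary \emph{transparent wall} $\partial Q_{\rm tr}$ near the good $u$-curve (this is where the hypothesis $dist(W^u,\partial M)>\eps_0$ is used), so that all relevant trajectories meet the auxiliary section at angles bounded away from grazing. Then the flow-to-section Jacobians are uniformly bounded above and below and uniformly Lipschitz (Lemma~\ref{lem:JacobianMapFlow}), the decomposition (\ref{eq:Jreduce}) applies with the map holonomy on the extended table, and the only remaining point is that the constants of \cite{ChM06} for the extended table depend on it only through $\cR_{Q}\cup\cR_u$. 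If you want to keep the projection to the real scatterers, you would have to prove a distortion-type estimate for $x\mapsto J(\Pi|_{W^u_1})(x)/J(\Pi|_{W^u_2})(h(x))$ that is dynamically H\"older uniformly over homogeneity layers --- a nontrivial addition comparable to the homogeneity-strip distortion bounds of \cite{ChM06} --- and this is missing from your proposal.
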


We will need one more property to ensure that the Jacobian of the holonomy varies sufficiently regularly along the central stable direction, as formulated in the following theorem.

\begin{theorem}\label{thm:holonomy_cs-regularity}
Let $W_1$ and $W_2$ be two good $u$-curves, and $p_1\in W_1$ and $p_2\in W_2$ two points on them that lie on the same homogeneous central stable manifold $W_{cs}$. We introduce the following quantities:
 \begin{itemize}
 \item $\delta$, the distance of $p_1$ and $p_2$ along $W_{cs}$ (in the natural Riemannian metric on $W_{cs}$ as a submanifolds of $M$);
 \item $\alpha$, the angle of the tangent vectors $T_{p_1}W_1$ and $T_{p_2}W_2$,
 \item $J_h(p_1)$, the Jacobian of the holonomy along central stable manifolds from $W_1$ to $W_2$, evaluated at $p_1$.
 \end{itemize}
 With these notations, there is a constant $C_{h2}=C_{h2}(\cR_Q,\cR_u)$ such that
 \[
 |J_h(p_1)-1|\le C_{h2}(\alpha+\delta^{1/3}).
 \]
\end{theorem}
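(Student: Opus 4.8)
The plan is to reduce the statement to its analogue for the discrete-time billiard map, where such a Jacobian estimate is available in the literature (Chernov–Markarian, Chapter 5), and then control the extra error coming from the flow (i.e.\ the free-flight times attached to the base points of the $u$-curves). Concretely, write $w_i = \Pi W_i$ for the discrete-time $u$-curves underlying $W_1$ and $W_2$, let $\gamma = \Pi W_{cs}$ be the homogeneous local stable manifold carrying $\Pi p_1$ and $\Pi p_2$, and let $\bar h : \Pi W_1 \to \Pi W_2$ be the holonomy along $\gamma$ in $\cM$. Then $J_h(p_1)$ factorizes: sliding from $p_1$ to $p_2$ along $W_{cs}$ in $M$ amounts to flowing back to $\Pi p_1$, applying the map-holonomy $\bar h$, and flowing forward to $p_2$. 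Each of the two flow-projection factors is the Jacobian of a time-$t$ flow map restricted to a $u$-curve versus its projection, evaluated at points a distance $O(\delta)$ apart; these factors are $1 + O(\delta)$ because the flight-time functions defining $W_1$ and $W_2$ are (by Definition~\ref{def:u-curve}) uniformly regular, and $p_1,p_2$ are $\delta$-close along a central-stable manifold so their projections are exponentially close in the discrete sense. Thus it suffices to show $|J\bar h(\Pi p_1) - 1| \le C(\alpha' + (\delta')^{1/3})$ for the map holonomy, where $\alpha'$ is the angle between $T_{\Pi p_1} w_1$ and $T_{\Pi p_2} w_2$ and $\delta'$ is the distance of $\Pi p_1,\Pi p_2$ along $\gamma$; but $\alpha' = \alpha + O(\delta)$ (the flow-projection changes tangent directions by $O(\delta)$, again by $u$-curve regularity) and $\delta' \le C\delta$, so this map-estimate is exactly what we need.

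For the map holonomy, the relevant statement is the well-known fact that the holonomy Jacobian along stable manifolds is close to $1$ when the two $u$-curves are close and nearly parallel, with a Hölder-type dependence on the distance. The cleanest route is to invoke the formula for $\log J\bar h$ as a sum (over the forward iterates $T^n$, $n \ge 0$) of differences of ``cross-ratio''-type quantities built from the curvatures of $T^n w_1$ and $T^n w_2$ at the two images of the stable manifold; this is the content of the absolute-continuity proof in \cite{ChM06}, Chapter 5 (and the distortion bounds there). Two mechanisms make this sum small: (i) for the first $N$ steps, where $N \sim \tfrac{1}{3}|\log \delta|/\log\lambda$, the images $T^n p_1, T^n p_2$ stay within distance $\lambda^n \delta \le \delta^{2/3}$ of each other on a common homogeneity strip, and the summand is bounded by $C(\alpha\lambda^{-n} + $ (distance)$^{1/2})$ using the uniform curvature and distortion bounds together with the homogeneity-strip regularity — giving a geometric contribution $O(\alpha)$ plus $O(\delta^{1/3})$; (ii) the tail $n > N$ contributes $O(\lambda^{-N}) = O(\delta^{1/3})$ by the standard exponential decay of the holonomy-Jacobian tail (this is where the exponent $1/3$, rather than $1/2$, comes from: one balances $\delta^{2/3}$-type proximity errors against the $\lambda^{-N}$ tail, both at the cutoff $N$). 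Summing the two regimes yields $|J\bar h - 1| \le C(\alpha' + (\delta')^{1/3})$.

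I expect the main obstacle to be \emph{bookkeeping the flow-to-map reduction with the homogeneity layers}, i.e.\ making precise that $p_1, p_2$ lying on a \emph{homogeneous} local central-stable manifold forces $\Pi p_1,\Pi p_2$ onto a \emph{homogeneous} local stable manifold (this is literally Definition~\ref{def:homcsman}) and then checking that the flight-time functions attached to $W_1,W_2$ do not destroy the needed proximity and near-parallelism — in particular that the angle $\alpha$ in $M$ and the angle $\alpha'$ in $\cM$ differ by only $O(\delta)$, uniformly over the class $\cR_u$, even near grazing collisions where the flight time is still bounded below by $\tau_{min}$. The genuinely billiard-specific estimate, item (i)--(ii) above for the map holonomy, is essentially contained in \cite{ChM06}; the novelty here is only the flow wrapper, so the write-up should spend most of its effort there and cite Chapter 5 of \cite{ChM06} for the map-level cross-ratio / distortion bounds. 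A secondary technical point is that all constants must be tracked to depend on $(\cR_Q,\cR_u)$ only; this is automatic since every ingredient — curvature bounds, $\tau_{min},\tau_{max}$, $\lambda$, $c_{hyp}$, the homogeneity constant $k_0$, and the $u$-curve regularity data — is part of $\cR_Q \cup \cR_u$.
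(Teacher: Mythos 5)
Your skeleton is the same as the paper's: factor $J_h$ as (projection Jacobian)$^{-1}\cdot$(map-holonomy Jacobian)$\cdot$(projection Jacobian), import the exponent $1/3$ from the discrete-time holonomy estimate (Theorem 5.42 of \cite{ChM06}), and control the two projection factors. The genuine gap is in the reduction step. You project with $\Pi$ onto the \emph{actual} scatterers and assert that the flow-to-map correspondence is uniformly regular over the class $\cR_u$ (``the factors are $1+O(\delta)$'', ``$\alpha'=\alpha+O(\delta)$'', ``$\delta'\le C\delta$''). This fails near grazing collisions: a good $u$-curve is $\eps_0$-far from $\partial M$, but its \emph{previous} collision can be arbitrarily close to tangential, and there the trace-to-curve correspondence degenerates ($d\xi_+=\cos\phi\, dr$, and the post-collisional front curvature contains a $2K/\cos\phi$ term), so the projection Jacobians, the tangent-direction comparison and even the distance comparison between $\delta$ and $d(\Pi p_1,\Pi p_2)$ are neither uniformly bounded nor uniformly Lipschitz; the lower bound $\tau_{min}$ on the flight time is irrelevant here — the problem is the unbounded derivative of the flight/projection map at tangencies, which the paper flags explicitly at the start of Section~\ref{sec:Ahom} as the reason a direct reduction is not done. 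Note also that the individual projection factors are in any case not $1+O(\delta)$ (they are merely bounded away from $0$ and $\infty$); what is actually needed is that the \emph{ratio} of the two projection Jacobians at the two nearby points is $1+O(\alpha+\delta)$, i.e.\ a Lipschitz comparison of these Jacobians, and that is exactly the uniformity that breaks down under your reduction.

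The paper's proof fixes this by replacing $\Pi$ with projection onto an auxiliary \emph{transparent wall} $\partial Q_{\mathrm{tr}}$: a circular arc of controlled curvature placed behind $W_1$, crossed transversally by all trajectories through $W_1$ and $W_2$, added as an extra Poincar\'e section to the table (so that the regularity parameters of the extended table are still controlled by $\cR_Q\cup\cR_u$). On this section $\cos\phi$ is bounded away from zero, and Lemma~\ref{lem:JacobianMapFlow} gives the explicit formula (\ref{eq:J0expl}) for the trace-to-curve Jacobian, showing it is bounded and uniformly Lipschitz with constants depending only on $(\cR_Q,\cR_u)$. With the decomposition $J_h=(J_1)^{-1}J_{h_0}J_2$ one then proves $|J_2-J_1|\le C(\delta+\alpha)$ by elementary comparisons of $\tau,\phi,B,m$ at the two trace points, and bounds $|J_{h_0}-1|$ by Theorem 5.42 of \cite{ChM06} applied on the extended table, which is where the $\delta^{1/3}$ enters. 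Your plan to re-derive the map-level estimate via the series for $\log J\bar h$ is unnecessary (the paper simply cites it); what does need work is precisely the flow-to-map wrapper you treat as routine, and some transversal-section device of this kind is required to obtain a constant $C_{h2}=C_{h2}(\cR_Q,\cR_u)$.
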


\subsection{Standard pairs}\label{sec:stpair}

The method of standard pairs was introduced in \cite{D05}. Somewhat later in \cite{Ch06} it was already utilized for obtaining various stochastic properties of dispersing billiards whereas  in \cite{ChD09A} Chernov and Dolgopyat presented far-reaching novel applications of the method. A standard pair is a $u$-curve $W$ and a probability density $\phi$ on $W$, with certain regularity properties. It can be pictured as a measure on the phase space which is concentrated on a single $u$-curve -- i.e. it is highly singular w.r.t. Riemannian volume. The precise notion to use may depend on the application, and there are many slightly different versions in the literature.

In applications, time evolution of standard pairs plays a crucial role, so the regularity properties required are such that standard pairs evolve into standard pairs in some sense. In the present work we will not consider such a time evolution: instead, we will approximate a standard pair with an absolutely continuous measure. So, for the purpose of applicability, we choose the notion of standard pair as general as we can.

\begin{definition}\label{def:stp}
A standard pair is a pair $(W,\phi)$ where
\begin{itemize}
 \item $W\subset M$ is a $u$-curve in the sense of Definition~\ref{def:u-curve}.
 \item $\phi:W\to\IR^+$ is a probability density function w.r.t. arc length on $W$.
 \item $\phi$ is dynamically H\"older continuous with some constants $C_\phi<\infty$ and $\theta_\phi<1$
 \end{itemize}
\end{definition}

\section{Proof of the main theorem}\label{sec:proof}

In the whole section we assume that  $F:M\to \IR$ is generalized $\alpha_F$-Hölder continuous (cf. Definition \ref{def:GHC}) and that $\int_M F\diff\mu = 0$.

The proof is based on Theorem \ref{thm:BDL}, i.e. on Corollary 1.3 of Theorem 1.2 from \cite{BDL16}.

We will prove our theorem by approximating the singular measure $\tilde\phi$ concentrated on $W^u$ with an absolutely continuous one, which has some H\"older continuous density $G$ w.r.t. $\mu$. The choice of $G$ will depend on $t$, which we sometimes emphasize by writing $G=G_t$.

Our approximating density $G_t$ will be supported on some $\eps$-neighbourhood of $W^u$, where $\eps=\eps(t)\le\eps_0$ will be specified later. The delicate construction of $G$ uses two foliations of this $\eps$-neighbourhood, which correspond to some kind of ``product structure'', at least for the vast majority of points. Both foliations will be measurable partitions w.r.t. $\mu$, and the regularity properties of the factor and conditional measures will play a crucial role -- although not always exploited formally.

This approximate product structure, with some of the notation, is shown in Figure~\ref{fig:product_structure}.
\begin{figure}[hbt]
 \psfrag{eps}{$\eps$}
 \psfrag{W^s_x}{$W^{c-s}_x$}
 \psfrag{h_z(x)}{$h_z(x)$}
 \psfrag{W^s_y}{$W^{c-s}_y$}
 \psfrag{h_z(y)}{$h_z(y)$}
 \psfrag{W^u_z}{$W^u_z$}
 \psfrag{W^u}{$W^u$}
 \psfrag{x}{$x$}
 \psfrag{y}{$y$}
 \psfrag{z}{$z$}
 \psfrag{D}{$D$}
 \psfrag{L}{$L$}
 \centering
 \includegraphics[bb=0 0 549 229, width=15cm]{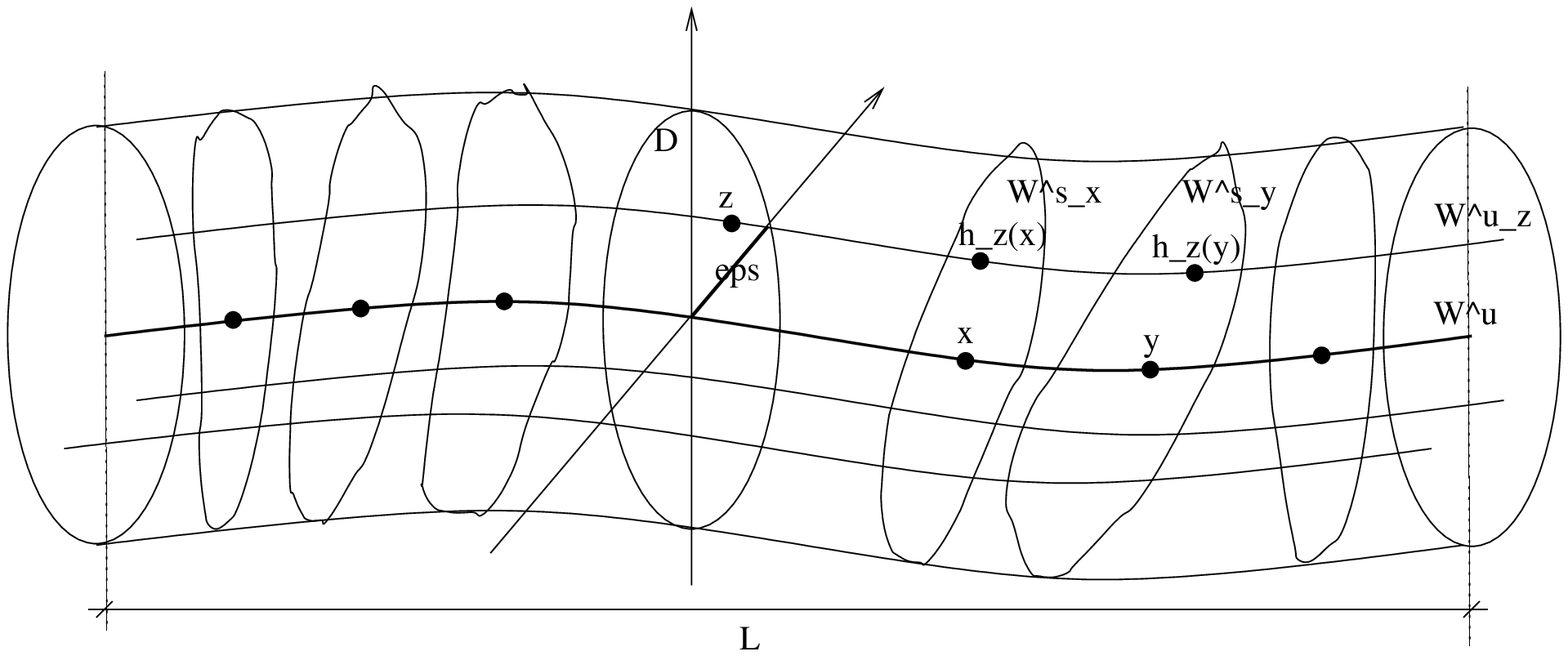}
 \caption{Product structure of a neighbourhood of a $u$-curve}
 \label{fig:product_structure}
\end{figure}

\subsection{Conditional measures}\label{sec:conditional}

During the construction, we will use foliations of (subsets of) the phase space with lower dimensional submanifolds. Such a foliation can also be viewed as a partition into subsets indexed with some index set. We use the notion of \emph{measurability} of such a partition in the usual sense:

\begin{definition}\label{def:kernel}
Let $(X_1,\sigalg_1)$ and $(X_2,\sigalg_2)$ be measurable spaces. The function $k:X_1\times \sigalg_2 \to [0,\infty]$ is called a kernel (form $X_1$ to $X_2$) if $k(x,.):\sigalg_2\to [0,\infty]$ is a measure for every $x\in X_1$ and $k(.,A):X_1\to[0,\infty]$ is measurable for every $A\in \sigalg_2$. It is called a probability kernel if also $k(x,X_2)=1$ for every $x\in X_1$.
\end{definition}

\begin{definition}\label{def:measure-kernel-composition}
Let $(X_1,\sigalg_1)$ and $(X_2,\sigalg_2)$ be measurable spaces, $\nu$ a measure on $(X_1,\sigalg_1)$ and $k:X_1\times \sigalg_2 \to [0,\infty]$ a kernel. The composition of $\nu$ and $k$ is the measure $\nu\otimes k$ on $(X_2,\sigalg_2)$ defined by
\[(\nu\otimes k)(B):=\int_{X_1} k(x,B)\diff \nu(x)\]
for every $B\in\sigalg_2$.
\end{definition}

\begin{definition}\label{def:measurable_partition}
Let $X_1$, $X_2$ be measurable spaces and $X_2=\mathop{\dot{\bigcup}}_{i\in X_1} E_i$ a partition of $X_2$. We say that the partition is measurable w.r.t. the measure $\mu$ on $X_2$ if there is a measure $\nu$ on $X_1$ and a kernel $k$ from $X_1$ to $X_2$ such that $\mu=\nu\otimes k$ and each measure $k(i,.)$ is concentrated on $E_i$. Then $\nu$ is called factor measure, and the measures $k_i(.):=k(i,.)$ are called conditional measures.
\end{definition}

If a partition is measurable, the factor measure and conditional measures are not unique. However, if $\mu$ is finite, then there is a canonical choice. Let $\pi:X_2\to X_1$ be the natural projection defined by $\pi(x)=i$ for $x\in E_i$, so $\pi^{-1}A=\cup_{i\in A}E_i$ for $A\subset X_1$. Then $\nu$ can be chosen to be the push-down of $\mu$ to $X_1$ by $\pi$:
\[\nu(A)=\pi_* \mu(A)=\mu(\pi^{-1}A)\]
for every measurable $A\subset X_1$. With this choice
\[\nu(A)=\mu(\pi^{-1}A)=(\nu\otimes k)(\pi^{-1}A)=\int_{X_1} k(x,\cup_{i\in A}E_i)\diff\nu(i)=\int_A k(i,X_2)\diff \nu(i),\]
so $k$ becomes a probability kernel.

In our discussion we always work with finite measures. This justifies the following convention:

\begin{convention}\label{conv:conditional-measure_always_probability}
When decomposing a measure into factor measure and conditional measures, we always choose the factor measure to be the push-down by the natural projection, and the conditional measures to be probabilities.
\end{convention}

\subsection{u-foliation}\label{sec:ufoliation}

First, we construct a foliation of a tube around $W^u$, about $\eps$ thick ($\eps < \eps_0$), with $1$-dimensional curves which are ``nearly parallel'' to $W^u$.
By ``nearly parallel'' we mean that if two points are sufficiently close, then the tangent vectors of the curves in those points are guaranteed to be arbitrarily close, even if the two points are not on the same curve.

This foliation can be chosen to have very nice regularity properties. Set $D\subset \IR^2$ to be the disk of radius $\eps$ centred at $0$. We place this disk orthogonally to $W^u$ at one of its points (say, the centre point of $W^u$), such that $D$ is centred at this intersection point.
Then, for any $z\in D$, $W^u_z$ will be the copy of $W^u$ shifted with $z$ in Euclidean space: $W^u_z=W^u+z$. Now we set $U:=\bigcup_{z\in D} W^u_z$ to be the tube formed by these shifted versions of $W^u$. The foliation $\{W^u_z\}_{z\in D}$ of $U$ has nice regularity properties. In particular, the partition is measurable, and the conditional measures on the $W^u_z$
are smooth. However, since $W^u$ is not (necessarily) straight, these conditional measures do not coincide with arc length, and we have to be careful about their precise regularity.

The specific form of the construction above is not important -- other smooth foliations would also do. However, with this special choice, some calculations can be simplified by introducing the following notation:

\begin{notation}\label{not:alpha_and_L}
Let $L$ be the length of the orthogonal projection of $W^u$  to the normal vector of $D$. Let $m_D$ be Lebesgue measure on $D$.
For $r\in U$ let $\psi(r)$ be the angle of the tangent vector of $W^u_z$ at $r$ with the normal vector of $D$.
\end{notation}

With this notation, $Leb(U)=L m_D(D)$ and $\mu(U)=\frac{L}{Leb(M)} m_D(D)$. The properties we need are summarized in the following lemma.

\begin{lemma}\label{lem:u-foliation}
 The foliation $\{W^u_z\}_{z\in D}$ of $U:=\bigcup_{z\in D} W^u_z$ has the following properties:
\begin{enumerate}
 \item $W^u=W^u_0$.
 \item $U$ is contained in the  $\eps$-neighbourhood of $W^u$.
 \item The partition $\{W^u_z\}_{z\in D}$ is measurable w.r.t. the invariant measure $\mu$ restricted to $U$ (i. e. to $\mu(.) = \mu(. \cap U)$), in the sense of Definition~\ref{def:measurable_partition}. Denote the factor measure on $D$ by $\mu^{factor}_D$ and the conditional (probability) measures on the foliation leaves $W^u_z$ by $\mu^{cond}_{W^u_z}$.

 \item $\mu^{factor}_D=\frac{L}{Leb(M)} m_D$.
 \item $\mu^{cond}_{W^u_z}$ is absolutely continuous w.r.t. arc length, with density
   \begin{equation}\label{eq:mu_cond_formula}
    \frac{\diff \mu^{cond}_{W^u_z}}{\diff m_z}(r)=\frac1L \cos\psi(r).
   \end{equation}
 \end{enumerate}
\end{lemma}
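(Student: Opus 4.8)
The plan is to verify the five listed properties essentially by unwinding definitions, since the foliation $\{W^u_z\}_{z\in D}$ is an explicit smooth object: a Euclidean translate of the fixed curve $W^u$ along the disk $D$ placed orthogonally to $W^u$. Property (1) is immediate from the definition $W^u_z=W^u+z$ with $z=0$. For property (2), note that every point $r\in W^u_z$ is $r'+z$ for some $r'\in W^u$, and $|z|\le\eps$ since $z\in D$, so $r$ lies within Euclidean distance $\eps$ of $W^u$; one should remark that $\eps<\eps_0$ guarantees $U$ stays inside $M$ away from $\partial M$, so the construction makes sense. The substance is in (3)--(5).

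For (3)--(5) I would set up an explicit diffeomorphism $\Psi:D\times[0,1]\to U$ (or $D\times W^u$, parametrizing $W^u$ by arc length $s\in[0,L(W^u)]$) given by $\Psi(z,s)=\gamma(s)+z$, where $\gamma$ is the arc-length parametrization of $W^u$. Since $W^u$ is a $C^2$ curve transversal to $D$ (it meets $D$ orthogonally at the centre and has small length/curvature, cf. Definition~\ref{def:u-curve} items (3)--(4) and the bound $L_{max}\le\frac1{100\Gamma_{max}}$), this map is a $C^1$-diffeomorphism onto $U$ for $\eps$ small enough — this is where one uses that $U$ is genuinely a ``tube'', i.e. the translates $W^u_z$ do not cross each other. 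Then $\mu$ restricted to $U$ is, up to the constant $\frac1{Leb(M)}$, just Lebesgue measure $Leb_{\mathbb R^3}$ on $U$ (the velocity coordinate is a translate as well, but the translation in $z\in D\subset\mathbb R^2$ should be read in the configuration-space slice orthogonal to $W^u$; more precisely the neighbourhood and the shift live in the appropriate coordinates of $M$ so that $Leb_M$ pulls back correctly). Pulling $Leb_M|_U$ back through $\Psi$ and computing the Jacobian gives the disintegration: the Jacobian factors, because translating by $z$ is an isometry, into a piece depending only on how $s\mapsto\gamma(s)$ sits relative to the normal direction of $D$. Concretely the Jacobian determinant equals $|\langle\gamma'(s),n_D\rangle|=\cos\psi(\Psi(z,s))$, which is exactly the density appearing in~(\ref{eq:mu_cond_formula}); integrating out $s$ over $W^u_z$ gives total mass $\int_0^{L(W^u)}\cos\psi\,ds = L$ (the length of the orthogonal projection of $W^u$ onto $n_D$, by Notation~\ref{not:alpha_and_L}), so after normalizing the conditional measures to be probabilities one divides by $L$, and the leftover factor of $L$ goes into the factor measure, yielding $\mu^{factor}_D=\frac{L}{Leb(M)}m_D$ and $\mu^{cond}_{W^u_z}=\frac1L\cos\psi\,dm_z$. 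This establishes (4) and (5) simultaneously, and measurability of the partition in the sense of Definition~\ref{def:measurable_partition} is then automatic since we have produced the kernel $k(z,\cdot)=\mu^{cond}_{W^u_z}(\cdot)$ explicitly and it is (weakly) measurable in $z$ because $\Psi$ is smooth; Convention~\ref{conv:conditional-measure_always_probability} is respected because we normalized.

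The main obstacle I anticipate is not any single computation but rather being careful about \emph{which} Lebesgue measure and \emph{which} coordinates are in play: $M=Q\times S^1$ is three-dimensional, $W^u$ is a curve in $M$ (not just in configuration space), and ``shifting by $z\in D$'' must be interpreted so that (a) $\Psi$ is a diffeomorphism onto an open tube, (b) the pushforward/pullback of $\mu=\frac1{Leb(M)}Leb_M$ is computed with the correct Jacobian, and (c) the angle $\psi$ from Notation~\ref{not:alpha_and_L} really is the relevant one. One must also check the transversality/embeddedness of the tube: that the translates $W^u+z$, $z\in D$, are pairwise disjoint and foliate $U$ — this follows from $W^u$ being nearly straight (small curvature $\Gamma_{max}$ and small length $L_{max}$) together with $\eps\le\eps_0$ being small, but it deserves an explicit sentence. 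Once the coordinate bookkeeping is pinned down, everything reduces to the change-of-variables formula and Fubini, and the factorization of the Jacobian is forced by the fact that $z\mapsto\cdot+z$ is an isometry of $\mathbb R^2$ (resp.\ of the relevant slice of $M$), so no distortion in $z$ appears and the full $z$-dependence of the disintegration is trivial.
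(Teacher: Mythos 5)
Your proposal is correct and is in essence the paper's own argument: the paper simply declares all items ``obvious from the construction'' (noting that the factors $L$ and $\frac1L$ come from Convention~\ref{conv:conditional-measure_always_probability}), and your explicit change-of-variables computation via $\Psi(z,s)=\gamma(s)+z$, with Jacobian $\cos\psi$ and total leaf mass $\int\cos\psi\,\diff s=L$, is exactly the detail behind that claim. Your flagged points (injectivity of the tube from $\cos\psi\ge\frac12$, staying inside $M$ via $\eps\le\eps_0$, and the shift acting in the flat coordinates of $M\subset\IT^3$ so that $\mu$ is translation-invariant there) are the right ones and are handled correctly.
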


\begin{proof}
 All items are obvious from the construction. The factors $L$ and $\frac1L$ are in accordance with Convention~\ref{conv:conditional-measure_always_probability}.
\end{proof}

For a technical reason (mainly of convenience) we need the following

\begin{lemma}\label{lem:W-u-z_is_good_u-curve}
For every $z\in D$, $W^u_z$ is a good $u$-curve (as in Definition~\ref{def:gooducurve}).
\end{lemma}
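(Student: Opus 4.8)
The statement to prove is Lemma~\ref{lem:W-u-z_is_good_u-curve}: for every $z \in D$, the Euclidean translate $W^u_z = W^u + z$ is a good $u$-curve.

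\medskip

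\textbf{Plan of proof.} The proof is essentially a matter of checking the defining conditions of Definition~\ref{def:u-curve} and Definition~\ref{def:gooducurve} one by one, exploiting that Euclidean translation on the torus is an isometry of the configuration space and acts trivially on the velocity component. First I would observe that if $W^u$ is realized as $W^u = \{\Phi^{t(x)}(x) \mid x \in w\}$ for a discrete-time $u$-curve $w \subset \mathcal M$ and flight-time function $t$, then the translate $W^u_z$ consists of points whose position components are shifted by $z$ and whose velocity components are unchanged. Since the curvature of a planar curve and the angle between its tangent and a fixed direction (the flow direction, which is determined by the velocity coordinate alone) are invariant under translation of the position coordinate, properties (1), (2), (3) of Definition~\ref{def:u-curve} hold for $W^u_z$ with exactly the same constants $\alpha_{min}$ and $\Gamma_{max}$ as for $W^u$. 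Likewise the arc length of $W^u_z$ equals that of $W^u$, so property (4) and the bound $L_{max} \le \frac{1}{100\Gamma_{max}}$ carry over verbatim. The only subtlety is to confirm that $W^u_z$ is again the shift-by-place-dependent-time image of an honest discrete-time $u$-curve; but $w_z := w + z$ (shifting only positions) is again a twice differentiable curve in $\mathcal M$ with tangent in the unstable cone $\cone_u$ at each point (the cone condition $\diff q\, \diff\phi > 0$ is unaffected by translating $q$) and curvature at most $B_{max}$, so $w_z$ is a discrete-time $u$-curve and $W^u_z = \{\Phi^{t(x-z)}(x) \mid x \in w_z\}$ exhibits it in the required form.

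\medskip

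Next I would verify the "good" condition, i.e. $\mathrm{dist}(W^u_z, \partial M) > \eps_0$. Here I use the hypothesis of Theorem~\ref{thm:main} (and the standing assumptions of Section~\ref{sec:proof}), namely that the $2\eps_0$-neighbourhood of $W^u$ is disjoint from $\partial M$. Since $z \in D$ with $D$ the disk of radius $\eps < \eps_0$, every point of $W^u_z$ lies within Euclidean distance $\eps < \eps_0$ of the corresponding point of $W^u$; hence $W^u_z$ is contained in the $\eps_0$-neighbourhood of $W^u$, and therefore $\mathrm{dist}(W^u_z, \partial M) \ge 2\eps_0 - \eps_0 = \eps_0$. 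To get the strict inequality required in Definition~\ref{def:gooducurve} I would note that in fact $\mathrm{dist}(W^u_z, \partial M) \ge 2\eps_0 - \eps > \eps_0$ since $\eps < \eps_0$. (If one only assumes $\eps \le \eps_0$ there is an easy cosmetic fix, e.g. shrinking $\eps$ slightly or noting the $2\eps_0$-neighbourhood is disjoint from $\partial M$ with room to spare; but the construction in Section~\ref{sec:ufoliation} takes $\eps < \eps_0$, so the strict bound is immediate.)

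\medskip

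\textbf{Main obstacle.} There is no deep obstacle here; this is a bookkeeping lemma. The one point that deserves genuine care is making sure that translation really does preserve all four regularity clauses of Definition~\ref{def:u-curve} \emph{simultaneously and with the same constants} — in particular that the angle with the flow direction in clause (2) is unchanged. This is where one must be explicit that the flow direction at a phase point $(r,v)$ depends only on $v$, so translating $r \mapsto r + z$ leaves it fixed, and that the tangent direction of $W^u_z$ at $(r+z, v)$ is the translate of the tangent direction of $W^u$ at $(r,v)$, hence makes the same angle with that fixed flow direction. Once this is spelled out, clauses (1),(3),(4) and the discrete-time cone/curvature conditions follow from translation-invariance of curvature and arc length, and the "good" property follows from the $2\eps_0$-neighbourhood hypothesis as above. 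I would therefore write the proof as: (a) translation preserves the discrete-time $u$-curve structure; (b) it preserves each clause of Definition~\ref{def:u-curve}; (c) the distance-to-boundary bound. This completes the proof.
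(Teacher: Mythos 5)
Your verification of the distance condition and of clauses (1)--(4) of Definition~\ref{def:u-curve} for the flow curve $W^u_z$ itself is fine and matches the paper (translation is an isometry of the configuration component and leaves the velocity untouched, so angle with the flow direction, curvature and length are preserved; the $2\eps_0$ hypothesis gives $\mathrm{dist}(W^u_z,\partial M)\ge 2\eps_0-\eps>\eps_0$). The gap is in the step you flag as "the only subtlety" and then dismiss: you claim that $w_z:=w+z$ (shifting only the positions of the discrete-time curve) is again a discrete-time $u$-curve in $\cM$. It is not, because $\cM$ consists of phase points whose base point lies on $\partial Q$; translating a curve that sits on a scatterer boundary by $z$ moves it off $\partial Q$, so $w+z$ is not even a subset of $\cM$, and the representation $W^u_z=\{\Phi^{t(x-z)}(x)\mid x\in w_z\}$ does not exhibit $W^u_z$ in the form required by Definition~\ref{def:u-curve}.

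The discrete-time curve actually associated with $W^u_z$ is its trace on $\cM$, i.e.\ $w^u_z=\Pi(W^u_z)$, obtained by flowing each point of $W^u_z$ back to its previous collision. This is a genuinely different curve on $\partial Q$: the collision points, the angles $\varrho$ measured against the (curved) scatterer normals, and hence the tangent directions and curvature of $w^u_z$ in the $(q,\varrho)$-coordinates all change slightly compared with $w^u$, so the cone condition and the curvature bound $B_{max}$ do not carry over "verbatim" by translation invariance. This is precisely the point the paper's own proof highlights ("strictly speaking, $w^u_z$ is not a shifted version of $w^u$, so the tangent vectors and curvature can change slightly") and resolves only informally, by remarking that one should work with a slightly more restricted class of discrete-time $u$-curves for $w^u$ and a slightly less restricted class into which the nearby $w^u_z$ falls automatically when $\eps_0$ (hence $|z|$) is small, with the details omitted. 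To repair your argument you would need either this two-class (or perturbation) argument, or an explicit estimate showing that the previous-collision projection of a small Euclidean translate distorts tangent directions and curvature of the trace by $O(|z|)$, which is exactly the content you assumed away.
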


\begin{proof}
By definition, the $u$-curve $W^u$ is obtained from a discrete time $u$-curve $w^u\subset\cM$ by lifting to $M$ with the flow.
Since $W^u$ is at least $2\eps_0$ far from $\partial M$, the shifted version $W^u_z$ is still at least $\eps_0$ far, so it is good, provided that it is a $u$-curve.

Now, by shifting $W^u$ into $W^u_z$, the curvature does not change, but the trace on $\cM$ does: $W^u_z$ is obtained from some $w^u_z$ in the same way as $W^u_z$ is obtained from $w^u$. We need that this $w^u_z$ is still a discrete time $u$-curve. But, strictly speaking, $w^u_z$ is not a shifted version of $w^u$, so the tangent vectors and curvature can change slightly.
At this point, being absolutely precise would result in overly complicated notation and no real ideas presented. One could introduce a more restricted class of $u$-curves into which $w^u$ is requested to belong (smaller cones, smaller curvature bound), and a less restricted class into which the nearby $w^u_z$ falls automatically if $\eps_0$ -- and thus $|z|$ -- is small enough. We omit these details.
\end{proof}

As mentioned in Section~\ref{sec:conditional}, the choice of the normalization for the factor measure and the conditional measures is somewhat arbitrary. In our choice we follow Convention~\ref{conv:conditional-measure_always_probability}. However, other choices of normalization would also be possible, and in some sense, only the ``product of the two'' has a physical meaning. Accordingly, what we will really use is the following immediate corollary of Lemma~\ref{lem:u-foliation}:

\begin{lemma}\label{lem:u-density-times-factor-density}
For any $z\in D$ and any $r \in W^u_z$
\[
\frac{\diff\mu^{factor}_D}{\diff m_D}(z) \frac{\diff \mu^{cond}_{W^u_z}}{\diff m_z}(r) = \frac{1}{Leb(M)} \cos\psi(r).
\]
\end{lemma}

These functions of $r$ on the different $W^u_z$ are obviously uniformly bounded and Lipschitz continuous:

\begin{lemma}\label{lem:u-density-regularity}
For any $z\in D$ and any $r_1,r_2\in W^u_z$
\[ \frac12 \le \cos\psi(r_1) \le 1 \]
and
\[ |\cos\psi(r_1)-\cos\psi(r_2)|\le \Gamma_{max} dist_{W^u_z}(r_1,r_2). \]
\end{lemma}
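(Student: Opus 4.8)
The statement is elementary and follows directly from the construction of the foliation $\{W^u_z\}_{z\in D}$, so I expect no real obstacle; the only care needed is in bookkeeping the geometric meaning of $\psi$ and the curvature bound $\Gamma_{max}$.

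First, for the two-sided bound on $\cos\psi(r_1)$: recall from Notation~\ref{not:alpha_and_L} that $\psi(r)$ is the angle between the tangent vector of $W^u_z$ at $r$ and the normal vector of the disk $D$. Since $D$ is placed orthogonally to $W^u=W^u_0$ at its centre point, at that centre point the tangent of $W^u$ is parallel to the normal of $D$, i.e.\ $\psi=0$ there. Moving along $W^u$ by arc length changes the tangent direction at rate at most the curvature, which by Definition~\ref{def:u-curve}(3) is at most $\Gamma_{max}$; since the total length is $L(W^u)\le L_{max}\le\frac1{100\Gamma_{max}}$, the angle $\psi$ stays at most $\frac1{100}$ along $W^u$. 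Shifting $W^u$ rigidly to $W^u_z=W^u+z$ does not change any tangent directions at all (Euclidean translation is an isometry), so the same bound $|\psi(r)|\le\frac1{100}$ holds on every leaf $W^u_z$. Hence $\cos\psi(r)\ge\cos\frac1{100}\ge\frac12$, and of course $\cos\psi(r)\le 1$ always; this gives the first display.

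Second, for the Lipschitz bound: the map $r\mapsto\psi(r)$ on a single leaf $W^u_z$ is, up to an additive constant (the fixed direction of the normal of $D$), exactly the direction angle of the unit tangent vector along $W^u_z$ as a function of arc length. Its derivative with respect to arc length is, in absolute value, precisely the (signed) curvature of $W^u_z$, which equals the curvature of $W^u$ (translation-invariance again) and is therefore bounded by $\Gamma_{max}$. Thus $|\psi(r_1)-\psi(r_2)|\le\Gamma_{max}\,dist_{W^u_z}(r_1,r_2)$, and since $\cos$ is $1$-Lipschitz,
\[
|\cos\psi(r_1)-\cos\psi(r_2)|\le|\psi(r_1)-\psi(r_2)|\le\Gamma_{max}\,dist_{W^u_z}(r_1,r_2),
\]
which is the second display. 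This completes the proof; the only mildly delicate point, handled above, is justifying that translating $W^u$ to $W^u_z$ leaves tangent directions and curvature untouched, so that both estimates are inherited uniformly in $z$ from the corresponding properties of $W^u$ itself.
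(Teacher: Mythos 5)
Your proof is correct and follows essentially the same route as the paper: reduce to $z=0$ by translation invariance, bound $|\psi(r_1)-\psi(r_2)|$ by $\Gamma_{max}\,dist_{W^u_z}(r_1,r_2)$ via the curvature bound, and get $\cos\psi\ge\frac12$ from $\psi=0$ at the point of intersection with $D$ together with $L_{max}\le\frac{1}{100\Gamma_{max}}$. The only cosmetic difference is that the paper applies the Lipschitz estimate directly to $\cos\psi$ to get $|\cos\psi-1|\le\frac1{100}$, whereas you bound $|\psi|\le\frac1{100}$ first; the content is the same.
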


\begin{proof} Each $W^u_z$ is a shifted version of $W^u$, so it is obviously enough to prove the statement for $z=0$ (meaning $r_1,r_2\in W^u_0=W^u$). $W^u$ is a $u$-curve, so by Definition~\ref{def:u-curve} its curvature is at most $\Gamma_{max}$ and its length is at most $L_{max}\le\frac{1}{100\Gamma_{max}}$. So
\[
|\cos\psi(r_1)-\cos\psi(r_2)| \le |\psi(r_1)-\psi(r_2)| \le \Gamma_{max} dist_{W^u_z}(r_1,r_2).
\]
Choosing $r_2$ to be the point of $W^u_z$ that lies on $D$, we have $\psi(r_2)=0$, so
\[
|\cos\psi(r_1)-1| \le \Gamma_{max} L_{max} \le \frac{1}{100},
\]
so $\cos\psi(r_1)\ge \frac12$.
\end{proof}

This has easy consequences:

\begin{lemma}\label{lem:W-u_z-s_not_close}
 For any $z_1,z_2\in D$, $dist(W^u_{z_1},W^u_{z_2})\ge \frac12 |z_2-z_1|$.
\end{lemma}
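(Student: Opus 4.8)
The claim is a quantitative ``non-collision'' statement for the leaves of the $u$-foliation: although the curves $W^u_z = W^u + z$ are translates of a curved curve and hence not parallel, their mutual distance is still comparable to the distance $|z_2 - z_1|$ of the translation vectors. The plan is to pick arbitrary points $p_1 \in W^u_{z_1}$ and $p_2 \in W^u_{z_2}$ and show $|p_1 - p_2| \ge \tfrac12 |z_2 - z_1|$, which immediately gives the infimum bound. Writing $p_i = q_i + z_i$ with $q_i \in W^u$, we have $p_1 - p_2 = (q_1 - q_2) - (z_2 - z_1)$, so it suffices to show $|q_1 - q_2 - (z_2 - z_1)| \ge \tfrac12|z_2-z_1|$, and by the triangle inequality this follows once we know that the component of $q_1 - q_2$ in the direction of $z_2 - z_1$ is small compared to $|z_2 - z_1|$ --- more precisely, it is enough that $|q_1 - q_2|$ has length at most $\tfrac12 |z_2 - z_1|$ in the relevant direction, which we will get from the fact that $W^u$ is nearly flat.

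The key geometric input is exactly Lemma~\ref{lem:u-density-regularity} (or rather the estimate behind it): $W^u$ is a $u$-curve, so its curvature is at most $\Gamma_{max}$ and its length at most $L_{max} \le \tfrac{1}{100\Gamma_{max}}$; hence the tangent direction of $W^u$ varies by at most $\Gamma_{max} L_{max} \le \tfrac{1}{100}$ radians along the whole curve. Consequently $W^u$ stays within a thin cone around a fixed line: if $u$ denotes the unit tangent at the centre point, then for any two points $q_1, q_2 \in W^u$ the chord $q_1 - q_2$ makes an angle at most $\tfrac{1}{100}$ with $\pm u$, so the component of $q_1 - q_2$ orthogonal to $u$ has absolute value at most $|q_1 - q_2| \sin(\tfrac{1}{100}) \le \tfrac{1}{100}|q_1-q_2| \le \tfrac{1}{100}L_{max}$, which is tiny.

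It remains to combine this with the structure of $D$: recall $D$ was placed \emph{orthogonally} to $W^u$ at its centre point, so the translation vectors $z \in D$ all lie in the plane $u^\perp$ (normal to the centre tangent $u$). Therefore $z_2 - z_1 \perp u$, and when we project the identity $p_1 - p_2 = (q_1 - q_2) - (z_2 - z_1)$ onto $u^\perp$, the vector $z_2 - z_1$ survives in full while $q_1 - q_2$ contributes only its orthogonal component, which is at most $\tfrac{1}{100}|q_1 - q_2|$ in length. Now if $|p_1 - p_2| < \tfrac12 |z_2 - z_1|$, then also $|q_1 - q_2| \le |p_1-p_2| + |z_2 - z_1| < \tfrac32|z_2-z_1|$, so the orthogonal component of $q_1 - q_2$ has length $< \tfrac{3}{200}|z_2-z_1|$; projecting, $|p_1 - p_2| \ge |z_2 - z_1| - \tfrac{3}{200}|z_2 - z_1| = \tfrac{197}{200}|z_2 - z_1| > \tfrac12|z_2-z_1|$, a contradiction. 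Hence $|p_1 - p_2| \ge \tfrac12|z_2 - z_1|$ for all choices of $p_1, p_2$, and taking the infimum over $p_1 \in W^u_{z_1}$, $p_2 \in W^u_{z_2}$ proves the lemma. I do not expect any real obstacle here; the only point requiring a little care is keeping track of which distances are ambient Euclidean and bookkeeping the small-angle estimate, but all the needed geometric facts (near-flatness of $W^u$, orthogonality of $D$) are already available from Definition~\ref{def:u-curve} and the construction in Section~\ref{sec:ufoliation}.
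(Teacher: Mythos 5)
Your argument is correct and is essentially the paper's own, whose entire proof is the one-liner ``trivial geometry using $\cos\psi\ge\frac12$'': you simply spell that geometry out, using the near-flatness bound $|\psi|\le\Gamma_{max}L_{max}\le\frac{1}{100}$ (implicit in the proof of Lemma~\ref{lem:u-density-regularity}) together with the orthogonality of $D$ to the centre tangent of $W^u$, and the projection/contradiction step is sound. No gap.
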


\begin{proof}
 Trivial geometry using $\cos\psi\ge \frac12$.
\end{proof}

\begin{lemma}\label{lem:projection_to_W-u-not-short}
 Let $x_1,x_2\in W^u$, let $z_1,z_2\in D$ and let $r_1:=x_1+z_1$, $r_2:=x_2+z_2$ (so $r_1,r_2\in U$). Then $dist(r_1,r_2)\ge \frac12 dist_{W^u}(x_1,x_2)$.
\end{lemma}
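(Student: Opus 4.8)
The plan is to obtain the bound by projecting everything onto the normal direction $e$ of the disk $D$ --- the direction along which, by construction, all the leaves $W^u_z$ stay almost straight. Since the whole tube $U$ sits in a small neighbourhood far from $\partial M$ and $\IT^3$ is flat, I would first identify a neighbourhood of $U$ isometrically with an open subset of $\IR^3$; in these coordinates the shift $W^u_z=W^u+z$ is genuine vector addition, the plane of $D$ is an affine $2$-plane, and $e$ is the unit vector normal to that plane. Note that, with the notation of Notation~\ref{not:alpha_and_L}, for $r\in W^u_z$ the angle $\psi(r)$ between the tangent of $W^u_z$ at $r$ and $e$ is exactly the $\psi$ appearing in Lemma~\ref{lem:u-density-regularity}, and $e$ is the tangent of $W^u$ at the point where $D$ is attached.

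The key step is the elementary identity
\[
\langle r_1-r_2,\,e\rangle=\langle x_1-x_2,\,e\rangle ,
\]
which holds because $z_1-z_2$ lies in the plane of $D$ and is therefore orthogonal to $e$. Combined with $dist(r_1,r_2)=|r_1-r_2|\ge|\langle r_1-r_2,e\rangle|$, this reduces the statement to showing $|\langle x_1-x_2,e\rangle|\ge\tfrac12\,dist_{W^u}(x_1,x_2)$. For the latter I would parametrise $W^u$ by arc length, $\gamma\colon[0,L]\to M$, write $x_1=\gamma(t_1)$, $x_2=\gamma(t_2)$ (so that $dist_{W^u}(x_1,x_2)=|t_1-t_2|$), and estimate
\[
|\langle x_1-x_2,e\rangle|=\Bigl|\int_{t_2}^{t_1}\langle\dot\gamma(t),e\rangle\,\diff t\Bigr|=\Bigl|\int_{t_2}^{t_1}\cos\psi(\gamma(t))\,\diff t\Bigr|\ge\tfrac12\,|t_1-t_2|,
\]
where the last inequality uses $\cos\psi\ge\tfrac12$ on $W^u$, i.e. Lemma~\ref{lem:u-density-regularity} with $z=0$. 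Putting the two displays together gives $dist(r_1,r_2)\ge\tfrac12\,dist_{W^u}(x_1,x_2)$.

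I do not expect a genuine obstacle: this is the same ``trivial geometry'' used for Lemma~\ref{lem:W-u_z-s_not_close}, just carried out in the direction along the curve instead of across leaves. The only mild points of care are the flat-chart identification of the small neighbourhood (so that ``orthogonal'' and $\langle\cdot,\cdot\rangle$ make literal sense) and the observation that the tangent directions of $W^u$ stay within a $\tfrac1{100}$-cone of $e$; the latter is already built into Lemma~\ref{lem:u-density-regularity}, whose own proof only invokes the curvature and length bounds of Definition~\ref{def:u-curve}.
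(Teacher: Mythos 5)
Your proof is correct and is essentially the argument the paper has in mind: its entire proof reads ``Trivial geometry using $\cos\psi\ge\frac12$,'' and your projection onto the normal direction of $D$, using that $z_1-z_2$ lies in the plane of $D$ together with Lemma~\ref{lem:u-density-regularity}, is exactly that geometry spelled out.
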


\begin{proof}
 Trivial geometry using $\cos\psi\ge \frac12$.
\end{proof}

\begin{lemma}\label{lem:psi-Lipschitz}
 $\psi$ is Lipschitz continuous on $D$ with constant $2\Gamma_{max}$. That is, for any $r_1,r_2\in U$,
 \[|\psi(r_2)-\psi(r_1)|\le 2 \Gamma_{max} dist(r_1,r_2).\]
\end{lemma}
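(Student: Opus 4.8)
The plan is to reduce everything to the curvature bound on $W^u$ by noticing that $\psi$ does not really depend on the whole point $r\in U$, but only on the base point of $W^u$ that lies ``below'' it. First I would record that each $r\in U$ lies on exactly one leaf $W^u_z$ (the leaves are pairwise disjoint by Lemma~\ref{lem:W-u_z-s_not_close}), hence $r$ can be written uniquely as $r=x+z$ with $x\in W^u$ and $z\in D$. Since $W^u_z=W^u+z$ is a Euclidean translate of $W^u$, the tangent line of $W^u_z$ at $r=x+z$ is parallel to the tangent line of $W^u$ at $x$. Therefore, writing $\theta(x)$ for the angle between $T_xW^u$ and the normal vector of $D$, we get $\psi(r)=\theta(x)$: the function $\psi$ is constant along each leaf and is genuinely a function of the base point $x\in W^u$ alone. (Since the total turning of $W^u$ is at most $\Gamma_{max}L_{max}\le\frac1{100}$, there is no wrap-around ambiguity in this angle.)

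Next I would transfer the curvature bound. The function $\theta:W^u\to\IR$ is Lipschitz with constant $\Gamma_{max}$ with respect to arc length on $W^u$, because the rate of change of the direction of the tangent vector along the curve equals its curvature, which is at most $\Gamma_{max}$ by Definition~\ref{def:u-curve}; this is exactly the estimate already used in the proof of Lemma~\ref{lem:u-density-regularity}. Then, given $r_1,r_2\in U$, I write $r_i=x_i+z_i$ with $x_i\in W^u$, $z_i\in D$, and combine:
\[|\psi(r_1)-\psi(r_2)|=|\theta(x_1)-\theta(x_2)|\le\Gamma_{max}\,dist_{W^u}(x_1,x_2).\]
Finally, Lemma~\ref{lem:projection_to_W-u-not-short} gives $dist_{W^u}(x_1,x_2)\le 2\,dist(r_1,r_2)$, so that $|\psi(r_1)-\psi(r_2)|\le 2\Gamma_{max}\,dist(r_1,r_2)$, which is the asserted bound.

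I do not expect any genuine obstacle here: as with the neighbouring lemmas, this is elementary Euclidean geometry of a tube around a curve of bounded curvature. The only two points that deserve an explicit (but brief) word are the well-definedness of the base point $x$ in the first step --- which is why the disjointness of the leaves is invoked --- and the identification $\psi(r)=\theta(x)$, which rests on the fact that the foliation leaves are rigid translates of $W^u$.
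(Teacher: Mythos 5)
Your proof is correct and is essentially the paper's own argument: write $r_i=x_i+z_i$, use that $\psi(r_i)=\psi(x_i)$ because the leaves are translates of $W^u$, bound $|\psi(x_1)-\psi(x_2)|\le\Gamma_{max}\,dist_{W^u}(x_1,x_2)$ by the curvature bound, and finish with Lemma~\ref{lem:projection_to_W-u-not-short} to pick up the factor $2$. The only difference is that you spell out the well-definedness of the base point and the parallel-tangent observation, which the paper compresses into ``by construction''.
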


\begin{proof}
 Let $r_1:=x_1+z_1$, $r_2:=x_2+z_2$ where $x_1,x_2\in W^u$ and $z_1,z_2\in D$. Then, by construction, $\psi(r_1)=\psi(x_1)$ and $\psi(r_2)=\psi(x_2)$, so
 \[|\psi(r_2)-\psi(r_1)|=|\psi(x_2)-\psi(x_1)|\le\Gamma_{max} dist_{W^u}(x_1,x_2).\]
 Using Lemma~\ref{lem:projection_to_W-u-not-short} we get the statement.
\end{proof}

\subsection{Central-stable foliation}\label{sec:csfoliation}

The other foliation we use to get the product structure of $U$ consists of (homogeneous) central-stable manifolds. This is crucial, since we will use in our calculations that the points on such a manifold stay close to each other for any long time. For this, a high price has to be paid: this foliation has much worse regularity properties than the u-foliation above, and not every point can be covered with the product structure.

We will use $H$ to denote the set of those points in $W^u$ whose central-stable manifold crosses $U$ properly. That is, the definition will ensure that for every $x\in H$
\begin{itemize}
 \item The central-stable manifold of $x$ is long enough in every direction (meaning $r^{c-s}(x)$ is big enough) so that it surely reaches the boundary of $U$.
 \item The central-stable manifold of $x$ should not hit the circular faces of the tube $U$, but rather cross $W^u_z$ for every $z\in D$.
\end{itemize}

\begin{notation}
\begin{equation}\label{eq:H-def}
H:=\left\{x\in W^u \,\middle|\, r^{c-s}(x)\ge \frac{10\eps}{\sin c_{tr}} \text{ and } W^{c-s}(x)\cap W^u_z\neq \emptyset \text{ for every $z\in D$} \right\}.
\end{equation}
\end{notation}

It is easy to see that if $\eps$ is small, then $H$ contains the vast majority of points in $W^u$:

\begin{lemma}\label{lem:W-minus-H-small} There is a global constant $\Cl{const:W-minus-H-small}<\infty$ such that
 $m_{W^u}(W^u\setminus H)\le \Cr{const:W-minus-H-small} \eps$.
\end{lemma}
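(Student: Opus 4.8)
The plan is to bound $m_{W^u}(W^u \setminus H)$ by splitting the bad set into two parts according to which of the two defining conditions in (\ref{eq:H-def}) fails. Write $W^u \setminus H = A_1 \cup A_2$, where
\[
A_1 := \left\{x\in W^u \,\middle|\, r^{c-s}(x) < \frac{10\eps}{\sin c_{tr}}\right\},
\qquad
A_2 := \left\{x\in W^u \,\middle|\, r^{c-s}(x) \ge \frac{10\eps}{\sin c_{tr}} \text{ but } W^{c-s}(x)\cap W^u_z = \emptyset \text{ for some } z\in D\right\},
\]
and bound each separately. For $A_1$, I would apply Theorem~\ref{thm:long_cs_manifolds} directly with $\eps' := \frac{10\eps}{\sin c_{tr}}$: this immediately gives $m_{W^u}(A_1) \le C_W \cdot \frac{10}{\sin c_{tr}}\cdot\eps$, and since $c_{tr}=c_{tr}(\cR_Q,\cR_u)$ is a global constant (Theorem~\ref{thm:transversality}), the whole prefactor is a global constant times $\eps$.

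The substance is in controlling $A_2$. Here I would argue geometrically: if $x \in A_2$, then $W^{c-s}(x)$ has a homogeneous piece of inner diameter at least $\frac{10\eps}{\sin c_{tr}}$ centred at $x \in W^u$, so it contains a disk of that radius inside the manifold. By Theorem~\ref{thm:transversality}, this central-stable manifold meets $W^u$ transversally with angle at least $c_{tr}$; more relevantly, it is uniformly transversal to the cross-sectional disk $D$ (which sits orthogonally to $W^u$) — this needs a short check using item (2) of Definition~\ref{def:u-curve} together with transversality, since the central-stable direction lies in a plane uniformly transversal to $u$-curves. Since $U = \bigcup_{z\in D} W^u_z$ is a tube of ``width'' roughly $2\eps$ (the disk $D$ has radius $\eps$) and ``length'' comparable to $L(W^u)\le L_{max}$, a central-stable disk of radius $\frac{10\eps}{\sin c_{tr}}$ centred on the core $W^u$, being transversal to the disk-direction with sine at least $\sin c_{tr}$, must traverse the full $\eps$-extent of $D$ in every direction before reaching its own boundary; hence it does meet every leaf $W^u_z$, $z\in D$. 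The factor $\frac{10}{\sin c_{tr}}$ in the definition of $H$ is chosen precisely to make this crossing argument go through with room to spare. Consequently $A_2$ is in fact empty (or at worst contained in a boundary layer of $W^u$ of length $O(\eps)$, coming from points $x$ near the two ends of $W^u$ whose central-stable manifold could exit through a circular face of the tube — this contributes at most $C\eps$ as well, using that $W^u_z$ is a good $u$-curve by Lemma~\ref{lem:W-u-z_is_good_u-curve} and that the tube has bounded geometry).

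Combining, $m_{W^u}(W^u\setminus H) \le m_{W^u}(A_1) + m_{W^u}(A_2) \le \Cr{const:W-minus-H-small}\,\eps$ with $\Cr{const:W-minus-H-small} = \Cr{const:W-minus-H-small}(\cR_Q,\cR_u)$ a global constant, as claimed. I expect the main obstacle to be the $A_2$ estimate: making precise the claim that a central-stable manifold which is long enough and uniformly transversal to the disk $D$ cannot avoid any leaf $W^u_z$ without first hitting its own boundary — this requires carefully quantifying the geometry of the tube $U$ (its diameter in the disk-direction versus the core-direction) and relating the intrinsic Riemannian diameter $r^{c-s}(x)$ on the curved submanifold to Euclidean distances in $M$, for which Lemma~\ref{lem:W-u-z-s_not_close} and the bounded-curvature hypotheses in Definition~\ref{def:u-curve} are the relevant tools. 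The transversality-to-$D$ step, though morally contained in Theorems~\ref{thm:transversality} and the transversality-to-flow-direction built into $\alpha_{min}$, is the one place where a genuinely new (if routine) geometric computation is needed rather than a direct citation.
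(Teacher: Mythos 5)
Your proposal is correct and follows essentially the same route as the paper: the bad set is split into points whose homogeneous central-stable manifold is too short (controlled by Theorem~\ref{thm:long_cs_manifolds} applied at scale $\frac{10\eps}{\sin c_{tr}}$) and points whose long manifold escapes through a circular end face of the tube, which by transversality (Theorem~\ref{thm:transversality}) can only occur within an $O(\eps)$-neighbourhood of the endpoints of $W^u$. Just note that your passing claim that $A_2$ ``is in fact empty'' should be dropped in favour of the parenthetical endpoint estimate, which is exactly the paper's argument.
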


\begin{proof}
Due to transversality (Theorem~\ref{thm:transversality}), $u$-curves and central-stable manifolds have an angle at least $c_{tr}$, so if $r^{c-s}(x)\ge \frac{10\eps}{\sin c_{tr}}$ then $W^{c-s}(x)$ either crosses every $W^u_z$, or it hits one of the circular faces at the end of the tube. This latter can only happen if $x$ is less than $\frac{11\eps}{\sin c_{tr}}$ away from one of the endpoints of $W^u$. On the other hand, not reaching far enough also only happens to a small set of $x$ due to Theorem~\ref{thm:long_cs_manifolds}.
\end{proof}

However, we need a little more than that.

\begin{notation}
From now on, for every $x\in U$ we use $W^{c-s}_x$ to denote the central-stable manifold of $x$, \emph{intersected with $U$}.
\end{notation}

So
\[U_0:=\cup_{x\in H} W^{c-s}_x\subset U\]
is the set of points in $U$ that can be covered with these properly crossing central-stable manifolds.
For any $z\in D$, set $H_z:=W^u_z\cap U_0$. Actually, this $H_z$ is nothing else than $H$ shifted from $W^u$ to $W^u_z$ by the holonomy map. For future use, we introduce two notations for this holonomy:
\begin{notation}\label{not:holonomy}
 Let $h:H\times D\to U$ be defined so that for any $x\in H$ and $z\in D$ $h(x,z)$ is the single element of $W^{c-s}_x \cap W^u_z$. We will also use the notation $h_z(x):=h(x,z)$, so $h_z:H\to W^u_z$ is the usual holonomy map.
\end{notation}

With this notation, $U_0=h(H\times D)$ and $H_z=h_z(H)$.

\begin{lemma}\label{lem:W_z-minus-H_z-small}
 There is a global constant $\Cl{const:W_z-minus-H_z-small}<\infty$ such that for any $z\in D$
 $m_{W^u_z}(W^u_z\setminus H_z)\le \Cr{const:W_z-minus-H_z-small} \eps$.
\end{lemma}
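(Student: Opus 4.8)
The plan is to prove this exactly as Lemma~\ref{lem:W-minus-H-small} is proved, but with $W^u_z$ playing the role of $W^u$, together with one extra bookkeeping step that identifies the central-stable manifold crossing $W^u_z$ as coming from a point of $H$. The key enabling fact is Lemma~\ref{lem:W-u-z_is_good_u-curve}: every $W^u_z$ is itself a good $u$-curve, so Theorems~\ref{thm:transversality} and~\ref{thm:long_cs_manifolds} apply to it without change.

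First I would cut $W^u_z$ into a large ``good'' part and a small ``bad'' part. Applying Theorem~\ref{thm:long_cs_manifolds} to $W^u_z$, the set of $y\in W^u_z$ with $r^{c-s}(y)$ below a suitable fixed multiple of $\eps$ has $m_{W^u_z}$-measure $\le C\eps$; and the two arcs of $W^u_z$ of length $O(\eps)$ near its endpoints have $m_{W^u_z}$-measure $\le C\eps$. So it suffices to show that every $y\in W^u_z$ lying outside these two exceptional sets belongs to $H_z$.

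For such a $y$ I would run the argument of Lemma~\ref{lem:W-minus-H-small}: by transversality (Theorem~\ref{thm:transversality}) the central-stable manifold $W^{c-s}(y)$ meets every $u$-curve, in particular every $W^u_{z'}$ with $z'\in D$, at angle $\ge c_{tr}$, so since $r^{c-s}(y)$ is large enough and $y$ is far from the circular faces of the tube $U$, the homogeneous local central-stable manifold of $y$ reaches the lateral boundary of $U$ in both directions and hence crosses $W^u_{z'}$ for every $z'\in D$; in particular it meets $W^u=W^u_0$ at a single point $x_0$. The new step is to verify $x_0\in H$: the arc of $W^{c-s}(y)$ joining $y\in W^u_z$ to $x_0\in W^u_0$ stays inside $U$ and, by the uniform transversality of $W^{c-s}(y)$ to the nearly parallel foliation $\{W^u_{z'}\}_{z'\in D}$ together with $W^u_z=W^u+z$ and $|z|\le\eps$, has length $\le C\eps$; hence $x_0$ and $y$ lie on the same homogeneous local central-stable manifold and $r^{c-s}(x_0)\ge r^{c-s}(y)-C\eps$, which still exceeds $\tfrac{10\eps}{\sin c_{tr}}$ provided the threshold used in the exceptional set above was chosen large enough, while $W^{c-s}(x_0)=W^{c-s}(y)$ continues to cross every $W^u_{z'}$. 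By the definition~(\ref{eq:H-def}) of $H$ this gives $x_0\in H$, and since $y$ is the unique point of $W^{c-s}_{x_0}\cap W^u_z$ we conclude $y=h_z(x_0)\in h_z(H)=H_z$. Adding the three $O(\eps)$ bounds yields $m_{W^u_z}(W^u_z\setminus H_z)\le C\eps$ with a global constant $C$.

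The main obstacle is the purely geometric step just sketched: controlling that $W^{c-s}(y)$ really does cross all of the $W^u_{z'}$ and reach $W^u$, and that $r^{c-s}$ stays above the threshold $\tfrac{10\eps}{\sin c_{tr}}$ entering the definition of $H$ once $y$ is slid by holonomy back to $W^u$. This is morally the same computation as in Lemma~\ref{lem:W-minus-H-small} and I expect it to go through by the same use of transversality and of the geometry of the thin tube $U$, at the harmless cost of a little slack in the multiplicative constants. Everything else is elementary length bookkeeping, using that $W^u_z$ is a rigid translate of $W^u$ at Euclidean distance $\le\eps$.
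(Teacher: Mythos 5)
Your proposal is correct and follows essentially the same route as the paper's proof: apply Theorem~\ref{thm:long_cs_manifolds} to the good $u$-curve $W^u_z$ (via Lemma~\ref{lem:W-u-z_is_good_u-curve}) with a larger threshold (the paper uses $\tfrac{20\eps}{\sin c_{tr}}$), discard an $O(\eps)$ neighbourhood of the endpoints, and use transversality to see that the surviving points' central-stable manifolds cross $U$ properly, hence meet $W^u$ at a point $x$ which (since it lies on the same manifold, only an $O(\eps)$ slide away) still satisfies the defining conditions of $H$, so $y\in h_z(H)=H_z$. The extra bookkeeping you add to check $x\in H$ is exactly the step the paper compresses into one sentence.
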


\begin{proof}
Just like before. Due to transversality (Theorem~\ref{thm:transversality}), $u$-curves and central-stable manifolds have an angle at least $c_{tr}$, so if $y\in W^u_z$ and $r^{c-s}(y)\ge\frac{20\eps}{\sin c_{tr}}$, then the central-stable manifold of $y$ either crosses $U$ properly, or it hits one of the circular faces at the end of the tube. This latter can only happen if $y$ is less than $\frac{22\eps}{\sin c_{tr}}$ away from one of the endpoints of $W^u_z$. Not reaching far enough also only happens to a small set of $y$ due to Lemma~\ref{lem:W-u-z_is_good_u-curve} and Theorem~\ref{thm:long_cs_manifolds}. Now if the central-stable manifold of $y\in W^u_z$ crosses $U$ properly, then it also intersects $W^u$ at some $x\in W^u$, so it has to coincide with the central-stable manifold of this $x$ and surely $r^{c-s}(x)\ge \frac{10\eps}{\sin c_{tr}}$. So $x\in H$ and $y\in U_0$.
\end{proof}

We only use this through the following immediate corollary:
\begin{lemma}\label{lem:U-minus-U0-small}
 There is a global constant $\Cl{const:U-minus-U0-small}<\infty$ such that
 $\mu(U\setminus U_0)\le \Cr{const:U-minus-U0-small} \eps^3$.
\end{lemma}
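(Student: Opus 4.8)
The plan is to disintegrate the measure $\mu$ restricted to $U$ along the $u$-foliation $\{W^u_z\}_{z\in D}$ from Section~\ref{sec:ufoliation}, and then to integrate over the $\eps$-disk $D$ the one-dimensional estimate already supplied by Lemma~\ref{lem:W_z-minus-H_z-small}. First I would note that, since $H_z=W^u_z\cap U_0$ by definition, the bad leaf-pieces assemble into $U\setminus U_0$: for every $z\in D$ one has $W^u_z\cap(U\setminus U_0)=W^u_z\setminus H_z$, hence $U\setminus U_0=\bigcup_{z\in D}(W^u_z\setminus H_z)$.

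Next I would use that the partition $\{W^u_z\}_{z\in D}$ is measurable w.r.t.\ $\mu|_U$ (item 3 of Lemma~\ref{lem:u-foliation}) together with the joint-density formula of Lemma~\ref{lem:u-density-times-factor-density}, which expresses the product of the factor density and the conditional density as $\frac{1}{Leb(M)}\cos\psi(r)$. Applying the disintegration to the set $U\setminus U_0$ gives
\[
\mu(U\setminus U_0)=\int_D\left(\int_{W^u_z\setminus H_z}\frac{\cos\psi(r)}{Leb(M)}\,\diff m_z(r)\right)\diff m_D(z),
\]
where $m_z$ is arc length on $W^u_z$ and $m_D$ is Lebesgue measure on $D$. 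Since $\cos\psi\le 1$ by Lemma~\ref{lem:u-density-regularity}, the inner integral is at most $\frac{1}{Leb(M)}m_{W^u_z}(W^u_z\setminus H_z)$, and Lemma~\ref{lem:W_z-minus-H_z-small} bounds this by $\frac{\Cr{const:W_z-minus-H_z-small}}{Leb(M)}\eps$ uniformly in $z\in D$. Integrating this uniform bound over $D$ and using $m_D(D)=\pi\eps^2$ yields
\[
\mu(U\setminus U_0)\le \frac{\Cr{const:W_z-minus-H_z-small}}{Leb(M)}\,\eps\cdot \pi\eps^2=\frac{\pi\,\Cr{const:W_z-minus-H_z-small}}{Leb(M)}\,\eps^3,
\]
so one may take $\Cr{const:U-minus-U0-small}=\frac{\pi\,\Cr{const:W_z-minus-H_z-small}}{Leb(M)}$.

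There is no genuine obstacle here; the computation is essentially bookkeeping, and the gain of the extra power of $\eps$ simply reflects the fact that the bad set has codimension one in each leaf while $D$ contributes an area factor $\eps^2$. The two points worth double-checking are: (i) that the normalisation factors $L$ and $1/L$ attached to $\mu^{factor}_D$ and $\mu^{cond}_{W^u_z}$ by Convention~\ref{conv:conditional-measure_always_probability} cancel in the product --- which is precisely why Lemma~\ref{lem:u-density-times-factor-density} was isolated --- and (ii) that the estimate of Lemma~\ref{lem:W_z-minus-H_z-small} is uniform over $z\in D$, which it is. Finally, $\Cr{const:U-minus-U0-small}$ is a global constant in the sense of Section~\ref{sec:const-convention}: $\Cr{const:W_z-minus-H_z-small}$ depends only on $(Q,\cR_u)$, and $Leb(M)=2\pi\,Vol(Q)\ge 2\pi A_{min}$, so $1/Leb(M)$ is controlled in terms of $\cR_Q$.
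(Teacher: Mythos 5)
Your proposal is correct and follows essentially the same route as the paper: disintegrate $\mu|_U$ along the $u$-foliation via Lemma~\ref{lem:u-density-times-factor-density}, bound $\cos\psi\le 1$, apply Lemma~\ref{lem:W_z-minus-H_z-small} uniformly in $z$, and integrate over $D$ using $m_D(D)=\pi\eps^2$, arriving at the same constant $\frac{\pi\,\Cr{const:W_z-minus-H_z-small}}{Leb(M)}$. Your closing remark that $1/Leb(M)$ is controlled through $A_{min}\in\cR_Q$ is a sensible extra check, though the paper leaves it implicit.
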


\begin{proof}
By Lemma~\ref{lem:u-density-times-factor-density}, since $\cos\psi(r)\le 1$,
\begin{eqnarray*}
 \mu(U\setminus U_0) &=& \int_D \mu^{cond}_{W^u_z} (W^u_z\setminus H_z)\diff \mu^{factor}_D(z) = \\
 &=& \int_D \int_{W^u_z\setminus H_z} \frac{\diff \mu^{cond}_{W^u_z}}{\diff m_{W^u_z}}(r) \diff m_{W^u_z}(r) \frac{\diff \mu^{factor}_D}{\diff m_D}(z) \diff m_D(z) = \\
 &=& \int_D \int_{W^u_z\setminus H_z} \frac{1}{Leb(M)} \cos\psi(r) \diff m_{W^u_z}(r) \diff m_D(z) \le \\
 &\le& \frac{1}{Leb(M)} \int_D m_{W^u_z} (W^u_z\setminus H_z) \diff m_D(z).
\end{eqnarray*}
So by Lemma~\ref{lem:W_z-minus-H_z-small}
 \[ \mu(U\setminus U_0) \le \frac{1}{Leb(M)} \int_D \Cr{const:W_z-minus-H_z-small} \eps \diff m_D(z) =
 \frac{\Cr{const:W_z-minus-H_z-small}}{Leb(M)} \eps m_D(D)=\frac{\Cr{const:W_z-minus-H_z-small} \pi}{Leb(M)} \eps^3. \]
\end{proof}

\begin{definition}\label{def_pi_and_gamma}
Let $\pi$ denote the natural projection of $U_0$ to $H$ by the holonomy, meaning $\pi(r):=x$ for $r\in W^{c-s}_x$.
Let the measure $\gamma$ on $H$ be the push-down of $\mu_{|U_0}$ from $U_0$ to $H$ by $\pi$, and let $m:=Leb_{W^u}|_H$ denote the restriction of the Lebesgue measure of $W^u$ to $H$.
\end{definition}

\begin{lemma}\label{lem:beta-bounded}
 $\gamma \ll m$ with density
  \begin{equation}\label{eq:beta-def}
   \beta:=\frac{\diff\gamma}{\diff m}:H\to\IR^+.
  \end{equation}
 satisfying
 \[\frac{1}{C_\beta}\eps^2 \le \beta \le C_\beta \eps^2,\]
 where $C_\beta=C_\beta(\cR_Q,\cR_u)$ is a global constant.
\end{lemma}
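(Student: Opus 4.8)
The plan is to compute $\beta$ explicitly by disintegrating $\mu|_{U_0}$ along the $u$-foliation $\{W^u_z\}_{z\in D}$ (rather than along the central-stable foliation, which is the one defining $\pi$), and then to recognize that for each fixed $z$ the trace of $\pi^{-1}(A)$ on the leaf $W^u_z$ is precisely the holonomic image $h_z(A)$, whose arc-length measure is governed by Theorem~\ref{thm:holonomy_dynholder}. Concretely, for measurable $A\subset H$ one has $\pi^{-1}(A)=\bigcup_{x\in A}W^{c-s}_x\subset U_0\subset U$, and since each $W^{c-s}_x$ ($x\in H$) meets $W^u_z$ in the single point $h_z(x)$ (by the definition of $H$ and Notation~\ref{not:holonomy}) while distinct points of $H$ lie on distinct central-stable manifolds, we get $\pi^{-1}(A)\cap W^u_z=h_z(A)$. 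Applying the disintegration of $\mu|_U$ from Lemma~\ref{lem:u-foliation}, then the change of variables $r=h_z(x)$ on $W^u_z$ --- whose Jacobian with respect to arc length is the function $Jh_z$ supplied by Theorem~\ref{thm:holonomy_dynholder}, applicable because $W^u$ and every $W^u_z$ is a good $u$-curve by Lemma~\ref{lem:W-u-z_is_good_u-curve} --- then interchanging the order of integration (all integrands being non-negative), and finally combining the two Radon--Nikodym factors via Lemma~\ref{lem:u-density-times-factor-density}, I obtain
\[
\gamma(A)=\int_D\mu^{cond}_{W^u_z}\bigl(h_z(A)\bigr)\,\diff\mu^{factor}_D(z)=\int_A\Bigl(\frac{1}{Leb(M)}\int_D\cos\psi\bigl(h_z(x)\bigr)\,Jh_z(x)\,\diff m_D(z)\Bigr)\,\diff m_{W^u}(x).
\]
This already shows $\gamma\ll m$ and identifies $\beta(x)=\frac{1}{Leb(M)}\int_D\cos\psi(h_z(x))\,Jh_z(x)\,\diff m_D(z)$, a positive average over the disk $D$ of radius $\eps$; since $m_D(D)=\pi\eps^2$, the order $\eps^2$ is already visible.

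It then remains only to bound the integrand $\cos\psi(h_z(x))\,Jh_z(x)$ from above and below by global constants. The angular factor is immediate: $\frac12\le\cos\psi\le1$ on all of $U$ by Lemma~\ref{lem:u-density-regularity}. For the Jacobian, the upper bound $Jh_z\le C_h$ is exactly Theorem~\ref{thm:holonomy_dynholder}. The lower bound is the one spot where a small argument is needed: the inverse map $h_z^{-1}:H_z\to W^u$ is again a holonomy along central-stable manifolds between two good $u$-curves, so Theorem~\ref{thm:holonomy_dynholder} applies to it with the \emph{same} constant and gives $J(h_z^{-1})\le C_h$; since $Jh_z(x)\cdot J(h_z^{-1})(h_z(x))=1$ (the chain rule for Jacobians of mutually inverse maps, valid at the measure-theoretic level used here), this forces $Jh_z\ge 1/C_h$. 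Alternatively, in the regime of small $\eps$ that is actually used in the construction one may instead invoke Theorem~\ref{thm:holonomy_cs-regularity}: $x$ and $h_z(x)$ lie on the common central-stable manifold $W^{c-s}_x\subset U$ at a mutual distance $\delta=O(\eps)$ along it by transversality (Theorem~\ref{thm:transversality}), and the angle between $T_xW^u$ and $T_{h_z(x)}W^u_z$ is $O(\eps)$ since $W^u_z=W^u+z$ has the same tangent directions as $W^u$, whose curvature is at most $\Gamma_{max}$ (Definition~\ref{def:u-curve}); hence $|Jh_z-1|=O(\eps^{1/3})$.

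Combining, $\frac1{2C_h}\le\cos\psi(h_z(x))\,Jh_z(x)\le C_h$, and $Leb(M)=2\pi\,Vol(Q)\in[2\pi A_{min},2\pi A_{max}]$, so
\[
\frac{1}{4C_hA_{max}}\,\eps^2\le\beta(x)\le\frac{C_h}{2A_{min}}\,\eps^2,
\]
which is the assertion with $C_\beta=C_\beta(\cR_Q,\cR_u)=\max\{4C_hA_{max},\,C_h/(2A_{min})\}$. The genuinely delicate points in carrying this through are: first, justifying the change of variables / Fubini step rigorously even though $H$ and the $H_z$ are only Cantor-like subsets of $u$-curves --- this is precisely what the measure-theoretic Jacobian of Theorem~\ref{thm:holonomy_dynholder} is built to handle --- and second, the lower bound on $Jh_z$ discussed above; the rest is bookkeeping with non-negative densities.
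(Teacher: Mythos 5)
Your proposal is correct and follows essentially the same route as the paper: disintegrate $\mu|_U$ along the $u$-foliation, identify $\pi^{-1}(A)\cap W^u_z=h_z(A)$, pull back via the measure-theoretic Jacobian $Jh_z$ from Theorem~\ref{thm:holonomy_dynholder}, combine the densities via Lemma~\ref{lem:u-density-times-factor-density}, and bound the integrand using $\frac12\le\cos\psi\le1$, the bounds on $Jh_z$, and $m_D(D)=\pi\eps^2$. Your inverse-holonomy argument for the lower bound $Jh_z\ge 1/C_h$ is a valid way to justify what the paper attributes directly to Theorem~\ref{thm:holonomy_dynholder} (whose statement only records the upper bound), so this is a welcome clarification rather than a deviation.
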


The proof of the lemma also gives a formula for $\beta$, but we do not need that.

\begin{proof}
Let $h_z$ be the holonomy from $W^u$ to $W^u_z$. So for $B\subset H$, $(\pi^{-1}B)\cap W^u_z=h_z(B)$. So, disintegrating the $\mu$-measure of $\pi^{-1}B$ w.r.t. the u-foliation, we get
\begin{eqnarray*}
\gamma(B)&=&\mu_U(\pi^{-1}B)=\int_D \mu^{cond}_{W^u_z}(h_z (B))\diff \mu^{factor}_D(z)=\\
&=& \int_D \int_{W^u_z} \Ind_{h_z(B)}(\tilde x) \frac{\diff \mu^{cond}_{W^u_z}}{\diff m_{W^u_z}}(\tilde x) \diff m_{W^u_z}(\tilde x) \diff \mu^{factor}_D(z).
\end{eqnarray*}
In the inner integral we substitute $\tilde x=h_z(x)$, which is exactly pulling back the integral to $W^u$ from $W^u_z$. We get
\[
\gamma(B)=\int_D \int_{W^u} \Ind_B(x) \frac{\diff \mu^{cond}_{W^u_z}}{\diff m_{W^u_z}}(h_z(x)) Jh_z(x) \diff m_{W^u}(x) \diff \mu^{factor}_D(z).
\]
We can now exchange the integrals to get
\[
\gamma(B)=\int_{W^u} \Ind_B(x) \int_D \frac{\diff \mu^{cond}_{W^u_z}}{\diff m_{W^u_z}}(h_z(x)) Jh_z(x) \diff \mu^{factor}_D(z)\diff m_{W^u}(x),
\]
which means exactly that
\begin{equation}
\beta(x)=\frac{\diff\gamma}{\diff m}(x)=\int_D \frac{\diff \mu^{cond}_{W^u_z}}{\diff m_{W^u_z}}(h_z(x)) Jh_z(x) \diff \mu^{factor}_D (z).
\end{equation}
is indeed the density of $\gamma$ w.r.t. $m$. Using Lemma~\ref{lem:u-density-times-factor-density} this can be written as
\begin{equation}\label{eq:beta_formula}
\beta(x)=\frac{\diff\gamma}{\diff m}(x)=\int_D \frac{1}{Leb(M)} \cos\psi(h_z(x)) Jh_z(x) \diff m_D (z).
\end{equation}

To see the bounds on $\beta$, we use that $\frac{1}{2}\le \cos\psi(h_z(x))\le 1$ by Lemma~\ref{lem:u-density-regularity}, $\frac{1}{C_h}\le Jh_z(x) \le C_h$ by Theorem~\ref{thm:holonomy_dynholder} and $m_D(D)=\eps^2\pi$ by definition.
\end{proof}

\subsection{Product structure}\label{sec:product}

We used $U_0$ to denote the set of points where leaves of the $u$-foliation and long central-stable manifolds intersect. Indeed, points of $U_0$ can be identified with a pair of ``coordinates'' $(x,z)$, where $x$  identifies the central-stable manifold and $z$ identifies the $u$-curve. This can be formulated in many ways -- see Notation~\ref{not:holonomy}:
\begin{eqnarray*}
U_0 &=& \{h_z(x) | x\in H,z\in D\}=\\
&=& \bigcup_{z\in D} h_z(H) = \\
&=& \bigcup_{x\in H} W^{c-s}_x = \\
&=& \bigcup_{x\in H} \bigcup_{z\in D} \left(W^{c-s}_x \cap W^u_z\right) = \\
&=& h(H\times D).
 \end{eqnarray*}

For measurable subsets $A\subset H$ and $B\subset D$ we will use the notation $A\ast B$ to denote this approximate product in $U_0$, while reserving the notation $A\times B$ for the usual Cartesian product:
\[
A\ast B:=h(A\times B)= \bigcup_{x\in A} \bigcup_{z\in B} \left(W^{c-s}_x \cap W^u_z\right).
\]
With this notation, we have
\[U_0=H\ast D.\]

Lemma~\ref{lem:beta-bounded} immediately implies that
\begin{lemma}\label{lem:A-times-D-small}
For any measurable $A\subset H$ we have $\mu(A\ast D)\le C_\beta \eps^2 m_{W^u}(A)$,
where $C_\beta=C_\beta(\cR_Q,\cR_u)<\infty$ is the global constant from Lemma~\ref{lem:beta-bounded}.
\end{lemma}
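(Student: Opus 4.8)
The plan is to reduce the statement entirely to the disintegration already carried out in the proof of Lemma~\ref{lem:beta-bounded}. First I would observe that, by the various reformulations of $U_0 = H\ast D$ collected just above, the set $A\ast D = h(A\times D) = \bigcup_{x\in A} W^{c-s}_x$ is precisely $\pi^{-1}(A)$, where $\pi:U_0\to H$ is the holonomy projection of Definition~\ref{def_pi_and_gamma}. Since $A\ast D\subset U_0\subset U$, its $\mu$-measure equals its $\mu_{|U_0}$-measure, so by the very definition of $\gamma$ as the push-down of $\mu_{|U_0}$ along $\pi$ we obtain $\mu(A\ast D) = \mu_{|U_0}(\pi^{-1}(A)) = \gamma(A)$.

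Second, I would invoke Lemma~\ref{lem:beta-bounded}, which gives $\gamma\ll m$ with density $\beta$ satisfying $\beta\le C_\beta\eps^2$, whence
\[
\gamma(A) = \int_A \beta\,\diff m \le C_\beta\eps^2\, m(A).
\]
Since $A\subset H$ and $m = Leb_{W^u}|_H$, we have $m(A) = m_{W^u}(A)$, and combining the two displayed identities yields $\mu(A\ast D)\le C_\beta\eps^2\, m_{W^u}(A)$, as claimed.

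There is essentially no obstacle here: all the substance is contained in Lemma~\ref{lem:beta-bounded}, and the only point requiring a line of verification is the bookkeeping identity $A\ast D = \pi^{-1}(A)$, which is immediate from the definitions of $\ast$ and of $\pi$. (One could, if desired, note that the lower bound $\beta\ge C_\beta^{-1}\eps^2$ from Lemma~\ref{lem:beta-bounded} gives the matching inequality $\mu(A\ast D)\ge C_\beta^{-1}\eps^2\, m_{W^u}(A)$, but only the upper bound is used in what follows.)
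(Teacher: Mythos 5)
Your proposal is correct and is essentially identical to the paper's own argument: both identify $A\ast D=\pi^{-1}(A)$, write $\mu(A\ast D)=\gamma(A)=\int_A\beta\,\diff m$, and apply the upper bound $\beta\le C_\beta\eps^2$ from Lemma~\ref{lem:beta-bounded}. No gaps.
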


\begin{proof} By Definition~\ref{def_pi_and_gamma} and Lemma~\ref{lem:beta-bounded} we can write
\[\mu(A\ast D)=\mu(\pi^{-1}(A))=\gamma(A) = \int_A 1 \diff \gamma = \int_A \beta(x)\diff m(x).\]
Lemma~\ref{lem:beta-bounded} also gives the bound for the integrand, so
\[\mu(A\ast D)\le \int_A C_\beta\eps^2 \diff m = C_\beta \eps^2 m_{W^u}(A).\]
\end{proof}

\subsection{Construction of the approximating density}\label{sec:construction}

We choose $q=q_\eps :D\to\IR^+$ be a smooth enough probability density. The specific form is not important, but for easier calculations we choose the function
\[
q(z)=q_\eps(z):=\frac{3}{\eps^2 \pi} \left(1-\frac{|z|}{\eps}\right) \quad \text{ for $z\in D$}.
\]
we will use the following regularity properties:
\begin{lemma}\label{lem:q-regularity}
\begin{enumerate}
 \item $q$ is a probability density on $D$ with respect to Lebesgue measure $m_D$,
 \item $q(z) \le \frac{3}{\pi} \frac{1}{\eps^2}$ for any $z\in D$,
 \item $|q(z_1)-q(z_2)| \le \frac{3}{\pi} \frac{1}{\eps^3}|z_1-z_2|$ for any $z_1,z_2\in D$,
 \item $q$ vanishes on the boundary of $D$.
\end{enumerate}
\end{lemma}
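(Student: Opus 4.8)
The plan is to verify each of the four listed properties by direct computation from the explicit formula
\[
q(z)=q_\eps(z)=\frac{3}{\eps^2\pi}\left(1-\frac{|z|}{\eps}\right),\qquad z\in D,
\]
where $D\subset\IR^2$ is the disk of radius $\eps$ centred at $0$. Each point is elementary, so the emphasis is on bookkeeping the constants correctly.

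First I would check that $q$ is a probability density (item 1). The function is clearly nonnegative on $D$ since $|z|\le\eps$ there, and I would integrate in polar coordinates: $\int_D q\,\diff m_D=\frac{3}{\eps^2\pi}\int_0^{2\pi}\int_0^\eps\bigl(1-\tfrac{s}{\eps}\bigr)s\,\diff s\,\diff\varphi=\frac{6}{\eps^2}\int_0^\eps\bigl(s-\tfrac{s^2}{\eps}\bigr)\diff s=\frac{6}{\eps^2}\bigl(\tfrac{\eps^2}{2}-\tfrac{\eps^2}{3}\bigr)=1$, which gives item 1. For item 2, the bound on the sup: since $0\le 1-\tfrac{|z|}{\eps}\le 1$ on $D$, we immediately get $q(z)\le\frac{3}{\eps^2\pi}=\frac{3}{\pi}\frac{1}{\eps^2}$. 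Item 4 (vanishing on $\partial D$) is immediate too: if $|z|=\eps$ then $1-\tfrac{|z|}{\eps}=0$, hence $q(z)=0$.

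For item 3, the Lipschitz estimate, I would write $q(z_1)-q(z_2)=\frac{3}{\eps^2\pi}\cdot\frac{|z_2|-|z_1|}{\eps}$, so $|q(z_1)-q(z_2)|=\frac{3}{\eps^3\pi}\bigl||z_2|-|z_1|\bigr|\le\frac{3}{\eps^3\pi}|z_1-z_2|$ by the reverse triangle inequality $\bigl||z_2|-|z_1|\bigr|\le|z_1-z_2|$. This is slightly stronger than the claimed constant $\frac{3}{\pi}\frac{1}{\eps^3}$ (indeed it matches it exactly), so item 3 follows.

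There is no real obstacle here; the only thing to be careful about is the factor from the area element in polar coordinates when checking the normalization in item 1 (a missing factor of $s$ would make the integral come out wrong), and keeping track of where the power $\eps^{-3}$ in item 3 comes from — namely one factor $\eps^{-2}$ from the prefactor and one more $\eps^{-1}$ from differentiating $|z|/\eps$. All four statements are then verified, completing the proof.
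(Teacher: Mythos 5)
Your proposal is correct and is simply the explicit version of what the paper dismisses as an "easy calculation": direct verification of nonnegativity and the polar-coordinate normalization, the trivial sup bound, the Lipschitz estimate via the reverse triangle inequality, and vanishing at $|z|=\eps$. All constants check out, so nothing further is needed.
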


\begin{proof}
Easy calculation. Remember that $D=\{z\in\IR^2\,:\,|z|\le\eps\}$.
\end{proof}

Let $\tilde{q}$ denote the measure on $D$ with density $q$ (with respect to Lebesgue measure $m_D$).
We construct our approximating density as a ``product'' of $\phi$ in the $x$ direction and $q$ in the $z$ direction using the product structure.
Specifically, let $\tilde{G_0}$ be the measure on $U_0$ which is defined on approximate product sets $A\ast B$ as the push-forward of $\tilde{q} \otimes \tilde{\phi}$ from $H\times D$ to $U_0$ by $h$:
\begin{equation}\label{eq:tildeG_0-def}
\tilde{G_0}(A\ast B) = \tilde{G_0}(h(A\times B)):= (\tilde{q} \otimes \tilde{\phi})(A\times B)) = \int_A \phi \diff m \cdot \int_B q \diff m_D.
\end{equation}
Let $G_0:U_0\to \IR^+$ be defined as
\begin{equation}\label{eq:G_0-def}
G_0:=\frac{\diff \tilde{G_0}}{\diff \mu}.
 \end{equation}
This definition makes sense, since $\tilde{G_0}$ is indeed absolutely continuous with respect to $\mu$, as we will see in Lemma~\ref{lem:G0-formula}.

We will use this definition only through two consequences. The first consequence is an explicit formula that allows us to check the regularity of $G_0$, and we obtain it by disintegrating (\ref{eq:tildeG_0-def}) w.r.t. the u-foliation.

\begin{lemma}\label{lem:G0-formula}
 For $x\in H$, $z\in D$ and $r=h_z(x)$
 \[G_0(r)=Leb(M) \phi(x)q(z) \frac{1}{Jh_z(x)}\frac{1}{\cos\psi(r)}.\]
 (for almost every $r$).
\end{lemma}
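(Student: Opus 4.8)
The plan is to compute the Radon–Nikodym derivative $G_0 = \diff\tilde{G_0}/\diff\mu$ on $U_0$ by disintegrating both measures along the $u$-foliation $\{W^u_z\}_{z\in D}$, and then disintegrating again along the central-stable foliation to reduce to a pointwise identification. Concretely, I would start from the defining formula (\ref{eq:tildeG_0-def}) for $\tilde{G_0}$ on approximate product sets $A\ast B = h(A\times B)$, and test against the reference measure $\mu$ restricted to $U_0$. Since $\{W^u_z\}_{z\in D}$ is a measurable partition of $U$ (Lemma~\ref{lem:u-foliation}) with factor measure $\mu^{factor}_D$ and conditional measures $\mu^{cond}_{W^u_z}$, and since $U_0 = H\ast D$ with $H_z = h_z(H) = W^u_z\cap U_0$, we can write for any measurable $A\subset H$, $B\subset D$:
\[
\mu(A\ast B) = \int_B \mu^{cond}_{W^u_z}(h_z(A)) \diff\mu^{factor}_D(z).
\]
The key move is then the change of variables $\tilde x = h_z(x)$ on each leaf $W^u_z$, which (by Theorem~\ref{thm:holonomy_dynholder}) pulls the arc-length integral on $W^u_z$ back to one on $W^u$ with Jacobian $Jh_z$. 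Combined with the density formula (\ref{eq:mu_cond_formula}) for $\mu^{cond}_{W^u_z}$ in terms of $\cos\psi$, this yields
\[
\mu(A\ast B) = \int_A \int_B \frac{\diff\mu^{cond}_{W^u_z}}{\diff m_z}(h_z(x))\, Jh_z(x) \diff\mu^{factor}_D(z)\, \diff m_{W^u}(x),
\]
exactly parallel to the computation already done for $\beta$ in the proof of Lemma~\ref{lem:beta-bounded}, only now with a general $B$ rather than $B=D$.

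Next I would compare this with $\tilde{G_0}(A\ast B) = \int_A\phi\,\diff m \cdot \int_B q\,\diff m_D$ from (\ref{eq:tildeG_0-def}). Both $\mu|_{U_0}$ and $\tilde{G_0}$ are now expressed as integrals over the index set $H\times D$ of the approximate product structure, so by uniqueness of disintegration (and Convention~\ref{conv:conditional-measure_always_probability} for normalizations), the density $G_0(h_z(x)) = \diff\tilde{G_0}/\diff\mu$ at the point $r = h_z(x)$ is the ratio of the respective "weights": the numerator carries $\phi(x)\,q(z)$ (times the Lebesgue/$m_D$ reference densities, which contribute the factors absorbed into $\mu^{factor}_D$), while the denominator carries $\frac{\diff\mu^{cond}_{W^u_z}}{\diff m_z}(h_z(x))\, Jh_z(x)\cdot\frac{\diff\mu^{factor}_D}{\diff m_D}(z)$. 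Using Lemma~\ref{lem:u-density-times-factor-density}, the product $\frac{\diff\mu^{factor}_D}{\diff m_D}(z)\cdot\frac{\diff\mu^{cond}_{W^u_z}}{\diff m_z}(h_z(x))$ equals $\frac{1}{Leb(M)}\cos\psi(r)$, so the denominator becomes $\frac{1}{Leb(M)}\cos\psi(r)\, Jh_z(x)$, and hence
\[
G_0(r) = \frac{\phi(x)\,q(z)}{\frac{1}{Leb(M)}\cos\psi(r)\, Jh_z(x)} = Leb(M)\,\phi(x)\, q(z)\, \frac{1}{Jh_z(x)}\, \frac{1}{\cos\psi(r)},
\]
which is the claimed formula. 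Along the way this also establishes that $\tilde{G_0}\ll\mu$, as promised after (\ref{eq:G_0-def}), since the expression is a genuine (finite, bounded) density: $\cos\psi\ge\frac12$ by Lemma~\ref{lem:u-density-regularity} and $Jh_z$ is bounded above and below by Theorem~\ref{thm:holonomy_dynholder}.

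The main obstacle, and the point requiring care rather than computation, is making the double disintegration rigorous: one has to check that the approximate product sets $A\ast B$ generate the relevant $\sigma$-algebra on $U_0$ and that the map $h:H\times D\to U_0$ is a measurable isomorphism identifying the two foliations with the two coordinate projections, so that "disintegrate along the $u$-foliation, then recognize the $z$-marginal as $\tilde q$ and the conditional on $W^{c-s}_x$ as coming from $\tilde\phi$" is legitimate. In other words, one needs the $u$-foliation and the central-stable foliation to be jointly measurable and transverse on $U_0$ — which is exactly the content of the product-structure discussion in Section~\ref{sec:product} together with transversality (Theorem~\ref{thm:transversality}). Once that measurable product structure is in hand, the identification of $G_0$ is just bookkeeping of Jacobians, essentially a one-parameter elaboration of the proof of Lemma~\ref{lem:beta-bounded}; the "for almost every $r$" qualifier in the statement is precisely the usual almost-everywhere ambiguity of Radon–Nikodym derivatives and requires no extra work.
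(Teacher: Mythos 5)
Your argument is correct and is essentially the paper's own proof: disintegrate over the $u$-foliation, pull the leafwise integrals back to $W^u$ via $\tilde x=h_z(x)$ with Jacobian $Jh_z$, compare integrands against (\ref{eq:tildeG_0-def}) on the generating product sets $A\ast B$, and finish with Lemma~\ref{lem:u-density-times-factor-density}. The only (welcome) difference is presentational: you exhibit the density directly and thereby record the absolute continuity $\tilde{G_0}\ll\mu$, which the paper asserts alongside the same computation.
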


\begin{proof}
For any measurable $A\subset H$ and $B\subset D$ we calculate $\tilde{G_0}(A\ast B)$ from (\ref{eq:G_0-def}) by disintegrating w.r.t. the u-foliation:
\begin{eqnarray*}
\tilde{G_0}(A\ast B) &=&
\int_B \int_{H_z(A)} G_0(r) \diff \mu^{cond}_{W^u_z}(r) \diff\mu^{factor}_D (z) = \\
&=& \int_B \int_{H_z(A)} G_0(r) \frac{\diff \mu^{cond}_{W^u_z}}{\diff m_z}(r) \diff m_z(r) \diff\mu^{factor}_D (z).
\end{eqnarray*}

In the inner integral we use the substitution $r=h_z(x)$ and notice that $\diff m_z(r) = Jh_z(x) \diff m(x)$ by definition (\ref{eq_Jh-def}):
\begin{eqnarray*}
\tilde{G_0}(A\ast B) &=&
\int_B \int_A G_0(h_z(x)) \frac{\diff \mu^{cond}_{W^u_z}}{\diff m_z}(h_z(x)) Jh_z(x) \diff m(x) \diff\mu^{factor}_D (z) = \\
&=& \int\limits_B \int\limits_A G_0(h_z(x)) \frac{\diff \mu^{cond}_{W^u_z}}{\diff m_z}(h_z(x)) Jh_z(x) \diff m(x) \frac{\diff\mu^{factor}_D}{\diff m_D}(z) \diff m_D (z)
\end{eqnarray*}
Comparing this with (\ref{eq:tildeG_0-def}) we find that
\begin{eqnarray*}
& &\int_{A\times B} \phi(x) q(z) \diff (m\otimes m_D)(x,z)= \\
& &= \int_{A\times B} G_0(h_z(x)) \frac{\diff \mu^{cond}_{W^u_z}}{\diff m_z}(h_z(x)) Jh_z(x) \frac{\diff\mu^{factor}_D}{\diff m_D}(z) \diff (m\otimes m_D)(x,z)
\end{eqnarray*}
for every measurable $A\subset H$ and $B\subset D$, so the integrands have to be equal:
\[
\phi(x) q(z)=G_0(h_z(x))  \frac{\diff \mu^{cond}_{W^u_z}}{\diff m_z}(h_z(x)) Jh_z(x) \frac{\diff\mu^{factor}_D}{\diff m_D}(z)
\]
for
$m\otimes m_D$-a.e. $(x,z)\in H\times D$, which also means $\mu$-a.e. $r\in U_0$.
Using Lemma~\ref{lem:u-density-times-factor-density} we get exactly the statement to be proven.
\end{proof}

The second consequence
says that for functions that are constant along central-stable manifolds, integrating against $G_0$ on $U_0$ is exactly the same as integrating against $\phi$ on $H$. We get it by projecting down to $H$ along central-stable manifolds. Remember that $\pi$ denotes the natural projection from $U_0$ to $H$ so when $r=h_z(x)\in U_0$ for some $x\in H$ and $z\in D$, we have $\pi(r)=x$. So a function that is constant along central-stable manifolds is a function which depends on $r$ through $\pi(r)$ only.

\begin{lemma}\label{lem:G_0-similarto-phi}
 For $f:U_0\to \IR$ (or $f:H\to\IR$) and any $A\subset H$,
 \[ \int\limits_{A\ast D} (f\circ \pi )G_0 \diff\mu =\int\limits_A f \phi \diff m.\]
\end{lemma}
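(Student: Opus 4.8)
The plan is to unwind the definition of $\tilde{G_0}$ (and hence $G_0 = \diff\tilde{G_0}/\diff\mu$) using the disintegration machinery, and then recognize that integrating $(f\circ\pi)G_0$ over $A\ast D$ is precisely the total $\tilde{G_0}$-mass of $A\ast D$ weighted by $f$ — which, by (\ref{eq:tildeG_0-def}), factors through the $\phi$-integral on $H$. I would first treat the case $f = \Ind_B$ for a measurable $B\subset H$: then $f\circ\pi = \Ind_{\pi^{-1}B} = \Ind_{B\ast D}$, and since $G_0$ is the $\mu$-density of $\tilde{G_0}$ on $U_0$,
\[
\int_{A\ast D} (\Ind_B\circ\pi)G_0\diff\mu = \int_{(A\cap B)\ast D} G_0\diff\mu = \tilde{G_0}\big((A\cap B)\ast D\big) = \tilde{G_0}\big((A\cap B)\ast D\big).
\]
Now apply (\ref{eq:tildeG_0-def}) with $B' = D$: $\tilde{G_0}((A\cap B)\ast D) = \int_{A\cap B}\phi\diff m \cdot \int_D q\diff m_D = \int_{A\cap B}\phi\diff m$, using that $q$ is a probability density on $D$ (Lemma~\ref{lem:q-regularity}, item 1). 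But $\int_{A\cap B}\phi\diff m = \int_A \Ind_B\,\phi\diff m$, which is exactly the right-hand side for $f=\Ind_B$.

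Having verified the identity for indicators, I would extend it to general (bounded, measurable) $f$ by the standard linearity-and-monotone-convergence argument: both sides of the claimed equality are linear in $f$ and, for nonnegative $f$, continuous under increasing limits (monotone convergence on the left for the measure $G_0\diff\mu$ restricted to $A\ast D$, and on the right for $\phi\diff m$ restricted to $A$), so agreement on indicators of measurable subsets of $H$ propagates to simple functions and then to all bounded measurable $f:H\to\IR$. For $f:U_0\to\IR$ the statement only depends on $f$ through $f\circ\pi$, i.e. through the values of $f$ on one representative point of each leaf $W^{c-s}_x$, $x\in H$; so this reduces to the $f:H\to\IR$ case by replacing $f$ with $x\mapsto f(x)$ (viewing $H\subset W^u\subset U_0$). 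One should note that "$(f\circ\pi)$ constant along central-stable leaves" is automatic, so no extra hypothesis on $f$ is needed.

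There is no serious obstacle here; the only point requiring a little care is the bookkeeping of which $\ast$-product appears. Concretely, one must observe that $\pi^{-1}(B) = B\ast D$ (immediate from Definition~\ref{def_pi_and_gamma} and the fact $U_0 = H\ast D$), and that $(A\ast D)\cap(B\ast D) = (A\cap B)\ast D$, which holds because the $\ast$-product is the image of a Cartesian product under the bijection $h:H\times D\to U_0$ and $h$ intertwines set operations in the first coordinate. A secondary technical remark: the identity in Lemma~\ref{lem:G0-formula} gives $G_0$ only $\mu$-a.e., but this is harmless since we are only integrating against $\mu$; and the well-definedness of $G_0 = \diff\tilde{G_0}/\diff\mu$, i.e. $\tilde{G_0}\ll\mu|_{U_0}$, is exactly what Lemma~\ref{lem:G0-formula} (to be proved independently) establishes, so I would invoke it as already available. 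Thus the whole proof is: reduce to indicators, use $q$ is a probability density to kill the $z$-integral, and recognize the surviving factor as $\int_A f\phi\diff m$.
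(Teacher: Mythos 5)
Your proposal is correct and follows essentially the same route as the paper: both arguments rest on rewriting $(f\circ\pi)G_0\,\diff\mu$ as an integral against $\tilde{G_0}=h_*(\tilde\phi\otimes\tilde q)$ and then using that $\int_D q\,\diff m_D=1$ to kill the $z$-direction. Your reduction to indicators of product sets followed by monotone-convergence extension is just a more explicit justification of the change-of-variables and Fubini factorization that the paper performs directly for general $f$.
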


\begin{proof}
We first use the definition of $G_0$ to replace integration w.r.t. $\mu$ by integration $w.r.t.$ $\tilde{G_0}$. Then we use the definition of $\tilde{G_0}$ to perform the integral substitution $r:=h(x,z)$:
\[
\int\limits_{A\ast D} \left(f \circ\pi\right)G_0 \diff\mu = \int\limits_{A\ast D} f(\pi(r)) \diff\tilde{G_0}(r) =
\int\limits_{A\times D} f(\pi(h(x,z))) \diff(\tilde{\phi}\otimes \tilde{q})(x,z).
\]
Since $\pi(h(x,z))=x$, the integrand depends on $x$ only (that's how it was designed), so the integral factorizes and the integral w.r.t. $z$ becomes $1$:
\[
\int\limits_{A\ast D} \left(f \circ\pi\right)G_0 \diff\mu = \int\limits_A f(x) \left[\int\limits_D 1 \diff\tilde{q}(z)\right] \diff\tilde{\phi}(x) = \int\limits_A f \diff\tilde{\phi}.
\]
Using the definition of $\tilde{\phi}$ gives the statement.
\end{proof}

\begin{remark} It is not hard to see that the foliation of $U_0$ with central-stable manifolds is also integrable (w.r.t. $\mu |_{U_0}$). We will not use this fact, so we do not prove it. However, for better understanding, we give the following statement (which we will not use), demonstrating how the function $G_0$ is ``well related'' to the central-stable foliation. It says that ``the integral of $G_0$ on each central-stable manifold is exactly what it should be'', and we obtain it by disintegrating (\ref{eq:tildeG_0-def}) w.r.t. the central-stable foliation.
Let $\nu_x$ denote the conditional measures of $\mu |_{U_0}$ on the $W^{c-s}_x$ (the existence of which we do not show). The factor measure is $\gamma$. Then for $m$-a.e. $x\in H$
 \[\int_{W^{c-s}_x} G_0(r) \diff \nu_x(r) = \frac{\phi(x)}{\beta(x)}.\]
Indeed, applying (\ref{eq:tildeG_0-def}) and (\ref{eq:G_0-def}) with $B=D$, and then disintegrating w.r.t. the central-stable foliation, we get
\[\int_A \phi(x)\diff m(x) = \tilde{G_0}(A\ast D)=\int_{A\ast D} G_0\diff \mu = \int_A \int_{W^{c-s}_x} G_0(r)\diff \nu_x(r) \diff \gamma(x).\]
By (\ref{eq:beta-def}) $\beta(x)=\frac{\diff \gamma(x)}{\diff m(x)}$, so this gives
\[\int_A \phi(x)\diff m(x) = \int_A \int_{W^{c-s}_x} G_0(r)\diff \nu_x(r) \beta(x) \diff m(x)\]
for every measurable $A\subset H$, which means that
\[\phi(x) = \int_{W^{c-s}_x} G_0(r)\diff \nu_x(r) \beta(x) \]
for $m$-a.e. $x\in H$.
\end{remark}

The following proposition is the key to the approximation of the singular measure with the density. It says that if we substitute the density $\phi$ on $W^u$ with the density $G_0$ on $M$, we make little mistake when integrating $F \circ \Phi^t$, if only $F$ is sufficiently regular. This is a strong statement, since $F\circ \Phi^t$ is far from inheriting the regularity of $F$, at least in unstable directions. So the essence of the proposition is that this particular approximating density is insensitive to irregularity in the unstable direction.

\begin{proposition}\label{prop:integral_compare_on_H1-U1}
There is a constant $1\le C_\pi=C_\pi(\cR_Q,\cR_u)<\infty$ such that for any measurable $A\subset H$,
\[
\left| \int\limits_{A\ast D}\left(F\circ \Phi^t\right)G_0\diff\mu - \int\limits_A \left(F\circ \Phi^t\right) \phi \diff m \right|\le
\int\limits_{A\ast D} \left[ \left(osc_{C_\pi \eps} F\right) \circ \Phi^t\right]G_0\diff\mu.
\]
\end{proposition}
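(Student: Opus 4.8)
The plan is to collapse the difference of the two integrals into a single integral over $A\ast D$ against $G_0$, whose integrand is the increment $F(\Phi^t r)-F(\Phi^t\pi(r))$, and then to bound that increment pointwise by an oscillation of $F$ of radius proportional to $\eps$; everything hinges on showing that this radius can be taken $O(\eps)$ uniformly in $t$.

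First I would fix $t\ge 0$ and apply Lemma~\ref{lem:G_0-similarto-phi} to the function $f:H\to\IR$, $f(x):=F(\Phi^t x)$, for which $(f\circ\pi)(r)=F(\Phi^t\pi(r))$. This gives
\[
\int_A(F\circ\Phi^t)\,\phi\diff m=\int_{A\ast D}(f\circ\pi)\,G_0\diff\mu=\int_{A\ast D}F(\Phi^t\pi(r))\,G_0(r)\diff\mu(r),
\]
so, subtracting from $\int_{A\ast D}(F\circ\Phi^t)G_0\diff\mu$ and using $G_0\ge 0$ together with the triangle inequality,
\[
\left|\int_{A\ast D}(F\circ\Phi^t)\,G_0\diff\mu-\int_A(F\circ\Phi^t)\,\phi\diff m\right|\le\int_{A\ast D}\bigl|F(\Phi^t r)-F(\Phi^t\pi(r))\bigr|\,G_0(r)\diff\mu(r).
\]

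Next I would establish the geometric estimate, which is the real content. For $r\in A\ast D$ put $x:=\pi(r)\in A\subset H$; then $x$ and $r$ lie on one and the same (homogeneous) central-stable manifold $W^{c-s}_x$. The claim is that there is a constant $C_\pi=C_\pi(\cR_Q,\cR_u)\ge 1$, independent of $\eps$, $t$, $W^u$, $\phi$ and $F$, with
\[
dist(\Phi^t x,\Phi^t r)\le C_\pi\,\eps\qquad\text{for every }t\ge 0.
\]
Its proof combines two facts. First, the arc of $W^{c-s}_x$ joining $x$ to $r$ has length $O(\eps)$: the crossing condition built into the definition (\ref{eq:H-def}) of $H$, together with $r^{c-s}(x)\ge 10\eps/\sin c_{tr}$, makes this central-stable leaf a genuine graph over $D$ (indeed $W^{c-s}_x\cap U=\{h_z(x)\mid z\in D\}$, since it meets each $W^u_z$ in a single point and does not terminate inside the tube $U$), while uniform transversality (Theorem~\ref{thm:transversality}, applicable to every $W^u_z$ by Lemma~\ref{lem:W-u-z_is_good_u-curve}) forces it to cross the ``long'' direction $W^u$ of the thin tube $U$ (contained in the $\eps$-neighbourhood of $W^u$) at an angle bounded below, so the map $z\mapsto h_z(x)$ is uniformly Lipschitz and the graph has length at most $C\eps/\sin c_{tr}$. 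Second, forward time cannot expand distances along central-stable manifolds by more than a uniform factor: the flow direction is neutral and the genuinely stable direction contracts, so $dist(\Phi^t x,\Phi^t r)\le C\,dist_{W^{c-s}_x}(x,r)$ for all $t\ge 0$ — the standard homogeneous hyperbolicity estimate for dispersing billiards, cf.~\cite{ChM06} (compare Theorem~\ref{thm:hyperbolicity} for the analogue on $u$-curves). Multiplying the two bounds and absorbing all constants into $C_\pi$ gives the estimate.

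Finally, since $\Phi^t x\in B_{C_\pi\eps}(\Phi^t r)$, both $\Phi^t r$ and $\Phi^t x$ lie in $B_{C_\pi\eps}(\Phi^t r)$, so by the definition (\ref{eq:osc_r-def}) of the oscillation
\[
\bigl|F(\Phi^t r)-F(\Phi^t\pi(r))\bigr|\le (osc_{C_\pi\eps}F)(\Phi^t r)=\bigl[(osc_{C_\pi\eps}F)\circ\Phi^t\bigr](r),
\]
and inserting this into the bound above yields exactly the claimed inequality. The hard part will be the first of the two facts in the geometric estimate — making rigorous and uniform in all the data the statement that a central-stable leaf through a point of $W^u$ stays inside the $\eps$-thin tube $U$ only along a piece of length $O(\eps)$; this is precisely where transversality and the crossing/length conditions encoded in the definition of $H$ are indispensable. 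The non-expansion along central-stable leaves is classical, and the remaining measure-theoretic manipulation is routine bookkeeping with Lemma~\ref{lem:G_0-similarto-phi} and the definition of the oscillation.
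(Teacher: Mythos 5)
Your proposal is correct and follows essentially the same route as the paper: use Lemma~\ref{lem:G_0-similarto-phi} with $f=F\circ\Phi^t$ to rewrite the second integral as $\int_{A\ast D}(F\circ\Phi^t\circ\pi)G_0\diff\mu$, then bound $|F\circ\Phi^t-F\circ\Phi^t\circ\pi|$ pointwise by $(osc_{C_\pi\eps}F)\circ\Phi^t$, exactly because $r$ and $\pi(r)$ lie on the same homogeneous central-stable leaf at distance $O(\eps)$ (the paper takes $C_\pi=10/\sin c_{tr}$ straight from the crossing condition in (\ref{eq:H-def})) and this distance does not grow under the forward flow. Your two-step justification of the geometric estimate is just a slightly more explicit version of the paper's one-line appeal to the construction of $H$ and transversality, so there is no substantive difference.
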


\begin{proof}
  Let $C_\pi=\frac{10}{\sin c_{tr}}$, where $c_{tr}$ is the transversality bound from Theorem~\ref{thm:transversality}.
  Let us consider the error we make if we substitute the function $F(\Phi^t(r))$ with the constant $F(\Phi^t(x))$ on each $W^{c-s}_x$. In other words, we are comparing the function $F\circ \Phi^t$ to the function $F\circ \Phi^t \circ \pi$. By the construction of $H$ in (\ref{eq:H-def}), for any $r\in U_0$, $r$ and $\pi(r)$ are $C_\pi \eps$ close, and they are also on the same central-stable manifold, so their distance does not increase in time. This means that $dist(\Phi^t(r),\Phi^t(\pi(r)))\le C_\pi \eps$, or in other words $\Phi^t(\pi(r))\in B_{C_\pi \eps} (\Phi^t(r))$. This implies that
 \begin{equation}\label{eq:F-Phi-pi}
  \left|F\circ \Phi^t - F \circ \Phi^t \circ\pi\right|\le (osc_{C_\pi \eps} F)\circ \Phi^t.
 \end{equation}
 Using Lemma~\ref{lem:G_0-similarto-phi} from right to left with $f= F\circ \Phi^t$, we get
 \begin{eqnarray*}
 \left| \int\limits_{A\ast D}\left(F\circ \Phi^t\right)G_0\diff\mu - \int\limits_A \left(F\circ \Phi^t\right) \phi \diff m \right| = \\
 \left| \int\limits_{A\ast D}\left(F\circ \Phi^t\right)G_0\diff\mu - \int\limits_{A\ast D}\left(F\circ \Phi^t\circ \pi\right)G_0\diff\mu \right| \le \\
 \le \int\limits_{A\ast D}\left| F\circ \Phi^t - F\circ \Phi^t\circ \pi\right| G_0\diff\mu.
 \end{eqnarray*}
 Substituting (\ref{eq:F-Phi-pi}) gives the result.
 \end{proof}

The above proposition points to a technical difficulty we have to fight: we will need to show that the right hand side is small when $t$ is large and $\eps$ is chosen appropriately. This does not follow immediately from norm estimates. Indeed, if $F$ were H\"older continuous, then $osc_{C_\pi \eps} F$ would be uniformly small, and a good upper bound would be immediate. However, we only assume that $F$ is generalized H\"older, so $osc_{C_\pi \eps} F$ is only small on average. The upper bound on the right hand side -- which is a time correlation function -- will follow from the correlation decay in Theorem~\ref{thm:BDL_extended}. For this, the generalized H\"older regularity of $osc_{C_\pi \eps} F$ needs to be shown. This is done in the separate paper \cite{T17}. The main theorem there is the following:

\begin{theorem}\label{thm:osc_f-genHolder}
 For any Lebesgue measurable $D\subset\IR^d$, any bounded $f:D\to\IR$, any $r>0$ and any $0<\alpha\le 1$
 \[|osc_r f|_{\alpha;gH} \le 2(\sup_D f - \inf_D f) \mu(Conv(D))\left(\frac{2d+1}{r}\right)^\alpha,\]
 where $Conv(D)$ denotes the convex hull of $D$.
\end{theorem}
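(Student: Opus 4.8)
The plan is, for each fixed $\rho>0$, to estimate $I(\rho):=\int_D\big(osc_\rho(osc_r f)\big)(x)\,\diff\mu(x)$ and then to recover $|osc_r f|_{\alpha;gH}=\sup_{\rho>0}\rho^{-\alpha}I(\rho)$. Write $V:=\sup_D f-\inf_D f$. Since $0\le osc_r f\le V$, the oscillation of $osc_r f$ never exceeds $V$, so $I(\rho)\le V\mu(D)\le V\mu(Conv(D))$ for every $\rho$; hence when $(2d+1)\rho\ge r$ one trivially has $I(\rho)\le 2V\mu(Conv(D))\le 2V\mu(Conv(D))\big(\tfrac{(2d+1)\rho}{r}\big)^\alpha$. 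When $(2d+1)\rho<r$, using $0<(2d+1)\rho/r<1$ together with $\alpha\le 1$ (so $t\le t^\alpha$ for $t\in(0,1)$), it is enough to prove the \emph{linear} bound $I(\rho)\le 2V\mu(Conv(D))\,(2d+1)\rho/r$. Thus the theorem reduces to establishing this linear bound for $0<\rho<r/(2d+1)$.

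The second step is an elementary pointwise sandwich. If $y\in B_\rho(x)$ then $B_{r-\rho}(x)\subseteq B_r(y)\subseteq B_{r+\rho}(x)$, so $\big(osc_{r-\rho}f\big)(x)\le\big(osc_r f\big)(y)\le\big(osc_{r+\rho}f\big)(x)$; taking the supremum and infimum over $y\in B_\rho(x)$ yields
\[
\big(osc_\rho(osc_r f)\big)(x)\ \le\ \big(osc_{r+\rho}f\big)(x)-\big(osc_{r-\rho}f\big)(x)\qquad(0<\rho<r).
\]
Integrating over $D$, the linear bound reduces to a modulus-of-continuity estimate for the nondecreasing bounded function $\Psi(s):=\int_D\big(osc_s f\big)(x)\,\diff\mu(x)$, namely $\Psi(r+\rho)-\Psi(r-\rho)\le 2V\mu(Conv(D))\,(2d+1)\rho/r$ for $0<\rho<r/(2d+1)$. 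For $\rho$ of order $r$ this is immediate, since $\Psi(r+\rho)-\Psi(r-\rho)\le\Psi(\infty)\le V\mu(Conv(D))$; the real content lies in the regime $\rho\ll r$, where it asserts that $\Psi$ is Lipschitz near the scale $r$ with constant $O\big(d\,V\,\mu(Conv(D))/r\big)$.

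This last estimate is the crux, and I expect it to be the main obstacle. The layer-cake identity $\big(osc_s f\big)(x)=\int_{\inf_D f}^{\sup_D f}\big(osc_s\Ind_{\{f>t\}}\big)(x)\,\diff t$ (valid since $x\mapsto\sup_{B_s(x)\cap D}f$ is lower semicontinuous, hence measurable, whatever $f$ is) reduces the claim, after integrating $t$ over an interval of length $V$, to the case of an indicator $f=\Ind_E$ with $E\subseteq D$, for which one must bound $\Psi_{\Ind_E}(r+\rho)-\Psi_{\Ind_E}(r-\rho)$. As $\big(osc_s\Ind_E\big)(x)=1$ exactly when $B_s(x)$ meets both $E$ and $D\setminus E$, this difference is at most the $\mu$-measure of a thin ($2\rho$-wide) shell, at level $\approx r$, of a distance function attached to $E$ (resp.\ to $D\setminus E$): a set of points whose ball of radius $r-\rho$ lies entirely on one side. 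These balls pack a volume $\lesssim\mu(Conv(D))$, and a covering/packing argument — a maximal $\rho$-net of the shell, whose radius-$\sim r$ companion balls have bounded overlap — should convert this into the required factor $\rho/r$, the dimensional constant $2d+1$ arising from the overlap count and the net geometry. The delicate point, and the reason the naive chaining argument (subdividing, at scale $\rho$, the segment joining the extremal points of $f$ inside $B_{r+\rho}(x)$) fails, is that $D$ is an \emph{arbitrary} measurable set: such a segment may leave $D$, where $f$ is undefined, and $D$ may be totally disconnected, so every estimate must be carried out ``in bulk'', through volumes of translated fixed sets, using only the ambient convexity of $Conv(D)$. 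Making this rigorous is the technical content of \cite{T17}.
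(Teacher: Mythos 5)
Your preparatory reductions are sound: the split into the trivial regime $(2d+1)\rho\ge r$ and the regime $(2d+1)\rho<r$, the replacement of the exponent $\alpha$ by a linear bound via $t\le t^\alpha$ for $t\in(0,1)$, the sandwich $(osc_\rho(osc_r f))(x)\le (osc_{r+\rho}f)(x)-(osc_{r-\rho}f)(x)$, and the layer-cake reduction to indicators are all correct (modulo a small slip: with the closed balls of (\ref{eq:osc_r-def}) the map $x\mapsto\sup_{B_s(x)\cap D}f$ is not lower semicontinuous, though measurability is not the real issue here). But these steps only transport the statement to the point where the actual work begins, and there you stop. The estimate $\Psi(r+\rho)-\Psi(r-\rho)\le 2V\mu(Conv(D))(2d+1)\rho/r$ -- equivalently, for an indicator, the bound $\mu\{x\in D:\ r-\rho<dist(x,A)\le r+\rho\}\le C_d\,(\rho/r)\,\mu(Conv(D))$ for an arbitrary $A\subset D$ -- is exactly the substance of Theorem~\ref{thm:osc_f-genHolder}: it is where the factor $\rho/r$, the appearance of $Conv(D)$, and the constant $2d+1$ must all be produced, uniformly over arbitrary measurable $D$. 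You only gesture at ``a covering/packing argument'' that ``should'' give this, and you explicitly defer the rigorous version to \cite{T17}; note that the present paper likewise contains no proof (it imports the theorem from that manuscript), so there is nothing in the paper that fills this hole for you. As it stands, the proposal is a reduction, not a proof.

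Beyond being absent, the sketched mechanism is doubtful on two concrete points. First, the claim that the radius-$\sim r$ companion balls of a $\rho$-net ``pack a volume $\lesssim\mu(Conv(D))$'' is false in general: balls of radius $r$ centred in $D$ are controlled by the measure of an $r$-neighbourhood of $Conv(D)$, which can vastly exceed $\mu(Conv(D))$ (take $D$ a thin slab of small measure with $r$ comparable to its diameter); yet the theorem's right-hand side involves $\mu(Conv(D))$ itself, so the argument must explain why the shell's measure can be charged to the convex hull and not to an $r$-neighbourhood. Second, net-and-bounded-overlap counts at scale $r$ typically yield dimensional constants that grow exponentially in $d$ (covering numbers like $3^d$ times surface-to-volume factors), not the linear constant $2d+1$; obtaining a clean $\rho/r$ with such a constant requires a genuinely different device (for instance a scaling/monotonicity inequality for $s\mapsto\mu(A\oplus B_s)$, or a radial projection argument inside $Conv(D)$), none of which is indicated. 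So the crux of the theorem is missing, and the route you point toward would need substantial new ideas to deliver the stated inequality.
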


We use this via the following corollary:

\begin{corollary}\label{cor:osc_F-genholder-norm}
 There is a global constant $\Cl{const:osc_F-genholder-norm}=\Cr{const:osc_F-genholder-norm}(\cR_Q)<\infty$ such that for any $0<\alpha\le 1$ and $0<\eps\le diam(M)$
 \[var_{\alpha}(osc_{C_\pi\eps} F) \le \frac{\Cr{const:osc_F-genholder-norm}}{\eps^{\alpha}} (\sup_M F - \inf_M F).\]
\end{corollary}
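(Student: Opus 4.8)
The plan is to transcribe Theorem~\ref{thm:osc_f-genHolder} -- which lives on bounded Lebesgue-measurable subsets of $\IR^d$ -- to the billiard phase space $M\subset\IT^3$ with its invariant measure $\mu$, by passing to the universal cover. Write $V_F:=\sup_M F-\inf_M F$ and $g:=osc_{C_\pi\eps}F$, so that $var_\alpha(g)=|g|_{\alpha;gH}+(\sup_M g-\inf_M g)$ and $0\le g\le V_F$. I would first dispose of the trivial case $C_\pi\eps\ge diam(M)$: then $B_{C_\pi\eps}(x)=M$ for every $x$, hence $g\equiv V_F$ is constant, $var_\alpha(g)=0$, and the estimate is void. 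So assume $C_\pi\eps<diam(M)$ from now on.

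Next I would set up the lift. Let $p\colon\IR^3\to\IT^3$ be the flat covering projection, a local isometry; set $\hat F:=F\circ p$ on $p^{-1}(M)$ and fix the fundamental cube $F_0=[0,1)^3$. Because $p$ is a covering local isometry, the closed $\IT^3$-ball $B_r(x)$ is the image under $p$ of the closed $\IR^3$-ball $B_r(\hat x)$ around any lift $\hat x$ (the quotient distance between two points is realised by a nearest pair of lifts), whence $osc_r^{\IT^3}F(x)=osc_r^{\IR^3}\hat F(\hat x)$, the right-hand oscillation being taken over $B_r(\hat x)\cap p^{-1}(M)$; and the same holds with $F$ replaced by $g$. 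Now fix an open cube $U\supset\bar F_0$ with $dist(\bar F_0,\partial U)\ge (C_\pi+1)\,diam(M)$ and put $D:=p^{-1}(M)\cap U$, a bounded measurable subset of $\IR^3$ with $\sup_D\hat F-\inf_D\hat F=V_F$ and $\mathrm{vol}(Conv(D))\le\mathrm{vol}(U)=:C_U$, a constant depending only on $\cR_Q$ (through the bound $d_Q$ on $diam(M)$ and the global constant $C_\pi$). If $\hat x\in\bar F_0$ and the outer radius $s$ satisfies $s+C_\pi\eps\le dist(\bar F_0,\partial U)$, then every ball occurring in the computation of $osc_s\big(osc_{C_\pi\eps}\hat F\big)(\hat x)$ lies inside $U$, so oscillations with respect to $p^{-1}(M)$ and with respect to $D$ agree; together with the ball identity this gives, for all such $s$ and every $x\in M$ with lift $\hat x\in F_0$,
\[osc_s^{\IT^3}g(x)=osc_s^{D}\big(osc_{C_\pi\eps}^{D}\hat F\big)(\hat x).\]

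Since $\mu=\tfrac1{Leb(M)}Leb_M$ and, by a standard change of variables, $\int_M h\,\diff Leb_M=\int_{F_0\cap p^{-1}(M)}(h\circ p)\,\diff Leb_{\IR^3}$, I would integrate this identity over $F_0\cap p^{-1}(M)$, enlarge the domain of integration to $D$ (the integrand is $\ge 0$), and invoke Theorem~\ref{thm:osc_f-genHolder} with $d=3$, $f=\hat F|_D$, $r=C_\pi\eps$, to obtain for every admissible $s$
\[\frac1{s^\alpha}\int_M osc_s^{\IT^3}g\,\diff\mu\;\le\;\frac1{Leb(M)}\,\big|osc_{C_\pi\eps}\hat F\big|_{\alpha;gH}\;\le\;\frac{2\,C_U\,V_F}{Leb(M)}\Big(\frac{7}{C_\pi\eps}\Big)^\alpha\;\le\;\frac{14\,C_U}{Leb(M)}\cdot\frac{V_F}{\eps^\alpha},\]
using $0<\alpha\le1$ (so $7^\alpha\le7$) and $C_\pi\ge1$ (so $C_\pi^\alpha\ge1$). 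For the remaining ``large'' $s$ (those violating $s+C_\pi\eps\le dist(\bar F_0,\partial U)$; one checks they all satisfy $s\ge diam(M)\ge\eps$) I would use only the crude bound $osc_s^{\IT^3}g\le V_F$ and $\mu(M)=1$ to get $\tfrac1{s^\alpha}\int_M osc_s^{\IT^3}g\,\diff\mu\le V_F/\eps^\alpha$. Taking the supremum over all $s>0$ and adding $\sup_M g-\inf_M g\le V_F\le diam(M)^\alpha\eps^{-\alpha}V_F$ (here $\eps\le diam(M)$ is used) yields
\[var_\alpha(g)\;\le\;\Big(\tfrac{14\,C_U}{Leb(M)}+1+\max\{1,diam(M)\}\Big)\,\frac{V_F}{\eps^\alpha},\]
and the bracket is a finite constant depending only on $\cR_Q$ (recall $Leb(M)=2\pi Vol(Q)\ge 2\pi A_{min}>0$) and, crucially, \emph{not} on $\alpha$. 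This is the claimed inequality with $\Cr{const:osc_F-genholder-norm}=\Cr{const:osc_F-genholder-norm}(\cR_Q)$ equal to that bracket.

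The one point that will need care is the bookkeeping of the two oscillation radii against the margin of the enlarged fundamental domain $U$: this is what makes the toral and Euclidean oscillations \emph{literally} coincide on $F_0$, rather than merely be comparable -- and comparability would be useless here, since the outer oscillation $osc_s$ is not monotone in its argument. The other thing to watch is that the final constant is $\alpha$-free, which forces one to bound $7^\alpha$, $C_\pi^{-\alpha}$ and $diam(M)^\alpha$ by $\alpha$-independent quantities; this is exactly what $0<\alpha\le1$ permits. Everything else is a mechanical application of Theorem~\ref{thm:osc_f-genHolder}.
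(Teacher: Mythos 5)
Your overall route is the same as the paper's: pass to the periodic extension/universal cover, note that toral and Euclidean oscillations literally coincide on a central fundamental domain as long as both radii stay below the margin, apply Theorem~\ref{thm:osc_f-genHolder} to the lifted function, treat large outer radii by the crude bound $osc_s g\le \sup F-\inf F$, and absorb the $\sup F-\inf F$ term using $\eps\le diam(M)$. This is exactly the content of Remark~\ref{rem:osc-f-genHolre-on-torus} and the paper's proof, only phrased through the covering map instead of an explicit $5\times5\times5$ block of cells.

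There is, however, one genuine defect in your bookkeeping of the constant. You choose the enlarged cube $U$ with margin $(C_\pi+1)\,diam(M)$, so $C_U=vol(U)$ grows with $C_\pi$, and you then assert that $C_U$ depends only on $\cR_Q$ ``through $d_Q$ and the global constant $C_\pi$''. But $C_\pi=10/\sin c_{tr}$ with $c_{tr}=c_{tr}(\cR_Q,\cR_u)$ (Theorem~\ref{thm:transversality}; see also the statement of Proposition~\ref{prop:integral_compare_on_H1-U1}), so as written your final constant depends on $\cR_u$ as well, whereas the corollary claims dependence on $\cR_Q$ alone. The paper avoids this because $C_\pi$ only ever enters in the favourable direction ($C_\pi\ge1$ sitting in a denominator): once the trivial case is discarded one has $C_\pi\eps<diam(M)\le diam(\IT^3)<1$, so both the inner radius and the relevant outer radii are below $1$ and a fixed margin of two fundamental cells suffices, giving $Leb(Conv(D))\le 125$, an absolute number. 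Your argument is repaired by the same observation: restrict the ``admissible'' outer radii to $s\le diam(M)$ and take the margin of $U$ to be $2\,diam(M)\le 2$, which makes $C_U$ an absolute constant and removes $C_\pi$ from the domain construction; with that change your proof is correct and the constant has the claimed $\cR_Q$-only form.
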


\begin{proof}
 By the definition of $var_\alpha$ in (\ref{eq:var}),
 \[var_{\alpha}(osc_{C_\pi\eps} F) = |osc_{C_\pi\eps} F|_{\alpha;gH} + \sup_M(osc_{C_\pi\eps} F) - \inf_M(osc_{C_\pi\eps} F).\]
 The second term is $\sup_M(osc_{C_\pi\eps} F)\le \sup_M F -\inf_M F$, while the third is $\inf_M(osc_{C_\pi\eps} F)\ge 0$, so
 \begin{equation} \label{eq:var-osc-F}
 var_{\alpha}(osc_{C_\pi\eps} F) \le |osc_{C_\pi\eps} F|_{\alpha;gH} + \sup_M F - \inf_M F.
 \end{equation}
 To bound the first term, we would like to apply Theorem~\ref{thm:osc_f-genHolder}
 with $d=3$, $D=M\subset \IR^3$, $f=F$, $r=C_\pi \eps$ and $\mu=\frac{1}{Leb(M)}Leb$. The only minor problem is that this theorem is about functions on subsets of $\IR^d$, while our $F$ has domain $M\subset\IT^3$. This can easily be treated at the cost of some non-optimal constant factor $C=125$, see Remark~\ref{rem:osc-f-genHolre-on-torus}. We get
 \[|osc_{C_\pi\eps} F|_{\alpha;gH}\le 2(\sup_M F - \inf_M F)\frac{1}{Leb(M)}125 Leb(\IT^3) \left(\frac{2\cdot 3+1}{C_\pi \eps}\right)^\alpha.\]
 Since $0<\alpha\le 1$ and $C_\pi\ge 1$, an upper bound independent of $\alpha$ can be given (although this is not important for us): $\left(\frac{2\cdot 3+1}{C_\pi}\right)^\alpha \le 7$. We assumed $\eps\le diam(M)$, so
 $1\le\left(\frac{diam(M)}{\eps}\right)^\alpha\le \frac{\max\{diam(M),1\}}{\eps^\alpha}$. Writing these back to (\ref{eq:var-osc-F}), we get the statement of the corollary with $\Cr{const:osc_F-genholder-norm}= 14\frac{125 Leb(\IT^3)}{Leb(M)}+\max\{diam(M),1\}$.
\end{proof}

\begin{remark}\label{rem:osc-f-genHolre-on-torus}
Theorem~\ref{thm:osc_f-genHolder} is about $f:D\to\IR$ where $D\subset\IR^d$, while we have $F:M\to\IR$ where $M\subset\IT^3$.
A non-optimal, but easy way out is the following:

If $r:=C_\pi \eps\ge diam(M)$ (which is unrealistic anyway), then $osc_r F\equiv \sup_M F - \inf_M F$ is constant, so
$|osc_r F|_{\alpha;gH}=0$.

So assume $r=C_\pi \eps < diam(M)$. We view $M\subset \IT^3$ as $M\subset [0,1]^3\subset\IR^3$, and extend $F$ from $M$ to $M+\IZ^3$ periodically. Then set $D:=(M+\IZ^3)\cap[-2,3]^3\subset\IR^3$, and restrict $F$ to $D$. In words: we extend $F$
from a single copy of $M$ to the neighbouring $5\cdot 5\cdot 5$ fundamental cells, $5^3=125$ copies of $M$ all together, to get some $\tilde F:D\to\IR$ where $D\subset\IR^3$ is still bounded. On the central $3\cdot 3 \cdot 3=27$ copies of $M$ in $[-1,2]^3$, $osc_r \tilde F$ is the same as $ocr_r F$. So, as long as $\delta<diam(M)$, $osc_\delta (osc_r \tilde F)$ and $osc_\delta (osc_r F)$ coincide on the central copy $M\subset[0,1]^3$. So, for $\delta<diam(M)$,
\[\int_M osc_\delta (osc_r F) = \int_{D\cap[0,1^3]} osc_\delta (osc_r \tilde F) \le \int_D osc_\delta (osc_r \tilde F).\]
(For $\delta \ge diam(M)$ the oscillation doesn't grow further, meaning $osc_\delta (osc_r F)=osc_{diam(M)} (osc_r F)$, so the same bound trivially holds.)

Now we can apply Theorem~\ref{thm:osc_f-genHolder} to the extended function $\tilde F:D\to\IR$ to get the bound on
$|osc_{C_\pi\eps} F|_{\alpha;gH}$. Clearly $Leb(Conv(D))\le Leb([-2,3]^3)=125 Leb(\IT^3)$.
\end{remark}

\subsection{Regularity of the approximating density}\label{sec:regularity}

\begin{proposition}\label{prop:G_0-dynHolder}
If $\phi$ is $\Theta_\phi$-dynamically H\"older continuous, then $G_0$ is uniformly dynamically H\"older continuous when restricted to any element of the u-foliation: There exist constants $C_{G;u}=C_{G;u}(\cR_Q,\cR_u)<\infty$ and $\Theta_G=\Theta_G(\Theta_\phi,\cR_Q,\cR_u) < 1$ such that for any $z\in D$ and $r_1,r_2\in H_z$
 \[|G_0(r_1)-G_0(r_2)|\le C_{G;u} \frac{1}{\eps^2} ||\phi||_{\Theta_\phi;dH} \Theta_G^{s^+(r_1,r_2)}.\]
 In particular, $C_{G;u}$ and $\Theta_G$ do not depend on $\phi$ and $\eps$ (but $\Theta_G$ depends on $\Theta_\phi$).
\end{proposition}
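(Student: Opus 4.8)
The plan is to read off the regularity of $G_0$ directly from the explicit formula in Lemma~\ref{lem:G0-formula}. Fixing $z\in D$, on the leaf $W^u_z$ the number $q(z)$ is constant, and for $r\in H_z$ we write $r=h_z(x)$ with $x=\pi(r)\in H$, so that
\[G_0(r)=Leb(M)\,q(z)\cdot\phi(\pi(r))\cdot\frac{1}{Jh_z(\pi(r))}\cdot\frac{1}{\cos\psi(r)}.\]
(Since $G_0$ is a priori defined only $\mu$-a.e., I would simply take this formula as the definition of the representative for which the pointwise estimate is proved.) Thus, up to the constant factor $Leb(M)q(z)$, the restriction $G_0|_{H_z}$ is a product of three functions, and it suffices to show that each of them is dynamically H\"older continuous on $H_z$ with respect to the separation time $s^+$, with a common base $\Theta_G<1$ and seminorm controlled by a global constant times $||\phi||_{\Theta_\phi;dH}$ (for the first factor) or by a global constant alone (for the other two), and then to combine them using the product estimate in Lemma~\ref{lem:dynHolder-product}.

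First I would handle $\phi\circ\pi$. On $H_z$ the projection $\pi$ is the inverse of the holonomy $h_z$, and by Lemma~\ref{lem:sep-time-constant-on-W^cs} the central-stable holonomy preserves separation times, so $s^+(\pi(r_1),\pi(r_2))=s^+(r_1,r_2)$ for $r_1,r_2\in H_z$; hence $\phi\circ\pi$ is dynamically $\Theta_\phi$-H\"older on $H_z$ with seminorm $\le|\phi|_{\Theta_\phi;dH}$ and supremum $\le\sup_{W^u}\phi$, both bounded by $||\phi||_{\Theta_\phi;dH}$. Next, for $1/(Jh_z\circ\pi)$: by Theorem~\ref{thm:holonomy_dynholder} the Jacobian $Jh_z$ is dynamically $\Theta_h$-H\"older with seminorm $\le C_h$ and satisfies $1/C_h\le Jh_z\le C_h$ (the lower bound being the one already used in the proof of Lemma~\ref{lem:beta-bounded}); transporting this to $H_z$ by Lemma~\ref{lem:sep-time-constant-on-W^cs} and using the reciprocal estimate in Lemma~\ref{lem:dynHolder-product}, the function $1/(Jh_z\circ\pi)$ is dynamically $\Theta_h$-H\"older with seminorm $\le C_h^3$ and supremum $\le C_h$. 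Finally, for $1/\cos\psi$: by Lemma~\ref{lem:u-density-regularity} the function $\cos\psi$ takes values in $[\tfrac12,1]$ and is Lipschitz along $W^u_z$ with constant $\Gamma_{max}$, so $1/\cos\psi$ is Lipschitz with constant $\le 4\Gamma_{max}$ and bounded by $2$; since $W^u_z$ is a good $u$-curve by Lemma~\ref{lem:W-u-z_is_good_u-curve}, Lemma~\ref{lem:Holder-dynHolder} converts this into dynamical $\theta_1$-H\"older continuity for some $\theta_1=\theta_1(\cR_Q,\cR_u)<1$, with seminorm bounded by a global constant.

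To finish, I would set $\Theta_G:=\max\{\Theta_\phi,\Theta_h,\theta_1\}<1$, which depends on $\Theta_\phi$ and on $(\cR_Q,\cR_u)$ but not on $\phi$ or $\eps$, and use that a dynamically $\theta$-H\"older function is automatically dynamically $\theta'$-H\"older for every $\theta'\ge\theta$ with no larger seminorm (cf.\ Lemma~\ref{lem:dynHolder-product}, since $\theta'^{s^+}\ge\theta^{s^+}$ for $s^+\ge1$), so all three factors become dynamically $\Theta_G$-H\"older with the seminorm bounds just obtained. Applying the product estimate of Lemma~\ref{lem:dynHolder-product} twice to $(\phi\circ\pi)\cdot\tfrac{1}{Jh_z\circ\pi}\cdot\tfrac{1}{\cos\psi}$ yields a dynamical $\Theta_G$-H\"older seminorm bounded by $||\phi||_{\Theta_\phi;dH}$ times a constant depending only on $C_h$, $\Gamma_{max}$ and the global constant from Lemma~\ref{lem:Holder-dynHolder}, hence on $(\cR_Q,\cR_u)$ only; multiplying by the constant $Leb(M)q(z)$ and using $q(z)\le\frac{3}{\pi\eps^2}$ from Lemma~\ref{lem:q-regularity} produces the asserted bound with $C_{G;u}=C_{G;u}(\cR_Q,\cR_u)$.

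The argument is essentially bookkeeping once Lemma~\ref{lem:G0-formula} is in hand; the one point that genuinely needs care — and the main potential pitfall — is that all three ``dynamical H\"older'' statements must be expressed in the \emph{same} quantity $\Theta_G^{s^+(r_1,r_2)}$. This works precisely because the separation time $s^+$ is intrinsic (independent of the connecting $u$-curve) and is preserved by the central-stable holonomy (Lemma~\ref{lem:sep-time-constant-on-W^cs}), which is what lets one pull the dynamical regularity of $\phi$ and of $Jh_z$ back from $W^u$ to the leaf $W^u_z$ and compare them there with the a priori unrelated regularity of the geometric factor $1/\cos\psi$.
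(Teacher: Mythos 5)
Your proposal is correct and follows essentially the same route as the paper: the explicit formula of Lemma~\ref{lem:G0-formula}, factor-by-factor dynamical H\"older bounds using the separation-time invariance of Lemma~\ref{lem:sep-time-constant-on-W^cs}, conversion of the Lipschitz factor $1/\cos\psi$ via Lemma~\ref{lem:Holder-dynHolder}, the choice $\Theta_G=\max\{\Theta_\phi,\Theta_h,\theta_1\}$, and the product estimate of Lemma~\ref{lem:dynHolder-product}, with $q(z)\le\frac{3}{\pi\eps^2}$ supplying the $\eps^{-2}$ factor. The only (harmless) deviation is that you bound $1/Jh_z$ through the reciprocal estimate and the lower bound $Jh_z\ge 1/C_h$, whereas the paper identifies $1/Jh_z(x)$ directly as the Jacobian of the reverse holonomy from $H_z$ to $H$ and applies Theorem~\ref{thm:holonomy_dynholder} to it, which only changes the numerical constant.
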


\begin{proof}
 Lemma~\ref{lem:G0-formula} gives an explicit formula for $G_0$:
 \[G_0(r)=Leb(M) \phi(x)q(z) \frac{1}{Jh_z(x)}\frac{1}{\cos\psi(r)},\]
 where $x=\pi(r)$ is the projection of $r$ to $H$ by the holonomy. All factors are dynamically H\"older, thus so is the product. Quantitatively:
 \begin{enumerate}[a.)]
  \item Since we fix $z$, $q(z)$ is just a constant, and Lemma~\ref{lem:q-regularity} ensures that $0\le q(z)\le \frac{3}{\pi}\frac{1}{\eps^2}$.
  \item $\phi(x)=\phi(\pi(r))$ has the same dynamical H\"older regularity as $\phi$, due to Lemma~\ref{lem:sep-time-constant-on-W^cs}, so for $\phi\circ\pi:H_z\to \IR$
  \[\sup_{H_z}(\phi\circ \pi)=\sup_H \phi \le \sup_{W^u} \phi \quad , \quad |\phi\circ\pi|_{\Theta_\phi;dH}=|\phi_{|H}|_{\Theta_\phi;dH}\le |\phi|_{\Theta_\phi;dH}.\]
  \item $\frac{1}{Jh_z(x)}=Jh_{H_z\to H}(r)$ is exactly the Jacobian of the holonomy from $H_z$ to $H$, so Theorem~\ref{thm:holonomy_dynholder} ensures that
  \[\sup_{H_z}\frac{1}{Jh_z}\le C_h \quad , \quad \left|\frac{1}{Jh_z}\right|_{\Theta_h;dH}\le C_h.\]
  \item By Lemma~\ref{lem:u-density-regularity} $\cos\psi(r)$ is $\alpha$-H\"older with $\alpha=1$ (which is Lipschitz continuity), so it is also dynamically H\"older by Lemma~\ref{lem:Holder-dynHolder} (ii) with some $\Theta_{\cos\psi}=\Theta_{\cos\psi}(\alpha=1,\cR_Q,\cR_u)=\Theta_{\cos\psi}(\cR_Q,\cR_u)<1$ and $C_{\cos\psi}=C_{\cos\psi}(\alpha=1,\cR_Q,\cR_u)=C_{\cos\psi}(\cR_Q,\cR_u)\in\IR$. In turn, Lemma~\ref{lem:dynHolder-product}(\ref{it:1_over_f-dynHolder}) ensures that $\frac{1}{\cos\psi(r)}$ is also dynamically H\"older:
  \[\left|\frac{1}{\cos\psi(r)}\right|\le 2 \quad , \quad \left|\frac{1}{\cos \psi}\right|_{\Theta_{\cos\psi};dH} \le 4 C_{\cos\psi}.\]
 \end{enumerate}
So let us choose
\[\Theta_G=\Theta_G(\Theta_\phi,\cR_Q,\cR_u):=\max\{\Theta_\phi,\Theta_h,\Theta_{\cos\psi}\}<1.\]
With this choice, Lemma~\ref{lem:dynHolder-product}(\ref{it:dynHolder-dynHolder}) ensures that the above three dynamical H\"older regularity statements remain valid with
$\Theta_\phi$, $\Theta_h$ and $\Theta_{\cos\phi}$ replaced by $\Theta_G$. Now Lemma~\ref{lem:dynHolder-product}(\ref{it:dynHolder-product}) and the definition (\ref{eq:dynHolder-norm}) of the dynamical H\"older norm ensure that
\begin{eqnarray*}
|{G_0}_{|H_z}|_{\Theta_G;dH}&\le& Leb(M) \frac{3}{\pi}\frac{1}{\eps^2}\left( \sup \phi C_h 4 C_{\cos\psi} + \sup\phi 2 C_h + C_h 2 |\phi|_{\Theta_\phi;dH} \right) \\
&\le& Leb(M) \frac{3}{\pi}\frac{1}{\eps^2} C_h (4C_{\cos\psi}+2+2) ||\phi||_{\Theta_\phi;dH}.
\end{eqnarray*}
Choosing $C_{G;u}:=\frac{12}{\pi}Leb(M) C_h (C_{\cos\psi} + 1)$ completes the proof.
\end{proof}

\begin{proposition}\label{prop:G_0-Holder-on-W^cs}
 With $\alpha_{G_0}:=\frac13$, $G_0$ is uniformly $\alpha_{G_0}$-H\"older continuous when restricted to any element of the central-stable-foliation: There is a constant $C_{G;cs}=C_{G;cs}(\cR_Q,\cR_u)<\infty$ such that for any $x\in H$ and any $r_1,r_2\in W^{c-s}_x$
 \[|G_0(r_1)-G_0(r_2)|\le C_{G;cs} \sup_{W^u}\phi\frac{1}{\eps^3} dist_{W^{c-s}_x}(r_1,r_2)^{\alpha_{G_0}}.\]
\end{proposition}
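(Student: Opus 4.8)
The plan is to start from the explicit formula for $G_0$ provided by Lemma~\ref{lem:G0-formula}, namely $G_0(r)=Leb(M)\,\phi(x)\,q(z)\,\frac{1}{Jh_z(x)}\,\frac{1}{\cos\psi(r)}$ where $x=\pi(r)$ and $z$ is the coordinate of the $u$-foliation leaf through $r$, and to estimate the variation of each factor as $r$ ranges over a single central-stable leaf $W^{c-s}_x$. The decisive structural observation — and the reason the statement is true at all — is that $\pi$ is \emph{constant} along $W^{c-s}_x$, so for $r_1,r_2\in W^{c-s}_x$ the factor $Leb(M)\,\phi(x)$ is literally identical at $r_1$ and $r_2$; thus the merely dynamically H\"older (hence possibly discontinuous) density $\phi$ never gets differenced, and only the three geometric factors $q(z)$, $\frac{1}{Jh_z(x)}$, $\frac{1}{\cos\psi(r)}$ contribute. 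Expanding the product of these three factors by the triangle inequality (as in Lemma~\ref{lem:Holder-Holder}(\ref{it:Holder-product})) reduces everything to oscillation bounds on each factor along $W^{c-s}_x$ together with the sup bounds $q(z)\le\frac{3}{\pi\eps^2}$ (Lemma~\ref{lem:q-regularity}), $\frac{1}{Jh_z(x)}\le C_h$ (Theorem~\ref{thm:holonomy_dynholder}) and $\frac{1}{\cos\psi}\le 2$ (Lemma~\ref{lem:u-density-regularity}); I will use the explicit representative of $G_0$ from Lemma~\ref{lem:G0-formula} throughout.

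First I would record the elementary geometric facts. Writing $\delta:=dist_{W^{c-s}_x}(r_1,r_2)$: (a) since $W^{c-s}_x\subset U$ is uniformly transversal to the $u$-foliation (Theorem~\ref{thm:transversality}) and $U$ is an $\eps$-tube, $\delta\le C\eps$; (b) $|z_1-z_2|\le 2\delta$, directly from Lemma~\ref{lem:W-u_z-s_not_close} applied to $r_1\in W^u_{z_1}$ and $r_2\in W^u_{z_2}$; (c) the angle between $T_{r_1}W^u_{z_1}$ and $T_{r_2}W^u_{z_2}$ is $\le C\delta$, because $W^u_z=W^u+z$ is a Euclidean translate so these tangent lines coincide with the tangents of $W^u$ at the translated points $r_i-z_i\in W^u$, and the turning of the tangent of $W^u$ is at most $\Gamma_{max}$ times arclength, which is $\le C(|r_1-r_2|+|z_1-z_2|)\le C\delta$.

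Next I estimate the three oscillations. For $q$: Lemma~\ref{lem:q-regularity} and (b) give $|q(z_1)-q(z_2)|\le\frac{3}{\pi\eps^3}|z_1-z_2|\le\frac{C}{\eps^3}\delta$. For $\frac{1}{\cos\psi}$: Lemma~\ref{lem:psi-Lipschitz} and $\cos\psi\ge\frac12$ make this factor Lipschitz along $W^{c-s}_x$, so $\bigl|\frac{1}{\cos\psi(r_1)}-\frac{1}{\cos\psi(r_2)}\bigr|\le C\Gamma_{max}\delta$. The only non-Lipschitz, and hence the only genuinely substantive, contribution is the holonomy Jacobian — and this is exactly where $\alpha_{G_0}=\frac13$ comes from. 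Since the central-stable holonomy $W^u\to W^u_{z_2}$ factors as $W^u\to W^u_{z_1}\to W^u_{z_2}$ (all three curves lying on the same c-s leaves), the defining relation of Theorem~\ref{thm:holonomy_dynholder} gives $Jh_{z_2}(x)=J(r_1)\cdot Jh_{z_1}(x)$, where $J(r_1)$ is the Jacobian at $r_1$ of the holonomy from $W^u_{z_1}$ to $W^u_{z_2}$. Now $r_1,r_2$ lie on the common homogeneous c-s manifold through $x$ at distance $\delta$, with tangent-line angle $\le C\delta$ by (c), so Theorem~\ref{thm:holonomy_cs-regularity} yields $|J(r_1)-1|\le C_{h2}(C\delta+\delta^{1/3})\le C\delta^{1/3}$, using $\delta\le C\eps\le C\eps_0$; together with $\frac{1}{C_h}\le Jh_{z_i}(x)\le C_h$ from Theorem~\ref{thm:holonomy_dynholder} this gives $\bigl|\frac{1}{Jh_{z_1}(x)}-\frac{1}{Jh_{z_2}(x)}\bigr|\le C\delta^{1/3}$.

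Finally I assemble: the product estimate gives $|G_0(r_1)-G_0(r_2)|\le Leb(M)\,\phi(x)\bigl(\frac{C}{\eps^3}\delta+\frac{C}{\eps^2}\delta^{1/3}+\frac{C}{\eps^2}\delta\bigr)$, and since $\phi(x)\le\sup_{W^u}\phi$, $\eps\le\eps_0$ and $\delta\le C\eps\le C\eps_0$, each power $\delta^1$ may be traded for $\delta^{1/3}$ at the cost of a power of $\eps$ bounded by a power of $\eps_0$, so every term is $\le C(\cR_Q,\cR_u)\,\sup_{W^u}\phi\,\frac{1}{\eps^3}\delta^{1/3}$, which is the claim with $\alpha_{G_0}=\frac13$. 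The main obstacle is therefore not the bookkeeping but is packaged entirely into Theorem~\ref{thm:holonomy_cs-regularity}: its $\delta^{1/3}$ modulus of continuity for the central-stable holonomy Jacobian is precisely what both forces and supplies the H\"older exponent $\frac13$, while everything else is elementary once one exploits that the $\phi$-factor does not vary along $W^{c-s}_x$.
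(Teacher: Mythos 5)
Your proof is correct and follows essentially the same route as the paper: you start from the formula of Lemma~\ref{lem:G0-formula}, exploit that $\phi(\pi(r))$ is constant along $W^{c-s}_x$, handle $q$ and $\tfrac{1}{\cos\psi}$ via Lemmas~\ref{lem:q-regularity}, \ref{lem:W-u_z-s_not_close} and \ref{lem:psi-Lipschitz}, and identify $Jh_{z_2}(x)/Jh_{z_1}(x)$ as the $u$-to-$u$ holonomy Jacobian estimated by Theorem~\ref{thm:holonomy_cs-regularity}, which is exactly where the paper gets the exponent $\tfrac13$ as well. The only (cosmetic) differences are that you bound the tangent-angle term directly from the translate structure of the $W^u_z$ rather than through $\psi$, and you trade $\delta$ for $\delta^{1/3}$ using $\delta\le C\eps$ (uniform boundedness of $\delta$ would already suffice), whereas the paper downgrades the Lipschitz bounds to $\tfrac13$-H\"older bounds using ambient distance and $diam(M)$.
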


\begin{proof}
 Lemma~\ref{lem:G0-formula} gives an explicit formula for $G_0$:
\begin{equation}\label{eq:G0-formula-for-const_x}
G_0(r)=Leb(M) \phi(x)q(z) \frac{1}{Jh_z(x)}\frac{1}{\cos\psi(r)},
\end{equation}
when $r=h_z(x)\in W^{c-s}_x$. {For} $x$ fixed, the first two {terms} are constant,
the third and last are Lipschitz. {Let us estimate $\frac{1}{Jh_z(x)}$}.

Let $z_1,z_2\in D$ and let $r_1=h_{z_1}(x)$, $r_2=h_{z_2}(x)$. Then
\[ \frac{Jh_{z_2}(x)}{Jh_{z_1}(x)}=Jh_{H_{z_1}\to H_{z_2}}(r_1)\]
is exactly the Jacobian of the holonomy from $H_{z_1}\subset W^u_{z_1}$ to $H_{z_2}\subset W^u_{z_2}$. We estimate this using Theorem~\ref{thm:holonomy_cs-regularity}:
\begin{equation}\label{eq:Jh_over_Jh-close-to-1}
\left| \frac{Jh_{z_2}(x)}{Jh_{z_1}(x)}-1 \right| \le C_{h2} \left(|\psi(r_2)-\psi(r_1)| + dist_{W^{c-s}_x}(r_1,r_2)^{\frac13} \right).
\end{equation}
Now by Lemma~\ref{lem:psi-Lipschitz}
\begin{eqnarray}
|\psi(r_2)-\psi(r_1)| &\le& 2 \Gamma_{max} dist(r_1,r_2) \le 2 \Gamma_{max} dist(r_1,r_2)^{\frac23} dist(r_1,r_2)^{\frac13} \le \\
  & \le & 2 \Gamma_{max} diam(M)^{\frac23} dist_{W^{c-s}_x}(r_1,r_2)^{\frac13}.
\end{eqnarray}
We {substitute} to (\ref{eq:Jh_over_Jh-close-to-1}), and use $\frac{1}{C_h} \le Jh_{z_2}(x)$ from Theorem~\ref{thm:holonomy_dynholder} to get
\[
\left| \frac{1}{Jh_{z_1}(x)}-\frac{1}{Jh_{z_2}(x)} \right| \le C_{h2} C_h \left( 2 \Gamma_{max} diam(M)^{\frac23} + 1 \right) dist_{W^{c-s}_x}(r_1,r_2)^{\frac13}.
\]
So $\frac{1}{Jh_z(x)}$ is H\"older continuous in its variable $r=h_z(x)$ along $W^{c-s}_x$, with exponent $\frac13$ and constants
\[
\left| \frac{1}{Jh_{z}(x)} \right| \le C_h \quad , \quad \left| \frac{1}{Jh_{z}(x)} \right|_{\frac13;H} \le C_{h2} C_h \left( 2 \Gamma_{max} diam(M)^{\frac23} + 1 \right).
\]
As mentioned before, the other factors in (\ref{eq:G0-formula-for-const_x}) are easy:
 \begin{enumerate}[a.)]
  \item Since we fix $x$, $\phi(x)$ is just a constant, and of course $0\le \phi(x)\le \sup_{W^u}\phi$.
  \item By Lemma~\ref{lem:q-regularity} $q$ is $\alpha$-H\"older with $\alpha=1$ (which is Lipschitz continuity) on $D$, so by Lemma~\ref{lem:Holder-Holder} it is also $\frac13$-H\"older and
  \[|q|_{\frac13;H}\le diam(D)^{\frac23} |q|_{1;H}=(2\eps)^{\frac23} \frac{3}{\pi} \frac{1}{\eps^3}=\frac{3\cdot 2^{2/3}}{\pi}\frac{1}{\eps^{2\frac13}}.\]
  Now Lemma~\ref{lem:W-u_z-s_not_close} says that $|z_2-z_1|\le 2 dist(r_1.r_2)\le 2 dist_{W^{c-s}_x}(r_1,r_2)$, so $q(z)$ is also $\frac13$-H\"older as a function of $r=h_z(x)$:
  \[|q(z_2)-q(z_1)|\le \frac{3\cdot 2^{2/3}}{\pi}\frac{1}{\eps^{2\frac13}} |z_2-z_1|^{\frac13} \le \frac{6}{\pi}\frac{1}{\eps^{2\frac13}} dist_{W^{c-s}_x}(r_1,r_2)^{\frac13}.\]
  We also have $q(z)\le \frac{3}{\pi}\frac{1}{\eps^2}$ by Lemma~\ref{lem:q-regularity}.
  \item By Lemma~\ref{lem:psi-Lipschitz} $\psi(r)$ is Lipschitz on $U$, thus so is $\cos\psi(r)$. So it is also $\frac13$-H\"older by Lemma~\ref{lem:Holder-Holder} (\ref{it:Holder-Holder}). Also $\cos\psi(r)\ge\frac12$ by Lemma~\ref{lem:u-density-regularity}, so Lemma~\ref{lem:Holder-Holder} (\ref{it:1_over_f-Holder}) says that $\frac{1}{\cos\psi(r)}$ is also $\frac13$-H\"older. Quantitatively,
  \begin{eqnarray*}
   \left| \frac{1}{\cos\psi} \right| &\le& 2 ,  \\
   \left| \frac{1}{\cos\psi} \right|_{\frac13;H} &\le& 4 | \cos\psi |_{\frac13;H} \le 4 diam(M)^{\frac23} | \cos\psi |_{1;H}\le \\
   &\le& 4 diam(M)^{\frac23} | \psi |_{1;H}\le 4 diam(M)^{\frac23} 2\Gamma_{max}.
  \end{eqnarray*}
 Since $dist(r_1,r_2)\le dist_{W^{c-s}_x}(r_1,r_2)$, $\frac{1}{\cos\psi}$ is also $\frac13$-H\"older on $W^{c-s}_x$ with the same constants.
 \end{enumerate}
Putting the estimates for the factors in (\ref{eq:G0-formula-for-const_x}) together, Lemma~\ref{lem:Holder-Holder} (\ref{it:Holder-product}) says that $G_0$ is $\frac13$-H\"older and
\[|G_0|_{\frac13;H} \le Leb(M) \sup_{W^u} \phi\left( \frac{3}{\pi}\frac{1}{\eps^2} C_h 8 diam(M)^{\frac23} \Gamma_{max} +
                                              \frac{3}{\pi}\frac{1}{\eps^2} C_{h2} C_h \left( 2 \Gamma_{max} diam(M)^{\frac23} + 1 \right) 2 +
                                              \frac{6}{\pi}\frac{1}{\eps^{2\frac13}} C_h 2
                                              \right)
\]
Since $\eps\le diam(M)$, choosing
\[
C_{G;cs}:=
Leb(M) \frac{6}{\pi} C_h \left(2 \Gamma_{max} (2+C_{h2}) diam(M)^{\frac53} + C_{h2} diam(M) + 2 diam(M)^{\frac23} \right)
\]
completes the proof.
\end{proof}

\begin{proposition}\label{prop:G_0-bounded} $G_0$ is uniformly bounded: there is a constant $C_{G;b}=C_{G;b}(\cR_Q,\cR_u)<\infty$ such that
 \[0\le G_0(r)\le \frac{C_{G;b}}{\eps^2} \sup_{W^u}\phi\]
 for every $r\in U_0$.
\end{proposition}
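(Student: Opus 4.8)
The plan is to read boundedness of $G_0$ directly off the explicit formula supplied by Lemma~\ref{lem:G0-formula}, namely
\[G_0(r)=Leb(M)\,\phi(x)\,q(z)\,\frac{1}{Jh_z(x)}\,\frac{1}{\cos\psi(r)},\]
valid for $\mu$-a.e.\ $r=h_z(x)\in U_0$ with $x=\pi(r)\in H$ and $z\in D$, and to estimate the four factors one at a time. Nonnegativity of $G_0$ is immediate, since $\phi\ge 0$, $q\ge 0$, $Jh_z>0$ and $\cos\psi>0$, so each factor is nonnegative.

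For the upper bound: first, $Leb(M)$ is a constant depending on $Q$ only (through $A_{min},A_{max}$). Second, $\phi(x)\le\sup_{W^u}\phi$ trivially. Third, Lemma~\ref{lem:q-regularity} gives $q(z)\le\frac{3}{\pi}\frac{1}{\eps^2}$, and this is precisely the source of the $\eps^{-2}$ in the claimed bound. Fourth, Lemma~\ref{lem:u-density-regularity} gives $\cos\psi(r)\ge\frac12$, hence $\frac{1}{\cos\psi(r)}\le 2$.

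The only point that deserves a word of care is the factor $\frac{1}{Jh_z(x)}$: Theorem~\ref{thm:holonomy_dynholder} literally provides only $|Jh|\le C_h$, an \emph{upper} bound on the holonomy Jacobian, whereas here we need $Jh_z(x)$ bounded \emph{away from} $0$. This follows by applying the same theorem to the inverse holonomy $h_z^{-1}:W^u_z\to W^u$, whose Jacobian is exactly $\frac{1}{Jh_z}$; after enlarging $C_h$ if necessary we may assume the two-sided bound $\frac{1}{C_h}\le Jh_z(x)\le C_h$ (this is the estimate already invoked in the proof of Lemma~\ref{lem:beta-bounded}), so in particular $\frac{1}{Jh_z(x)}\le C_h$. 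Multiplying the four bounds yields
\[G_0(r)\le Leb(M)\cdot\sup_{W^u}\phi\cdot\frac{3}{\pi}\frac{1}{\eps^2}\cdot C_h\cdot 2=\frac{6\,Leb(M)\,C_h}{\pi}\cdot\frac{1}{\eps^2}\sup_{W^u}\phi,\]
so it suffices to take $C_{G;b}:=\frac{6}{\pi}Leb(M)\,C_h$, which depends on $(\cR_Q,\cR_u)$ only. I do not expect any real obstacle: the statement is a routine corollary of Lemma~\ref{lem:G0-formula} together with the elementary factor bounds just listed, and is strictly easier than its companions Propositions~\ref{prop:G_0-dynHolder} and~\ref{prop:G_0-Holder-on-W^cs}, where the same four factors had to be controlled in a (dynamical or ordinary) H\"older seminorm rather than merely in sup norm.
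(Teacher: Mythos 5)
Your proposal is correct and follows essentially the same route as the paper: read the bound off the explicit formula of Lemma~\ref{lem:G0-formula}, bound $q$ by Lemma~\ref{lem:q-regularity}, $\frac{1}{\cos\psi}$ by Lemma~\ref{lem:u-density-regularity}, and $\frac{1}{Jh_z}$ by viewing it as the Jacobian of the reverse holonomy and invoking Theorem~\ref{thm:holonomy_dynholder}, arriving at the same constant $C_{G;b}=\frac{6}{\pi}Leb(M)C_h$. No gaps.
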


\begin{proof}
 This is actually included in the proofs of both previous lemmas. Lemma~\ref{lem:G0-formula} gives an explicit formula for $G_0$:
 \[G_0(r)=Leb(M) \phi(x)q(z) \frac{1}{Jh_z(x)}\frac{1}{\cos\psi(r)},\]
 where $x=\pi(r)$ is the projection of $r$ to $H$ by the holonomy. All the factors multiplying $\phi$ have known bounds. Quantitatively:
 \begin{enumerate}[a.)]
  \item Lemma~\ref{lem:q-regularity} ensures that $0\le q(z)\le \frac{3}{\pi}\frac{1}{\eps^2}$.
  \item $\frac{1}{Jh_z(x)}=Jh_{H_z\to H}(r)$ is exactly the Jacobian of the holonomy from $H_z$ to $H$, so Theorem~\ref{thm:holonomy_dynholder} ensures that
  $0<\frac{1}{Jh_z}\le C_h$.
  \item By Lemma~\ref{lem:u-density-regularity}, $0<\frac{1}{\cos \psi(r)}\le 2$.
 \end{enumerate}
Choosing $C_{G;b}:=\frac{6}{\pi} Leb(M) C_h$ completes the proof.
\end{proof}

\subsection{Smoothing the approximating density}\label{sec:smoothing}

Our approximating density $G$ will be a slight modification of $G_0$ to ensure that it has the required regularity.
First, we restrict $G_0$ from $U_0$ to a smaller set $U_1$ to improve its regularity from dynamically H\"older continuous to truly H\"older continuous:

\begin{proposition}\label{prop:G_0-Holder-on-H_1}
 There exist $\Cl{const:G_0-Holder-on-H_1}=\Cr{const:G_0-Holder-on-H_1}(\cR_Q,\cR_u)<\infty$, $\alpha_G=\alpha_G(\cR_Q,\cR_u,\Theta_{\phi})\le 1$ and a set $H_1\subset H$ with the following properties: Let $U_1=H_1\ast D$. Then
 \begin{enumerate}
  \item \label{it:W-minus-H1-small} $m_{W^u}(W^u\setminus H_1)\le \Cr{const:G_0-Holder-on-H_1}\eps$,
  \item \label{it:U-minus-U1-small} $\mu(U\setminus U_1)\le \Cr{const:G_0-Holder-on-H_1}\eps^3$,
  \item \label{it:G_0-Holder-on-H_1} $G_0$ restricted to $U_1$ is $\alpha_G$-H\"older continuous: for any $r_1,r_2 \in U_1$
  \[|G_0(r_1)-G_0(r_2)|\le \Cr{const:G_0-Holder-on-H_1} \frac{||\phi||_{\Theta_\phi;dH}}{\eps^3} |r_1-r_2|^{\alpha_G}.\]
 \end{enumerate}
\end{proposition}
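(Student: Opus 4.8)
The plan is to obtain $H_1$ from $H$ by deleting, at each ``level'' $n$, a neighbourhood of the level-$n$ singularities whose radius shrinks geometrically in $n$ and is proportional to $\eps$. Concretely, let $\Sigma_n\subset W^u$ be the (finite) set of points at which $T^n$ is discontinuous but $T^{n-1}$ is continuous, i.e.\ $T^{n-1}x\in S_0$; for finite-horizon dispersing billiards its cardinality is controlled by the one-step growth of complexity, $|\Sigma_n|\le C\,\Lambda^n$ with $C=C(\cR_Q)$ and $\Lambda=\Lambda(\cR_Q)$ (cf.\ \cite{ChM06}, Chapter 5). I fix $\rho\in(0,1)$ small enough that $\Lambda\rho<1$ and that $\alpha_u:=\frac{\log(1/\Theta_G)}{\log(1/\rho)}\le 1$, and set
\[H_1:=\Big\{x\in H\;\Big|\; dist_{W^u}(x,\Sigma_n)>\eps\rho^n\ \text{ for every }n\ge1\Big\},\qquad U_1:=H_1\ast D,\qquad \alpha_G:=\min\{\tfrac13,\alpha_u\}.\]
Here $\Theta_G=\Theta_G(\Theta_\phi,\cR_Q,\cR_u)$ is the exponent from Proposition~\ref{prop:G_0-dynHolder}, so $\alpha_G=\alpha_G(\cR_Q,\cR_u,\Theta_\phi)\le1$ as required.

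The measure bounds are then routine. Since $\Sigma_n$ has at most $C\Lambda^n$ points, $m_{W^u}(H\setminus H_1)\le\sum_{n\ge1}2C\Lambda^n\cdot\eps\rho^n=\frac{2C\Lambda\rho}{1-\Lambda\rho}\,\eps$; combined with Lemma~\ref{lem:W-minus-H-small} this gives (1). For (2), write $\mu(U\setminus U_1)\le\mu(U\setminus U_0)+\mu(U_0\setminus U_1)$: the first term is $\le\Cr{const:U-minus-U0-small}\eps^3$ by Lemma~\ref{lem:U-minus-U0-small}, while $U_0\setminus U_1=(H\setminus H_1)\ast D$, so the second term is $\le C_\beta\eps^2\,m_{W^u}(H\setminus H_1)\le C\eps^3$ by Lemma~\ref{lem:A-times-D-small}.

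For (3) I would combine the two one-directional regularity statements already established for $G_0$ through the approximate product structure. Let $r_1,r_2\in U_1$. If $dist(r_1,r_2)$ exceeds a fixed absolute constant, the bound follows from the uniform bound $G_0\le C_{G;b}\eps^{-2}\sup_{W^u}\phi$ of Proposition~\ref{prop:G_0-bounded}. Otherwise write $r_i=h_{z_i}(x_i)$ with $x_i\in H_1$, $z_i\in D$, and interpolate through $\tilde r:=h_{z_1}(x_2)\in W^u_{z_1}\cap W^{c-s}_{x_2}$. Since $\tilde r$ and $r_2$ lie on the same central-stable manifold $W^{c-s}_{x_2}$ at distance $\lesssim|z_1-z_2|\le 2\,dist(r_1,r_2)$ along it (Lemma~\ref{lem:W-u_z-s_not_close}), Proposition~\ref{prop:G_0-Holder-on-W^cs} gives $|G_0(\tilde r)-G_0(r_2)|\le C\,\eps^{-3}||\phi||_{\Theta_\phi;dH}\,dist(r_1,r_2)^{1/3}$. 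Since $r_1$ and $\tilde r$ lie on the same $u$-leaf $W^u_{z_1}$ and $s^+(r_1,\tilde r)=s^+(x_1,x_2)$ (Lemma~\ref{lem:sep-time-constant-on-W^cs}), Proposition~\ref{prop:G_0-dynHolder} gives $|G_0(r_1)-G_0(\tilde r)|\le C_{G;u}\eps^{-2}||\phi||_{\Theta_\phi;dH}\,\Theta_G^{\,s^+(x_1,x_2)}$. The construction of $H_1$ forces $dist_{W^u}(x_1,x_2)\ge\eps\rho^{\,s^+(x_1,x_2)}$ — if $s^+(x_1,x_2)=n$ then some point of $\Sigma_n$ lies on the arc $[x_1,x_2]$, which is therefore longer than $\eps\rho^n$ by definition of $H_1$ — hence $\Theta_G^{\,s^+(x_1,x_2)}\le\big(\eps^{-1}dist_{W^u}(x_1,x_2)\big)^{\alpha_u}$. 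Finally $dist_{W^u}(x_1,x_2)\lesssim dist(r_1,r_2)$, because the central-stable holonomy $\pi$ of Definition~\ref{def_pi_and_gamma} is Lipschitz on $U_0$: this follows from the two-sided bound $\tfrac1{C_h}\le Jh\le C_h$ of Theorem~\ref{thm:holonomy_dynholder} by comparing $x_1$ and $x_2$ through the common $u$-leaf $W^u_{z_1}$. Assembling the two pieces and using $dist(r_1,r_2)\le diam(M)$ to pass from the exponents $\tfrac13$ and $\alpha_u$ down to $\alpha_G$, one obtains (3); the power $\eps^{-3}$ is $\eps^{-2}$ (the size of $G_0$) times $\eps^{-\alpha_u}\le\eps^{-1}$ (the renormalization of the dynamical Hölder bound into a genuine one).

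The technical heart — and the step I expect to be the real obstacle — is carrying out this passage from \emph{dynamical} to \emph{genuine} Hölder continuity uniformly down to the scale $\eps$. Dynamical Hölder continuity is insensitive to how two points straddling a singularity are placed, so both the comparison $dist_{W^u}(\pi(r_1),\pi(r_2))\lesssim dist(r_1,r_2)$ and the inequality $dist_{W^u}(x_1,x_2)\ge\eps\rho^{s^+(x_1,x_2)}$ must be arranged so that neither the tube thickness $\eps$ nor the ``gaps'' of $H$ — where central-stable manifolds are short, quantified by Theorem~\ref{thm:long_cs_manifolds} — spoil the estimate. Making the Lipschitz property of $\pi$ on $U_0$ precise, and, if needed, refining the definition of $H_1$ by also deleting $u$-leaves that approach a flow singularity manifold too closely (exploiting that such manifolds are central-stable-like and uniformly transversal to $u$-curves, Theorems~\ref{thm:transversality} and~\ref{thm:alignment}, so that a separating singularity manifold forces $dist(r_1,r_2)$ to be large), while keeping the deleted measure $O(\eps)$ via the complexity bound, is exactly where the care is required.
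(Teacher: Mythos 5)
Your overall architecture is the same as the paper's: delete geometrically shrinking neighbourhoods of the order-$n$ singularities, bound the deleted measure by counting singularities, convert the dynamical H\"older estimate of Proposition~\ref{prop:G_0-dynHolder} into a genuine one by forcing $dist\ge(\text{const})\,\theta^{s^+}$ for the surviving points, and then treat arbitrary pairs by interpolating through $h_{z_1}(x_2)$ and using transversality together with Proposition~\ref{prop:G_0-Holder-on-W^cs}. The genuine gap is the step where you transport the inequality $dist_{W^u}(x_1,x_2)\ge\eps\rho^{\,s^+(x_1,x_2)}$ from $W^u$ to the leaf $W^u_{z_1}$ via the claim that $\pi$ (equivalently the holonomy $h_{z_1}$ and its inverse) is Lipschitz on $U_0$ ``by the two-sided bound $C_h^{-1}\le Jh\le C_h$''. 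Theorem~\ref{thm:holonomy_dynholder} is a purely measure-theoretic statement about Borel subsets of the Cantor-like set on which the holonomy is defined: it controls $m_{H_2}(h(B))$ for $B\subset H_1$, and says nothing about the lengths of the gaps of $H_z$ inside an interval of $W^u_z$. The arc-distance $dist_{W^u_{z_1}}(r_1,\tilde r)$ consists mostly of such gaps, so a bounded Jacobian does not yield a pointwise Lipschitz (or uniformly bi-Lipschitz) estimate for the cs-holonomy, and no pointwise regularity of that kind is available in the paper. Since Proposition~\ref{prop:G_0-dynHolder} is applied \emph{on the leaf} $W^u_{z_1}$, what you actually need is a lower bound $dist_{W^u_{z_1}}(r_1,\tilde r)\ge c\,\theta^{\,s^+}$ on the leaf itself, and cutting neighbourhoods of singularities only on $W^u$ does not provide it. You correctly identified this as the obstacle, but the proposal does not close it.

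The paper closes it by changing the construction of $H_1$, not by proving regularity of $\pi$: for every singular point $y$ of order $n$ on \emph{every} leaf $W^u_z$ one removes from $H$ the set $h_z^{-1}\bigl(I(y)\cap H_z\bigr)$, where $I(y)$ is the interval of length $c\theta^n$ around $y$ in $W^u_z$. The Jacobian bound is then used only for what it really gives — the $m_{W^u}$-measure of these removed subsets of $H$ — and Lemma~\ref{lem:sing-continuation} guarantees that all the removed pieces coming from one singularity family sit around a single base point $x\in W^u$, so the union over $z$ costs only a factor $2$. After this, if $r_1,\tilde r\in h_{z}(H_1)$ have separation time $n$, the separating singularity on $W^u_z$ itself has a cleared $c\theta^n$-interval around it, which is exactly the leafwise lower bound you need; the exponent and the $\eps^{-3}$ constant then come out as in your computation (the paper takes $c=\eps^2$ and $\theta$ small so that $\ln\Theta_G/\ln\theta\le\frac12$). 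A second, smaller omission: your $\Sigma_n$ counts only genuine discontinuities, whereas the paper also deletes neighbourhoods of the secondary (homogeneity-strip) singularities, which accumulate on the primary ones and are controlled via alignment (Theorem~\ref{thm:alignment}); this is needed because the regularity of $Jh$, hence of $G_0$ along leaves, is really only dynamically H\"older with respect to homogeneous separation, so points straddling a homogeneity boundary must also be kept apart. With these two repairs — cutting on all leaves via holonomy preimages, and including secondary singularities in the count — your argument becomes essentially the paper's proof.
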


\begin{proof}
The main input is the dynamical H\"older continuity of $G_0$, as stated in Proposition~\ref{prop:G_0-dynHolder}. We will construct $H_1$ by cutting out some neighbourhood of every singularity from $H$. If we do this appropriately, the total set we cut out will be small, and $G_0$ restricted to the remaining set will be H\"older (and not only dynamically H\"older). To obtain this, take $c>0$ and $\theta<1$ to be specified later. For every singularity of order $n\ge 0$, we cut out a neighbourhood (in the metric on $W^u_z$) of radius at least $\frac{c}{2}\theta^n$ (meaning an interval of length $c\theta^n$) from \emph{every} $W^u_z$.

To make the argument precise, we have to take into account secondary singularities, meaning that in principle, for every $n$, there are infinitely many intervals we need to cut out around singularities of order $n$. As usual, this only costs some power of $\frac{1}{\eps}$ in the measure of the set cut out, since the infinitely many secondary singularities accumulate at finitely many primary ones, so the intervals overlap heavily. An easy (not optimal) way to do this is  the following: Let $y_k\in W^u_z$ be on the $k$-th secondary singularity near a primary singularity. Then, by alignment (Theorem~\ref{thm:alignment}) $y_k$ it is at most some $\frac{\Cl{const:secondary}}{k^2}$ far from $y\in W^u_z$, in the metric of $W^u_z$, where $y$ is on the primary singularity -- or, possibly, an endpoint of $W^u_z$. (Here $\Cr{const:secondary}=\Cr{const:secondary}(\cR_Q,\cR_u$.) So if
\begin{equation}\label{eq:k_crit}
k>k_{crit}(n):=\frac{\sqrt{\Cr{const:secondary}}}{\sqrt{\frac{c}{2}\theta^n}},
 \end{equation}
then this distance is less than $\frac{c}{2}\theta^n$, meaning that the entire $\frac{c}{2}\theta^n$-neighbourhood of $y_k$ is contained in the $c \theta^n$-neighbourhood of $y$. So, with some generosity, we cut out neighbourhoods of radius $c\theta^n$
around the primary singularity and the first $k_{crit}(n)$ secondary singularities (and possibly the endpoints of $W^u_z$), and these finitely many intervals cover the $\frac{c}{2}\theta^n$-neighbourhood of every (primary and secondary) singularity.

Now let $y\in W^u_{z_y}$ be a singular point of order $n(y)$ (either primary or secondary). Due to the continuation property of singularity curves, Lemma~\ref{lem:sing-continuation}, the singularity containing $y$ -- or its continuation -- intersects $W^u$ in a single point $x$, which is singular of order $n(x)\le n(y)$. \footnote{Actually, if $y$ is close to the end of the $u$-curve $W^u_z$, it may happen that $W^u_z$ terminates before intersecting the singularity. However, this can only happen if all points of $H_z\subset W^u_z$ are on the same side of $y$, since central-stable manifolds cannot cross singularities. As it will be clear below, such singular points $y$ are of no interest for us.}

Let $I(y)\subset W^u_{z_y}$ be the neighbourhood of radius $c\theta^{n(y)}$ around $y$ in $W^u_{z_y}$, in the metric of $W^u_{z_y}$. Now the set we cut out from $H$ near $x$ is
\[I_x:=\bigcup_{y} h_{z_y}^{-1}(I(y)\cap H_{z_y}),\]
where the union is over all $y$ that give the same $x$ as above. This is an uncountable union, but the members of the union are all intervals around $x$ in $W^u$ (intersected by $H$), so the union is just the longest interval. (More precisely, the longest half-interval has to be taken in both directions.) So
\[
m_{W^u}(I_x)\le 2 \sup_y m_{W^u}(h_{z_y}^{-1}(I(y)\cap H_{z_y}))\le 2\sup_y C_h m_{W^u_{z_y}}(I(y))\le C_h \sup_y c\theta^{n(y)}
\le C_h c \theta^{n(x)}
\]
by the absolute continuity, Theorem~\ref{thm:holonomy_dynholder}. Again, $C_h=C_h(\cR_Q,\cR_u)$. Now we set
\[H_1:=H\setminus \bigcup_x I_x,\]
where the union is over all singular points $x\in W^u$. Since the number of primary singularities of order $n$ is at most $K_{max}^n$ (see Section~\ref{sec:regpar}), the total length we cut out is at most
 \[m_{W^u}(H\setminus H_1)\le\sum_x m_{W^u}(I_x) \le \sum_{n=0}^\infty k_{crit}(n) K_{max}^n C_h c \theta^n.\]
Using (\ref{eq:k_crit}), this gives
 \[m_{W^u}(H\setminus H_1)\le \sum_{n=0}^\infty \Cl{const:secondary2} K_{max}^n \sqrt{c \theta^n} = \Cr{const:secondary2}\frac{\sqrt{c}}{1-K_{max}\sqrt{\theta}}\]
 with $\Cr{const:secondary2}=\sqrt{2 \Cr{const:secondary}C_h}$ if $K_{max}\sqrt{\theta}<1$. So let us choose $\theta\le\theta(\cR_Q):=\frac{1}{4K_{max}^2}$, which means that
 \[m_{W^u}(H\setminus H_1)\le \Cr{const:secondary2}\frac{\sqrt{c}}{1-\frac12}= 2 \Cr{const:secondary2}\sqrt{c}.\]
 Now take $c:=\eps^2$. So item~\ref{it:W-minus-H1-small} is shown, because $W^u\setminus H$ is also small by Lemma~\ref{lem:W-minus-H-small}.

 Item~\ref{it:U-minus-U1-small} follows from Lemma~\ref{lem:U-minus-U0-small} and Lemma~\ref{lem:A-times-D-small}:
 \[\mu(U\setminus U_1)=\mu(U\setminus U_0)+\mu((H\setminus H_1)\ast D)\le \Cr{const:U-minus-U0-small} \eps^3 + C_\beta \eps^2 2 \Cr{const:secondary2} \eps.\]

 We are left to prove the last item, which is H\"older continuity of the restriction. We start with H\"older continuity along each
 $W^u_z$. By construction, if $r_1,r_2\in W^u_z\cap U_1=h_z(H_1)$ and their separation time is $s^+(r_1,r_2)=n$, then there is a singularity of order $n$ separating them, around which we already cut out an interval of length $c\theta^n$, meaning that $dist_{W^u_z}(r_1,r_2)\ge c\theta^n$. In short,
\begin{equation}\label{eq:dist-and-separation_time}
 dist_{W^u_z}(r_1,r_2)\ge c\theta^{s^+(r_1,r_2)}.
\end{equation}
Now H\"older continuity of the restriction of $G_0$ to any $h_z(H_1)$ follows from Proposition~\ref{prop:G_0-dynHolder}: for any $r_1,r_2\in h_z(H_1)$
\begin{eqnarray*}
 |G_0(r_1)-G_0(r_2)|\le C_{G;u} \frac{1}{\eps^2} ||\phi||_{\Theta_\phi;dH} \Theta_G^{s^+(r_1,r_2)}=
 C_{G;u} \frac{1}{\eps^2} ||\phi||_{\Theta_\phi;dH} \left(\theta^{s^+(r_1,r_2)}\right)^\frac{\ln\Theta_G}{\ln\theta} \le \\
 \le C_{G;u} \frac{1}{\eps^2} ||\phi||_{\Theta_\phi;dH} \left( \frac1c  dist_{W^u_z}(r_1,r_2) \right)^\frac{\ln\Theta_G}{\ln\theta} =
 C_{G;u} \frac{1}{\eps^2} ||\phi||_{\Theta_\phi;dH}\left( \frac1c \right)^{\alpha'} dist_{W^u_z}(r_1,r_2)^{\alpha'}
\end{eqnarray*}
with $\alpha':=\frac{\ln\Theta_G}{\ln\theta}>0.$ We can choose $\theta=\theta(\cR_Q,\Theta_G)$ sufficiently small to make sure that that $\alpha'\le\frac12$, so by our earlier choice $c=\eps^2$ we get (using $\eps\le diam(M)$) that
\begin{equation}\label{eq:G_0-Holder-on-W^u}
 |G_0(r_1)-G_0(r_2)|\le \Cl{const:G0-Holder-on-u} \frac{||\phi||_{\Theta_\phi;dH}}{\eps^3} dist_{W^u_z}(r_1,r_2)^{\alpha'}
\end{equation}
(with $\Cr{const:G0-Holder-on-u}=C_{G;u}\max\{diam(M),1\}$.)

So we are able to compare function values if $r_1$ and $r_2$ are on the same $u$-curve. On the other hand, for two points on the same central-stable manifold, Proposition~\ref{prop:G_0-Holder-on-W^cs} can be applied directly. For arbitrary $r_1,r_3\in U_1$ we combine the two by setting $r_2$ to be the only intersection point of the $u$-curve of $r_1$ and the central-stable manifold of $r_3$: if $r_1=h_{z_1}(x)$ and $r_3=h_{z_2}(y)$ with $x,y\in H_1$ and $z_1,z_2\in D$, then $r_2:=h_{z_1}(y)$. Transversality (by Theorem~\ref{thm:transversality}) of $W^u_{z_1}$ and $W^{c-s}_y$ guarantees that, with some $\Cl{const:transversal-distances}=\Cr{const:transversal-distances}(\cR_Q.\cR_u)$
 \[dist_{W^u_{z_1}}(r_1,r_2)\le \Cr{const:transversal-distances} |r_1-r_3|\]
 and
 \[dist_{W^{c-s}_x}(r_2,r_3)\le \Cr{const:transversal-distances} |r_1-r_3|.\]
 So (\ref{eq:G_0-Holder-on-W^u}) and Proposition~\ref{prop:G_0-Holder-on-W^cs} give
 \begin{eqnarray*}
 |G_0(r_1)-G_0(r_3)|\le |G_0(r_1)-G_0(r_2)| + |G_0(r_2)-G_0(r_3)|\le \\
 \le \Cr{const:G0-Holder-on-u} \frac{||\phi||_{\Theta_\phi;dH}}{\eps^3} dist_{W^u_z}(r_1,r_2)^{\alpha'} + \frac{C_{G-cs}\sup\phi}{\eps^3} dist_{W^{c-s}_x}(r_2,r_3)^{\alpha_{G_0}}\le \\
 \le \Cr{const:G0-Holder-on-u} \Cr{const:transversal-distances}^{\alpha'} \frac{||\phi||_{\Theta_\phi;dH}}{\eps^3} |r_1-r_3|^{\alpha'}+ \frac{C_{G-cs}\sup\phi}{\eps^3} \Cr{const:transversal-distances}^{\alpha_{G_0}} |r_1-r_3|^{\alpha_{G_0}}.
 \end{eqnarray*}
Choosing $\alpha_G:=\min\{\alpha', \alpha_{G_0}\}$, we use $|r_1-r_3|\le diam(M)$ and $\sup\phi\le ||\phi||_{\Theta_\phi;dH}$ from (\ref{eq:dynHolder-norm}) to get the result.
 \end{proof}

\begin{remark}\label{rem:G0-zero-on-boundary}
So far we only defined and considered $G_0$ at points of the product set $U_0=H\ast D$. However, Lemma~\ref{lem:G0-formula} shows that $G_0(r)=0$ whenever $r\in U_0\cap W^u_z$ with $|z|=\eps$, since then $q(z)=0$. So it is reasonable to set $G_0=0$ on all of $\bigcup\{W^u_z\,|\, |z|=\eps\}$, the curved surface of $U$. With this extension, $G_0$ is still clearly H\"older along the $u$-curves $W^u_z$, and Proposition~\ref{prop:G_0-Holder-on-H_1} remains true, with the proof unchanged.
\end{remark}

Now we extend $G_0|_{U_1}$ from $U_1$ to all of $M$.

\begin{proposition}\label{prop:G_Holder}
There is a global constant $\Cl{const:G-regularity}=\Cr{const:G-regularity}(\cR_Q,\cR_u)<\infty$ and a function $G:M\to\IR^+$ such that
 \begin{enumerate}
  \item \label{it:G-is-extension-of-G0} $G=G_0$ on $U_1$,
  \item \label{it:suppG-about-U_1} $\mu(supp(G)\setminus U_1)\le \Cr{const:G-regularity} \eps^3$,
  \item \label{it:G-is-Holder} $G$ is H\"older continuous with constants as $G_0$: for any $r_1,r_2 \in M$
  \[|G(r_1)-G(r_2)|\le \Cr{const:G_0-Holder-on-H_1} \frac{||\phi||_{\Theta_\phi;dH}}{\eps^3} |r_1-r_2|^{\alpha_G},\]
  where $\Cr{const:G_0-Holder-on-H_1}(\cR_Q,\cR_u)<\infty$ and $\alpha_G(\cR_Q,\cR_u,\Theta_{\phi})\le 1$ are given by Proposition \ref{prop:G_0-Holder-on-H_1},
  \item \label{it:G-bound} $0\le G \le \frac{\Cr{const:G-regularity}}{\eps^2} \sup_{W^u}\phi$,
  \item \label{it:IntG_almost_1} $\left| \int_{M}G\diff\mu -1\right|\le \Cr{const:G-regularity}\eps \sup_{W^u}\phi$,
  \item \label{it:G-Holder-norm} $||G||_{\alpha_G;H}\le \frac{\Cr{const:G-regularity}}{\eps^3} ||\phi||_{\Theta_\phi;dH}$.
 \end{enumerate}
\end{proposition}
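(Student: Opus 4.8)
The plan is to extend $G_0|_{U_1}$ to a genuinely H\"older function on all of $M$ and then multiply by a sharp cutoff that localizes the support without spoiling the H\"older norm. First I would record that, by Remark~\ref{rem:G0-zero-on-boundary} together with Proposition~\ref{prop:G_0-Holder-on-H_1}(\ref{it:G_0-Holder-on-H_1}) and Proposition~\ref{prop:G_0-bounded}, the function $G_0$ (with the convention $G_0=0$ on the curved surface of $U$, extended by continuity to $\overline{U_1}\subset\overline U=U$) is $\alpha_G$-H\"older on $\overline{U_1}$ with seminorm at most $\Cr{const:G_0-Holder-on-H_1}\frac{||\phi||_{\Theta_\phi;dH}}{\eps^3}$ and satisfies $0\le G_0\le\frac{C_{G;b}}{\eps^2}\sup_{W^u}\phi$ there. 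Then I would invoke a H\"older extension result of McShane--Whitney type (cf.\ Subsection~\ref{sec:Holder-extension-1p}), capped between $0$ and $\sup_{\overline{U_1}}G_0$, to get $G^{(1)}:M\to\IR^+$ with $G^{(1)}=G_0$ on $\overline{U_1}$, the same two-sided bound, and $|G^{(1)}|_{\alpha_G;H}\le\Cr{const:G_0-Holder-on-H_1}\frac{||\phi||_{\Theta_\phi;dH}}{\eps^3}$. Finally I would set $\delta:=\eps^2$, take the cutoff $\eta(r):=\max\{0,\,1-(\eps^{-2}dist(r,\overline{U_1}))^{\alpha_G}\}$, and define $G:=\eta\cdot G^{(1)}$.

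Items (\ref{it:G-is-extension-of-G0}), (\ref{it:G-is-Holder}), (\ref{it:G-bound}) and (\ref{it:G-Holder-norm}) should then follow routinely. Since $dist(\cdot,\overline{U_1})$ vanishes on $U_1$ we have $\eta\equiv1$ there, hence $G=G_0$ on $U_1$, which is (\ref{it:G-is-extension-of-G0}). The cutoff $\eta$ takes values in $[0,1]$ and is the composition of the $1$-Lipschitz map $dist(\cdot,\overline{U_1})$ with a function of $\alpha_G$-H\"older seminorm $\eps^{-2\alpha_G}$, so $|\eta|_{\alpha_G;H}\le\eps^{-2\alpha_G}$; Lemma~\ref{lem:Holder-Holder}(\ref{it:Holder-product}) then bounds $|G|_{\alpha_G;H}$ by $|G^{(1)}|_{\alpha_G;H}+(\sup_M G^{(1)})|\eta|_{\alpha_G;H}\le\Cr{const:G_0-Holder-on-H_1}\frac{||\phi||_{\Theta_\phi;dH}}{\eps^3}+\frac{C_{G;b}\sup_{W^u}\phi}{\eps^2}\eps^{-2\alpha_G}$. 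Here $\alpha_G\le\alpha_{G_0}=\tfrac13\le\tfrac12$, so $\eps^{-2\alpha_G}\le C\eps^{-1}$, and $\sup_{W^u}\phi\le||\phi||_{\Theta_\phi;dH}$ by (\ref{eq:dynHolder-norm}); thus $|G|_{\alpha_G;H}\le C\frac{||\phi||_{\Theta_\phi;dH}}{\eps^3}$, which is (\ref{it:G-is-Holder}) after absorbing the universal factor into $\Cr{const:G_0-Holder-on-H_1}$. Clearly $0\le G\le G^{(1)}\le\frac{C_{G;b}}{\eps^2}\sup_{W^u}\phi$, which is (\ref{it:G-bound}), and combining the last two estimates yields (\ref{it:G-Holder-norm}).

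For the support bound (\ref{it:suppG-about-U_1}) I would use that $\eta=0$ off $\{dist(\cdot,\overline{U_1})\le\eps^2\}\subset\{dist(\cdot,U)\le\eps^2\}$, so $supp(G)\setminus U_1\subset\big(\{dist(\cdot,U)\le\eps^2\}\setminus U\big)\cup(U\setminus U_1)$. The first set is an $\eps^2$-collar of $\partial U$; since $U=W^u+D$ has boundary of area $O(\eps)$ (a lateral part of area $O(\eps)$ plus two discs of area $O(\eps^2)$) with curvature radii $O(\eps)\gg\eps^2$, this collar has $\mu$-measure $O(\eps)\cdot\eps^2=O(\eps^3)$, while the second has measure $\le\Cr{const:G_0-Holder-on-H_1}\eps^3$ by Proposition~\ref{prop:G_0-Holder-on-H_1}(\ref{it:U-minus-U1-small}). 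For (\ref{it:IntG_almost_1}) I would split $\int_M G\diff\mu=\int_{U_1}G\diff\mu+\int_{M\setminus U_1}G\diff\mu$: on $U_1$ one has $G=G_0=\frac{\diff\tilde{G_0}}{\diff\mu}$, so $\int_{U_1}G\diff\mu=\tilde{G_0}(H_1\ast D)=\int_{H_1}\phi\diff m$ by (\ref{eq:tildeG_0-def}), and since $\int_{W^u}\phi\diff m_{W^u}=1$ and $0\le\int_{W^u\setminus H_1}\phi\diff m_{W^u}\le\sup_{W^u}\phi\cdot m_{W^u}(W^u\setminus H_1)\le\Cr{const:G_0-Holder-on-H_1}\eps\sup_{W^u}\phi$ by Proposition~\ref{prop:G_0-Holder-on-H_1}(\ref{it:W-minus-H1-small}), this term is $1$ up to $\Cr{const:G_0-Holder-on-H_1}\eps\sup_{W^u}\phi$; the other term is $0\le\int_{M\setminus U_1}G\diff\mu\le\sup_M G\cdot\mu(supp(G)\setminus U_1)\le C\eps\sup_{W^u}\phi$ by (\ref{it:G-bound}) and (\ref{it:suppG-about-U_1}). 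Enlarging $\Cr{const:G-regularity}$ to dominate all the universal constants above finishes the proof.

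The genuinely delicate point — and the reason this lemma is not quite trivial — is the choice of $\delta$. The support bound (\ref{it:suppG-about-U_1}) forces $G$ to die within an $O(\eps^2)$-thin collar of $U$, since $\partial U$ has area $\sim\eps$ and a thicker collar would add $\gg\eps^3$ to $\mu(supp\,G)$; yet $G_0|_{U_1}$ can be as large as $\eps^{-2}\sup_{W^u}\phi$ near the cut-out singularity intervals and near the circular faces of $U$, so cutting it off over a distance $\delta$ costs about $\eps^{-2}\delta^{-\alpha_G}\sup_{W^u}\phi$ in the H\"older seminorm. This fits within the admissible budget $\eps^{-3}||\phi||_{\Theta_\phi;dH}$ exactly when $\delta^{\alpha_G}\gtrsim\eps$, i.e.\ $\delta\gtrsim\eps^{1/\alpha_G}$, which is compatible with $\delta\le\eps^2$ precisely because $\alpha_G\le\alpha_{G_0}=\tfrac13\le\tfrac12$. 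In other words, the $\tfrac13$-H\"older control along central-stable manifolds established in Proposition~\ref{prop:G_0-Holder-on-W^cs} is exactly what makes the bookkeeping here close; without it (if $\alpha_G$ were allowed above $\tfrac12$) one could not simultaneously meet the support and the H\"older-norm requirements with this construction.
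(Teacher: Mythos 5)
Your proposal is correct, but it localizes the support by a different mechanism than the paper. The paper exploits Remark~\ref{rem:G0-zero-on-boundary} — $G_0$ already vanishes on the curved lateral surface of $U$ because $q$ vanishes on $\partial D$ — so it defines $G^*$ to be $G_0$ on $U_1$ and $0$ outside $U^+$, where $U^+$ is $U$ with two hemispherical caps of radius $\eps$ attached to the flat faces; since any point of $U_1$ can only "see" the zero region across the lateral surface where $G_0$ is already zero, $G^*$ is H\"older with exactly the constant of Proposition~\ref{prop:G_0-Holder-on-H_1}, and Lemma~\ref{lem:Holder-extension} extends it to $M$ without loss, while the support estimate reduces to $\mu(U^+\setminus U)=O(\eps^3)$ plus $\mu(U\setminus U_1)$. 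You instead extend first (McShane, via Section~\ref{sec:Holder-extension-1p}) and then multiply by an explicit $\alpha_G$-H\"older cutoff dying at distance $\eps^2$ from $\overline{U_1}$. Your accounting is right: the cutoff costs $\sup G^{(1)}\cdot\eps^{-2\alpha_G}\le C\eps^{-3}\sup_{W^u}\phi$ in the seminorm precisely because $\alpha_G\le\alpha_{G_0}=\tfrac13\le\tfrac12$, and your support bound rests on the tube-collar estimate $\mu(\{0<dist(\cdot,U)\le\eps^2\})\le C\eps^3$, which is valid because the boundary of $U$ has area $O(\eps)$ and the collar thickness $\eps^2$ is far below the transverse curvature radius $\sim\eps$ of the lateral surface (you correctly flag this; since $U$ is not convex and has edges, this deserves to be recorded as a short lemma rather than the one-line "$O(\eps)\cdot\eps^2$" remark, but the estimate is true). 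Items (\ref{it:G-is-extension-of-G0}), (\ref{it:G-bound}), (\ref{it:IntG_almost_1}) and (\ref{it:G-Holder-norm}) you handle exactly as the paper does. Two minor caveats: item (\ref{it:G-is-Holder}) as stated carries the constant $\Cr{const:G_0-Holder-on-H_1}$ itself, which your construction delivers only up to a universal factor — harmless, since only item (\ref{it:G-Holder-norm}) with $\Cr{const:G-regularity}$ is used later, but one should either enlarge that constant once or restate (\ref{it:G-is-Holder}) accordingly; and your closing claim that $\alpha_G\le\tfrac12$ is what makes the bookkeeping close is true for your cutoff construction but not intrinsic to the proposition, since the paper's cap construction pays no cutoff price at all (the factor $q$ supplies the decay at the lateral boundary and the caps handle the flat faces), so the statement would survive with $\alpha_G$ arbitrarily close to $1$.
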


\begin{proof}
Let us attach two semi-spheres of radius $\eps$ to the two flat faces of $U$ to get the set $U^+$, which is now a neighbourhood of $W^u$. We set $G:=0$ outside this $U^+$, so $supp(G)\subset U^+$. So
\[ \mu(supp(G)\setminus U_1) \le \mu(U^+\setminus U_1) = \mu(U^+\setminus U) + \mu(U\setminus U_1) \le \frac{4\eps^3 \pi}{3} + \Cr{const:G_0-Holder-on-H_1} \eps^3 \]
by Proposition~\ref{prop:G_0-Holder-on-H_1}, so item~\ref{it:suppG-about-U_1} holds. Setting $G=0$ outside $U^+$ does not spoil H\"older continuity of $G_0$: the function $G^*:(M\setminus U^+)\cup U_1$ defined as
\[ G^*(r):=\begin{cases}
              G_0(r), & \text{ if $r\in U_1$}\\
              0, & \text{ if $r\in M\setminus U^+$}
           \end{cases}
\]
satisfies $0\le G^* \le \sup G_0\le \frac{C_{G;b}}{\eps^2} \sup_{W^u}\phi$ by Proposition~\ref{prop:G_0-bounded}, and it
is H\"older continuous with the same constants as $G_0$ on $U_1$ as in Proposition~\ref{prop:G_0-Holder-on-H_1}, item~\ref{it:G_0-Holder-on-H_1}. This is so because for any point $r$ where $G^*$ is defined and possibly non-zero, the nearest point outside $U^+$ is surely on $\bigcup\{W^u_z\,|\, |z|=\eps\}$, so the regularity of $G^*$ follows from Remark~\ref{rem:G0-zero-on-boundary}.

Now the abstract extension lemma Lemma~\ref{lem:Holder-extension} ensures that $G^*$ can be extended to some $G$ defined on all of $M$ with all the required H\"older continuity and upper bound, so items \ref{it:G-is-extension-of-G0}, \ref{it:G-is-Holder} and \ref{it:G-bound} are shown.

To see item~\ref{it:IntG_almost_1} we use that $\phi$ is a probability density so $\int_{W^u}\phi\diff m_{W^u} =1$ and that $\int_{U_1}G\diff\mu=\int_{H_1}\phi\diff m_{W^u}$ by (\ref{eq:tildeG_0-def}), (\ref{eq:G_0-def}) and item \ref{it:G-is-extension-of-G0}. We get that
\[ \left| \int_{M}G\diff\mu -1\right| = \left| \int_{M}G\diff\mu - \int_{W^u}\phi\diff m_{W^u} \right| \le \int_{M\setminus U_1} G\diff\mu + \int_{W^u\setminus H_1}\phi\diff m_{W^u}. \]
The first term is bounded due to items \ref{it:suppG-about-U_1} and \ref{it:G-bound} of the present proposition, while the second term is bounded due to item~\ref{it:W-minus-H1-small} of Proposition~\ref{prop:G_0-Holder-on-H_1}, giving the statement of item~\ref{it:IntG_almost_1}.

Finally, item~\ref{it:G-Holder-norm} follows from items \ref{it:G-is-Holder}, \ref{it:G-bound} and the definitions (\ref{eq:Holder-norm}) and (\ref{eq:dynHolder-norm}).
\end{proof}

\subsection{Using the approximating density}\label{sec:using}

\subsubsection{Total error of the approximation}\label{sec:total-error}

\begin{lemma}\label{lem:G-phi_error_small}
There is a global constant $\Cl{const:G-phi_error_small}=\Cr{const:G-phi_error_small}(\cR_Q,\cR_u)<\infty$ such that
\[
\left| \int_M (F\circ \Phi^t) G \diff \mu - \int_M F\circ \Phi^t \diff \tilde\phi \right|\le \int\limits_{M} \left[ \left(osc_{C_\pi \eps} F\right) \circ \Phi^t\right]G\diff\mu + \Cr{const:G-phi_error_small} (\sup|F|) (\sup_{W^u}\phi) \eps,
\]
where $C_\pi(\cR_Q,\cR_u)<\infty$ if from Proposition~\ref{prop:integral_compare_on_H1-U1}.
\end{lemma}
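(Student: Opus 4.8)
The plan is to write the left-hand side as a three-term sum, where the ``bulk'' term is controlled by Proposition~\ref{prop:integral_compare_on_H1-U1} applied with $A=H_1$, and the remaining two ``tail'' terms account for the facts that $G$ agrees with $G_0$ only on $U_1=H_1\ast D$, and that $\tilde\phi$ lives on all of $W^u$ rather than on $H_1$ alone. Both tail terms will be $O(\eps)$ by the support, boundedness and measure-of-complement estimates already established for $G$ and $H_1$.

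Concretely, first I would split both integrals over $U_1$ and its complement. Using $G=G_0$ on $U_1$ (Proposition~\ref{prop:G_Holder}, item~\ref{it:G-is-extension-of-G0}), together with $\int_M F\circ\Phi^t\diff\tilde\phi=\int_{W^u}(F\circ\Phi^t)\phi\diff m_{W^u}=\int_{H_1}(F\circ\Phi^t)\phi\diff m+\int_{W^u\setminus H_1}(F\circ\Phi^t)\phi\diff m_{W^u}$ (where $m=Leb_{W^u}|_H$ agrees with $m_{W^u}$ on $H_1\subset H$), the triangle inequality gives
\[
\left| \int_M (F\circ \Phi^t) G \diff \mu - \int_M F\circ \Phi^t \diff \tilde\phi \right|
\le
\left| \int_{H_1\ast D}(F\circ\Phi^t)G_0\diff\mu - \int_{H_1}(F\circ\Phi^t)\phi\diff m \right|
+ \left| \int_{M\setminus U_1}(F\circ\Phi^t)G\diff\mu \right|
+ \left| \int_{W^u\setminus H_1}(F\circ\Phi^t)\phi\diff m_{W^u} \right|.
\]
For the first term, I would invoke Proposition~\ref{prop:integral_compare_on_H1-U1} with $A=H_1$ (legitimate since $H_1\subset H$); because $osc_{C_\pi\eps}F\ge 0$, $G_0\ge 0$, and $G_0=G$ on $U_1=H_1\ast D$, the bound there enlarges harmlessly to
\[
\left| \int_{H_1\ast D}(F\circ\Phi^t)G_0\diff\mu - \int_{H_1}(F\circ\Phi^t)\phi\diff m \right|
\le \int_{H_1\ast D}\left[(osc_{C_\pi\eps}F)\circ\Phi^t\right]G_0\diff\mu
\le \int_M\left[(osc_{C_\pi\eps}F)\circ\Phi^t\right]G\diff\mu .
\]

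For the second term, items~\ref{it:suppG-about-U_1} and \ref{it:G-bound} of Proposition~\ref{prop:G_Holder} give $\mu(\mathrm{supp}(G)\setminus U_1)\le\Cr{const:G-regularity}\eps^3$ and $\sup_M G\le\frac{\Cr{const:G-regularity}}{\eps^2}\sup_{W^u}\phi$, hence $\left|\int_{M\setminus U_1}(F\circ\Phi^t)G\diff\mu\right|\le (\sup_M|F|)(\sup_M G)\,\mu(\mathrm{supp}(G)\setminus U_1)\le \Cr{const:G-regularity}^2(\sup_M|F|)(\sup_{W^u}\phi)\eps$; for the third term, item~\ref{it:W-minus-H1-small} of Proposition~\ref{prop:G_0-Holder-on-H_1} gives $m_{W^u}(W^u\setminus H_1)\le\Cr{const:G_0-Holder-on-H_1}\eps$, hence $\left|\int_{W^u\setminus H_1}(F\circ\Phi^t)\phi\diff m_{W^u}\right|\le \Cr{const:G_0-Holder-on-H_1}(\sup_M|F|)(\sup_{W^u}\phi)\eps$. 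Summing the three estimates and taking $\Cr{const:G-phi_error_small}:=\Cr{const:G-regularity}^2+\Cr{const:G_0-Holder-on-H_1}$ yields the claim. There is no real obstacle here: the lemma is a bookkeeping step assembling Proposition~\ref{prop:integral_compare_on_H1-U1} with already-proved size estimates, and the only point requiring a little care is keeping the distinction between $G$ and $G_0$ straight on $M\setminus U_1$ and noticing that the nonnegative oscillation error produced on $U_1$ may be replaced by the integral over all of $M$.
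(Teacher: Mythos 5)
Your proposal is correct and follows essentially the same route as the paper: the same three-term decomposition, the same application of Proposition~\ref{prop:integral_compare_on_H1-U1} with $A=H_1$ (enlarging the nonnegative oscillation integral from $U_1$ with $G_0$ to $M$ with $G$), and the same tail bounds from Proposition~\ref{prop:G_Holder} (items on $\mathrm{supp}(G)\setminus U_1$ and $\sup G$) and Proposition~\ref{prop:G_0-Holder-on-H_1} (item on $W^u\setminus H_1$), yielding the same constant.
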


\begin{proof}
We understand the integrals on $U_1$ and $H_1$ well, and the measure of the rest is small. So we write
 \begin{eqnarray*}
 &\,& \left| \int_M (F\circ \Phi^t) G \diff \mu - \int_M F\circ \Phi^t \diff \tilde\phi \right| = \\
 &=& \left| \int_{supp(G)} (F\circ \Phi^t) G \diff \mu - \int_{W^u} F\circ \Phi^t \diff \tilde\phi \right| \le \\
 &\le& \left| \int_{U_1} (F\circ \Phi^t) G_0 \diff \mu - \int_{H_1} F\circ \Phi^t \diff \tilde\phi \right| + \\
 &\,& \left| \int_{supp(G)\setminus U_1} (F\circ \Phi^t) G \diff \mu \right| + \left| \int_{W^u\setminus H_1} F\circ \Phi^t \diff \tilde\phi \right|.
 \end{eqnarray*}
The first term is estimated using Proposition~\ref{prop:integral_compare_on_H1-U1}.
For the second we use Proposition~\ref{prop:G_Holder}, items \ref{it:suppG-about-U_1} and \ref{it:G-bound}.
For the third we use Proposition~\ref{prop:G_0-Holder-on-H_1}, item~\ref{it:W-minus-H1-small}.
We get
 \begin{eqnarray*}
 &\,&\left| \int_M (F\circ \Phi^t) G \diff \mu - \int_M F\circ \Phi^t \diff \tilde\phi \right| \le \\
 &\le& \int\limits_{U_1} \left[ \left(osc_{C_\pi \eps} F\right) \circ \Phi^t\right]G_0\diff\mu + \\
 &\,& + (\sup |F|) (\sup_M G) \mu(supp(G)\setminus U_1) + (\sup|F|) (\sup_{W^u}\phi) m_{W^u}(W^u\setminus H_1)\le \\
 &\le& \int\limits_{M} \left[ \left(osc_{C_\pi \eps} F\right) \circ \Phi^t\right]G\diff\mu + \\
 &\,& + (\sup |F|) \frac{\Cr{const:G-regularity}}{\eps^2} (\sup_{W^u}\phi) \Cr{const:G-regularity}\eps^3 +
   (\sup|F|) (\sup_{W^u}\phi) \Cr{const:G_0-Holder-on-H_1} \eps = \\
 &=& \int\limits_{M} \left[ \left(osc_{C_\pi \eps} F\right) \circ \Phi^t\right]G\diff\mu +
   (\Cr{const:G-regularity}^2+\Cr{const:G_0-Holder-on-H_1}) (\sup|F|) (\sup_{W^u}\phi) \eps.
 \end{eqnarray*}
\end{proof}

\subsubsection{Completing the proof}\label{sec:end-of-proof}

We now have all the regularity estimates to complete the proof of our main Theorem~\ref{thm:main} using the approximating density $G$.

\begin{remark}
 The ``moreover'' part of Theorem~\ref{thm:main} discusses the dependence of the correlation decay on the billiard domain $Q$. To prove it, we needed and need to keep track of the $Q$-dependence of our ``constants''. Up to this point, every constant and every estimate depended on $Q$ through $\cR_Q$ (from (\ref{eq:RQ})) only. It is only the remaining final step where a (possibly) more complicated dependence appears, through the application of Theorem~\ref{thm:BDL} (more precisely, its corollary Theorem~\ref{thm:BDL_extended}).
\end{remark}

\begin{proof}[Proof of Theorem~\ref{thm:main}]
We need to estimate $\left| \int_M F\circ \Phi^t \diff \tilde\phi \right|$ from above. We do this using Lemma~\ref{lem:G-phi_error_small}, which implies
\begin{equation}\label{eq:main-thm-cov-estimate}
\left| \int_M F\circ \Phi^t \diff \tilde\phi \right|\le
\left| \int_M (F\circ \Phi^t) G \diff \mu \right| +
\int\limits_{M} \left[ \left(osc_{C_\pi \eps} F\right) \circ \Phi^t\right]G\diff\mu + \Cr{const:G-phi_error_small} (\sup|F|) (\sup_{W^u}\phi) \eps.
\end{equation}

The first term is estimated using Theorem~\ref{thm:BDL_extended} and Proposition~\ref{prop:G_Holder}, item~\ref{it:G-Holder-norm}. Since $\int_M F\diff\mu=0$, the result is
\begin{equation}\label{eq:int-F-estimate}
\left| \int_M (F\circ \Phi^t) G \diff \mu \right| \le
\cC_{BDL;g} var_{\alpha_F}(F) ||G||_{\alpha_G;H} e^{- a''t} \le
\cC_{BDL;g} var_{\alpha_F}(F) \frac{\Cr{const:G-regularity}}{\eps^3} ||\phi||_{\Theta_\phi;dH} e^{- a''t} .
\end{equation}
Here $\cC_{BDL;g}=\cC_{BDL;g}(Q,\alpha_F,\alpha_G)$ and $a''=a''(Q,\alpha_F,\alpha_G)$ both depend on $Q$, $\cR_u$, $\alpha_F$ and $\Theta_\phi$, since $\alpha_G=\alpha_G(\cR_Q,\cR_u,\Theta_\phi)$.

To estimate the second term, we use Theorem~\ref{thm:BDL_extended} again, with the same $\alpha_F$ and $\alpha_G$ (this choice is for convenience only). We get
\begin{equation}\label{eq:int-osc-F-estimate}
\int\limits_{M} \left[ \left(osc_{C_\pi \eps} F\right) \circ \Phi^t\right]G\diff\mu \le
\int_M osc_{C_\pi \eps} F \diff \mu \int_M G \diff \mu +
\cC_{BDL;g}\, var_{\alpha_F}(osc_{C_\pi\eps} F) ||G||_{\alpha_G;H} e^{- a''t}.
\end{equation}
The definitions (\ref{eq:gH-seminorm-def}) and (\ref{eq:var}) give
\[\int_M osc_{C_\pi \eps} F \diff \mu \le |f|_{\alpha_F;gH} (C_\pi \eps)^{\alpha_F}\le C_\pi^{\alpha_F} var_{\alpha_F} \eps^{\alpha_F}.\]
Item~\ref{it:IntG_almost_1} of Proposition~\ref{prop:G_Holder} and (\ref{eq:dynHolder-norm}) give
\[\int_M G \diff \mu \le 1 + \Cr{const:G-regularity}\eps \sup_{W^u}\phi \le
(L_{max} + \Cr{const:G-regularity} diam(M)) ||\phi||_{\Theta_\phi;dH}.\]
Corollary~\ref{cor:osc_F-genholder-norm} with $\alpha=\alpha_F$ and (\ref{eq:var}) give
\[
var_{\alpha_F}(osc_{C_\pi\eps} F) \le \frac{\Cr{const:osc_F-genholder-norm}}{\eps^{\alpha_F}} (\sup_M F - \inf_M F)\le
\frac{\Cr{const:osc_F-genholder-norm}}{\eps^{\alpha_F}} var_{\alpha_F}(F).
\]
Finally, item~\ref{it:G-Holder-norm} of Proposition~\ref{prop:G_Holder} gives
$||G||_{\alpha_G;H} \le \frac{\Cr{const:G-regularity}}{\eps^3} ||\phi||_{\Theta_\phi;dH}$.
{Substituting} to (\ref{eq:int-osc-F-estimate}), we get
\begin{equation}\label{eq:int-osc-F-estimate2}
\int\limits_{M} \left[ \left(osc_{C_\pi \eps} F\right) \circ \Phi^t\right]G\diff\mu \le
\C \cdot (1+\cC_{BDL;g}) var_{\alpha_F}(F) ||\phi||_{\Theta_\phi;dH} \left( \eps^{\alpha_F} + \frac{e^{- a''t}}{\eps^{3+\alpha_F}} \right).
\end{equation}
The third term of (\ref{eq:main-thm-cov-estimate}) is trivially bounded using (\ref{eq:var}) and (\ref{eq:dynHolder-norm}) as
\[
\Cr{const:G-phi_error_small} (\sup|F|) (\sup_{W^u}\phi) \eps \le
\Cr{const:G-phi_error_small} var_{\alpha_F}(F) ||\phi||_{\Theta_\phi;dH} \eps.
\]
{Substituting} this, (\ref{eq:int-F-estimate}) and (\ref{eq:int-osc-F-estimate2}) back to (\ref{eq:main-thm-cov-estimate}), we {obtain}
\begin{equation}\label{eq:main-thm-cov-estimate2}
\left| \int_M F\circ \Phi^t \diff \tilde\phi \right|\le
\Cl{const:main-thm-cov-estimate2} \cdot (1+\cC_{BDL;g}) var_{\alpha_F}(F) ||\phi||_{\Theta_\phi;dH} \left( \eps^{\alpha_F} + \frac{e^{- a''t}}{\eps^{3+\alpha_F}} \right).
\end{equation}

This holds for every $t\ge 0$ and every $0<\eps\le\eps_0$. Now to minimize the sum
$\eps^{\alpha_F} + \frac{e^{- a''t}}{\eps^{3+\alpha_F}}$,
we choose $\eps:=\eps(t)$ so that the two terms are equal, whenever this is allowed by the restriction
$\eps\le\eps_0$. Specifically, let
\begin{equation}\label{eq:a-def}
a:=\frac{\alpha_F}{3+2\alpha_F}a''
\end{equation}
and
\[\eps:=\eps(t):=\min\left\{e^{-\frac{a''}{3+2\alpha_F}t},\eps_0\right\}.\]
With this choice, if $e^{-\frac{a''}{3+2\alpha_F}t} \le \eps_0$, then
\[\eps^{\alpha_F} + \frac{e^{- a''t}}{\eps^{3+\alpha_F}}=2\eps^{\alpha_F} = 2 e^{-at}.\]
If $e^{-\frac{a''}{3+2\alpha_F}t} \ge \eps_0$, then a short calculation gives
\[
\eps^{\alpha_F} + \frac{e^{- a''t}}{\eps^{3+\alpha_F}}=
\left[ \left(\frac{\eps_0}{e^{-\frac{a''}{3+2\alpha_F}t}}\right)^{\alpha_F} +
  \left(\frac{e^{-\frac{a''}{3+2\alpha_F}t}}{\eps_0}\right)^{3+\alpha_F} \right] e^{-at} \le
\left[ 1 + \left(\frac{1}{\eps_0}\right)^{3+\alpha_F} \right] e^{-at}
.\]
{Substituting} to (\ref{eq:main-thm-cov-estimate2}), we get
\begin{equation}
\left| \int_M F\circ \Phi^t \diff \tilde\phi \right|\le
\Cr{const:main-thm-cov-estimate2} \cdot (1+\cC_{BDL;g})
\left[ 2 + \left(\frac{1}{\eps_0}\right)^{3+\alpha_F} \right]
var_{\alpha_F}(F) ||\phi||_{\Theta_\phi;dH} e^{-at}.
\end{equation}
So the main statement of the theorem is proven with
\begin{equation}\label{eq:cC-def}
\cC=\Cr{const:main-thm-cov-estimate2} \cdot (1+\cC_{BDL;g})
\left[ 2 + \left(\frac{1}{\eps_0}\right)^{3+\alpha_F} \right].
\end{equation}

To see the ``moreover'' part of the theorem, note that the $Q$-dependence of $\cC_{BDL;g}$ and $a''$ is described by the ``moreover'' part of Theorem~\ref{thm:BDL_extended}. With this, (\ref{eq:a-def}) gives
\[a=\frac{\alpha_F}{3+2\alpha_F} \frac{\alpha_F}{\alpha_F+1} a'(Q,\alpha_G(\cR_Q,\cR_u,\Theta_\phi))\]
and (\ref{eq:cC-def}) gives
\[\cC=\cC(\cR_Q,\cR_u,\alpha_F,\cC_{BDL}(Q,\alpha_G(\cR_Q,\cR_u,\Theta_\phi))).\]
Now the ``moreover'' part is shown with $\alpha=\alpha_G$.
\end{proof}

\section{Possible extension, open problems}\label{sec:extensions}

\subsection{Dependence of constants on the billiard table}\label{sec:ext_Q-dependence}

As mentioned in the introduction, equidistribution theorems, like the one in this paper, are sometimes applied to a class of models simultaneously. In such situations, it is useful to know that the same estimate holds for all the models in the class. Unfortunately, our main Theorem~\ref{thm:main} says little about the dependence of the ``constants'' $\cC$ and $a$ on the billiard table $Q$. The only reason for this is that our main reference, Theorem~\ref{thm:BDL} from \cite{BDL16} does not say anything about the $Q$-dependence of the constants $\cC_{BDL}$ and $a'$. If this dependence was better understood -- say, we would know that $\cC_{BDL}$ and $a'$ depend on $Q$ through $\cR_Q$ only, -- then a better understanding of the $Q$-dependence of $\cC$ and $a$ in Theorem~\ref{thm:main} would be automatic -- see the ``moreover'' part of Theorem~\ref{thm:main}.

However, there is good reason that \cite{BDL16} does not discuss the $Q$-dependence of constants. At the heart of their functional analytic proof is a compactness argument, which allows only finitely many eigenvalues of the transfer operator in a neighbourhood of the imaginary axis.
This ensures a spectral gap, but gives no control on the size of that gap. Getting explicit bounds seems difficult at least.

Another possible way to control the $Q$-dependence of equidistribution would be to give up exponential decay, and prove stretched exponential equidistribution only. This is clearly not optimal, but also very useful technically, and can be proven with different methods. Indeed, if we want stretched exponential decay only, we can, instead of \cite{BDL16}, refer to Theorem 1.1 for \cite{Ch07}:

\begin{theorem}\label{thm:Ch}
Consider a billiard like the one in Theorem~\ref{thm:main}. Assume $0 < \alpha \le 1$. Then there exist $a' = a'(Q,\alpha) > 0$ and $\cC_{Ch}=\cC_{Ch}(Q,\alpha) < \infty$ such that for any
$F,G:M\to \IR$ generalized $\alpha$-H\"older functions with $\int_M F\diff\mu=0$ and any $t\ge 0$ one has
 \[\left| \int_M (F\circ \Phi^t) G \diff \mu  \right| \le \cC_{Ch} var_\alpha(F) var_\alpha(G) e^{- a'\sqrt{t}}. \]
Here $var_\alpha(.)$ denotes the generalized $\alpha$-H\"older seminorm defined in (\ref{eq:var}).
(Ch stands for Chernov.)
\end{theorem}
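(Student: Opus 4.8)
The plan is to deduce the statement directly from Theorem~1.1 of Chernov~\cite{Ch07}, which is essentially the same result phrased slightly differently. That theorem asserts, under precisely our assumptions \ref{asm:dispersing}, \ref{asm:finite-horizon} and \ref{asm:no-corner}, a stretched exponential bound of the form $C\, e^{-a'\sqrt{t}}$ for the flow correlation $\int_M (F\circ\Phi^t)\,G\,\diff\mu$, valid whenever $F$ and $G$ satisfy a generalized H\"older regularity condition, with the constant $C$ controlled by the relevant generalized H\"older seminorms of $F$ and $G$. So the remaining work is bookkeeping: checking that the regularity class and the seminorm appearing in \cite{Ch07} agree, up to harmless factors, with Definition~\ref{def:GHC} and the quantity $var_\alpha$ of (\ref{eq:var}).

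First I would note, as already recorded in Remark~\ref{rem:sup_not_ess-sup}, that our notion of generalized $\alpha$-H\"older continuity coincides with the one used in \cite{Ch07}: both are built from the genuine oscillation (\ref{eq:osc_r-def}), i.e.\ with true supremum and infimum over metric balls rather than essential ones, so there is no loss here. Next, Chernov's bound is stated in terms of a quantity comparable --- up to a factor depending only on $Q$ and $\alpha$ --- to $|F|_{\alpha;gH}+\sup_M F-\inf_M F = var_\alpha(F)$, and similarly for $G$; absorbing that factor into $\cC_{Ch}=\cC_{Ch}(Q,\alpha)$ yields the displayed inequality. The hypothesis $\int_M F\,\diff\mu = 0$ is no real restriction: for a general $G$ one writes $\int_M (F\circ\Phi^t)\,G\,\diff\mu = \int_M (F\circ\Phi^t)\,\bigl(G-\int_M G\,\diff\mu\bigr)\,\diff\mu$, and passing from $G$ to $G-\int_M G\,\diff\mu$ changes neither the integral nor $var_\alpha(G)$. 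Finally, to cover small $t$ as well, for $0\le t\le 1$ one bounds the left-hand side trivially by $(\sup_M|F|)\,\bigl(\sup_M|G-\int_M G\,\diff\mu|\bigr)\le var_\alpha(F)\,var_\alpha(G)$, using that $\int_M F\,\diff\mu = 0$ forces $\sup_M|F|\le \sup_M F - \inf_M F$; enlarging $\cC_{Ch}$ by the factor $e^{a'}$ then makes the estimate valid for all $t\ge 0$.

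The only step that requires genuine care --- and the reason Remark~\ref{rem:sup_not_ess-sup} is included in the paper --- is the first one: confirming that the regularity class for which \cite{Ch07} is proved really coincides with ours, in particular with respect to the supremum-versus-essential-supremum point, since for the singular measures that ultimately interest us this distinction matters. Should any residual gap remain between the two formulations, it can be closed exactly as in Section~\ref{sec:genholder-EDC}: a generalized H\"older function is approximated in $L^1(\mu)$ by a genuinely H\"older one (for instance by mollification), Chernov's estimate is applied to the H\"older part, and the $L^1(\mu)$ approximation error is controlled directly, exploiting that here --- unlike in the singular-measure situation treated in the main body of the paper --- one integrates against the smooth invariant measure $\mu$.
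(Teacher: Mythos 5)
Your proposal is correct and matches the paper's treatment: Theorem~\ref{thm:Ch} is not proved in the paper but is simply a restatement of Theorem~1.1 of \cite{Ch07}, with the regularity class identified via Remark~\ref{rem:sup_not_ess-sup} and the constant absorbed into $\cC_{Ch}(Q,\alpha)$, exactly the bookkeeping you describe. The additional normalizations you mention (mean-zero reduction and the trivial bound for small $t$) are harmless and consistent with how the paper uses the result.
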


This theorem, combined with lemmas \ref{lem:holder_genholder} and \ref{lem:genholder-genholder} has the following obvious corollary:

\begin{corollary}\label{cor:Ch-extended}
A planar billiard flow with finite horizon and no corner points enjoys stretched exponential correlation decay for a pair of H\"older and a generalized H\"older observables. Quantitatively, let $0 < \alpha_F,\alpha_G \le 1$. Then there exists an $a'' = a''(Q, \alpha_F,\alpha_G) > 0$ and a $\cC_{Ch;g}=\cC_{Ch;g}(Q,\alpha_F,\alpha_G) < \infty$ such that if $F:M\to \IR$ is $\alpha_F$-generalized H\"older and $G:M\to \IR$ is $\alpha_G$-H\"older, then for any $t\ge 0$
 \[\left| \int_M (F\circ \Phi^t) G \diff \mu - \int_M F \diff \mu \int_M G \diff \mu \right| \le \cC_{Ch;g} var_{\alpha_F}(F) ||G||_{\alpha_G;H} e^{- a''\sqrt{t}}. \]
(Here again Ch stands for Chernov.)

Moreover, $a''(Q,\alpha_F,\alpha_G)=a'(Q,\min\{\alpha_F,\alpha_G\})$ and
\[\cC_{Ch;g}(Q,\alpha_F,\alpha_G)= \max\{(diam(M))^{|\alpha_G-\alpha_F|},1\} \, \cC_{Ch}(Q,\min\{\alpha_F,\alpha_G\})\]
where $a'$ and $\cC_{Ch}$ are from Theorem~\ref{thm:Ch}.
\end{corollary}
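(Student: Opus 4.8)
The plan is to deduce the corollary from Theorem~\ref{thm:Ch} by a short two-step regularity reduction, bringing both observables into the common class of generalized $\alpha$-H\"older functions with $\alpha:=\min\{\alpha_F,\alpha_G\}$. First I would use Lemma~\ref{lem:holder_genholder} to pass from the genuine H\"older regularity of $G$ to generalized $\alpha_G$-H\"older regularity, with $var_{\alpha_G}(G)\le 2\,||G||_{\alpha_G;H}$. Then, since $\alpha\le\alpha_F$ and $\alpha\le\alpha_G$, Lemma~\ref{lem:genholder-genholder} lets me lower the exponent of each of $F$ and $G$ down to $\alpha$, at the cost of a factor $\max\{(diam(M))^{\alpha_F-\alpha},1\}$ for $F$ and $\max\{(diam(M))^{\alpha_G-\alpha},1\}$ for $G$. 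The key bookkeeping observation is that exactly one of the exponents $\alpha_F-\alpha$, $\alpha_G-\alpha$ is zero --- the one belonging to whichever of $\alpha_F,\alpha_G$ realizes the minimum --- so one of these two factors equals $1$ and the other equals $\max\{(diam(M))^{|\alpha_F-\alpha_G|},1\}$.

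Next I would arrange the mean-zero hypothesis of Theorem~\ref{thm:Ch} by replacing $F$ with $\hat F:=F-\int_M F\diff\mu$. Since adding a constant alters neither the oscillation $osc_r$ nor $\sup_M-\inf_M$, one has $var_\alpha(\hat F)=var_\alpha(F)$; and by $\Phi^t$-invariance of $\mu$,
\[\int_M(\hat F\circ\Phi^t)\,G\diff\mu=\int_M(F\circ\Phi^t)\,G\diff\mu-\int_M F\diff\mu\int_M G\diff\mu,\]
which is precisely the quantity to be estimated. Applying Theorem~\ref{thm:Ch} with exponent $\alpha$ to the pair $(\hat F,G)$ then gives the bound $\mathcal{C}_{Ch}(Q,\alpha)\,var_\alpha(\hat F)\,var_\alpha(G)\,e^{-a'(Q,\alpha)\sqrt t}$, and substituting the two regularity reductions from the previous step produces the claimed inequality with decay exponent $a''=a'(Q,\min\{\alpha_F,\alpha_G\})$ and constant $\mathcal{C}_{Ch;g}=\max\{(diam(M))^{|\alpha_F-\alpha_G|},1\}\,\mathcal{C}_{Ch}(Q,\min\{\alpha_F,\alpha_G\})$ --- the universal factor $2$ coming from Lemma~\ref{lem:holder_genholder} being absorbed into $\mathcal{C}_{Ch}$ (or tracked explicitly, which does not change the form of the statement).

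I do not expect any genuine obstacle: the argument is essentially bookkeeping assembled from three already-established facts (Theorem~\ref{thm:Ch} and Lemmas~\ref{lem:holder_genholder} and \ref{lem:genholder-genholder}). The only points that require a sentence of care are the mean subtraction together with the invariance of $var_\alpha$ under adding a constant --- so that nothing is lost when reducing to the centred $\hat F$ --- and the small combinatorial remark that exactly one of the two $\max\{(diam(M))^{\cdot},1\}$ factors degenerates to $1$, which is what makes the stated constant come out in the clean product form.
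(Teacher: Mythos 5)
Your proposal is correct and follows exactly the route the paper intends: the paper simply declares the corollary an immediate consequence of Theorem~\ref{thm:Ch} combined with Lemmas~\ref{lem:holder_genholder} and~\ref{lem:genholder-genholder}, which is precisely the exponent-reduction and mean-subtraction bookkeeping you carry out. Your remark about the harmless factor $2$ from Lemma~\ref{lem:holder_genholder} (not visible in the stated form of $\cC_{Ch;g}$) is a fair observation and does not affect the validity of the statement or the argument.
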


Now the stretched exponential version of our main Theorem~\ref{thm:main} follows:

\begin{theorem}\label{thm:stretched} Suppose the billiard table $Q$ satisfies assumptions \ref{asm:dispersing}, \ref{asm:finite-horizon} and \ref{asm:no-corner}. Let $0<\Theta_\phi<1$, $0<\alpha_F \le 1$ and $\eps_0>0$. Then there exist $\cC<\infty$ and $a>0$ with the following properties:

Let $(W^u,\phi)$ be a standard pair with a dynamically $\Theta_\phi$-H\"older $\phi$. Assume that $W^u$ is a \emph{good} $u$-curve. Let $F:M\to \IR$ be generalized $\alpha_F$-H\"older continuous. Then, for every $t\ge 0$,
 \[\left| \int_{W^u} \left(F\circ \Phi^t\right) \phi \diff m_{W^u} - \int_M F\diff\mu \right| \le \cC \cdot ||\phi||_{\Theta_\phi;dH}\cdot var_{\alpha_F}F\cdot  e^{-a \sqrt{t}}.\]
Here
 \begin{itemize}
  \item $a=a(Q,\cR_u,\Theta_\phi,\alpha_F)<\infty$ and $\cC=\cC(Q,\cR_u,\Theta_\phi,\alpha_F)<\infty$ depend on the billiard table $Q$, the regularity of good $u$-curves quantified in $\cR_u$, and the regularity classes of $\phi$ and $F$ given by $\Theta_\phi$ and $\alpha_F$. They do not depend on $W^u$, $\phi$ and $F$.
 \end{itemize}
Moreover, $\cC$ depends on $Q$ only through $\cR_Q$ from (\ref{eq:RQ}) and $\cC_{Ch}(Q,\alpha)$ from Theorem~\ref{thm:Ch} with some $\alpha=\alpha(\cR_Q,\cR_u,\Theta_\phi,\alpha_F)>0$.
Similarly, $a$ depends on $(Q,\cR_u,\Theta_\phi)$ only through $a'(Q,\alpha)$ from Theorem~\ref{thm:Ch} with the same $\alpha=\alpha(\cR_Q,\cR_u,\Theta_\phi,\alpha_F)>0$. That is,
\[\cC=\cC(\cR_Q,\cR_u,\alpha_F,\cC_{Ch}(Q,\alpha(\cR_Q,\cR_u,\Theta_\phi,\alpha_F)))\]
and
\[a=a(\alpha_F, a'(Q,\alpha(\cR_Q,\cR_u,\Theta_\phi,\alpha_F))).\]
\end{theorem}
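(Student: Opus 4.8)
The plan is to run the proof of Theorem~\ref{thm:main} essentially unchanged, substituting the stretched exponential bound of Corollary~\ref{cor:Ch-extended} for the exponential bound (Theorem~\ref{thm:BDL_extended}) at the two places where the latter is used. The key observation is that everything preceding Section~\ref{sec:end-of-proof} --- the two foliations of the tube $U$ around $W^u$ (Sections~\ref{sec:ufoliation}--\ref{sec:product}), the construction of $G_0$ and of the genuinely H\"older density $G=G_t$ (Sections~\ref{sec:construction}--\ref{sec:smoothing}), and all the associated regularity estimates (Propositions~\ref{prop:G_0-dynHolder}--\ref{prop:G_Holder} and Lemma~\ref{lem:G-phi_error_small}) --- is completely insensitive to which correlation decay statement is available downstream, so it carries over verbatim. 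In particular $G$ is $\alpha_G$-H\"older with $||G||_{\alpha_G;H}\le\frac{\Cr{const:G-regularity}}{\eps^3}||\phi||_{\Theta_\phi;dH}$, where $\alpha_G=\alpha_G(\cR_Q,\cR_u,\Theta_\phi)$; the function $osc_{C_\pi\eps}F$ is generalized $\alpha_F$-H\"older with the bound of Corollary~\ref{cor:osc_F-genholder-norm}; and (\ref{eq:main-thm-cov-estimate}) holds as stated (note the geometric setup is exactly that of Theorem~\ref{thm:main}, so one uses the same hypothesis on $W^u$, under which Lemma~\ref{lem:W-u-z_is_good_u-curve} and the holonomy theorems of Section~\ref{sec:holonomy} apply without change).

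First I would reproduce (\ref{eq:main-thm-cov-estimate}). Then, in place of the two invocations of Theorem~\ref{thm:BDL_extended} --- one bounding $\left|\int_M(F\circ\Phi^t)G\diff\mu\right|$ and one bounding $\int_M\left[(osc_{C_\pi\eps}F)\circ\Phi^t\right]G\diff\mu$ --- I would apply Corollary~\ref{cor:Ch-extended} with the same pair of exponents $\alpha_F$ (for the generalized H\"older factor) and $\alpha_G$ (for the H\"older factor). Since that corollary has exactly the same shape as Theorem~\ref{thm:BDL_extended} but with $e^{-a''t}$ replaced by $e^{-a''\sqrt{t}}$, the estimates (\ref{eq:int-F-estimate}) and (\ref{eq:int-osc-F-estimate2}) go through with that single modification, now with $a''=a''(Q,\alpha_F,\alpha_G)$ and $\cC_{Ch;g}=\cC_{Ch;g}(Q,\alpha_F,\alpha_G)$ from Corollary~\ref{cor:Ch-extended}. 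Combining them as in the proof of Theorem~\ref{thm:main} gives the analogue of (\ref{eq:main-thm-cov-estimate2}),
\[
\left|\int_M F\circ\Phi^t\diff\tilde\phi\right|\le
\cC_1\cdot(1+\cC_{Ch;g})\,var_{\alpha_F}(F)\,||\phi||_{\Theta_\phi;dH}\left(\eps^{\alpha_F}+\frac{e^{-a''\sqrt{t}}}{\eps^{3+\alpha_F}}\right)
\]
for every $t\ge 0$ and every $0<\eps\le\eps_0$, where $\cC_1=\cC_1(\cR_Q,\cR_u,\alpha_F)$ collects the constants from the density construction.

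The only genuinely new step is the optimization in $\eps$, and it is routine. I would set $\eps:=\min\{e^{-\frac{a''}{3+2\alpha_F}\sqrt{t}},\eps_0\}$, so that the two terms in the bracket balance whenever the constraint $\eps\le\eps_0$ permits; the short calculation from the proof of Theorem~\ref{thm:main} applies verbatim with $t$ replaced by $\sqrt{t}$ and yields a bracket at most $\left[2+(1/\eps_0)^{3+\alpha_F}\right]e^{-a\sqrt{t}}$ with $a:=\frac{\alpha_F}{3+2\alpha_F}a''$. This gives the asserted estimate with $\cC=\cC_1\cdot(1+\cC_{Ch;g})\left[2+(1/\eps_0)^{3+\alpha_F}\right]$. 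For the ``moreover'' part one traces the $Q$-dependence exactly as in Theorem~\ref{thm:main}: every constant produced by the density construction depends on $Q$ only through $\cR_Q$, while $a''$ and $\cC_{Ch;g}$ depend on $Q$ through $a'(Q,\alpha)$ and $\cC_{Ch}(Q,\alpha)$ from Theorem~\ref{thm:Ch} with $\alpha=\min\{\alpha_F,\alpha_G\}=\alpha(\cR_Q,\cR_u,\Theta_\phi,\alpha_F)$, by Corollary~\ref{cor:Ch-extended}.

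I do not expect a real obstacle. The compactness/spectral-gap issue that blocks control of the $Q$-dependence in the exponential case is simply absent here, precisely because Chernov's Theorem~\ref{thm:Ch} is proved by direct, quantitative estimates --- which is the whole reason for settling for the stretched rate; hence the sharper ``moreover'' conclusion comes for free. The mildest point needing a sentence is matching hypotheses: throughout the construction $\eps\le\eps_0$, so the tube $U$ and the capped region $U^+$ stay well inside $M$, and the statement is used in exactly the same regime as Theorem~\ref{thm:main}.
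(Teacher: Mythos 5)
Your proposal is correct and follows essentially the same route as the paper: the paper's own proof simply reruns the argument of Theorem~\ref{thm:main} with every $t$ replaced by $\sqrt{t}$, every $\cC_{BDL;g}$ by $\cC_{Ch;g}$, and every use of Theorem~\ref{thm:BDL_extended} by Corollary~\ref{cor:Ch-extended}, then concludes the ``moreover'' part with $\alpha=\min\{\alpha_F,\alpha_G\}$, exactly as you do. Your closing aside that the $Q$-dependence of Chernov's constants ``comes for free'' slightly overstates what is known (the paper explicitly leaves the extraction of $\cR_Q$-only dependence from \cite{Ch07} as an open task), but this does not affect the proof of the theorem as stated, since the claim is only dependence through $\cC_{Ch}(Q,\alpha)$ and $a'(Q,\alpha)$.
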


Note that there are only two differences between this theorem and the main Theorem~\ref{thm:main}. First, there is $\sqrt{t}$ instead of $t$ in the exponent. Second, the description of the $Q$-dependence of constants in the ``moreover'' part now refers to Theorem~\ref{thm:Ch} instead of Theorem~\ref{thm:BDL}.

\begin{proof}
 The proof of Theorem~\ref{thm:main} from Section~\ref{sec:end-of-proof} {applies
 up to the proof of the main statement in (\ref{eq:cC-def}), with the following minor modifications:}
 \begin{itemize}
  \item every $t$ should be replaced by $\sqrt{t}$,
  \item every $\cC_{BDL;g}$ should be replaced by $\cC_{Ch;g}$,
  \item every reference to Theorem~\ref{thm:BDL_extended} should be replaced by a reference to Corollary~\ref{cor:Ch-extended}.
 \end{itemize}
To see the ``moreover'' part, we use the ``moreover'' part of Corollary~\ref{cor:Ch-extended}. With this, (\ref{eq:a-def}) gives
\[a=\frac{\alpha_F}{3+2\alpha_F} a'(Q,\min\{\alpha_F,\alpha_G(\cR_Q,\cR_u,\Theta_\phi)\})\]
and (\ref{eq:cC-def}) gives
\[\cC=\cC(\cR_Q,\cR_u,\alpha_F,\cC_{Ch}(Q,\min\{\alpha_F,\alpha_G(\cR_Q,\cR_u,\Theta_\phi)\})).\]
Now the ``moreover'' part is shown with $\alpha=\min\{\alpha_F,\alpha_G\}$.
\end{proof}

Just like in the case of our main Theorem~\ref{thm:main}, if we were able to better understand the functions $a'$ and $\cC_{Ch}$ in Theorem~\ref{thm:Ch}, then we would also know the $Q$-dependence of $a$ and $\cC$ better. This problem is currently open: as a first step, a detailed analysis of the proof in \cite{Ch07} is needed to see what $Q$-dependence can be read out of it.

\subsection{Corner points}\label{sec:ext-corner}

A weakness of the main Theorem~\ref{thm:main} is that it requires Assumption~\ref{asm:no-corner}, so it does not cover the case of corner points. Again, the main reason is that our main reference Theorem~\ref{thm:BDL} does not cover corner points either. However, several regularity properties that we use, are also known only in the case of no corner points. We do not attempt to generalize these here. Instead, we discuss the points where Assumption~\ref{asm:no-corner} is used, and state two conditional theorems, describing what needs to be checked to cover the case of corner points.

If there are corner points, there is no lower bound $\tau_{min}$ for the free flight. Instead of Assumption~\ref{asm:no-corner}, we require Assumption~\ref{asm:no-cusp} below to rule out cusps.~\footnote{According to the terminology of \cite{ChM06}, billiards satisfying assumptions \ref{asm:dispersing}, \ref{asm:finite-horizon} and \ref{asm:no-cusp} belong to category C.}

\begin{assumption}[No cusps]
\label{asm:no-cusp}
 For every corner point $r\in \Gamma_i\cap\Gamma_j$, the angle of $\Gamma_i$ and $\Gamma_j$ at $x$ is non-zero.
\end{assumption}

Then there is some lower bound $\xi_{min}$ for the angle of the two smooth components of $\partial Q$ that meet at a corner. Also, let $d_{min}$ be the minimal free flight between two scatterers that do \emph{not} form a corner. (If $d_{min}$ is very small, there is a ``bottleneck'' situation between two scatterers that almost touch. Such a situation is close to a cusp.)

So the (hopefully sufficient) set of regularity parameters for the table $Q$ is now
\begin{equation}\label{eq:RQ-corner}
\cR_Q^{corner}:=\{\tau_{max},d_{min},\xi_{min},\kappa_{min},\kappa_{max},\kappa'_{max},K_{max},A_{min},A_{max},d_Q\}
\end{equation}
(compare (\ref{eq:RQ})).

The inputs of our argument -- apart from the main reference \cite{BDL16} -- are the regularity properties of $u$-curves stated in sections \ref{sec:hyp-prop} and \ref{sec:holonomy}. Of these, the elementary ones in Section~\ref{sec:hyp-prop} are known under Assumption~\ref{asm:no-cusp} (no cusps) instead of Assumption~\ref{asm:no-corner}, so there is nothing to do, see also Remark~\ref{rem:aligncorner}. On the other hand, the advanced regularity properties of Section~\ref{sec:holonomy} are proven in Section~\ref{sec:Ahom} with reference to Theorem 5.67, Proposition 5.48 and Theorem 5.42 from \cite{ChM06}. These are definitely known only for billiards with no corner points.

To get the three statements of Section~\ref{sec:holonomy} from these statements of \cite{ChM06}, our arguments (in Section~\ref{sec:Ahom}) do not use Assumption~\ref{asm:no-corner}. That is, to prove them for {billiards with corner points}, it would be enough to prove the corner point generalizations of the three statements of \cite{ChM06}. This justifies the following conditional extension of our main theorem:

\begin{theorem}\label{thm:corner} Suppose the billiard table $Q$ satisfies assumptions \ref{asm:dispersing}, \ref{asm:finite-horizon} and \ref{asm:no-cusp}. Assume that
\begin{enumerate}
 \item the generalization of Theorem~\ref{thm:long_cs_manifolds}, or alternatively, Theorem 5.67 from \cite{ChM06} is proven for this class of billiards, with $\cR_Q$ replaced by $\cR_Q^{corner}$,
 \item the generalization of Theorem~\ref{thm:holonomy_dynholder}, or alternatively, Proposition 5.48 from \cite{ChM06} is proven for this class of billiards, with $\cR_Q$ replaced by $\cR_Q^{corner}$,
 \item the generalization of Theorem~\ref{thm:holonomy_cs-regularity}, or alternatively, Theorem 5.42 from \cite{ChM06} is proven for this class of billiards, with $\cR_Q$ replaced by $\cR_Q^{corner}$,
 \item the generalization of Theorem~\ref{thm:BDL} (which is Corollary 1.3 in \cite{BDL16}) is proven for this class of billiards.
\end{enumerate}
Then there is exponential equidistribution for the evolution of the standard pair under the flow:

Let $0<\Theta_\phi<1$, $0<\alpha_F \le 1$ and $\eps_0>0$. Then there exist $\cC<\infty$ and $a>0$ with the following properties:

Let $(W^u,\phi)$ be a standard pair with a dynamically $\Theta_\phi$-H\"older $\phi$. Assume that the $2\eps_0$-neighbourhood of $W$ is disjoint from the boundary of $M$. Let $F:M\to \IR$ be generalized $\alpha_F$-H\"older continuous. Then, for every $t\ge 0$,
 \[\left| \int_{W^u} \left(F\circ \Phi^t\right) \phi \diff m_{W^u} - \int_M F\diff\mu \right| \le \cC \cdot ||\phi||_{\Theta_\phi;dH}\cdot var_{\alpha_F}F\cdot  e^{-a t}.\]
Here
 \begin{itemize}
  \item $a=a(Q,\cR_u,\Theta_\phi,\alpha_F)<\infty$ and $\cC=\cC(Q,\cR_u,\Theta_\phi,\alpha_F)<\infty$ depend on the billiard table $Q$, the regularity of good $u$-curves quantified in $\cR_u$, and the regularity classes of $\phi$ and $F$ given by $\Theta_\phi$ and $\alpha_F$. They do not depend on $W^u$, $\phi$ and $F$.
 \end{itemize}
Moreover, $\cC$ depends on $Q$ only through $\cR_Q^{corner}$ from (\ref{eq:RQ-corner}) and $\cC_{BDL}(Q,\alpha)$ from the generalization of Theorem~\ref{thm:BDL} with some $\alpha=\alpha(\cR_Q^{corner},\cR_u,\Theta_\phi)>0$.
Similarly, $a$ depends on $(Q,\cR_u,\Theta_\phi)$ only through $a'(Q,\alpha)$ from the generalization of Theorem~\ref{thm:BDL} with the same $\alpha=\alpha(\cR_Q^{corner},\cR_u,\Theta_\phi)>0$. That is,
\[\cC=\cC(\cR_Q^{corner},\cR_u,\alpha_F,\cC_{BDL}(Q,\alpha(\cR_Q^{corner},\cR_u,\Theta_\phi)))\]
and
\[a=a(\alpha_F, a'(Q,\alpha(\cR_Q^{corner},\cR_u,\Theta_\phi))).\]
\end{theorem}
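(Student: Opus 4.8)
The plan is to deduce Theorem~\ref{thm:corner} by rerunning, essentially verbatim, the proof of Theorem~\ref{thm:main} from Section~\ref{sec:proof}, while verifying that the \emph{only} places where Assumption~\ref{asm:no-corner} is used are through the four inputs now promoted to hypotheses, and replacing $\cR_Q$ by $\cR_Q^{corner}$ from (\ref{eq:RQ-corner}) throughout. First I would survey the ingredients. The elementary hyperbolic properties of Section~\ref{sec:hyp-prop} --- uniform hyperbolicity (Theorem~\ref{thm:hyperbolicity}), transversality (Theorem~\ref{thm:transversality}) and alignment for tangential singularities (Theorem~\ref{thm:alignment}) --- hold under the weaker Assumption~\ref{asm:no-cusp}, as explained in Remark~\ref{rem:aligncorner}, with constants depending on $\cR_Q^{corner}$ and $\cR_u$ only. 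The advanced holonomy regularity statements of Section~\ref{sec:holonomy} (Theorems~\ref{thm:long_cs_manifolds}, \ref{thm:holonomy_dynholder}, \ref{thm:holonomy_cs-regularity}) were deduced in Section~\ref{sec:Ahom} from Theorem~5.67, Proposition~5.48 and Theorem~5.42 of \cite{ChM06}, and --- as noted in Section~\ref{sec:ext-corner} --- those reductions never invoked Assumption~\ref{asm:no-corner}; hence hypotheses (1)--(3) supply precisely what is needed. Finally, the correlation bound used at the end, Theorem~\ref{thm:BDL} and its generalized-H\"older extension Theorem~\ref{thm:BDL_extended}, comes from hypothesis (4): the passage from Theorem~\ref{thm:BDL} to Theorem~\ref{thm:BDL_extended} in Section~\ref{sec:genholder-EDC} is a soft $L^1(\mu)$-mollification argument that does not see the billiard geometry.

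Next I would observe that the rest of Section~\ref{sec:proof} is insensitive to corner points. The foliation constructions of Sections~\ref{sec:ufoliation}--\ref{sec:product} --- the tube $U=\bigcup_{z\in D}W^u_z$, the u-foliation with its factor and conditional measures, the approximate product structure $A\ast B$, and the bounds on the density $\beta$ --- are pure Euclidean geometry of tubular neighbourhoods of $u$-curves together with the holonomy estimates already secured, and involve only $\cR_u$ and the constants $c_{tr},C_h,\dots$. Likewise Theorem~\ref{thm:osc_f-genHolder} and Corollary~\ref{cor:osc_F-genholder-norm}, which control $var_{\alpha_F}(osc_{C_\pi\eps}F)$, are statements about bounded functions on subsets of $\IR^d$ and carry no dynamics. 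The construction of $G_0$ (Lemma~\ref{lem:G0-formula}), its dynamical- and genuine-H\"older regularity in the u- and central-stable directions, the extension to $G$ on $M$ in Proposition~\ref{prop:G_Holder}, the total-error estimate Lemma~\ref{lem:G-phi_error_small}, and the final optimization over $\eps$ with the choices (\ref{eq:a-def})--(\ref{eq:cC-def}) all go through unchanged once the inputs above are in place; the only bookkeeping change is that every occurrence of $\cR_Q$ becomes $\cR_Q^{corner}$.

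The one step that genuinely requires care is the construction of the good subset $H_1$ in Proposition~\ref{prop:G_0-Holder-on-H_1}, since this is where the billiard combinatorics enters and where the absence of a lower bound $\tau_{min}$ on the free flight could bite. That argument needs: (a) that near a tangential (primary) singularity the secondary singularities accumulate quadratically, which rests on alignment for tangential singularities (available under Assumption~\ref{asm:no-cusp} by Remark~\ref{rem:aligncorner}) together with the continuation property of singularity manifolds, Lemma~\ref{lem:sing-continuation}, which should persist for the flow singularity manifolds in the presence of corners; (b) that near a corner-collision singularity one still has a uniform lower bound on its angle with any $u$-curve and no accumulating secondary singularities (the latter by no-cusps, which keeps the corner region tame), so a single cut of radius $c\theta^n$ suffices there; and (c) that the number of primary singularities of order $n$ grows at most exponentially, say $\le C^n$ with $C=C(\cR_Q^{corner})$ (finite horizon and no cusps still cap the branching of $T^{-n}$). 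Granting these, one repeats the cutting argument with the choice $\theta\le\frac{1}{4C^{2}}$ in place of $\theta\le\frac{1}{4K_{max}^{2}}$, and again $m_{W^u}(H\setminus H_1)=O(\eps)$ with $\mu(U\setminus U_1)=O(\eps^3)$. I expect facts (b) and (c) --- i.e. furnishing a substitute for alignment at corner singularities and the exponential complexity bound --- to be the main, and essentially the only, obstacle; everything else transcribes Section~\ref{sec:end-of-proof}.

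With $H_1$ in hand the conclusion is identical to Section~\ref{sec:end-of-proof}: estimate $\left|\int_M F\circ\Phi^t\diff\tilde\phi\right|$ by Lemma~\ref{lem:G-phi_error_small}, bound the three resulting terms using Theorem~\ref{thm:BDL_extended} (now via hypothesis (4)), Corollary~\ref{cor:osc_F-genholder-norm} and Proposition~\ref{prop:G_Holder}, and optimize $\eps=\eps(t)$ as in (\ref{eq:a-def})--(\ref{eq:cC-def}) to obtain exponential decay at rate $a$. Tracking dependences exactly as in that section --- now with $\cR_Q$ replaced by $\cR_Q^{corner}$ and with the constants $a'$, $\cC_{BDL}$ of the generalization of Theorem~\ref{thm:BDL} entering only through $\alpha=\alpha_G=\alpha_G(\cR_Q^{corner},\cR_u,\Theta_\phi)$ --- yields the ``moreover'' part in the stated form.
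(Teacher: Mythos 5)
Your proposal follows essentially the same route as the paper, whose justification of Theorem~\ref{thm:corner} is precisely the observation that the proof of Theorem~\ref{thm:main} carries over verbatim once hypotheses (1)--(4) supply the corner-point versions of Theorems~\ref{thm:long_cs_manifolds}, \ref{thm:holonomy_dynholder}, \ref{thm:holonomy_cs-regularity} and \ref{thm:BDL}, with $\cR_Q$ replaced by $\cR_Q^{corner}$ and the elementary facts of Section~\ref{sec:hyp-prop} available under Assumption~\ref{asm:no-cusp} (Remark~\ref{rem:aligncorner}). Your extra scrutiny of the cutting argument in Proposition~\ref{prop:G_0-Holder-on-H_1} (alignment needed only at tangential singularities, no accumulating secondary singularities at corner singularities, an exponential complexity bound in terms of $\cR_Q^{corner}$) is a sensible refinement of the same argument rather than a different route, and is consistent with how the paper disposes of the alignment issue in Remark~\ref{rem:aligncorner}.
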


Similarly to the situation in Section~\ref{sec:ext_Q-dependence}, we don't know if the generalization of Theorem~\ref{thm:BDL} to the case of corner points can reasonably be done. An alternative can be to treat the corner points with the method of \cite{Ch07}, and get stretched exponential equidistribution only. This would be good enough for most applications. The analogue of Theorem \ref{thm:corner} for this case is straightforward.

\subsection{Standard pairs close to scatterers}\label{sec:no-eps0}

Theorem~\ref{thm:main}, as we stated and proved it, requires that the $u$-curve $W^u$ is at least $\eps_0$ far away from collision points. This is clearly a weakness from the application point of view. However, it is not a serious restriction, because if $\eps_0$ is small enough, then any short $u$-curve $W^u$, which is possibly too close to a scatterer (or even in the process of being reflected from it), will soon evolve (under $\Phi^{t_0}$ with some small $t_0$) either into a curve which is more than $\eps_0$ away, or possibly a bounded number of such curves. (At most two if there are no corner points.) At the same time, $\phi$ evolves into some density $\phi'$ on $\Phi^{t_0} W^u$. Now we can apply Theorem~\ref{thm:main} to the pair $(\Phi^{t_0} W^u,\phi')$, provided that it is a standard pair, and the regularity can be quantitatively checked.

This requires a study of the time evolution of standard pairs (for finite times), especially the evolution of their regularity parameters. This was not needed for the proof of our theorem, and is thus not done here. The notion of standard pairs (including the notion of $u$-curves) was designed with this need in mind, and we expect the notion to be time-invariant in some good sense. We leave this to a future paper that applies this result.

\section{Acknowledgement} This research was supported by Hungarian National Foundation for Scientific
Research grant No. K 104745 and OMAA-92\"ou6 project.

\renewcommand\thesection{\Alph{section}}
\setcounter{section}{0}
\section{Appendix}

\subsection{Holonomy}\label{sec:Ahom}

Here we provide the proofs for the three theorems stated in Section~\ref{sec:holonomy}, namely theorems \ref{thm:long_cs_manifolds}, \ref{thm:holonomy_dynholder} and \ref{thm:holonomy_cs-regularity}.

\begin{notation}
 Since the arguments in this section are independent from the bulk of the paper, we will use slightly different notation for better readability. In particular, we will use $\phi$ for the angle of the velocity with the normal vector of the scatterer at collision points, as usual in the billiard literature.
\end{notation}

These theorems formulate regularity properties of $u$-curves (of the flow phase space). For simplicity we reduce their proofs to regularity properties of discrete time $u$-curves. The direct reduction to those of $u$-curves of the billiard ball map raises a technical problem since  the transition between the discrete time and the flow $u$-curves has bad regularity near tangent collisions: the flight time that connects them has unbounded derivative. There is an additional fact we want to emphasize: our reference \cite{ChM06} states the theorems in a form which is a little weaker than what they actually prove, and what we need. In particular, they use the term ``constant'' for numbers which depend on the billiard table $Q$ only, but they do not discuss the form of this dependence. To see that their constants actually depend on $Q$ through $\cR_Q$ from (\ref{eq:RQ}), we need to look into the proofs.

Taking the aforementioned circumstances into consideration our approach will still be to reduce our statements to the discrete time analogues by introducing so-called \emph{transparent walls} and relying on regularity properties of  $u$-curves living on them. In this way we can still use the results of \cite{ChM06} by also avoiding unbounded derivatives.
This means that we extend the discrete time phase space $\cM$ with an extra component $\cM_{tr}=\partial Q_{tr}\times [-\frac{\pi}{2},\frac{\pi}{2}]$.
The motion of the particle remains unchanged, but when $\cM_{tr}\subset M$ is reached, that is treated as a collision
(note that $\cM_{tr}$ is in the phase space, i.e. we only keep track of passing through the transparent wall
in one direction). Since the statements we want to prove are local, it is enough to set up the transparent wall near the $u$-curves we study. See Figure~\ref{fig:transparent_wall}.
\begin{figure}[hbt]
 \psfrag{T_tr^2-q}{$T_{tr}^2 q$}
 \psfrag{T_tr-q}{$T_{tr}q$}
 \psfrag{pQ_tr}{$\partial Q_{tr}$}
 \psfrag{gamma}{$\gamma$}
 \psfrag{W}{$W$}
 \psfrag{q}{$q$}
 \centering
\includegraphics[width=5cm]{./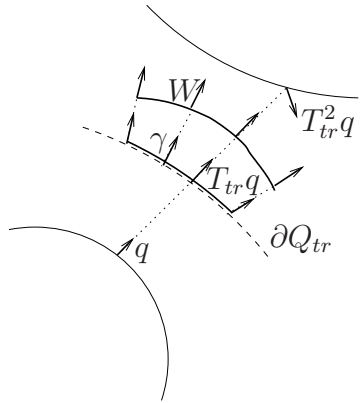}
 \caption{A transparent wall with the trace of a $u$-curve}
 \label{fig:transparent_wall}
\end{figure}

The new map $T_{tr}:\cM\cup \cM_{tr}\to \cM\cup \cM_{tr}$ is very similar to the billiard dynamics, and the theorems we use from \cite{ChM06} remain valid. The new domain $\partial Q \cup \partial Q_{tr}$ will depend on the $u$-curves under consideration, but we take care that the constants in the estimates remain uniform.

In return, we can always choose the new wall so that it has a big enough angle with the trajectories considered. This makes it easy to show the necessary regularity properties for the transition map between the flow curves and their traces on the transparent wall.

Before the specific calculations, we ask the reader to recall the notations $\cR_Q$ and $\cR_u$ (formulas (\ref{eq:RQ}) and (\ref{eq:Ru})).

\begin{convention}
\label{conv:GeomConstantsHere}
As discussed in Section~\ref{sec:const-convention}, $C$ denotes constants that depend only on $Q$ and $\cR_u$, also called geometric constants. In this section, dependence on $Q$ is always through $\cR_Q$ only, so $C=C(\cR_Q,\cR_U)$, even if this is not indicated. On the other hand, the exact value of $C$ may change from line to line (even within a line).
\end{convention}

Throughout, we consider \textit{good $u$-curves} $W$ for the flow (cf.~Definitions~\ref{def:u-curve} and~\ref{def:gooducurve}). In particular
\[
W=\{p(s)=(x(s),y(s),v_x(s),v_y(s))|s\in I \}.
\]
This generates
$W_0=\{(x(s),y(s))\}$, a curve in the configuration domain. In fact, we have
\[(v_x(s),v_y(s))=(\cos\omega(s),\sin\omega(s)),\]
as the {the speed is one}. Note that the curvature of $W_0$ (as a curve in the configuration space) is bounded from below by some constant $B_{\rm{min}}=B_{\rm{min}}(\cR_Q,\cR_u)$.
To describe the tangent vectors in ${\mathcal T}_{p}W$ it is convenient to use the Jacobi coordinates $(d\xi,d\eta,d\omega)$, where $d\xi=-\sin\omega dx+\cos\omega dy$ and
$d\eta=\cos\omega dx+\sin\omega dy$. Note that the arc length along $W$ can be computed by integrating $\sqrt{d\xi^2+d\eta^2+d\omega^2}$.

Now we construct the transparent wall associated to the good $u$-curve $W$. Take one particular point $p(s_0)=(x(s_0),y(s_0))$ in the interior of $W_0$, and consider the unit (velocity) vector $v(s_0)= (v_x(s_0),v_y(s_0))$. Consider, furthermore the point $p'=p(s_0)-L v(s_0)$ in the configuration space. Let $\partial Q_{\rm{tr}}$ denote the circular arc of length $10 L$, and curvature $B_{\rm{min}}/2$ centred at $p'$ {with normal vector $v(s_0)$ at $p'$}.  Then $\partial Q_{\rm{tr}}$ is our transparent wall, which will be regarded as a scatterer. Accordingly, the billiard map phase space $\cM$ can be extended with the associated component $\cM_{\rm{tr}}$, which is parametrized by the usual $(r,\phi)$ (configuration and velocity) coordinates. $\partial Q_{\rm{tr}}$ has the following properties:

\begin{itemize}
\item The trajectories emerging from the points of $W$ necessarily intersect $\partial Q_{\rm{tr}}$ for some negative time.
\item Let $\gamma=\{q(s)=(r(s),\phi(s))\}$ denote the resulting trace of $W$ on $\cM_{\rm{tr}}$. Then $\gamma$ is a $u$-curve for this Poincar\'e section.
\item From now on, with a slight abuse of notation, let $s$ denote arc length along $\gamma$.  We have $W=\{p(s)=(q(s),\tau(s))\}$ where, by bounded curvature, $\tau(s)$ is $C^1$ and $\tau'(s)$ is Lipschitz with some constant $C$ independent of $W$. The canonical projection $\pi$ provides a one-to-one correspondence between $\gamma$ and $W$.
\item The above properties hold not only for $W$ but for any other good $u$-curve $\bar{W}$ sufficiently close to $W$: $\bar{W}$ will leave a trace $\bar{\gamma}=\pi(\bar{W})$ on $\partial Q_{\rm{tr}}$ when flown backwards in time; $\bar{\gamma}$ is a $u$-curve for the Poincar\'e section $\cM_{\rm{tr}}$ and $\bar{W}=\{\bar{p}(s)=(\bar{q}(s),\bar{\tau}(s))\}$, with $\bar{q}(s)$ tracing $\bar{\gamma}$, and $\bar{\tau}$ $C^1$ with a uniformly Lipschitz continuous derivative.
\end{itemize}

\begin{convention}
\label{conv:GeomConstantsHere2}
In what follows we will work with the billiard table that has the additional {transparent wall} $\partial Q_{\rm{tr}}$. The regularity parameters (cf. (\ref{eq:RQ})) of this
extended {billiard} configuration are denoted by $\cR_{Q,\partial Q_{\rm{tr}}}$. These depend, a priori, on $\partial Q_{\rm{tr}}$. Nonetheless, by construction, and as $\partial Q_{\rm{tr}}$ is associated to a good $u$-curve, we have $\cR_{Q,\partial Q_{\rm{tr}}}=\cR_Q\cup\cR_u$. Note that $\cR_u$ includes the regularity parameter $\eps_0$, which is the minimum distance of the good $u$-curves from the scatterers.
\end{convention}

The proofs of Theorems~\ref{thm:long_cs_manifolds},~\ref{thm:holonomy_dynholder} and Theorem~\ref{thm:holonomy_cs-regularity} are essentially the reduction to analogous statements for the billiard map, discussed in \cite{ChM06} as Theorem 5.67, Proposition 5.48 and Theorem 5.42, respectively.  In fact, in the formulation of these statements in \cite{ChM06}, the constants $C$ depend on the billiard domain $Q$.
In particular, a priori, in our setting this implies a dependence on the $u$-curve $W_1^u$ via the choice of the transparent wall $\partial Q_{\rm{tr}}$. However, {a careful study of} the relevant arguments in \cite{ChM06} {reveals} that the constants $C$ depend only on the regularity parameters of the billiard domain (in the terminology of Section~\ref{sec:regpar}). To see this, we also refer to Appendix A.2. in \cite{ChD09A} -- in particular Extension 1 -- where the analogous property of the constants is explicitly stated.\footnote{The statement in \cite{ChD09A} concerns the rate of correlation decay, yet the proof of exponential decay of correlations heavily relies on the regularity properties of the holonomy map.}
Thus, in our case, we have $C=C(\cR_{Q\cup\partial Q_{\rm{tr}}})$ in the statements. By Convention~\ref{conv:GeomConstantsHere2}, this means $C=C(\cR_Q,\cR_u)$, so not only are the estimates uniform in $W^u$, but the constants only depend on $Q$ through $\cR_Q$, as desired by the theorems we are proving.

To reduce our statements to their discrete time analogues (with the help of the transparent wall $\partial Q_{\rm{tr}}$), we first have to discuss the regularity of the projection $\pi$ of $W$ to $\gamma\subset\cM_{\rm{tr}}$.

\begin{lemma}
\label{lem:JacobianMapFlow}
The map $\pi$ is absolutely continuous and its Jacobian $J_{\gamma\to W}(s)$ satisfies
\[C^{-1}<J_{\gamma\to W}(s)<C\quad \text{and} \quad |J_{\gamma\to W}(s)-J_{\gamma\to W}(s')|\le C|s-s'|\]
for some uniform $C$.
\end{lemma}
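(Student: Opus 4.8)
The plan is to parametrize everything explicitly and reduce the statement to the regularity of the flight-time function $\tau(s)$ recorded in the construction of the transparent wall. Recall that $W=\{p(s)=(q(s),\tau(s))\,|\,s\in I\}$ where $s$ is arc length along $\gamma\subset\cM_{\rm{tr}}$, $q(s)$ traces $\gamma$ with unit speed, and $\tau$ is $C^1$ with $\tau'$ Lipschitz (constant $C$ uniform in $W$). The point $p(s)=\Phi^{\tau(s)}(q(s))$ depends smoothly on $s$, so $\pi$ is a $C^1$ diffeomorphism between $\gamma$ and $W$ and its Jacobian is just $|p'(s)|$, the length-distortion factor. Thus first I would write $J_{\gamma\to W}(s)=|p'(s)|$, where $p'(s)=\frac{d}{ds}\Phi^{\tau(s)}(q(s))$. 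Differentiating, $p'(s)=\partial_s(\text{flow start point}) + \tau'(s)\cdot(\text{flow vector field at }p(s))$; in the Jacobi coordinates $(d\xi,d\eta,d\omega)$ of the flow phase space this is a concrete linear combination. Since $q(s)$ has unit speed along $\gamma$ and the free flight is transversal to $\partial Q_{\rm{tr}}$ with a uniformly bounded angle (by the geometric construction: $\partial Q_{\rm{tr}}$ has length $10L$ and small curvature $B_{\rm{min}}/2$, and $W$ is at distance $\ge\eps_0$ from the scatterers so the flight segment from $\gamma$ to $W$ meets $\partial Q_{\rm{tr}}$ at an angle bounded away from $0$ and $\pi/2$), the quantity $|p'(s)|$ is comparable to $\sqrt{1+\tau'(s)^2}$ up to uniform constants, hence $C^{-1}<J_{\gamma\to W}(s)<C$.

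For the Lipschitz bound on $s\mapsto J_{\gamma\to W}(s)$, I would differentiate the formula for $p'(s)$ once more, or rather estimate the modulus of continuity directly. The ingredients that vary with $s$ are: (i) $\tau'(s)$, which is Lipschitz by construction; (ii) the derivative $Dq(s)$ of the trace along $\gamma$, which is Lipschitz because $\gamma$ is a $u$-curve with uniformly bounded curvature; (iii) the differential of the flow map $D\Phi^{\tau(s)}$ evaluated along the free-flight segment — but during free flight (no collision, since we are between $\gamma$ and $W$ and stay $\ge\eps_0$ from scatterers) the flow is just the smooth, linear-in-time translation flow on $M=Q\times S^1$, whose differential depends smoothly (indeed affinely) on $\tau(s)$, hence Lipschitz in $s$ because $\tau$ is Lipschitz; and (iv) the base point $q(s)\in\partial Q_{\rm{tr}}$, which moves Lipschitz-ly. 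Composing these Lipschitz dependences and using the uniform lower bound $J_{\gamma\to W}\ge C^{-1}>0$ to control $|a|-|b|\le|a-b|$ on the relevant vectors, one gets $|J_{\gamma\to W}(s)-J_{\gamma\to W}(s')|\le C|s-s'|$ with $C=C(\cR_Q,\cR_u)$.

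The one point requiring care — and the main obstacle — is exactly the issue flagged in the text just before the lemma: near tangential collisions the flight time connecting a discrete-time $u$-curve to the flow $u$-curve has unbounded derivative, which is why the transparent wall $\partial Q_{\rm{tr}}$ was introduced in the first place. So I must make sure the argument only ever uses the flight time \emph{from $\gamma$ on the transparent wall to $W$}, never the flight time from a real scatterer. Here is where the good-$u$-curve hypothesis ($\mathrm{dist}(W,\partial M)>\eps_0\ge 10L_{\max}$) and the explicit placement of $\partial Q_{\rm{tr}}$ (at distance $L$ behind the centre of $W$, with moderate curvature and length $10L$) pay off: the backward flight from every point of $W$ to $\partial Q_{\rm{tr}}$ has length in a fixed range $[\,c,C\,]$ and hits $\partial Q_{\rm{tr}}$ transversally with angle bounded away from the degenerate values, all uniformly. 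This uniform transversality and uniform flight length is what makes $\tau(s)$ genuinely $C^1$ with uniformly Lipschitz derivative — a fact already asserted in the bulleted construction above — and everything else in the proof is then a routine chain-rule computation with uniform bounds, which I would not write out in full.
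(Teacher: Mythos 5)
Your proposal is correct and follows essentially the same route as the paper: the paper likewise computes $J_{\gamma\to W}(s)$ as the length distortion of $s\mapsto \Phi^{\tau(s)}(q(s))$, only it carries out your chain-rule step explicitly in Jacobi coordinates, writing the tangent to $W$ as the free-flight shear of the post-collision front data $d\xi_+=\cos\phi\,dr$, $d\omega_+=B_+d\xi_+$ plus the flow-direction component $(\sin\phi+\tau')\,dr$, which gives the closed formula (\ref{eq:J0expl}) from which the two-sided bound and the Lipschitz dependence are read off. The qualitative inputs you invoke (uniform transversality at the transparent wall, i.e.\ $\cos\phi$ bounded below, boundedness of the flight time and of $B_+$, and $\tau'$ bounded and Lipschitz from the construction) are exactly the ingredients the paper uses, so your argument is sound; just note that the explicit formula is reused later (in the proof of Theorem~\ref{thm:holonomy_cs-regularity}), so eventually it has to be written out rather than left as a qualitative comparability statement.
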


\begin{proof}
Below we obtain an explicit formula for the Jacobian $J_{\gamma\to W}(s)$ in several steps.
For any $q(s)\in\gamma$ and $p(s)=\pi^{-1}(q(s))\in W$, we need to relate the line element $ds$ along $\gamma$ to the length of the corresponding tangent vector, $(d\xi,d\eta,d\omega)$, of $W$ at $p(s)$. For brevity the dependence on $s$ is omitted {whenever there is no risk of ambiguity} and the following notations are introduced:
\begin{itemize}
 \item $m=\frac{d\phi}{dr}$, the slope of $\gamma$ in the Poincar\'e section.
 \item $K=K(r)$, the curvature of $\partial Q_{\rm{tr}}$ at the point of collision.
 \item $W_+$ the post-collision front (or local orthogonal manifold) emerging from $\gamma$, and  $(d\xi_+,0,d\omega_+)$, its tangent vector.
 \item $B_+$, the curvature of the post-collision front. In particular, $B_+=\frac{d\omega_+}{d\xi_+}$.
 \end{itemize}

 The following relations are standard, see \cite{ChM06}:
 \[
 ds=\sqrt{1+m^2} \cdot dr;\qquad d\xi_+=\cos\phi \cdot dr; \qquad m=B_+\cos \phi -K.
 \]

 By construction
 \begin{itemize}
 \item $\cos\phi(r)$ is bounded away from $0$,
\item $m(r)$ is bounded from above, while $B_+(r)$ is bounded away from $0$ and infinity,
\item all these quantities depend on $r$ {in a Lipschitz continuous manner}, with uniformly bounded Lipschitz constant,
\item $ds$, $dr$ and $d\xi_+$ are uniformly equivalent.
\end{itemize}
Hence $d\xi_+=\frac{\cos\phi(r)}{\sqrt{1+m(r)^2}}\cdot ds$, and as $d\omega_+=B_+(r) d\xi_+$, the Jacobian $J_{\gamma\to W_+}$ is bounded away from $0$ and infinity, and Lipschitz with uniformly bounded Lipschitz constant.

The free flight evolution of $W_+$ into $W$ results in
\[
d\xi=d\xi_++\tau d\omega_+=(1+\tau B_+) d\xi_+, \qquad  d\omega_+=d\omega.
\]
We still need to compute $d\eta$. By \cite{ChM06}, section 3.3, we have
\[
d\eta=\sin\phi dr+d\tau=(\sin\phi +\tau'(r))dr.
\]
Now we are in the position to compute the Jacobian $J_{\gamma\to W}=\frac{\sqrt{d\xi^2+d\eta^2+d\omega^2}}{ds}$. As
\begin{eqnarray}
 \label{eq:J0expl}
   \sqrt{d\xi^2+d\eta^2+d\omega^2}&=&\sqrt{(1+\tau B_+)^2 +B_+^2 +\frac{(\sin\phi+\tau')^2}{\cos^2\phi}} \cdot \cos\phi \, dr \\ \nonumber
    &=& \frac{\sqrt{(1+\tau B_+)^2\cos^2\phi +B_+^2 \cos^2\phi +(\sin\phi+\tau')^2}}{\sqrt{1+m^2}}\cdot ds
 \end{eqnarray}

our analysis implies that the Jacobian $J_{\gamma\to W}(s)$ is uniformly bounded away from $0$ and infinity, and depends on $s$ {in a Lipschitz continuous manner}, with uniformly bounded Lipschitz constant.
\end{proof}
\medskip

\begin{proof}[Proof of Theorem~\ref{thm:long_cs_manifolds}]
Let us mention first that it would be possible to prove Theorem~\ref{thm:long_cs_manifolds} by discussing the flow directly, as in the proof of the related Proposition 6.10 from \cite{ChM06}. Yet, in accordance with the other arguments presented here, we prefer to {deduce it from the corresponding statement for the map, i.e. Theorem 5.67 in \cite{ChM06}}.

Consider a good $u$-curve $W$, the associated transparent wall $\partial Q_{\rm{tr}}$ and the trace $\gamma=\pi(W)$. For $x\in W$, let $r^{c-s}(x)$ denote the inner radius of the homogeneous local central stable manifold centred at $x$, while for $p\in\gamma$, let $r^s(p)$ denote the inner radius of the homogeneous stable manifold (of the billiard map $T$ with the new transparent wall) centred at $p$. Fix some $\eps>0$ and introduce
\[
B_{W,\eps}=\{x\in W|r^{c-s}(x)\le \eps\};\qquad
B_{\gamma,\eps}=\{x\in \gamma|r^s(x)\le \eps\}.
\]
Now we refer to Theorem~5.67 from \cite{ChM06} which ensures that
\begin{equation}\label{CM:longs}
m_{\gamma}(B_{\gamma,\eps})\le C\eps.
\end{equation}
On the other hand, Lemma~\ref{lem:JacobianMapFlow} ensures that
\begin{equation}\label{longs-longcs-measure}
m_{W}(B_{W,\eps})\le C m_{\gamma}(\pi(B_{W,\eps})).
\end{equation}
By construction of $\partial Q_{\rm{tr}}$, the flow time between $\gamma$ and $W$ is uniformly bounded, and there exists some constant $\Cl{const:BP1}$ such that
if $r^s(\pi(x))>\Cr{const:BP1}\eps$, then $r^{c-s}(x)>\eps$. Hence
\begin{equation}\label{longs-longcs-set}
\pi(B_{W,\eps})\subset B_{\gamma,\Cr{const:BP1}\eps}.
\end{equation}
Theorem~\ref{thm:long_cs_manifolds} follows by {combining} (\ref{CM:longs}), (\ref{longs-longcs-measure}) and (\ref{longs-longcs-set}).\hfill
\end{proof}

\begin{proof}[Proof of Theorem~\ref{thm:holonomy_dynholder}]
We use the notations introduced above and those of Theorem~\ref{thm:holonomy_dynholder}. Let $W_1^u$ and $W_2^u$ be two good $u$-curves, and let
$\partial Q_{\rm{tr}}$ be the transparent wall constructed from $W_1^u$ as discussed above. Let us assume that $W_1^u$ and $W_2^u$ are sufficiently close that
both leave traces on $\partial Q_{\rm{tr}}$, to be denoted by $\gamma_1$ and $\gamma_2$, respectively. Let $H_1\subset W_1^u$
denote the set of points for which the (homogeneous) central-stable manifold (of the flow) extends to $W_2^u$. Analogously, let
$H^0_1\subset \gamma_1$ denote the set of points for which the (homogeneous) stable manifold (of the map) extends to $\gamma_2$.
Let $h:H_1\to W_2^u$ and $h_0:H^0_1\to \gamma_2$ denote the holonomy maps given by sliding along central-sable and stable manifolds, respectively, and let
$H_2=h(H_1)$ and $H_2^0=h_0(H_1^0)$ denote the ranges of these holonomy maps, respectively.  We have $H^0_i=\pi(H_i)$ for $i=1,2$.
For $p,p'\in H_1$ the separation times on $H_1, H_2, H_1^0, H_2^0$ are equal, i.e.~$s^+(p,p')=s^+(\pi(p),\pi(p'))=s^+(h(p),h(p'))=s^+(h_0(\pi(p)),h_0(\pi(p')))$.

To express the Jacobian $J_h$, it is worth introducing the restrictions of the Jacobians $J_1= J_{\gamma_1\to W_1^u}|_{H_1^0}$ and $J_2= J_{\gamma_2\to W_2^u}|_{H_2^0}$. Then we have
\begin{equation}\label{eq:Jreduce}
J_h=(J_1)^{-1} \cdot J_{h_0} \cdot J_2.
\end{equation}

Theorem~\ref{thm:holonomy_dynholder} follows form the following claim.

\smallskip\noindent \textit{Claim.} There exist uniform constants $C>0$ and $\Theta<1$ such that $C^{-1}<J(p)<C$ and $|\log J(p)- \log J(p')|\le C\Theta^{s^+(p,p')}$, for
$J=(J_1)^{-1}$, $J_2$ and $J_{h_0}$.

\smallskip
Concerning $J_1$ and $J_2$ first note that the analysis of Lemma~\ref{lem:JacobianMapFlow} applies to $J_{\gamma_i\to W_i^u}$ for $i=1$ and $2$ as well. Also, for
$p, p'\in H_1$ let $s$ and $s'$ denote the arc length parameters of the projections $\pi(p),\pi(p')\in \gamma_i$, respectively. Then, by uniform expansion $|s-s'|\le C\Theta^{s^+(p,p')}$. Hence Lemma~\ref{lem:JacobianMapFlow} implies the Claim for $(J_1)^{-1}$ and $J_2$.

As for $J_{h_0}$, the Claim is explicitly stated in \cite{ChM06}, Proposition 5.48. This completes the proof of the Claim, and hence Theorem~\ref{thm:holonomy_dynholder} follows.\hfill
\end{proof}

\begin{proof}[Proof of Theorem~\ref{thm:holonomy_cs-regularity}]
Fix $p_1\in W_1$. Introduce the transparent wall $\partial Q_{\rm{tr}}$ corresponding to $W_1$ as above. Throughout, we use the notations from the proof of Theorem~\ref{thm:holonomy_dynholder}.
Note that $p_2=h(p_1)$. Projections onto $\cM_{\rm{tr}}$ are denoted as $q_i=\pi(p_i)$ and $\gamma_i=\pi(W_i)$; $(i=1,2)$.
For brevity let us introduce $J_i=J_i(q_i)$ $(i=1,2)$ and $J_0=J_{h_0}(q_1)$. As discussed in the proof of Theorem~\ref{thm:holonomy_dynholder}, $C^{-1}\le J \le C$ for $J=J_0, J_1, J_2$. Furthermore, the decomposition of Formula (\ref{eq:Jreduce}) applies and
\[
|J_h(p)-1|\le |J_1^{-1}\cdot J_2|\cdot |J_0-1| + |J_1^{-1}\cdot J_2-1|.
\]

Hence the following two claims imply the statement of Theorem~\ref{thm:holonomy_cs-regularity}.

\smallskip \noindent \textit{Claim 1.} $|J_2-J_1|\le C(\delta+\alpha)$.

\smallskip \noindent \textit{Claim 2.} $|J_0-1|\le C(\delta+\alpha^{1/3})$.

Let $\tau_i=\tau(q_i)$, $\phi_i=\phi(q_i)$, $(i=1,2)$. Let, furthermore, $m_i$ denote the slope of $\gamma_i$ at $q_i$, and let $B_i$ denote the curvature of the corresponding outgoing front ($B_+$ in the proof of Lemma~\ref{lem:JacobianMapFlow}).

We make the following observations.
\begin{enumerate}[(i)]
\item As $d(p_1,p_2)=\delta$, we have $d(q_1,q_2)\le C\delta$. Hence $|\tau_2-\tau_1|\le C\delta$ and $|\phi_2-\phi_1|\le C\delta$.
\item Recall that $\alpha$ denotes the angle of the tangent vectors $v_1=T_{p_1}W_1$ and $v_2=T_{p_2}W_2$. Let $v_i = (d\xi_i,d\eta_i,d\omega_i)$, for $i=1,2$. As it only the directions of the vectors $v_1$ and $v_2$ that matter, we may fix the
$d\xi_i$ to have unit length. $d\xi_1$ and $d\xi_2$ correspond to the flow directions at $p_1$ and $p_2$, respectively, hence $|d\xi_1-d\xi_2|\le C\delta$. It follows that
$\left|\frac{d\omega_1}{d\xi_1}- \frac{d\omega_2}{d\xi_2}\right|\le C(\delta+\alpha)$ and
$\left|\frac{d\eta_1}{d\xi_1}- \frac{d\eta_2}{d\xi_2}\right|\le C(\delta+\alpha)$.
On the other hand, by the analysis presented in the proof of Lemma~\ref{lem:JacobianMapFlow},
$\frac{d\omega_i}{d\xi_i}=(\tau_i +B_i^{-1})^{-1}$ and
$\frac{d\omega_i}{d\xi_i}=\frac{\sin\phi_i+\tau_i'}{\cos\phi_i(1+\tau_i B_i)}$, for $i=1,2$.
\item It follows that $|(\tau_2+B_2^{-1})^{-1}-(\tau_1+B_1^{-1})^{-1}|\le C(\delta+\alpha)$. Furthermore, these quantities, just like the $B_i$ ($i=1,2$), are bounded away from zero and infinity by some geometric constants. This, along with the previous items, implies $|B_2- B_1|\le C(\delta+\alpha)$ and hence $|m_2-m_1|\le C(\delta+\alpha)$. Similarly, $|(\sin\phi_2 + \tau'(q_2)) - (\sin\phi_1 + \tau'(q_1))|\le C(\delta+\alpha)$ (and thus $|\tau'(q_2)-\tau'(q_1)|\le C(\delta+\alpha)$).
\end{enumerate}

To prove Claim 1, we use the explicit Formula (\ref{eq:J0expl}) for both $J_1(q_1)$ and $J_2(q_2)$. Note first that in this formula both the numerator and the denominator are bounded away from zero and infinity by geometric constants. Hence Claim 1 follows from the above items.

To prove Claim 2, we refer to Theorem 5.42 in \cite{ChM06}. This Theorem implies that
$|J_{h_0}- 1|\le C (|\sphericalangle(\gamma_1(q_1),\gamma_2(q_2))| +d(q_1,q_2)^{1/3})$, where
$d(q_1,q_2)$ is the distance of $q_1$ and $q_2$, while $\sphericalangle(\gamma_1(q_1),\gamma_2(q_2))$ is the angle that the direction of $\gamma_1$ at $q_1$ makes with the direction of $\gamma_2$ at $q_2$ -- which is bounded by $C|m_1-m_2|$.
Now, by the analysis above, $d(q_1,q_2)\le C\delta$ while $\sphericalangle(\gamma_1(q_1),\gamma_2(q_2))\le C(\alpha+\delta)$. This implies Claim 2, and thus completes the proof of Theorem~\ref{thm:holonomy_cs-regularity}.\hfill
\end{proof}

\subsection{Exponential correlation decay for generalized H\"older observables}\label{sec:genholder-EDC}

Here we present a simple extension of Theorem~\ref{thm:BDL}, which is from \cite{BDL16}, to allow generalized H\"older observables. In the first step we only allow one of the two observables to be generalized H\"older, because this is the version we will use.

\begin{theorem}\label{thm:BDL_extended}
A planar billiard flow with finite horizon and no corner points enjoys exponential correlation decay for a H\"older and a generalized H\"older observable. Quantitatively, let $0 < \alpha_F,\alpha_G \le 1$. Then there exists an $a'' = a''(Q, \alpha_F,\alpha_G) > 0$ and a $\cC_{BDL;g}=\cC_{BDL;g}(Q,\alpha_F,\alpha_G) < \infty$ such that if $F:M\to \IR$ is $\alpha_F$-generalized H\"older and $G:M\to \IR$ is $\alpha_G$-H\"older, then for any $t\ge 0$
 \[\left| \int_M (F\circ \Phi^t) G \diff \mu - \int_M F \diff \mu \int_M G \diff \mu \right| \le \cC_{BDL;g} var_{\alpha_F}(F) ||G||_{\alpha_G;H} e^{- a''t}. \]
(Here BDL stands for Baladi-Demers-Liverani.)

Moreover, $a''(Q,\alpha_F,\alpha_G)$ depends on $Q$ and $\alpha_G$ only through $a'(Q,\alpha_G)$ from Theorem~\ref{thm:BDL}: actually, $a''=\frac{\alpha_F}{\alpha_F+1} a'(Q,\alpha_G)$. Similarly, $\cC_{BDL;g}(Q,\alpha_F,\alpha_G)$ depends on $Q$ only through $\cR_Q$ from (\ref{eq:RQ}) and $\cC_{BDL}(Q,\alpha_G)$ from Theorem~\ref{thm:BDL}, so it has the form
$\cC_{BDL;g}=\cC_{BDL;g}(\cR_Q,\cC_{BDL}(Q,\alpha_G),\alpha_F,\alpha_G)$.
\end{theorem}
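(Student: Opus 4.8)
The plan is a standard mollification argument, paralleling the optimization already carried out at the end of Section~\ref{sec:end-of-proof}. Since the covariance $\int_M(F\circ\Phi^t)G\,\diff\mu-\int_M F\,\diff\mu\int_M G\,\diff\mu$ does not change when a constant is added to $F$, I would first normalize $\int_M F\,\diff\mu=0$; then $\inf_M F\le0\le\sup_M F$, so $\sup_M|F|\le\sup_M F-\inf_M F\le var_{\alpha_F}(F)$. The idea is then to replace $F$ by a Lipschitz (hence, by Lemma~\ref{lem:Holder-Holder}\,(\ref{it:Holder-Holder}), $\alpha_G$-H\"older) approximant $F_\delta$ at a scale $0<\delta\le1$, apply Theorem~\ref{thm:BDL} to the pair $(F_\delta,G)$, and balance the approximation error against the exponential decay.

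For the approximant I would fix a smooth mollifier $\eta_\delta$ on $\IR^3$ supported in the ball of radius $\delta$, with $\int\eta_\delta=1$ and $\|\nabla\eta_\delta\|_{L^1}\le C/\delta$ (the constant depending on the dimension $3$ only), extend $F$ to a bounded Borel function on $\IT^3$ with range in $[\inf_M F,\sup_M F]$, and set $F_\delta:=(F*\eta_\delta)|_M$. Then $\sup_M|F_\delta|\le\sup_M|F|$ and $|F_\delta|_{1;H}\le\sup_M|F|\cdot\|\nabla\eta_\delta\|_{L^1}\le C\,var_{\alpha_F}(F)/\delta$, so $\|F_\delta\|_{\alpha_G;H}\le C(\cR_Q,\alpha_G)\,var_{\alpha_F}(F)/\delta$. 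The key estimate is the $L^1$-bound
\[
\int_M|F-F_\delta|\,\diff\mu\le\int_M(osc_\delta F)\,\diff\mu+C(\cR_Q)\,(\sup_M|F|)\,\delta\le C(\cR_Q)\,var_{\alpha_F}(F)\,\delta^{\alpha_F}.
\]
Indeed, for $x\in M$ with $dist(x,\partial M)>\delta$ the Euclidean $\delta$-ball around $x$ lies inside $M$ — because the complement of $Q$ is convex near each strictly convex scatterer — so there $F_\delta(x)$ is an average of $F|_M$ over $B_\delta(x)$ and $|F(x)-F_\delta(x)|\le(osc_\delta F)(x)$; the remaining $\delta$-collar of $\partial M$ has $\mu$-measure $\le C(\cR_Q)\,\delta$, where $|F-F_\delta|\le2\sup_M|F|$; and $\int_M(osc_\delta F)\,\diff\mu\le var_{\alpha_F}(F)\,\delta^{\alpha_F}$ by (\ref{eq:gH-seminorm-def}), while $\delta\le\delta^{\alpha_F}$ since $\alpha_F\le1$.

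With $\bar F_\delta:=F_\delta-\int_M F_\delta\,\diff\mu$ (so $\int_M\bar F_\delta\,\diff\mu=0$ and $\|\bar F_\delta\|_{\alpha_G;H}\le C\,var_{\alpha_F}(F)/\delta$), I would split, using $\int_M F\,\diff\mu=0$ and the $\Phi^t$-invariance of $\mu$,
\begin{align*}
\left|\int_M(F\circ\Phi^t)G\,\diff\mu\right|
&\le\left|\int_M(\bar F_\delta\circ\Phi^t)G\,\diff\mu\right|+\left|\int_M F_\delta\,\diff\mu\right|\left|\int_M G\,\diff\mu\right|+(\sup_M|G|)\int_M|F-F_\delta|\,\diff\mu\\
&\le\left|\int_M(\bar F_\delta\circ\Phi^t)G\,\diff\mu\right|+2\,(\sup_M|G|)\int_M|F-F_\delta|\,\diff\mu,
\end{align*}
estimate the first term by Theorem~\ref{thm:BDL} applied to the $\alpha_G$-H\"older pair $(\bar F_\delta,G)$, i.e. by $\cC_{BDL}(Q,\alpha_G)\,\|\bar F_\delta\|_{\alpha_G;H}\,\|G\|_{\alpha_G;H}\,e^{-a'(Q,\alpha_G)t}$, and obtain, for every $t\ge0$ and $0<\delta\le1$,
\[
\left|\int_M(F\circ\Phi^t)G\,\diff\mu-\int_M F\,\diff\mu\int_M G\,\diff\mu\right|\le C(\cR_Q,\alpha_F,\alpha_G)\big(1+\cC_{BDL}(Q,\alpha_G)\big)\,var_{\alpha_F}(F)\,\|G\|_{\alpha_G;H}\left(\delta^{\alpha_F}+\frac{e^{-a'(Q,\alpha_G)t}}{\delta}\right).
\]
Choosing $\delta=\delta(t):=\min\{e^{-a'(Q,\alpha_G)t/(\alpha_F+1)},1\}$ equalizes the two terms — exactly the one-variable optimization closing Section~\ref{sec:end-of-proof} — and gives the statement with $a''=\frac{\alpha_F}{\alpha_F+1}\,a'(Q,\alpha_G)$ and $\cC_{BDL;g}=\cC_{BDL;g}(\cR_Q,\cC_{BDL}(Q,\alpha_G),\alpha_F,\alpha_G)$. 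The mollifier estimates and this optimization are routine; the only step requiring genuine care is the mollification on the manifold-with-boundary $M$ — guaranteeing $|F-F_\delta|\le osc_\delta F$ off a thin collar of $\partial M$ (via the local convexity of $\IT^2\setminus Q$ near scatterers) and that this collar costs only $O(\delta)$ — together with the bookkeeping that keeps every constant above dependent on $Q$ through $\cR_Q$ only, which is precisely what lets $\cC_{BDL;g}$ inherit its $Q$-dependence solely through $\cR_Q$ and $\cC_{BDL}(Q,\alpha_G)$.
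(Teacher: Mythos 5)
Your proposal is correct and follows essentially the same route as the paper's proof: replace $F$ by a smoothed version at scale $\delta$, bound the $L^1(\mu)$ error by $C\,var_{\alpha_F}(F)\,\delta^{\alpha_F}$ via the oscillation integral, apply Theorem~\ref{thm:BDL} to the smoothed (centered) observable paired with $G$, and optimize $\delta\sim e^{-a'(Q,\alpha_G)t/(\alpha_F+1)}$ to obtain $a''=\frac{\alpha_F}{\alpha_F+1}a'(Q,\alpha_G)$. The only (harmless) deviation is in handling $\partial M$: the paper extends $F$ outside $M$ by a nearby-infimum so that the ball average satisfies $|\tilde F-F|\le osc_{2r}F$ pointwise everywhere, whereas you take an arbitrary bounded extension and absorb a $\delta$-collar of $\partial M$ of $\mu$-measure at most $C(\cR_Q)\,\delta$ --- both give the same estimate with constants depending on $Q$ only through $\cR_Q$.
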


\begin{proof}
Both the left and the right hand side remain unchanged if we add a constant to $F$, so it is common to assume -- without loss of generality -- that $\int_M F\diff\mu=0$.  However, the H\"older norm -- which is used in Theorem~\ref{thm:BDL} -- does depend on additive constants. On the other hand, if $F$ takes both positive and negative values, we have
\[
\sup |F|\le \sup F - \inf F\le 2 \sup |F|,
\]
hence in this case the H\"older norm can be controlled by $var_{\alpha_F}(F)$. For convenience, in the argument below we assume, without loss of generality, that
\begin{equation}\label{eq:assume-F-symmetric}
\sup |F|=\sup F = -\inf F =\frac{\sup F - \inf F}{2}.
\end{equation}
Note that if $F$ is such that $\int_M F\diff\mu=0$, and a constant is added to satisfy condition (\ref{eq:assume-F-symmetric}), $\sup |F|$ is affected at most by a factor $2$, while both versions of $F$ take both positive and negative values (unless $F$ is identically zero).

To prove the statement, the natural idea is to choose some $r>0$, replace $F$ with the smoothed version $\tilde{F}(x):=\frac{1}{Leb(B_r(x))}\int_{B_r(x)}F\diff Leb$, and then apply Theorem~\ref{thm:BDL} to $\tilde{F}$ and $G$. This $\tilde{F}$ would be Lipschitz continuous whenever $F$ is (measurable and) bounded, if $F$ were defined on $\IR^d$ or $\IT^d$. The only thing we have to be careful about is that $F$ is only defined on the phase space $M = Q\times \IT\subset \IT^3$. For that reason, we first extend $F$ to the $r$-neighbourhood $B_r(M)$ of $M$ in a fairly arbitrary way, say
\[\hat{F}(x):=\begin{cases}
                F(x) & \text{if $x\in M$}, \\
                \inf\{F(y)\,|\,y\in B_r(x)\cap M\} & \text{if $x\in B_r(M)\setminus M$}.
               \end{cases}
\]
Now for $x\in M$ we can define
\[
\tilde{F}(x):=\frac{1}{Leb(B_r(x))}\int_{B_r(x)} \hat{F}\diff Leb=\frac{3}{4r^3\pi}\int_{B_r(x)} \hat{F}\diff Leb.
\]
These definitions ensure that $\inf \hat{F}=\inf F$, $\sup \hat{F}=\sup F$, so for any $x,y\in M$
\begin{align*}
\tilde{F}(y)-\tilde{F}(x)= &
\frac{3}{4r^3\pi}\left(\int_{B_r(y)\setminus B_r(x)}\hat{F}\diff Leb - \int_{B_r(x)\setminus B_r(y)}\hat{F}\diff Leb \right) \le \\
\le & \frac{3}{4r^3\pi} \left(\sup \hat{F} Leb(B_r(y)\setminus B_r(x)) - \inf \hat{F} Leb(B_r(x)\setminus B_r(y)) \right) \le \\
\le & \frac{3}{4r^3\pi} (\sup F - \inf F) r^2 \pi |y-x| = \frac{3}{4\pi} (\sup F - \inf F) \frac1r |y-x|,
\end{align*}
so $\tilde{F}$ is Lipschitz continuous and
\[
|\tilde{F}|_{1;H}\le \frac{3}{4\pi} (\sup F - \inf F) \frac1r.
\]
Now Lemma~\ref{lem:Holder-Holder} \item~\ref{it:Holder-Holder} says that $\tilde{F}$ is also $\alpha_G$-H\"older and with (\ref{eq:assume-F-symmetric}) we get
\begin{equation}\label{eq:Ftilde-holder-norm}
||\tilde{F}||_{\alpha_G;H}
\le \left(\frac{3}{4\pi} diam(M)^{1-\alpha_G} \frac1r + \frac12 \right) (\sup F - \inf F)
\le \left(\frac{3}{4\pi} diam(M)^{1-\alpha_G} \frac1r + \frac12 \right) var_{\alpha_F}F.
\end{equation}
On the other hand, the definitions imply that
\[\inf_{B_{2r}(x)} F \le \tilde{F}(x) \le \sup_{B_{2r}(x)} F,\]
so
\[|\tilde{F}(x)-F(x)|\le \sup_{B_{2r}(x)} F - \inf_{B_{2r}(x)} F = (osc_{2r}F)(x),\]
which means that
\begin{equation}\label{eq:Ftilde-error-L1-norm}
||\tilde{F}-F||_{L^1(\mu)} \le \int_M osc_{2r}F \diff\mu \le |F|_{\alpha_F;gH}(2r)^{\alpha_F}
\le 2^{\alpha_F} var_{\alpha_F} F \cdot r^{\alpha_F}.
\end{equation}
To prove the theorem, we write
\[
\left| \int_M (F\circ \Phi^t) G \diff \mu - \int_M F \diff \mu \int_M G \diff \mu \right| \le
\left| \int_M (\tilde{F}\circ \Phi^t) G \diff \mu - \int_M \tilde{F} \diff \mu \int_M G \diff \mu \right|
+ 2\sup|G|\cdot ||\tilde{F}-F||_{L^1(\mu)}.
\]
We estimate the first term using Theorem~\ref{thm:BDL} and (\ref{eq:Ftilde-holder-norm}), and the second term using (\ref{eq:Ftilde-error-L1-norm}) to get
\begin{multline}
\left| \int_M (F\circ \Phi^t) G \diff \mu - \int_M F \diff \mu \int_M G \diff \mu \right| \le \\
\le \cC_{BDL}(Q,\alpha_G) ||\tilde{F}||_{\alpha_G;H} ||G||_{\alpha_G;H}e^{-a'(Q,\alpha_G)t}
 + 2||G||_{\alpha_G;H}  2^{\alpha_F} var_{\alpha_F} F \cdot r^{\alpha_F} \le \\
\le var_{\alpha_F} {F}||G||_{\alpha_G;H}
\left(
\frac{3 \cC_{BDL}}{4\pi} diam(M)^{1-\alpha_G} \frac{e^{-a'(Q,\alpha_G)t}}{r} + \frac{\cC_{BDL}}{2} e^{-a'(Q,\alpha_G)t}
+ 2^{\alpha_F+1} r^{\alpha_F}
\right).
\end{multline}
Choosing $r=e^{-\frac{a'}{\alpha_F+1}t}\le 1$ and
\[a''=a''(Q,\alpha_F,\alpha_G):=\frac{\alpha_F}{\alpha_F+1} a'(Q,\alpha_G)\]
we get $r^{\alpha_F}=\frac{e^{-a't}}{r}=e^{-a'' t}$ and $e^{-a' t}\le e^{-a'' t}$, so the statement follows with
\[
\cC_{BDL;g}=\cC_{BDL;g}(Q,\alpha_F,\alpha_G):=
2^{\alpha_F+1} + \left(\frac{3}{4\pi} diam(M)^{1-\alpha_G} + \frac{1}{2} \right) \cC_{BDL}(Q,\alpha_G).
\]
\end{proof}

For completeness we state the extension for the case when both observables are only generalized H\"older. We will not use this version in this paper.

\begin{theorem}\label{thm:BDL_extended2}
A planar billiard flow with finite horizon and no corner points enjoys exponential correlation decay for generalized H\"older observables. Quantitatively, let $0 < \alpha_F,\alpha_G \le 1$. Then there exists an $a''' = a'''(Q, \alpha_F,\alpha_G) > 0$ and a $\cC_{BDL;gg}=\cC_{BDL;gg}(Q,\alpha_F,\alpha_G) < \infty$ such that if $F:M\to \IR$ is $\alpha_F$-generalized H\"older and $G:M\to \IR$ is $\alpha_G$-generalized H\"older, then for any $t\ge 0$
 \[\left| \int_M (F\circ \Phi^t) G \diff \mu - \int_M F \diff \mu \int_M G \diff \mu \right| \le \cC_{BDL;gg} var_{\alpha_F}(F)   var_{\alpha_G}(G) e^{- a'''t}.\]
(Here BDL stands for Baladi-Demers-Liverani.)
\end{theorem}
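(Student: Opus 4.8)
The plan is to bootstrap from Theorem~\ref{thm:BDL_extended} by applying the very same mollification procedure, this time to the observable $G$ as well. Since replacing $F$ by $F-\int_M F\diff\mu$ changes neither side of the claimed inequality nor $var_{\alpha_F}(F)$, I would first assume $\int_M F\diff\mu=0$; then $\sup_M|F|\le \sup_M F-\inf_M F\le var_{\alpha_F}(F)$. Similarly, adding a constant to $G$ is now harmless, so I may also normalise $G$ so that $\sup_M|G|\le \sup_M G-\inf_M G\le var_{\alpha_G}(G)$.

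First I would smooth $G$. Fix $r>0$ (to be chosen later with $r\le 1$), extend $G$ from $M$ to the $r$-neighbourhood $B_r(M)\subset\IT^3$ exactly as $\hat F$ was defined in the proof of Theorem~\ref{thm:BDL_extended} (taking the infimum of $G$ over $B_r(x)\cap M$ for $x\notin M$), and set $\tilde G(x):=\tfrac{3}{4r^3\pi}\int_{B_r(x)}\hat G\diff Leb$ for $x\in M$. The computations in that proof carry over verbatim: $\tilde G$ is Lipschitz with $|\tilde G|_{1;H}\le \tfrac{3}{4\pi}(\sup_M G-\inf_M G)\tfrac1r$, hence by Lemma~\ref{lem:Holder-Holder}(\ref{it:Holder-Holder}) and $r\le 1$ we get $||\tilde G||_{\alpha_G;H}\le C(\alpha_G)\, var_{\alpha_G}(G)\,\tfrac1r$ with $C(\alpha_G)$ depending only on $\alpha_G$ and $diam(M)$; and $|\tilde G(x)-G(x)|\le (osc_{2r}G)(x)$, so $||\tilde G-G||_{L^1(\mu)}\le |G|_{\alpha_G;gH}(2r)^{\alpha_G}\le 2\, var_{\alpha_G}(G)\, r^{\alpha_G}$.

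Next I would split
\begin{align*}
\Bigl| \int_M (F\circ \Phi^t) G \diff \mu - \int_M F \diff \mu \int_M G \diff \mu \Bigr|
&\le \Bigl| \int_M (F\circ \Phi^t) \tilde G \diff \mu - \int_M F \diff \mu \int_M \tilde G \diff \mu \Bigr| \\
&\quad + \sup\nolimits_M|F|\cdot ||\tilde G-G||_{L^1(\mu)} + \Bigl|\int_M F\diff\mu\Bigr|\cdot ||\tilde G - G||_{L^1(\mu)},
\end{align*}
where the estimate of the first error term uses that $\Phi^t$ preserves $\mu$, and the last term vanishes since $\int_M F\diff\mu=0$. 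To the first term on the right I apply Theorem~\ref{thm:BDL_extended} to the pair $(F,\tilde G)$ — here $F$ is generalized $\alpha_F$-H\"older and $\tilde G$ is genuinely $\alpha_G$-H\"older — obtaining the bound $\cC_{BDL;g}(Q,\alpha_F,\alpha_G)\, var_{\alpha_F}(F)\, ||\tilde G||_{\alpha_G;H}\, e^{-a''t}$ with $a''=\tfrac{\alpha_F}{\alpha_F+1}a'(Q,\alpha_G)$. Combining with the bounds above gives, for $r\le 1$,
\[
\Bigl| \int_M (F\circ \Phi^t) G \diff \mu - \int_M F \diff \mu \int_M G \diff \mu \Bigr|
\le var_{\alpha_F}(F)\, var_{\alpha_G}(G)\Bigl( C_1 \frac{e^{-a''t}}{r} + C_2\, r^{\alpha_G}\Bigr),
\]
with $C_1:=\cC_{BDL;g}(Q,\alpha_F,\alpha_G)\,C(\alpha_G)$ and $C_2:=2$, both depending only on $Q,\alpha_F,\alpha_G$. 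Finally I optimise in $r$: taking $r:=e^{-\frac{a''}{\alpha_G+1}t}\le 1$ makes $\tfrac{e^{-a''t}}{r}=r^{\alpha_G}=e^{-a'''t}$ with $a''':=\tfrac{\alpha_G}{\alpha_G+1}a''=\tfrac{\alpha_G}{\alpha_G+1}\cdot\tfrac{\alpha_F}{\alpha_F+1}a'(Q,\alpha_G)>0$, and the theorem follows with $\cC_{BDL;gg}:=C_1+C_2$.

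There is no serious obstacle here; the one point that needs care is exactly the one that already arose in Theorem~\ref{thm:BDL_extended}, namely that $G$ is defined on $M\subset\IT^3$ rather than on all of $\IT^3$, so one must extend it to a neighbourhood before mollifying — but the extension $\hat G$ above accomplishes this while preserving $\sup_M G$, $\inf_M G$ and the oscillations of $G$ on $M$. One could alternatively avoid invoking Theorem~\ref{thm:BDL_extended} and instead mollify $F$ and $G$ simultaneously with a common radius $r$ and quote Theorem~\ref{thm:BDL} directly; this is essentially the same computation and yields the same type of bound.
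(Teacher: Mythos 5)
Your argument is correct and is exactly the ``completely analogous'' argument the paper alludes to when it omits the proof of Theorem~\ref{thm:BDL_extended2}: mollify $G$ over balls of radius $r$ after extending it to a neighbourhood of $M$, control $||\tilde G||_{\alpha_G;H}$ and $||\tilde G-G||_{L^1(\mu)}$ by $var_{\alpha_G}(G)$, invoke Theorem~\ref{thm:BDL_extended} (or, as you note, mollify both observables and quote Theorem~\ref{thm:BDL} directly), and optimise $r=e^{-\frac{a''}{\alpha_G+1}t}$. No gaps; the normalisations of $F$ and $G$ and the choice of constants are handled correctly.
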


The proof is completely analogous to the proof of Theorem~\ref{thm:BDL_extended}, and we omit it.

\subsection{Extension of H\"older continuous functions}\label{sec:Holder-extension-1p}

\begin{lemma}\label{lem:Holder-extension-1p}
 Let $(X,d)$ be a metric space, $\emptyset\neq D\subset X$, $z\in X$ and $f:D\to\IR$ H\"older continuous with constants $0 \le C<\infty$ and $0<\alpha \le 1$ meaning that
\begin{equation}\label{eq:Holder-ext-0}
|f(x)-f(y)|\le C d(x,y)^{\alpha} \text{ for any $x,y\in D$}.
\end{equation}
 Then there exists a function $\tilde{f}:D\cup\{z\}\to\IR$ such that $\tilde{f}=f$ on $D$, $\inf f\le \tilde{f} \le \sup f$ and $\tilde{f}$ is H\"older continuous with the same constants as $f$:
 \begin{equation}\label{eq:Holder-ext-1}
 |\tilde{f}(x)-\tilde{f}(y)|\le C d(x,y)^{\alpha} \text{ for any $x,y\in D\cup\{z\}$}.
 \end{equation}
\end{lemma}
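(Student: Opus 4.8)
The plan is to use the classical McShane one-point extension, clamped from above so as to respect the bound $\inf f\le\tilde f\le\sup f$. First I would set $g(x):=f(x)+C\,d(z,x)^{\alpha}$ for $x\in D$, define $\tilde f:=f$ on $D$, and put
\[
\tilde f(z):=\min\Bigl\{\sup_{D} f,\ \inf_{x\in D} g(x)\Bigr\}.
\]
A preliminary remark is needed to see that this is well defined (i.e.\ that $\inf_D g>-\infty$): fixing any $w_0\in D$ and using (\ref{eq:Holder-ext-0}), the triangle inequality $d(w_0,x)\le d(w_0,z)+d(z,x)$, and the subadditivity $(a+b)^{\alpha}\le a^{\alpha}+b^{\alpha}$ valid for $0<\alpha\le 1$, one gets $g(x)\ge f(w_0)-C\,d(w_0,x)^{\alpha}+C\,d(z,x)^{\alpha}\ge f(w_0)-C\,d(w_0,z)^{\alpha}$ for every $x\in D$, a finite lower bound.

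Second, I would check $\inf_D f\le\tilde f(z)\le\sup_D f$. The upper bound is built into the definition. For the lower bound, $g(x)\ge f(x)\ge\inf_D f$ for every $x\in D$, so $\inf_D g\ge\inf_D f$, while also $\sup_D f\ge\inf_D f$; taking the min of two quantities that are both $\ge\inf_D f$ gives the claim.

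Third -- the substantive part -- I would verify (\ref{eq:Holder-ext-1}). For $x,y\in D$ it is exactly the hypothesis (\ref{eq:Holder-ext-0}), and for $x=y=z$ it is trivial, so the only case to treat is $|\tilde f(z)-f(x)|\le C\,d(z,x)^{\alpha}$ for $x\in D$. The upper estimate is immediate: $\tilde f(z)\le\inf_D g\le g(x)=f(x)+C\,d(z,x)^{\alpha}$. For the lower estimate $f(x)-\tilde f(z)\le C\,d(z,x)^{\alpha}$ I would split according to which term realizes the minimum. If $\tilde f(z)=\sup_D f$, then $\tilde f(z)\ge f(x)\ge f(x)-C\,d(z,x)^{\alpha}$. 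If instead $\tilde f(z)=\inf_D g$, then for every $w\in D$, using (\ref{eq:Holder-ext-0}), the triangle inequality and subadditivity of $t\mapsto t^{\alpha}$ as above,
\[
g(w)=f(w)+C\,d(z,w)^{\alpha}\ \ge\ f(x)-C\,d(x,w)^{\alpha}+C\,d(z,w)^{\alpha}\ \ge\ f(x)-C\,d(x,z)^{\alpha},
\]
and taking the infimum over $w\in D$ gives $\tilde f(z)\ge f(x)-C\,d(x,z)^{\alpha}$. This closes the verification.

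Finally I would record that the construction is consistent with $\tilde f=f$ on $D$ even when $z\in D$: taking $x=z$ in the infimum gives $\inf_D g\le f(z)$, while (\ref{eq:Holder-ext-0}) gives $g(w)\ge f(z)$ for all $w$, so $\inf_D g=f(z)\le\sup_D f$ and hence $\tilde f(z)=f(z)$. There is no real obstacle in this proof; the only place the hypothesis $\alpha\le 1$ enters is through the inequality $(a+b)^{\alpha}\le a^{\alpha}+b^{\alpha}$, used to absorb the triangle inequality, and it seems worth isolating that point explicitly in the write-up.
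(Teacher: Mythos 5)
Your proof is correct and is essentially the paper's argument in explicit form: the key step in both is the inequality $f(y)-C\,d(y,z)^{\alpha}\le f(x)+C\,d(x,z)^{\alpha}$ for all $x,y\in D$ (triangle inequality plus subadditivity of $t\mapsto t^{\alpha}$ for $\alpha\le 1$), which the paper uses to show that the intervals $[f(x)-C\,d(x,z)^{\alpha},\,f(x)+C\,d(x,z)^{\alpha}]$ have a common point inside $[\inf f,\sup f]$, while you simply name an explicit point of that intersection, namely the clamped McShane value $\min\bigl\{\sup_{D}f,\ \inf_{w\in D}\bigl(f(w)+C\,d(z,w)^{\alpha}\bigr)\bigr\}$. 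All your verifications (finiteness of the infimum, the two-sided bound, the H\"older estimate at $z$, and consistency when $z\in D$) are sound, so there is no gap.
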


\begin{proof}
We must have $\tilde{f}=f$ on $D$, so (\ref{eq:Holder-ext-1}) is trivial if $x,y\neq z$, and also if $x=y=z$. So we only need to choose $f(z)\in[\inf f,\sup f]$ so that (\ref{eq:Holder-ext-1}) holds with $y=z$ for every $x\in D$.
For every $x\in D$, define the numbers
\[ a_x:=f(x)-C d(x,z)^{\alpha}\quad , \quad b_x:=f(x)+C d(x,z)^{\alpha}\]
and the interval $I_x:=[a_x,b_x]$. If we can choose $f(z)$ such that $f(z)\in I_x\cap [\inf f,\sup f]$ for every $x\in D$, we are done because $f(z)\in I_x$ is equivalent to (\ref{eq:Holder-ext-1}) with $y=z$. Set
\[a:=\sup_{x\in D} a_x\quad , \quad b:=\inf_{x\in D} b_x.\]
Clearly $a\le \sup f$ and $b\ge\inf f$, so it enough to show that
\begin{equation}\label{eq:Holder-ext-2}
 a\le b,
\end{equation}
since this means not only that $\cap_{x\in D} I_x=[a,b]\neq\emptyset$, but also that
\[\cap_{x\in D} I_x\cap [\inf f, \sup f]=[a,b]\cap[\inf f,\sup f]\neq\emptyset.\]
First we show that $I_x\cap I_y\neq\emptyset$ for any $x,y\in D$.
The function $g(u):= u^\alpha$ is concave and satisfies $g(0)\ge 0$. Such functions are known to be subadditive:
$(u+v)^\alpha\le u^\alpha + v^\alpha$ for any $u,v\ge 0$.
So the triangle inequality $d(x,y)\le d(x,z) + d(y,z)$ implies that
\[d(x,y)^\alpha \le (d(x,z)+d(y,z))^\alpha\le d(x,z)^\alpha + d(y,z)^\alpha,\]
so (\ref{eq:Holder-ext-1}) implies that
\[ f(y)-f(x)\le C d(x,z)^\alpha + C d(y,z)^\alpha. \]
Rearranging this inequality gives
\[ a_y= f(y)-C d(y,z)^\alpha\le f(x)+C d(x,z)^\alpha =b_x.\]
Similarly, we can get $a_x\le b_y$, so $I_x\cap I_y=[\max\{a_x,a_y\},\min\{b_x,b_y\}]\neq \emptyset$.

Now if $b<a$ would hold, we would also have some $x$ and $y$ with $b_x < a_y$, a contradiction (we assumed $D\neq\emptyset$). So
we have shown (\ref{eq:Holder-ext-2}) and the proof is complete.
\end{proof}

\begin{lemma}\label{lem:Holder-extension}
 Let $(X,d)$ be a separable metric space, $\emptyset\neq D\subset X$ and $f:D\to\IR$ H\"older continuous with constants $0 \le C<\infty$ and $0<\alpha \le 1$ meaning that
 \[ |f(x)-f(y)|\le C d(x,y)^{\alpha} \text{ for any $x,y\in D$}. \]
 Then there exists a function $\tilde{f}:X\to\IR$ such that $\tilde{f}=f$ on $D$, $\inf f\le \tilde{f} \le \sup f$ and $\tilde{f}$ is H\"older continuous with the same constants as $f$:
 \[ |\tilde{f}(x)-\tilde{f}(y)|\le C d(x,y)^{\alpha} \text{ for any $x,y\in X$}. \]
\end{lemma}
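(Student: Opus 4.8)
The plan is to bootstrap the one-point extension Lemma~\ref{lem:Holder-extension-1p} to all of $X$: first perform countably many one-point extensions along a dense sequence (using separability), then reach the remaining points of $X$ by a density / uniform-continuity argument.

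First I would fix a countable dense subset $\{z_n\}_{n\ge 1}$ of $X$, which exists since $X$ is separable. Set $D_0:=D$ and $f_0:=f$. Inductively, assuming $f_{n-1}:D_{n-1}\to\IR$ is already defined, extends $f$, is H\"older with the original constants $(C,\alpha)$, and takes values in $[\inf f,\sup f]$, I would apply Lemma~\ref{lem:Holder-extension-1p} with the point $z=z_n$ (note $D_{n-1}\supseteq D\neq\emptyset$, so the lemma applies) to obtain $f_n:D_n\to\IR$, where $D_n:=D_{n-1}\cup\{z_n\}$, which still extends $f_{n-1}$ (hence $f$), is H\"older with constants $(C,\alpha)$, and --- since $\inf f_{n-1}=\inf f$ and $\sup f_{n-1}=\sup f$ by the inductive hypothesis --- again takes values in $[\inf f,\sup f]$. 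The $f_n$ are pairwise consistent, so they glue to a single function $f_\infty$ on $D_\infty:=D\cup\{z_n\,:\,n\ge 1\}$; any two points of $D_\infty$ lie in a common $D_n$, so $f_\infty$ inherits the H\"older bound with constants $(C,\alpha)$, and it takes values in $[\inf f,\sup f]$.

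It remains to extend $f_\infty$ from $D_\infty$ to $X$. Since $D_\infty\supseteq\{z_n\}$ is dense in $X$, for each $x\in X$ I would choose a sequence $x_k\in D_\infty$ with $x_k\to x$; the sequence $(f_\infty(x_k))_k$ is Cauchy because $|f_\infty(x_k)-f_\infty(x_j)|\le C\,d(x_k,x_j)^\alpha\to 0$, hence convergent, and a standard interlacing argument shows the limit is independent of the chosen sequence. Defining $\tilde f(x)$ to be this limit gives a well-defined function on $X$ with $\tilde f=f_\infty=f$ on $D_\infty\supseteq D$; since each value is a limit of numbers in the closed interval $[\inf f,\sup f]$, we get $\inf f\le\tilde f\le\sup f$. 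Finally, given $x,y\in X$, choosing $x_k\to x$ and $y_k\to y$ in $D_\infty$ and passing to the limit in $|f_\infty(x_k)-f_\infty(y_k)|\le C\,d(x_k,y_k)^\alpha$, using continuity of the metric and of $t\mapsto t^\alpha$, yields $|\tilde f(x)-\tilde f(y)|\le C\,d(x,y)^\alpha$, as required.

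I do not expect a serious obstacle here; the points requiring care are pure bookkeeping: verifying that the bounds $\inf f\le\,\cdot\,\le\sup f$ genuinely persist through the induction (so that the hypotheses of Lemma~\ref{lem:Holder-extension-1p} keep applying and the infimum/supremum do not drift), and that the density argument is legitimate even though $X$ need not be complete --- which is fine precisely because $D_\infty$ is dense in $X$ itself, so no passage to an abstract completion is needed. Alternatively, one could bypass the countable induction by a Zorn's lemma argument on the poset of H\"older extensions of $f$ with values in $[\inf f,\sup f]$, using Lemma~\ref{lem:Holder-extension-1p} to exclude a maximal element with domain $\neq X$; I would keep the separability-based version since it is more self-contained and matches the stated hypotheses.
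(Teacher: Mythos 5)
Your proposal is correct and follows essentially the same route as the paper: a countable dense set is handled by iterating the one-point extension Lemma~\ref{lem:Holder-extension-1p}, and the remaining points of $X$ are reached by the (unique) continuous extension from the dense set, which preserves the H\"older constants and the bounds $\inf f\le\tilde f\le\sup f$. No gaps.
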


\begin{proof}
 Let $E=\{x_1,x_2,\dots\}$ be a countable dense subset in $X$. Let $D_0:=D$ and $D_n:=D\cup\{x_1,\dots,x_n\}$ for $n=1,2,\dots$. We define $\tilde{f}$ of each $x_i$ inductively: If it is already defined with the required properties on $D_{n-1}$, then Lemma~\ref{lem:Holder-extension-1p} gives $\tilde{f}(x_n)$ by the extension to $D_n$. The function $\tilde{f}$ defined on $D\cup E$ this way also has the required properties (for any $x,y\in D\cup E$, $\tilde{f}(x)$ and $\tilde{f}(y)$ are obtained in finitely many steps). Now we have a continuous function on a dense set, so the continuous extension to all of $X$ exists and is unique with the trivial definition
 $\tilde{f}(x):=\lim\limits_{D\cup E\ni y\to x} \tilde{f}(y)$,
 and obviously preserves the regularity properties.
\end{proof}

\end{document}